\documentclass{amsart}
\usepackage[curve,matrix,arrow,frame,tips]{xy}
\usepackage{enumerate}
\usepackage{amssymb}
\usepackage{manfnt}
\usepackage{bbm}
\usepackage[pdfborder={0 0 0}, pdftitle={Equivariant K-theory as a global localization}, pdfauthor={Stefan Schwede},pdfstartview={FitH}]{hyperref}

\DeclareMathOperator{\Fun}{Fun}
\DeclareMathOperator{\Alg}{Alg}
\DeclareMathOperator{\CAlg}{CAlg}
\DeclareMathOperator{\UR}{URing}
\DeclareMathOperator{\rAb}{Ab}
\DeclareMathOperator{\cof}{cof}
\DeclareMathOperator{\colim}{colim}
\DeclareMathOperator{\gl}{gl}
\DeclareMathOperator{\dgl}{\!\text{-}gl}
\DeclareMathOperator{\lax}{lax}
\DeclareMathOperator{\cl}{cl}
\DeclareMathOperator{\nv}{nv}
\DeclareMathOperator{\st}{st}
\DeclareMathOperator{\uc}{uc}
\DeclareMathOperator{\mon}{mon}
\DeclareMathOperator{\Gal}{Gal}
\DeclareMathOperator{\Hom}{Hom}
\DeclareMathOperator{\Aut}{Aut}
\DeclareMathOperator{\Sym}{Sym}
\DeclareMathOperator{\map}{map}
\DeclareMathOperator{\LMod}{LMod}
\DeclareMathOperator{\Mod}{Mod}
\DeclareMathOperator{\Id}{Id}
\DeclareMathOperator{\ind}{ind}
\DeclareMathOperator{\ev}{ev}
\DeclareMathOperator{\ab}{ab}
\DeclareMathOperator{\op}{op}
\DeclareMathOperator{\Sp}{Sp}
\DeclareMathOperator{\tr}{tr}
\DeclareMathOperator{\res}{res}
\DeclareMathOperator{\rFin}{\mathcal{F}in}
\DeclareMathOperator{\GrAb}{GrAb}

\newcommand{\dbs}{\backslash\!\!\backslash}
\newcommand{\mone}{{\mathbbm 1}}
\newcommand{\mC}{{\mathbb C}}
\newcommand{\mE}{{\mathbb E}}
\newcommand{\mF}{{\mathbb F}}
\newcommand{\mL}{{\mathbb L}}
\newcommand{\mN}{{\mathbb N}}
\newcommand{\mP}{{\mathbb P}}
\newcommand{\mQ}{{\mathbb Q}}
\newcommand{\mR}{{\mathbb R}}
\newcommand{\mS}{{\mathbb S}}
\newcommand{\mZ}{{\mathbb Z}}
\newcommand{\Ab}{{\mathcal A}b}
\newcommand{\Ac}{{\mathcal A}}
\newcommand{\Bc}{{\mathcal B}}
\newcommand{\Cc}{{\mathcal C}}
\newcommand{\Dc}{{\mathcal D}}
\newcommand{\Fc}{{\mathcal F}}
\newcommand{\Lc}{{\mathcal L}}
\newcommand{\M}{{\mathcal M}}
\newcommand{\Oc}{{\mathcal O}}
\newcommand{\Pc}{{\mathcal P}}
\newcommand{\Sc}{{\mathcal S}}
\newcommand{\Uc}{{\mathcal U}}
\newcommand{\Wc}{{\mathcal W}}
\newcommand{\Xc}{{\mathcal X}}
\newcommand{\Yc}{{\mathcal Y}}
\newcommand{\bL}{{\mathbf L}}
\newcommand{\bP}{{\mathbf P}}
\newcommand{\bT}{{\mathbf T}}
\newcommand{\bku}{{\mathbf{ku}}}
\newcommand{\bKU}{{\mathbf{KU}}}
\newcommand{\bKR}{{\mathbf{KR}}}
\newcommand{\iso}{\cong}
\newcommand{\sm}{\wedge}
\newcommand{\tensor}{\otimes}
\newcommand{\xra}{\xrightarrow}
\newcommand{\bs}{\backslash}
\newcommand{\upi}{{\underline \pi}}
\newcommand{\td}[1]{\langle #1\rangle}
\renewcommand{\to}{\longrightarrow}

\numberwithin{equation}{section}
\newtheorem{theorem}[equation]{Theorem}
\newtheorem{lemma}[equation]{Lemma}
\newtheorem{cor}[equation]{Corollary}
\newtheorem{prop}[equation]{Proposition}
 
\theoremstyle{definition}
\newtheorem{defn}[equation]{Definition}
\newtheorem{rk}[equation]{Remark}
\newtheorem{eg}[equation]{Example}
\newtheorem{construction}[equation]{Construction}

\newcommand{\Danger}{%
   \par\penalty-500\begingroup\parindent=0.0pt\clubpenalty=10000%
   \def\par{\endgraf\endgroup}\leavevmode
   \hangindent=2.4em\relax
   \hangafter=-2\relax
   \hbox to 0pt{\hss\relax
      \vbox to 0pt{\vskip-8pt\relax
        \hbox
          {\relax\dbend}
\vss}\kern.70em}}

\begin{document}

\title[Equivariant \texorpdfstring{$K$}{K}-theory as a global localization]
{Equivariant \texorpdfstring{$K$}{K}-theory as a global localization}

\author{Stefan Schwede}

\subjclass[2020]{19L47, 55N91, 55P91, 55Q91}

\begin{abstract}
  We establish a global equivariant refinement of the theorem of Snaith that
  presents the complex $K$-theory spectrum as a localization
  of the suspension spectrum of $\mathbb C P^\infty$.
  We prove that the morphism of ultra-commutative global ring
  spectra from $\Sigma^\infty_+ B_{\gl} U(1)$ 
  that classifies the tautological $U(1)$-representation
  exhibits the global $K$-theory spectrum
  as a specific localization, in both the $\infty$-categories of
  ultra-commutative ring spectra and of commutative global ring spectra.
  The classes to be inverted are certain representation-graded
  $U(n)$-equivariant homotopy classes of $\Sigma^\infty_+ B_{\gl} U(1)$,
  the {\em pre-Bott classes}. 
  We also show that for every compact Lie group $G$, the $G$-equivariant
  $K$-theory spectrum is a telescopic localization of
  $\Sigma^\infty_+ B_G U(1)$ by the pre-Bott classes of all unitary $G$-representations. 
\end{abstract}

\maketitle

\tableofcontents

\section*{Introduction}

A celebrated theorem of Snaith \cite{snaith:algebraic cobordism, snaith:localized stable}
shows that complex $K$-theory can be obtained
from the unreduced suspension spectrum of $\mC P^\infty$ by inverting the Bott class.
A contemporary formulation of this result is that a specific morphism
of $E_\infty$-ring spectra $\Sigma^\infty_+ \mC P^\infty\to K U$
that classifies the tautological complex line bundle over $\mC P^\infty$
is initial, in the higher categorical sense, among morphisms
that take the Bott class in $\pi_2(\Sigma^\infty_+\mC P^\infty)$
to an invertible homotopy class, see \cite[Theorem 6.5.1]{lurie:ellipticII}.
Lurie reformulates this fact
as the statement that complex $K$-theory
classifies orientations of the multiplicative group in higher algebra,
compare \cite[Section 3.1]{lurie:elliptic_survey} and \cite[Corollary 4.3.27]{lurie:ellipticII}.
In this paper we prove a highly structured,
globally-equivariant refinement of Snaith's localization result.
Our protagonist is the morphism of ultra-commutative global ring spectra
\[  \eta\ :\ \Sigma^\infty_+ \bP \ \to\ \bKU  \]
introduced in \cite[Remark 6.4.20]{schwede:global}.
Here $\bP$ is a multiplicative model of the global classifying space of $U(1)$,
made from complex projective spaces, 
and $\Sigma^\infty_+ \bP$ is its unreduced global suspension spectrum.
The target $\bKU$ is Joachim's ultra-commutative ring spectrum model
for global equivariant $K$-theory \cite[Definition 4.3]{joachim:coherences},
see also \cite[Construction 6.4.9]{schwede:global}.

In Construction \ref{con:pre-Bott} we introduce the {\em pre-Bott classes}
$b_m$ in $\pi_{\nu_m}^{U(m)}(\Sigma^\infty_+\bP)$, for $m\geq 1$,
where $\nu_m$ is the tautological $U(m)$-representation on $\mC^m$.
The definition of these classes involves the global structure and power operations.
The morphism $\eta\colon \Sigma^\infty_+ \bP \to\bKU$ takes the pre-Bott class $b_m$
to the $U(m)$-equivariant Bott class of the representation $\nu_m$, hence the name.
In Construction \ref{con:pre-Bott_of_V}
we extend these classes by naturality in group homomorphisms to {\em pre-Bott classes}
$b_{G,V}$ in $\pi_V^G(\Sigma^\infty_+\bP)$, for all unitary $G$-representations $V$.
By Bott periodicity in equivariant $K$-theory,
the morphism $\eta$ in particular inverts all these pre-Bott classes.
We show that the morphism $\eta\colon \Sigma^\infty_+ \bP \to\bKU$
is in fact a localization by the pre-Bott classes in several different senses:
in the $\infty$-categories of ultra-commutative ring spectra
and of commutative global ring spectra;
for every compact Lie group $G$, in the $\infty$-category
 of commutative $G$-ring spectra;
for every compact Lie group $G$, at the level of $G$-equivariant cohomology
theories; and in the `Real-global' context.

We expand on these localization results.
The global formulation starts from the fact that
$\eta\colon \Sigma^\infty_+ \bP\allowbreak\to\bKU$ inverts the pre-Bott classes $b_m$ for $m\geq 1$,
see Theorem \ref{thm:pre-Bott}.
We show in Theorem \ref{thm:main}
that $\eta$ is in fact initial with this property,
even with two different levels of coherence in the multiplication:

\medskip
\noindent
{\bf Theorem A.}
{\em The morphism of ultra-commutative ring spectra $\eta\colon \Sigma^\infty_+ \bP \to\bKU$
is a localization by the pre-Bott classes $b_m$ for $m\geq 1$,
both in the $\infty$-category of ultra-commutative ring spectra,
and in the $\infty$-category of commutative global ring spectra.}
\medskip

In Theorem A, `localization' is to be understood in the coherent and higher categorical
sense, i.e., as an initial example of a morphism with source $\Sigma^\infty_+ \bP$
that sends all $b_m$ to invertible classes,
see Definitions \ref{def:global_loc} and \ref{def:ucom_loc}.
Readers so inclined may interpret Theorem A
as the statement that global $K$-theory classifies
ultra-commutative orientations of the global multiplicative group.
The main ingredient for the proof of Theorem A is a characterization of
the effect of $\eta\colon \Sigma^\infty_+ \bP \to\bKU$ on the geometric fixed point spectra
as a localization, see Theorem \ref{thm:geomfix}.

We emphasize that Theorem A is a fully global result, encompassing all compact Lie groups;
previous work on the subject \cite{snaith:model_equivariant, spitzweck-oestvaer:equivariant}
had restricted attention to {\em abelian} compact Lie groups.
Because $\bP$ is a global classifying space of $U(1)$,
the spectrum $\Sigma^\infty_+\bP$ is {\em left} induced from the global family
of abelian compact Lie groups; 
in contrast, $\bKU$ is {\em right} induced from the same global family.
Being left and right induced from some proper subclass of compact Lie groups
are antithetical properties; so it might come as a surprise that inverting
the pre-Bott classes mediates the gap between these.

In a forthcoming paper, we shall use the localization property of
the morphism $\eta\colon \Sigma^\infty_+ \bP \to\bKU$
to promote the $k$-th power endomorphism \eqref{eq:define_power_P} of $\bP$
to the {\em stable global Adams operation},
a morphism of ultra-commutative ring spectra
\[ \Psi^k \ : \ \bKU \ \to \ \bKU[t_k^{-1}]\]
which realizes the $k$-th Adams operation in equivariant $K$-theory.
Here
\[ t_k\ =\ \tr_{\{1\}}^{C_k}(1)\ \in \  \pi_0^{C_k}(\bKU) \]
is the transfer of the multiplicative unit to the cyclic group of order $k$,
and $\bKU[t_k^{-1}]$ denotes the ultra-commutative localization of $\bKU$ by $t_k$.
\medskip

In view of Theorem A, one might wonder about localizations that only invert some
of the pre-Bott classes.
The pre-Bott class $b_m$ is a factor of a restriction of $b_{m+1}$,
see Theorem \ref{thm:fix of pre-Bott} (i).
So inverting $b_{m+1}$ also inverts all the previous pre-Bott classes $b_1,\dots,b_m$.
We show in Theorem \ref{thm:all_different} that inverting $b_m$
in the $\infty$-category of ultra-commutative ring spectra
does {\em not} make $b_{m+1}$ invertible; our witness is the group $S U(m+1)$.
Hence the localization in the $\infty$-category of commutative global ring spectra
does not make $b_{m+1}$ invertible, either.
So the localizations by the various pre-Bott classes are all distinct,
and one needs to invert all $b_m$ to obtain global $K$-theory
from $\Sigma^\infty_+\bP$ on all compact Lie groups.

On the family of {\em abelian} compact Lie groups, 
inverting only the first pre-Bott class $b_1$ already turns 
$\Sigma^\infty_+\bP$ into global $K$-theory. However, inverting only $b_1$
leads to two different answers, depending on the level of coherence in the
commutativity, i.e., global-commutative versus ultra-commutative:
\begin{itemize}
\item 
As we show in Theorem \ref{thm:glcom b_1 loc}, the localization
of $\Sigma^\infty_+\bP$ by $b_1$
in the $\infty$-category of commutative global ring spectra
is left induced from the global family
$\Ab$ of abelian compact Lie groups, and $\Ab$-globally equivalent to $\bKU$.
In other words: the global localization by $b_1$ is equivalent to
the result of first restricting $\bKU$ to the global family $\Ab$, and then left inducing
to full global equivariance.
The ultimate reason why inverting only $b_1$ already yields $\bKU$ on abelian compact Lie groups
is that it inverts the pre-Bott classes of all 1-dimensional representations,
and of all sums of such.
For abelian compact Lie groups, this accounts for all unitary representations,
so on this global family inverting $b_1$ makes all other pre-Bott classes invertible as well.
\item
As we show in Theorem \ref{thm:ucom b_1 loc}, the localization
of $\Sigma^\infty_+\bP$ by $b_1$
in the $\infty$-category of ultra-commutative ring spectra
is left induced from the larger global family $\Ab^\circ$ of compact Lie groups
whose identity component is abelian, and $\Ab^\circ$-globally equivalent to $\bKU$.
In other words: the ultra-commutative localization by $b_1$ is equivalent to
the result of first restricting $\bKU$ to the global family $\Ab^\circ$, and then left inducing
to full global equivariance.
\end{itemize}
The global family $\Ab^\circ$ is substantially larger than $\Ab$;
for example, it also contains all finite groups.\medskip

In addition to our global results, we also prove localization properties
for the morphism of genuine $G$-ring spectra underlying $\eta$, for every compact Lie group $G$.
We write $\bP_G=P(\Uc_G^\mC)$ for the $G$-space of complex lines in a complete
complex $G$-universe $\Uc_G^\mC$,
which is also the underlying $G$-space of the global space $\bP$, and a classifying space
for $G$-equivariant complex line bundles.
When restricting to a fixed compact Lie group,
we can describe $G$-equivariant $K$-theory as an algebraic
$RO(G)$-graded localization of $G$-equivariant $\Sigma^\infty_+\bP_G$-cohomology
by inverting the pre-Bott classes of all unitary $G$-representations.
The key ingredient for this is Theorem \ref{thm:telescopic localization},
identifying $\bKU_G$ as a telescopic localization by the set of these pre-Bott classes.
The following result is a combination of Theorem \ref{thm:main-G-structured}
and Theorem \ref{thm:main-G-RO-version}.

\medskip
\noindent
{\bf Theorem B.}
{\em
    Let $G$ be a compact Lie group.
  \begin{enumerate}[\em (i)]
\item  The morphism of commutative $G$-ring spectra
  $\eta_G\colon\allowbreak \Sigma^\infty_+ \bP_G\to \bKU_G$
    is a localization by the pre-Bott classes $b_{G,V}$
    for all unitary $G$-representations $V$,
    in the $\infty$-category of commutative $G$-ring spectra.
  \item For every $\Sigma^\infty_+ \bP_G$-module $M$ in $\Sc p_G^\sm$,
    the morphism
    \[ \pi_\star^G(\eta_G\sm_{\Sigma^\infty_+\bP_G} M)\ : \  \pi_\star^G(M)  \ \to \ \pi_\star^G(\bKU_G\sm_{\Sigma^\infty_+\bP_G} M) \]
    exhibits the target as a localization,
    in $RO(G)$-graded  $\pi_\star^G(\Sigma^\infty_+ \bP_G)$-modules,
    of the source by the set of pre-Bott classes of all unitary $G$-representations.
   \item For every finite based $G$-CW-complex $A$, the morphism 
    \[  \eta^\star_G(A)\ : \  (\Sigma^\infty_+ \bP_G)^\star(A)  \ \to \ \bKU_G^\star(A) \]
    exhibits the target as a localization,
    in $RO(G)$-graded  $(\Sigma^\infty_+ \bP_G)^\star$-modules,
    of the source by the set of pre-Bott classes of all unitary $G$-representations.
  \item The morphism of graded-commutative $RO(G)$-graded rings
    \[  \pi_\star^G(\eta_G)\ : \  \pi_\star^G(\Sigma^\infty_+ \bP_G) \ \to \ \pi_\star^G(\bKU_G) \]
    exhibits the target as an $RO(G)$-graded localization of the source by the set
    of pre-Bott classes of all unitary $G$-representations.
  \end{enumerate}
  }
\medskip

When the group $G$ is finite, then in Theorem B one can obtain $\bKU_G$ by telescopically
inverting a single pre-Bott class; any unitary representation that contains all irreducibles
does the job, for example the complex regular representation.

Given Theorem B, one may wonder if $RO(G)$-graded localization is really necessary,
or whether it is also possible to obtain $\bKU_G$ from $\Sigma^\infty_+ \bP_G$
by telescopic localization by some integer-graded $G$-equivariant homotopy class.
The paper \cite{spitzweck-oestvaer:equivariant} claims this for
abelian compact Lie groups, by inverting a single class in the integer-graded
homotopy group $\pi_2^G(\Sigma^\infty_+\bP_G)$.
However, we show in Corollary \ref{cor:no_Z_localization} that for $G\ne\{1\}$,
one {\em cannot} obtain $\bKU_G$ from $\Sigma^\infty_+ \bP_G$ by inverting any
set of integer-graded classes.
In particular, the main result of \cite{spitzweck-oestvaer:equivariant} is incorrect;
we refer to Remark \ref{rk:spitz-oest} for more details.\medskip

The global $K$-theory spectrum underlies a `Real-global' refinement $\bKR$,
which also knows about conjugate-linear phenomena, and represents
Real-equivariant $K$-theory for compact Lie groups augmented to
the Galois group of $\mC$ over $\mR$.
In Section \ref{sec:Real} we extend our localization results to this Real-global context.
This might sound like another vast generalization, but the main work
is already done in proving Theorem A, and only one additional step is needed.
In Construction \ref{con:Real pre-Bott} we introduce
the {\em Real-global pre-Bott class} $\tilde b_1$ in the group
$\pi^{\tilde T}_{\nu_1}(\Sigma^\infty_+\bP)$, where $\tilde T=T\rtimes C$
is the extended circle group, the semidirect product of $C=\Gal(\mC/\mR)$ acting on $T=U(1)$
by complex conjugation.
As the name suggests, the Real-global pre-Bott class restricts to the pre-Bott class $b_1$,
and it maps to the Bott class for the tautological Real $\tilde T$-representation,
see Proposition \ref{prop:properties_tilde b}.
As we explain in the proof of Theorem \ref{thm:Realglobal},
the key feature of the refined pre-Bott class $\tilde b_1$ is that inverting it
kills the geometric fixed point spectra of all surjectively augmented Lie groups $G\to C$.
Consequently, as far as localization is concerned, all new `Real' features
are already taken care of by $\tilde b_1$, and the pre-Bott classes $b_m$ for $m\geq 2$
do not have to be modified, or extended, in the `Real' direction.
The final result, to be proved as Theorem \ref{thm:Realglobal},
is the following.

\medskip
\noindent
{\bf Theorem C.}
{\em The morphism  $\eta\colon \Sigma^\infty_+ \bP \to\bKR$
  is a localization in the $\infty$-category of commutative Real-global ring spectra
  by the Real-global pre-Bott class $\tilde b_1$ and the pre-Bott classes $b_m$ for $m\geq 2$.}
\medskip

As I explain in Remark \ref{rk:C_global_UCom}, I am convinced that
$\eta\colon \Sigma^\infty_+ \bP \to\bKR$
is also a localization by the classes $\tilde b_1$ and $\{b_m\mid m\geq 2\}$
in the $\infty$-category of ultra-commutative Real-global ring spectra.
However, adapting the non-Real proof to the Real-global situation requires
some foundational properties of ultra-commutative $C$-global ring spectra
that have not yet been established at the time of this writing.\medskip

This paper includes several appendices.
In Appendix \ref{app:equivariant_spectra} we collect the background material on
the $\infty$-categories of genuine $G$-spectra
that we need in the body of the paper; it does not contain any new mathematics.
In Appendix \ref{app:global}
we review global spectra and global ring spectra from the higher categorical perspective.
This appendix also contains a rectification result, and the bimonadicity
of ultra-commutative ring spectra.
In more detail: we show in Theorem \ref{thm:monoidal_loc}
that for every commutative orthogonal ring spectrum $A$,
the symmetric monoidal 1-category of $A$-modules in orthogonal spectra
presents the symmetric monoidal $\infty$-category of modules over the
commutative global ring spectrum presented by $A$.
And in Theorem \ref{thm:UR_monadic}
we show that the $\infty$-category of ultra-commutative ring spectra
is both monadic and comonadic over the $\infty$-category of
commutative global ring spectra.

Appendix \ref{app:global_classes} develops global stable homotopy theory
relative to a {\em global class}, i.e.,
a non-empty class of compact Lie groups that is closed under isomorphism
and passage to closed subgroups;
the emphasis is on multiplicative and ultra-commutative features.
Global classes are more general than global families
in the sense of \cite[Definition 1.4.1]{schwede:global}, in that they
need not be closed under quotient groups.
Two key results of this appendix are coreflectivity statements for multiplicative structures.
Theorem \ref{thm:multiplicative_class}
includes the statement that for every {\em multiplicative} global class,
the left induced commutative global ring spectra
are coreflective inside all commutative global ring spectra.
And Theorem \ref{thm:wreath_closed} shows, among other things, that
for multiplicative global classes which are additionally closed under wreath products,
the left induced ultra-commutative ring spectra
are coreflective inside all ultra-commutative ring spectra.\medskip

{\bf History and related results.}
The first published version
of Snaith's theorem \cite[II Theorem 9.1.1]{snaith:algebraic cobordism} 
amounts to the equivalence of underlying infinite loop spaces
obtained from the equivalence $(\Sigma^\infty_+ \mC P^\infty)[b^{-1}]\sim K U$
of spectra. In \cite[Theorem 2.12]{snaith:localized stable},
Snaith states the result as a natural isomorphism
of cohomology theories, with a different proof.
A translation of Snaith's work into contemporary
language as a localization of $E_\infty$-ring spectra
can be found in \cite[Theorem 6.5.1]{lurie:ellipticII};
Lurie gives an independent proof from the perspective of 
orientations of the multiplicative group in higher algebra.
Gepner--Snaith \cite{gepner-snaith} and
Spitzweck--{\O}stv{\ae}r \cite{spitzweck-oestvaer:motivic} independently proved
analogues of Snaith's theorem in the context of motivic homotopy theory.

There are also two previous papers
\cite{snaith:model_equivariant, spitzweck-oestvaer:equivariant} devoted to describing
$G$-equivariant $K$-theory, for abelian compact Lie groups $G$, as the result of
`inverting the Bott class on $\Sigma^\infty_+ \bP_G$'.
As we already mentioned above, the main result of \cite{spitzweck-oestvaer:equivariant}
is incorrect, as our Corollary \ref{cor:no_Z_localization} shows;
we explain in Remark \ref{rk:spitz-oest} below where the argument
of \cite{spitzweck-oestvaer:equivariant} breaks.
In \cite[Theorem 3.1]{snaith:model_equivariant}, Snaith establishes an equivariant version of
his non-equivariant theorem for {\em abelian} compact Lie groups.
The result is different from our Theorem B, however,
in that Snaith implicitly works with naive-equivariant (as opposed to genuine-equivariant)
$G$-spectra. Indeed, the definition  \cite[(2.6)]{snaith:model_equivariant}
of the cohomology theory $h_G(A)$ for $G$-spaces $A$ is by taking equivariant maps
into $\bP_G$ while simultaneously stabilizing by spheres with trivial $G$-action
and multiplication by the `naive' equivariant Bott class, 
i.e., the non-equivariant Bott class inflated to $G$-spectra.
If we write $\{-,-\}_G^{\text{naive}}$ for the morphism group in the homotopy category
of $G$-spectra indexed on a trivial universe, then
Snaith's main result \cite[Theorem 3.1]{snaith:model_equivariant} is a natural isomorphism
\[ h_G(A) = \{\Sigma^\infty_+ A,\Sigma^\infty_+ \bP_G\}_G^{\text{naive}}[\beta^{-1}]\ \iso \ K_G(A) \ ,\]
for abelian compact Lie groups $G$, and finite $G$-CW-complexes $A$,
between a naive equivariant cohomology theory
on the one side and a genuine equivariant cohomology theory on the other side.\medskip

{\bf Conventions.}
We state our main results in higher categorical terms in the $\infty$-categories
of commutative or ultra-commutative global or $G$-equivariant ring spectra.
For definiteness, `$\infty$-category' shall in this paper mean `quasi-category',
and the more technical appendices draw heavily from Lurie's writings
\cite{lurie:htt, lurie:higher_algebra}. The exposition and proofs in the body
of the paper are largely agnostic to the particular model for higher categories,
and mostly use naturality and universal property arguments.
To get to $\infty$-categories, we use the 1-categorical models
for unstable and stable global homotopy theory provided in \cite{schwede:global}.
In other words: we model global spaces by orthogonal spaces,
and we model global spectra by orthogonal spectra,
relative to the respective classes of global equivalences.
In fact, I would not know how to construct the morphism
$\eta\colon \Sigma^\infty_+\bP\to\bKU$ with the full coherence that it supports
(i.e., as a morphism of Real-global ultra-commutative ring spectra) in model-independent terms.

{\bf Acknowledgments.}
I would like to thank William Balderrama, Dave Benson, Jack Davies,
Markus Hausmann, Branko Juran, Dominik Kirstein, Tobias Lenz and Sil Linskens
for several enlightening discussions about the contents of this paper,
and for providing help with and references for the higher categorical foundations
and manipulations needed in this paper.
The author is a member of the Hausdorff Center for Mathematics
at the University of Bonn (DFG GZ 2047/1, project ID 390685813).

{\bf AI disclosure.}
I obtained the main ideas and results of this paper without consulting AI.
If evidence for this claim is desired, I offer the 
recorded talk that I gave at the Isaac Newton Institute on February 25, 2025,
where I presented most of the results in this paper for the first time outside of Bonn.
AI became very helpful during the final stage of proof-reading, polishing, and optimizing
the written paper in the summer and fall of 2026.
I have used ChatGPT 5.5 and 5.6 (OpenAI) and Claude Opus 5.5 (Anthropic)
to search for literature, to locate precise references,
to discuss various pieces of mathematics related to this paper,
and for proof-reading several previous versions of this document.
They found embarrassingly many typos and inaccuracies,
and several smaller gaps or oversights in mathematical arguments.
I believe that I would have caught many of these issues myself, but with much more
time and effort; and some of them would probably have slipped past me.
I also made Claude scrutinize existing arguments for simplifications;
it suggested simplifications
of my original proofs of Proposition \ref{prop:nearcyclic},
of Remark \ref{rk:spitz-oest}, of Example \ref{eg:Sigma_3_witnesses},
of Theorem \ref{thm:monoidal_loc},
and of parts (iii) and (iv) of Theorem \ref{thm:wreath_closed},
and a reorganization that resulted in the present formulation of
the localization recognition in Theorem \ref{thm:loc_reg}.
The final text was written, reviewed and approved by me.

\section{Localizations of commutative global ring spectra}

In this section we introduce localizations
of commutative and ultra-commutative global ring spectra,
see Definitions \ref{def:global_loc} and \ref{def:ucom_loc}, respectively.
In each case, a localization is an initial example of a morphism that inverts
the given set of representation-graded equivariant homotopy classes.
The {\em geometric fixed point criterion}, established in Theorem \ref{thm:fix_criterion},
is our main tool to recognize global localizations; as the name suggests,
it provides a sufficient criterion
at the level of geometric fixed point rings.\medskip

The pre-Bott classes do not live in integer gradings, but rather they
are {\em representation-graded} equivariant homotopy classes.
We briefly recall what we mean by this concept.

\begin{defn}[Representation-graded homotopy groups]
  We let $G$ be a compact Lie group, $E$ an orthogonal $G$-spectrum, 
  and $V$ an orthogonal $G$-representation.
  The $V$-graded $G$-equivariant homotopy group of $E$ is
  \[ \pi_V^G(E)\ = \ \colim_{U\in s(\Uc_G)} [S^{U\oplus V},E(U)]^G_*\ .  \]
  The colimit is taken over the poset $s(\Uc_G)$, under inclusion,
  of finite-dimensional $G$-subrepresentations of a complete $G$-universe $\Uc_G$,
  along stabilization maps much like in the integer-graded case.
\end{defn}

We emphasize that we grade by actual representations,
as opposed to isomorphism classes of such, or elements in the representation ring.
Invertibility in the representation-graded sense is nevertheless
closely related to invertibility in the $RO(G)$-graded homotopy ring,
see Remark \ref{rk:2invertibles} for details.
One can imagine grading more generally by pairs of representations
that represent formal differences `$V-W$'.
Since the pre-Bott classes live in `effective' representation degrees,
we will not need the additional generality in the present paper,
and we stick to the simpler situation.

The group $\pi_V^G(E)$ is naturally isomorphic to $\pi_0^G(\Omega^V E)$,
so the functor $\pi_V^G$ takes $\upi_*$-isomorphisms of orthogonal $G$-spectra
to isomorphisms, see \cite[Proposition 3.1.40]{schwede:global}.
So whenever convenient we can---and will---consider $\pi_V^G$ as a functor on
the $\infty$-category $\Sc p_G$ of genuine $G$-spectra,
or on the homotopy category of $G$-spectra $h(\Sc p_G)$.
On the homotopy category, the functor $\pi_V^G$ is represented by the
suspension spectrum of $S^V$: the identity of $S^V=(\Sigma^\infty S^V)(0)$
represents a class
\[ e_{G,V}\ \in \ \pi_V^G(\Sigma^\infty S^V) \ ,\]
and the pair $(\Sigma^\infty S^V\!, e_{G,V})$ represents the functor
$\pi_V^G\colon h(\Sc p_G)\to\rAb$.

If $H$ is a closed subgroup of $G$, $E$ an orthogonal $G$-spectrum,
and $V$ a $G$-representation, then the restriction and transfer homomorphisms
easily generalize from integer-graded
to representation-graded homotopy groups to homomorphisms
\[ \res^G_H \ : \ \pi_V^G(E)\ \to \ \pi_V^H(E) \text{\qquad and\qquad}
 \tr^G_H \ : \ \pi_V^H(E)\ \to \ \pi_V^G(E) \ ;\]
here $\pi^H_V$ refers to the homotopy group graded by the underlying $H$-representation of $V$.
Indeed, we can simply define these representation-graded restriction and transfer operations
as the degree 0 restrictions and transfers for the orthogonal $G$-spectrum $\Omega^V E$.\medskip

Now we let $R$ be a {\em homotopy $G$-ring spectrum}, i.e., 
an algebra in the symmetric monoidal stable $G$-homotopy category $h(\Sc p_G^\sm)$.
Since the suspension $G$-spectrum $\Sigma^\infty S^V$
represents the functor $\pi_V^G$,
a representation-graded homotopy class $v\in \pi_V^G(R)$
yields a unique morphism $v^\circ\colon\Sigma^\infty S^V\to R$ in $h(\Sc p_G)$
such that $v^\circ_*(e_{G,V})=v$.
We denote by $v^\flat\colon R\sm S^V\to R$ the composite
\begin{equation}\label{eq:define_v_flat}
  R\sm S^V\iso R\sm (\Sigma^\infty S^V)\ \xra{R\sm v^\circ}\ R\sm R \ \xra{\ \mu\ }\ R
\end{equation}
in $h(\Sc p_G)$, where $\mu\colon R\sm R\to R$ is the multiplication morphism of $R$.

\begin{defn}[Invertibility]\label{def:RO-invertible}
  Let $G$ be a compact Lie group, let $R$ be a homotopy $G$-ring spectrum,
  and let $V$ be a $G$-representation.
  A representation-graded homotopy class $v\in \pi_V^G(R)$
  is {\em invertible} if the morphism 
  $v^\flat \colon R\sm S^V\to R$  is an equivalence of $G$-spectra.
\end{defn}

\begin{rk}\label{rk:2invertibles}
In Construction \ref{con:pi_star^G} we recall the $RO(G)$-graded
equivariant homotopy groups of a $G$-spectrum.
Invertibility in the sense of Definition \ref{def:RO-invertible}
is then essentially the same as invertibility in the $RO(G)$-graded homotopy ring $\pi_\star^G(R)$.
More precisely, a class $v\in \pi_V^G(R)$ is invertible in the representation-graded sense
just defined if and only if for some (hence any)
equivalence of $G$-spectra $\kappa\colon \mS^{[V]}_G\sim \Sigma^\infty S^V$,
the class $\kappa^*(v)\in\pi_{[V]}^G(R)$ is a unit in the 
$RO(G)$-graded homotopy ring $\pi_\star^G(R)$.
The caveat is that while
the representation-graded homotopy group $\pi_V^G(R)$
is isomorphic to the $RO(G)$-graded homotopy group $\pi_{[V]}^G(R)$,
there is no preferred or `canonical' way to identify them.
\end{rk}

The next proposition characterizes invertibility of representation-graded homotopy classes
in terms of geometric fixed points.
We recall the geometric fixed point group $\Phi^G_k(X)$
of an orthogonal $G$-spectrum $X$ in \eqref{eq:define_geom_fix}.
A biadditive pairing
\[  \times\ : \ \Phi_k^G(X)\times \Phi_l^G(Y)\ \to \ \Phi_{k+l}^G(X\sm Y) \]
is defined in \eqref{eq:cdot_Phi^G}, providing a lax symmetric monoidal structure
on the $\mZ$-graded geometric fixed point groups $\Phi_*^G(X)$.
Since the functor $\Phi^G_*$ inverts $\upi_*$-isomorphisms of orthogonal
$G$-spectra, it descends to a lax symmetric monoidal functor
$\bar\Phi^G_*\colon\Sc p_G^\sm\to \GrAb^\tensor$
on the $\infty$-category of $G$-spectra;
see Construction \ref{con:descend_pi_and_Phi} for more details.
For homotopy $G$-ring spectra, the geometric fixed point groups inherit a graded multiplication,
compare \eqref{eq:concrete_product}.

Now we let $V$ be a $G$-representation.
We write $k=\dim(V^G)$ for the dimension of the $G$-fixed subspace of $V$.
A choice of orientation of $V^G$ determines a homotopy class of
linear isometry between $\mR^k$ and $V^G$, and thus a
{\em geometric fixed point homomorphism}:
\begin{align}\label{eq:geom_fix_map}
  \Phi^G\ : \ \pi_V^G (X) \quad &\to \qquad \Phi_k^G (X) \\
  [f\colon S^{U\oplus V}\to X(U)]\ &\longmapsto \ [f^G\colon S^{U^G\oplus\mR^k}\iso S^{(U\oplus V)^G}\to X(U)^G]  \nonumber
\end{align}
For $V=\mR^k$ with trivial $G$-action this fixed point homomorphism
specializes to the one defined in \eqref{eq:define_Phi^G_map}.
The geometric fixed point homomorphism \eqref{eq:geom_fix_map} is additive and multiplicative,
provided we orient $(V\oplus W)^G$ by concatenating the
chosen orientations of $V^G$ and $W^G$.
Choosing the opposite orientation changes this homomorphism
into its negative, which does not influence whether or not a class is invertible. 

From now on, we shall employ a standard abuse of notation.
When $H$ is a closed subgroup of a compact Lie group $G$,
and $X$ is an orthogonal $G$-spectrum, 
we shall write $\pi_V^H(X)$ and $\Phi^H_k(X)$ for the representation-graded
homotopy groups and the geometric fixed point homotopy groups, respectively,
of the underlying orthogonal $H$-spectrum.
In other words, we leave the restriction functor $\Sp^O_G\to\Sp^O_H$
implicit. The same goes for geometric fixed point homomorphisms:
whenever confusion is unlikely,
we shall write $\Phi^H\colon \pi_V^G(X)\to \Phi_l^H(X)$
for what rigorously ought to be $\Phi^H\circ \res^G_H$, for $l=\dim(V^H)$.

\begin{prop}\label{prop:invert_criterion}
  Let $V$ be a representation of a compact Lie group $G$,
  and let $R$ be a commutative homotopy $G$-ring spectrum.
  Then for a representation-graded homotopy class $v\in\pi_V^G(R)$,
  the following conditions are equivalent.
  \begin{enumerate}[\em (a)]
  \item The class $v$ is invertible.
  \item There exists an element $u\in\pi_0^G(R\sm S^V)$
    such that $v^\flat_*(u)=1$ in $\pi_0^G(R)$.
    \item For every closed subgroup $H$ of $G$, the class $\Phi^H(v)$
      is a homogeneous graded unit in the geometric fixed point ring $\Phi_*^H(R)$.
    \end{enumerate}
\end{prop}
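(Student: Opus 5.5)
We will prove the cycle (a)$\Rightarrow$(b)$\Rightarrow$(c)$\Rightarrow$(a). The implication (a)$\Rightarrow$(b) is immediate. If $v^\flat\colon R\sm S^V\to R$ is an equivalence, then $v^\flat_*\colon\pi_0^G(R\sm S^V)\to\pi_0^G(R)$ is bijective. So $1$ has a preimage $u$.

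For (b)$\Rightarrow$(c), fix a closed subgroup $H$. Restrict everything to $H$; restriction is compatible with $v^\flat$ and with the unit, so $\res^G_H(u)$ witnesses (b) for the $H$-ring spectrum $R$. Now apply geometric fixed points $\Phi^H$. Set $l=\dim(V^H)$. Since $\bar\Phi^H_*$ is lax symmetric monoidal and $\Phi^H_*(\Sigma^\infty S^V)\iso\Phi^H_*(\Sigma^\infty S^{V^H})$ is free of rank one in degree $l$ on the class $\Phi^H(e_{H,V})$, we obtain an isomorphism $\Phi^H_0(R\sm S^V)\iso\Phi^H_{-l}(R)$. Strictly speaking, geometric fixed points are strong monoidal on suspension spectra, so the smash with $S^V$ just shifts by $l$. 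Under this isomorphism, the map $\Phi^H_*(v^\flat)$ becomes left multiplication by $\Phi^H(v)\in\Phi^H_l(R)$; this follows from the definition \eqref{eq:define_v_flat} and the multiplicativity of the pairing \eqref{eq:cdot_Phi^G}. Hence $\Phi^H(v)\cdot w=1$ for $w\in\Phi^H_{-l}(R)$ the image of $u$. Graded commutativity of $\Phi^H_*(R)$, which holds because $R$ is commutative, then makes $\Phi^H(v)$ a two-sided homogeneous unit.

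For (c)$\Rightarrow$(a), we use that a morphism of $G$-spectra is an equivalence if and only if it induces isomorphisms on $\Phi^H_*$ for all closed subgroups $H$. This is the standard geometric fixed point detection of equivalences, which I assume is recorded in the appendix on equivariant spectra. By the computation just made, $\Phi^H_*(v^\flat)$ is identified, up to the shift isomorphism $\Phi^H_*(R\sm S^V)\iso\Phi^H_{*-l}(R)$, with multiplication by $\Phi^H(v)$ on $\Phi^H_*(R)$. Multiplication by a homogeneous unit is bijective, so $v^\flat$ is an equivalence.

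The main technical point is the identification of $\Phi^H_*(v^\flat)$ with multiplication by $\Phi^H(v)$. We must check three things. First, the shift isomorphism for $-\sm S^V$ on geometric fixed points is given by the external product with $\Phi^H(e_{H,V})$ and is an isomorphism, not merely a lax structure map. Second, the choice of orientation of $V^H$ only introduces a sign. Third, the product in $\Phi^H_*(R)$ agrees with $\Phi^H_*(\mu)$ applied to the external pairing. I would verify the isomorphism claim at the pointset level: for $X\sm S^V$, the fixed points $(X(U)\sm S^V)^H=X(U)^H\sm S^{V^H}$ give a suspension isomorphism in the colimit defining $\Phi^H_*$.
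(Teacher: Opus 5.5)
Your proposal is correct and follows the paper's proof essentially step for step: the same cycle (a)$\Rightarrow$(b)$\Rightarrow$(c)$\Rightarrow$(a). In (b)$\Rightarrow$(c) you apply $\Phi^H$ to $v^\flat_*(u)=1$, identify $\Phi^H_*(v^\flat)$ with multiplication by $\Phi^H(v)$ through the orientation-induced shift isomorphism $\Phi^H_{m-l}(R)\iso\Phi^H_m(R\sm S^V)$, and conclude by graded commutativity; (c)$\Rightarrow$(a) is joint conservativity of the geometric fixed point functors.

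Two harmless nitpicks. First, the shift isomorphism comes from strong monoidality of $\Phi^H$, or from your point-set suspension argument, not from the lax structure map, since $\Phi^H_*(\Sigma^\infty S^V)\iso\pi_{*-l}(\mS)$ is free of rank one only as a $\pi_*(\mS)$-module. Second, because $v^\flat=\mu\circ(R\sm v^\circ)$, it induces right rather than left multiplication by $\Phi^H(v)$, so you actually obtain a left inverse as in the paper, which graded commutativity upgrades to a unit anyway.
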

\begin{proof}
  (a)$\Longrightarrow$(b)
  Since the morphism $v^\flat\colon R\sm S^V\to R$ is an equivalence of $G$-spectra,
  the induced map of homotopy groups
  $v^\flat_*\colon\pi_0^G(R\sm S^V)\to\pi_0^G(R)$
  is bijective. In particular, the class $1\in \pi_0^G(R)$ has a preimage.

  (b)$\Longrightarrow$(c)
  We let $H$ be a closed subgroup of $G$.
  Applying the geometric fixed point homomorphism $\Phi^H\colon \pi_0^G(R)\to\Phi^H_0(R)$
  to the relation $v^\flat_*(u)=1$ yields
  \begin{equation}\label{eq:Phi^H_of_rel}
      \Phi^H_0(v^\flat)(\Phi^H(u))\ =\ \Phi^H(v^\flat_*(u))\ =\  \Phi^H(1)\ = \ 1   
  \end{equation}
   in $\Phi_0^H(R)$.
  A choice of orientation of $V^H$ induces a well-defined homotopy
  class of equivalence $S^l\sim S^{V^H}$, for $l=\dim(V^H)$. This, in turn, induces 
 isomorphisms
 \begin{equation}\label{eq:induced_fix_iso}
   \Phi^H_{m-l}(R) \
   \iso \ \Phi^H_m(R\sm S^l) \
   \iso \ \Phi^H_m(R\sm S^{V^H})\ \iso \ \Phi^H_m(R\sm S^V)\ . 
 \end{equation}
 Moreover, composition of this isomorphism with the map
 $\Phi^H_m(v^\flat)\colon\Phi^H_m(R\sm S^V)\to \Phi^H_m(R)$ is multiplication
 by the element $\Phi^H(v)$.

 For $m=0$, we use this isomorphism to turn the class $\Phi^H(u)\in\Phi^H_0(R\sm S^V)$
 into a class $\bar u\in\Phi_{-l}^H(R)$. The relation \eqref{eq:Phi^H_of_rel}
 then becomes the relation $\bar u\cdot\Phi^H(v)=1$.
 So the class $\Phi^H(v)$ has a left inverse.
 Since the homotopy $G$-ring spectrum $R$ is commutative,
 the graded ring $\Phi^H_*(R)$ is graded-commutative, so $\Phi^H(v)$ also
 has a right inverse, and is thus a graded unit.

  (c)$\Longrightarrow$(a) 
  For every closed subgroup $H$ of $G$ and choice of orientation of $V^H$,
  the composite of the isomorphism \eqref{eq:induced_fix_iso}
  with $\Phi^H_m(v^\flat)\colon \Phi^H_m(R\sm S^V) \to\Phi^H_m(R)$
  is multiplication by the class $\Phi^H(v)$.
  Because $\Phi^H(v)$ is a graded unit, multiplication by $\Phi^H(v)$ is bijective,
  hence so is the map $\Phi^H_m(v^\flat)$.
  So the morphism $v^\flat$ induces isomorphisms of geometric fixed point groups
  for all closed subgroups of $G$. Hence $v^\flat$ is an equivalence of $G$-spectra,
  which means that $v$ is invertible.
\end{proof}

The next proposition establishes some expected properties of invertible classes.
For an orthogonal $G$-ring spectrum $R$, we can construct
multiplication maps on representation-graded homotopy groups
\begin{equation}\label{eq:ROpairing}
    \cdot \ : \ \pi_V^G(R)\times \pi_W^G(R)\ \to \ \pi_{V\oplus W}^G(R) 
\end{equation}
by multiplying representatives. More precisely,
if $f\colon S^{U\oplus V}\to R(U)$ and  $g\colon S^{U'\oplus W}\to R(U')$
are $G$-maps that represent classes $x=[f]\in \pi_V^G(R)$ and $y=[g]\in\pi_W^G(R)$,
respectively, then $x\cdot y$ is the class represented by
the following composite:
\[  S^{U\oplus U'\oplus V\oplus W}\ \xra{(U\oplus\tau_{U',V}\oplus W)_*}\
  S^{U\oplus V\oplus U'\oplus W}\
  \xra{f\sm g}\ R(U)\sm R(U')\ \xra{\mu_{U,U'}} \ R(U\oplus U')   \]
Here $\mu_{U,U'}$ is a component of the multiplication morphism,
in external form.

\begin{prop}\label{prop:invert_inherit}
  Let $V$ be a representation of a compact Lie group $G$,
  let $R$ be a commutative homotopy $G$-ring spectrum,
  and let $v\in\pi_V^G(R)$ be a representation-graded homotopy class.
  \begin{enumerate}[\em (i)]
  \item If the class $v$ is invertible, then for every
    morphism $i\colon R\to S$ of commutative homotopy $G$-ring spectra,
    the class $i_*(v)\in\pi_V^G(S)$ is invertible.
  \item If the class $v$ is invertible, and $H$ is a closed subgroup of $G$,
    then the class $\res^G_H(v)\in \pi^H_V(R)$ is invertible.
  \item Let $w\in\pi_W^G(R)$ be another representation-graded homotopy class.
    Then the product $v\cdot w$ is invertible if and only if both $v$ and $w$ are invertible.
  \end{enumerate}
\end{prop}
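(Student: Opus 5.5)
The plan is to reduce all three parts to the geometric fixed point criterion, Proposition \ref{prop:invert_criterion}, using the equivalence (a)$\Leftrightarrow$(c) there. That turns invertibility into a statement about homogeneous units in graded-commutative rings.

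For (i), fix a closed subgroup $H$ of $G$. The morphism $i$ induces a ring homomorphism $\Phi^H_*(i)\colon\Phi^H_*(R)\to\Phi^H_*(S)$ of graded geometric fixed point rings. Multiplicativity follows from lax monoidality of $\bar\Phi^H_*$ and the fact that $i$ is a morphism of homotopy ring spectra. The geometric fixed point homomorphism \eqref{eq:geom_fix_map} is natural in the spectrum, so $\Phi^H(i_*(v))=\Phi^H_*(i)(\Phi^H(v))$. Ring homomorphisms preserve units and the unit $1$, so the image of a homogeneous unit is again a homogeneous unit. Criterion (c) then gives invertibility of $i_*(v)$.

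For (ii), every closed subgroup $K$ of $H$ is a closed subgroup of $G$. By the paper's convention, $\Phi^K(\res^G_H(v))$ equals $\Phi^K(v)$, computed on underlying $K$-spectra. This is a unit by (c) applied to $v$ over $G$, and (c) over $H$ finishes the argument. Alternatively one could note that restriction preserves smash products and equivalences, so $(\res^G_H v)^\flat=\res^G_H(v^\flat)$ is an equivalence; the fixed point route is more uniform with (i) and (iii).

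For (iii), I first check that $\Phi^H(v\cdot w)=\pm\,\Phi^H(v)\cdot\Phi^H(w)$ in $\Phi^H_*(R)$. This is the multiplicativity of \eqref{eq:geom_fix_map} stated in the text, with the orientation of $(V\oplus W)^H$ obtained by concatenation; any other orientation only introduces a sign. Matching the product \eqref{eq:ROpairing} on representatives with the product \eqref{eq:concrete_product} on geometric fixed points is the one spot needing care. It requires comparing the shuffle $\tau_{U',V}$ with its $H$-fixed restriction, but this only affects signs, which are irrelevant for units. The claim then reduces to a ring-theoretic fact in the graded-commutative ring $\Phi^H_*(R)$: a product $ab$ of homogeneous elements is a unit if and only if $a$ and $b$ are. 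If $ab$ has inverse $c$, then $a(bc)=1$, and graded commutativity gives $(bc)a=\pm1$, so $a$ is a unit; by symmetry $b$ is a unit too. The converse is clear. Applying this for every closed subgroup $H$ and invoking (c)$\Leftrightarrow$(a) twice completes the proof. The main obstacle, mild as it is, is this bookkeeping of orientation signs in the multiplicativity of $\Phi^H$.
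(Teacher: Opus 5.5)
Your proposal is correct and matches the paper's proof: parts (i) and (iii) go through the geometric fixed point criterion of Proposition~\ref{prop:invert_criterion} exactly as you describe, and with concatenated orientations the paper gets $\Phi^H(v\cdot w)=\Phi^H(v)\cdot\Phi^H(w)$ on the nose, so no signs arise. For (ii) the paper uses the direct argument you list as your alternative, namely that $(\res^G_H v)^\flat$ is obtained from $v^\flat$ by applying the restriction functor; your fixed-point version is equally valid.
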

\begin{proof}
  (i) For every closed subgroup $H$ of $G$, the morphism $i$ induces
  a homomorphism of graded rings $\Phi^H_*(i)\colon \Phi^H_*(R)\to\Phi^H_*(S)$.
  Since $\Phi^H(v)$ is a graded unit by Proposition \ref{prop:invert_criterion},
  the class 
  \[ \Phi^H_*(i)(\Phi^H(v)) \ = \ \Phi^H(i_*(v)) \]
  is a graded unit in the graded ring $\Phi^H_*(S)$.
  Since this holds for all closed subgroups $H$ of $G$,
  Proposition \ref{prop:invert_criterion}
  shows that the class $i_*(v)$ is invertible. 

  (ii) The morphism $\res^G_H(v)^\flat\colon R\sm S^V\to R$
  is obtained from $v^\flat\colon R\sm S^V\to R$ by applying
  the restriction functor $\res^G_H\colon \Sc p_G\to\Sc p_H$.
  So if $v^\flat$ is an equivalence of $G$-spectra, then
  $\res^G_H(v)^\flat$ is an equivalence of $H$-spectra.

  (iii) We let $H$ be a closed subgroup of $G$.
  We choose orientations of $V^H$ and $W^H$, and then orient $(V\oplus W)^H$ by concatenating the
  chosen orientations.
  Then 
  \[ \Phi^H(v\cdot w)\ = \ \Phi^H(v)\cdot \Phi^H(w)  \]
  in $\Phi^H_{k+l}(R)$, where $k=\dim(V^H)$ and $l=\dim(W^H)$.
  So $\Phi^H(v\cdot w)$ is invertible in the graded ring $\Phi^H_*(R)$
  if and only if $\Phi^H(v)$ and $\Phi^H(w)$ are invertible.
  Since this holds for all closed subgroups $H$ of $G$,
  Proposition \ref{prop:invert_criterion} shows that
  $v\cdot w$ is invertible if and only if both $v$ and $w$ are invertible.
\end{proof}

\begin{construction}[Restriction homomorphisms]
  We let $X$ be an orthogonal spectrum, and we let $\gamma\colon K\to G$ be a continuous
  homomorphism of compact Lie groups. We define the {\em restriction homomorphism}
  \begin{equation}\label{eq:define_restriction}
    \gamma^*\ \colon\ \pi_V^G(X) \ \to \ \pi^K_{\gamma^*(V)}(X)    
  \end{equation}
  by sending the class of a $G$-map $f\colon S^{U\oplus V}\to X(U)$
  to the class of the $K$-map
  \[ S^{\gamma^*(U)\oplus \gamma^*(V)}\ = \
    \gamma^*(S^{U\oplus V})\ \to \ \gamma^*(X(U)) \ = \ X(\gamma^*(U))\ . \]
  As in the special case $V=0$ in \cite[Construction 3.1.15]{schwede:global},
  we exploit here that elements of $\pi_{\gamma^*(V)}^K(X)$
  can be unambiguously represented by $K$-maps $S^{W\oplus\gamma^*(V)}\to X(W)$
  for any $K$-representation $W$, not necessarily a subrepresentation of the chosen
  complete universe, see \cite[Construction 3.1.13]{schwede:global}.
  The restriction homomorphisms $\gamma^*$ are clearly contravariantly functorial
  for composition of group homomorphisms. When $\gamma$ is the inclusion of
  a subgroup $H\leq G$, then $\gamma^*$ specializes to the restriction homomorphism
  $\res^G_H$ considered above. The restriction homomorphism $\gamma^*$
  is a natural transformation between two functors from orthogonal spectra
  to abelian groups that both invert global equivalences.
  So $\gamma^*$ descends uniquely to a natural transformation
  between the descended functors from the $\infty$-category of global spectra
  to abelian groups; we abuse notation and denote the descended transformation
  by the same symbol.

  Whenever the continuous homomorphism $\gamma\colon K\to G$ is surjective,
  the geometric fixed point homomorphism \eqref{eq:geom_fix_map}
  matches the restriction homomorphism \eqref{eq:define_restriction}
  with the inflation homomorphism \eqref{eq:define_inflation}
  on geometric fixed point homotopy groups, in the sense of the commutative square:
  \begin{equation}\begin{aligned}\label{eq:infl_and_fix-ROG}
    \xymatrix{ \pi_V^G(X)\ar[r]^-{\gamma^*}\ar[d]_{\Phi^G} & \pi_{\gamma^*(V)}^K(X)\ar[d]^{\Phi^K}\\
      \Phi^G_k(X)\ar[r]_-{\gamma^*} &\Phi^K_k(X)    }      
    \end{aligned}
  \end{equation}
  It is implicit that we use the same orientation for $V^G=(\gamma^*(V))^K$.
  The square commutes at the level of representatives by direct inspection.
\end{construction}

By a {\em global homotopy ring spectrum} we mean
an algebra in the symmetric monoidal global stable homotopy category $h(\Sc p_{\gl}^\sm)$.
The strong symmetric monoidal functor
$\res_G^\sm\colon \Sc p_{\gl}^\sm\to \Sc p_G^\sm$
from \eqref{eq:define_res^sm_G}
that extracts the $G$-homotopy type of a global spectrum
induces a strong symmetric monoidal functor of homotopy categories
$h(\res_G^\sm)\colon h(\Sc p_{\gl}^\sm)\to h(\Sc p_G^\sm)$.
The functor $\Alg(h(\res^\sm_G))$ thus takes 
global homotopy ring spectra to homotopy $G$-ring spectra.
We define the representation-graded homotopy groups and the notion
of `invertibility' for  global homotopy ring spectra on the
underlying homotopy $G$-ring spectra.
We shall now show that invertible classes are then stable under restriction
along continuous homomorphisms of compact Lie groups.

\begin{prop}\label{prop:restriction_and_invert}
  Let $\gamma\colon K\to G$ be a continuous homomorphism of compact Lie groups.
  Let $R$ be a commutative global homotopy ring spectrum,
  and let $v\in \pi_V^G(R)$ be an invertible representation-graded homotopy class.
  Then the class $\gamma^*(v)$ in $\pi_{\gamma^*(V)}^K(R)$ is invertible.
\end{prop}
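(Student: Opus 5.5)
Every continuous homomorphism $\gamma\colon K\to G$ factors as a surjection $K\to\gamma(K)$ followed by the inclusion of the closed subgroup $\gamma(K)\leq G$. Restriction homomorphisms are contravariantly functorial, so $\gamma^*$ is the composite of $\res^G_{\gamma(K)}$ with restriction along the surjection. The inclusion case is Proposition \ref{prop:invert_inherit} (ii). It therefore remains to treat a surjective $\gamma$.

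Assume now that $\gamma$ is surjective, and verify criterion (c) of Proposition \ref{prop:invert_criterion} for $\gamma^*(v)$. Let $L$ be a closed subgroup of $K$ and set $H=\gamma(L)$, a closed subgroup of $G$. The restriction $\gamma|_L\colon L\to H$ is surjective. Using functoriality again, together with the fact that restriction to $L$ of $\gamma^*(v)$ equals $(\gamma|_L)^*(\res^G_H v)$, we get
\[ \Phi^L(\gamma^*(v)) \ = \ \Phi^L\bigl((\gamma|_L)^*(\res^G_H(v))\bigr) \ = \ (\gamma|_L)^*\bigl(\Phi^H(v)\bigr)\ . \]
The second equality is the commutative square \eqref{eq:infl_and_fix-ROG}, applied to the surjection $\gamma|_L$ with the same orientation of $V^H=(\gamma^*V)^L$. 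Here $(\gamma|_L)^*\colon\Phi^H_*(R)\to\Phi^L_*(R)$ is the inflation map on geometric fixed points. By Proposition \ref{prop:invert_criterion}, $\Phi^H(v)$ is a homogeneous graded unit in $\Phi^H_*(R)$. If inflation is a homomorphism of graded rings, it carries units to units, so $\Phi^L(\gamma^*(v))$ is a unit. Since $L$ was arbitrary, the proposition then follows from (c)$\Rightarrow$(a).

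The main obstacle is checking that inflation $\alpha^*\colon\Phi^H_*(R)\to\Phi^L_*(R)$, for surjective $\alpha\colon L\to H$, is multiplicative and unital for a commutative global homotopy ring spectrum $R$. It is also the reason the hypothesis must be global rather than merely $G$-equivariant: inflation is only defined using the global structure. I would verify this on representatives, using the description of the products in \eqref{eq:cdot_Phi^G} and \eqref{eq:concrete_product}. A representative $S^{U^H\oplus\mR^k}\to R(U)^H$ inflates to the same map, viewed via $R(U)^H= R(\alpha^*U)^{L}$. The external multiplication maps $\mu_{U,U'}$ are compatible with $\alpha^*$ on the nose, since they come from a (commutative) orthogonal ring spectrum model. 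Hence the product of inflated representatives is the inflation of the product, and the unit inflates to the unit.

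If one prefers to avoid point-set models, an alternative route works for any surjective $\gamma$. Since $\gamma^*$ is induced by the strong symmetric monoidal passage between the $G$- and $K$-homotopy types, $\gamma^*(v)^\flat$ is obtained from $v^\flat$ by a functor $h(\Sc p_G)\to h(\Sc p_K)$ that preserves equivalences, at least on global spectra. I would nevertheless follow the geometric fixed point argument above, as it only needs the already established square \eqref{eq:infl_and_fix-ROG}.
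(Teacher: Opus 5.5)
Your argument is the paper's proof: for each closed $L\leq K$ you pass to the epimorphism $L\to H=\gamma(L)$, use \eqref{eq:infl_and_fix-ROG} to identify $\Phi^L(\gamma^*(v))$ with the inflation of the graded unit $\Phi^H(v)$, and conclude by Proposition \ref{prop:invert_criterion}. Inflation is a ring map because it is a monoidal natural transformation for the pairing \eqref{eq:cdot_Phi^G}; check this via the universal bimorphism and naturality in $\mu$, not via an orthogonal ring spectrum model of $R$, which a homotopy ring spectrum need not have. Your preliminary reduction to surjective $\gamma$ is also unnecessary, since the argument already handles arbitrary $\gamma$.

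Drop the closing `alternative route', though. There, $v^\flat$ is only a morphism in $h(\Sc p_G)$, and inflation of $G$-spectra along a non-injective homomorphism does not in general intertwine $\res_G$ and $\res_K$: already for $C_2\to\{1\}$ there are global spectra with trivial underlying spectrum but nontrivial underlying $C_2$-spectrum. So no functor of the kind you describe is available.
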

\begin{proof}
  We let $L$ be a closed subgroup of $K$,
  and we set   $H=\gamma(L)$, a closed subgroup of $G$.
  The restriction of $\gamma$ to $L$ factors as a continuous epimorphism
  $\beta\colon L\to H$,  followed by the inclusion of $H$ into $G$.
  Since $v$ is invertible, the class $\Phi^H(v)$ is a graded unit in the
  ring $\Phi^H_*(R)$, by Proposition \ref{prop:invert_criterion}.
  Since the inflation homomorphism $\beta^*\colon\Phi^H_*(R)\to\Phi^L_*(R)$
  is a morphism of graded rings, the class
  \[ \beta^*(\Phi^H(v)) \ = \ \Phi^L(\beta^*(\res^G_H(v)))
    \ = \ \Phi^L(\gamma^*(v)) \]
  is invertible in the graded ring $\Phi^L_*(R)$.
  The first equation is the compatibility of inflation with geometric fixed point
  homomorphisms  \eqref{eq:infl_and_fix-ROG}, applied to $\beta\colon L\to H$.
  Since this holds for all closed subgroups of $K$, the class
  $\gamma^*(v)$ is invertible by Proposition \ref{prop:invert_criterion}.
\end{proof}

By a {\em commutative global ring spectrum} we mean
a commutative algebra in the symmetric monoidal $\infty$-category $\Sc p_{\gl}^\sm$
of global spectra.
As we recall in Construction \ref{con:res_G},
every global spectrum has an underlying genuine $G$-spectrum,
and this process is implemented by a strong symmetric monoidal functor
$\res_G^{\sm} \colon \Sc p_{\gl}^{\sm}  \to \Sc p_G^{\sm}$
whose underlying functor $\res_G\colon \Sc p_{\gl}\to \Sc p_G$
has adjoints on either side, and thus preserves limits and colimits,
see Theorem \ref{thm:gl2G}.
We obtain an induced functor of commutative algebras
\[ \CAlg(\res_G^{\sm}) \ : \ \CAlg(\Sc p^\sm_{\gl})\ \to \ \CAlg(\Sc p_G^{\sm}) \]
from commutative global ring spectra to commutative $G$-ring spectra.
We define the representation-graded homotopy groups and the pairing 
\eqref{eq:ROpairing} as those for the underlying $G$-spectrum.

In the following a {\em set of representation-graded equivariant homotopy classes}
of a global spectrum $R$ will mean a set $\Xc=\{v\}$ of
classes in $\pi_V^G(R)$ for varying pairs $(G,V)$ consisting
of a compact Lie group $G$ and a $G$-representation $V$.
We will say that the class $v\in \pi_V^G(R)$ is {\em supported}
at the group $G$, or at the pair $(G,V)$.
The main example for this paper will be the set $\{b_m\}_{m\geq 1}$
of {\em pre-Bott classes} of the ultra-commutative ring spectrum $\Sigma^\infty_+\bP$
defined in Construction \ref{con:pre-Bott} below; the class $b_m$ is supported
at the unitary group $U(m)$ and its tautological representation $\nu_m$ on $\mC^m$.

\begin{defn}[Global localization]\label{def:global_loc}
  Let $R$ be a commutative global ring spectrum.
  Let $\Xc$ be a set of representation-graded equivariant homotopy classes of $R$.
  \begin{enumerate}[(i)]
  \item A morphism $i\colon R\to S$ of commutative global ring spectra
    {\em inverts $\Xc$} if for every
    element $v\in\Xc$ supported at $(G,V)$, the class $i_*(v)\in\pi_V^G(S)$
    is invertible in the sense of Definition \ref{def:RO-invertible}.
  \item 
  A {\em global localization by $\Xc$}
  is an initial object of the full subcategory of the
  slice category $R\bs\CAlg(\Sc p_{\gl}^\sm)$ spanned by the
  $\Xc$-inverting morphisms.
\end{enumerate}
\end{defn}

\begin{rk}
The kind of localizations of ring spectra we study first arose in
non-equivariant homotopy theory. A special feature in that context is that
the localization of $R$ by a homotopy class $u\in\pi_n(R)$,
or more precisely its underlying spectrum, can always be calculated as the colimit,
classically modeled by a mapping telescope, of the sequence
\[ R \ \xra{\ \cdot u}\ R\sm S^{-n}\ \xra{\ \cdot u}\ R\sm S^{-2n}\xra{\ \cdot u}\ \dots \]
of iterations of the graded endomorphism of  $R$ that lifts multiplication by $u$,
compare \cite[Proposition 4.3.17]{lurie:ellipticII}.
Consequently, the topological localization always realizes the algebraic localization
on graded homotopy rings.

\Danger We alert the reader that global localizations are {\em essentially never}
given by a sequential colimit over endomorphisms that resemble
`multiplication by the elements' to be inverted.
The issue is already present for localization at a class in
the equivariant homotopy rings of degree 0.
Much as in the $G$-equivariant context, the functor
$\pi_0^G\colon \Sc p_{\gl}\to\rAb$ is represented by a global spectrum.
However, a key difference is that the representing object $\Sigma^\infty_+ B_{\gl}G$,
the unreduced suspension spectrum of the global classifying space of $G$,
is {\em not} invertible in the global stable homotopy category (unless the group is trivial).
In fact, the Picard group of the global stable homotopy category is isomorphic
to $\mZ$, generated by the suspension of the global sphere spectrum,
see \cite[Theorem 4.5.5]{schwede:global}. In other words:
the only invertible global spectra are the `trivial spheres'. 
So if the class we wish to invert is supported at a nontrivial group,
there is not even any sequence of `graded endomorphisms' one might wish to take the colimit of.

The situation changes as soon as we fix a compact Lie group $G$
and pass to the underlying $G$-spectra.
As we show in Theorem \ref{thm:telescopic localization},
the spectrum $\bKU_G$ is a `Bott-inverted telescopic localization' of $\Sigma^\infty_+\bP_G$,
i.e., a sequential colimit of copies of $\Sigma^\infty_+\bP_G$ shifted by
the spheres of an exhaustive sequence of unitary representations,
along morphisms that represent multiplication by pre-Bott classes.
\end{rk}

The following theorem is our main tool for recognizing localizations.

\begin{theorem}[Localization recognition]\label{thm:loc_reg}
  Let $\Cc$ be a presentably symmetric monoidal $\infty$-category.
  Let $\{\Phi^j\colon\Cc\to\Sc p^\sm\}_{j\in J}$ be a set
  of strong symmetric monoidal left adjoints.
  Let $R$ be a commutative algebra in $\Cc$,
  and let $\Wc$ be a full subcategory of $R\bs\CAlg(\Cc)$.
  Let $i\colon R\to S$ be an object of $\Wc$.
  Suppose that the following conditions hold:
  \begin{enumerate}[\em (a)]
  \item The functors $\{\Phi^j\}_{j\in J}$ are jointly conservative.
  \item  The $\infty$-category $\Wc$ is closed under postcomposition with morphisms in $\CAlg(\Cc)$.
  \item For every $j\in J$, the morphism of graded-commutative rings
    $\pi_*(\Phi^j(i))\colon \pi_*(\Phi^j(R))\to \pi_*(\Phi^j(S))$
    is a localization by a set of homogeneous elements of the source,
    and for every object $f\colon R\to P$ of $\Wc$,
    the morphism $\pi_*(\Phi^j(f))\colon\allowbreak \pi_*(\Phi^j(R))\to \pi_*(\Phi^j(P))$
    factors through $\pi_*(\Phi^j(i))$.
  \end{enumerate}
  Then $i\colon R\to S$ is an initial object of $\Wc$.
\end{theorem}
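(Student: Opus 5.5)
The plan is to show that for every object $f\colon R\to P$ of $\Wc$ the mapping space $\map_{R\bs\CAlg(\Cc)}(S,P)$ is contractible. I would do this by comparing $i$ with the localization of $R$ at a suitable set of objects of $\Cc$, and then transporting the question along the jointly conservative family $\{\Phi^j\}$. Concretely, I consider the pushout $S\sm_R P$ in $\CAlg(\Cc)$, i.e.\ the relative tensor product, which is the coproduct of $S$ and $P$ under $R$. Let $\iota\colon P\to S\sm_R P$ be the canonical map. Then $\iota\circ f$ lies in $\Wc$ by condition (b), and so does $S\to S\sm_R P$.

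The first key step is to show that $\iota$ is an equivalence. The functors $\Phi^j$ are strong symmetric monoidal left adjoints, so they preserve relative tensor products of commutative algebras. Hence
\[ \Phi^j(S\sm_R P)\ \simeq\ \Phi^j(S)\sm_{\Phi^j(R)}\Phi^j(P). \]
By (c), $\pi_*\Phi^j(S)$ is a localization $\pi_*\Phi^j(R)[T_j^{-1}]$ at a set $T_j$ of homogeneous elements. I would use the standard non-equivariant fact, \cite[Proposition 4.3.17]{lurie:ellipticII}, that such an algebraic localization is realized by a telescopic, flat, smashing localization of $E_\infty$-rings. Thus $\Phi^j(S)\sm_{\Phi^j(R)}M$ is the telescopic localization of $M$ at $T_j$ for every $\Phi^j(R)$-module $M$. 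The factorization clause of (c) says that the elements of $T_j$ become units in $\pi_*\Phi^j(P)$. Therefore $\Phi^j(P)\to\Phi^j(P)[T_j^{-1}]$ is an equivalence, and this map is exactly $\Phi^j(\iota)$. By (a), $\iota$ itself is an equivalence.

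It follows that every $P$ in $\Wc$ receives a map from $S$ under $R$, namely $\iota^{-1}$ composed with the pushout inclusion, so the mapping space is nonempty. For contractibility I would upgrade this into a statement about the functor $S\sm_R(-)$ on $R\bs\CAlg(\Cc)$, which is left adjoint to the forgetful functor from $S\bs\CAlg(\Cc)$. The argument above shows that the unit $P\to S\sm_R P$ is an equivalence exactly on $\Wc$. Applying the same argument to $P=S$ shows that the multiplication $S\sm_R S\to S$ is an equivalence, so $R\to S$ is an idempotent, epimorphic map of commutative algebras. For such maps the forgetful functor $S\bs\CAlg\to R\bs\CAlg$ is fully faithful, and its essential image consists of the $P$ for which $P\to S\sm_R P$ is an equivalence. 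Hence
\[ \map_R(S,P)\ \simeq\ \map_S(S,S\sm_R P)\ \simeq\ \ast \]
for $P$ in $\Wc$, which proves that $i$ is initial.

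I expect the main obstacle to be the first key step, namely identifying $\Phi^j(S)\sm_{\Phi^j(R)}\Phi^j(P)$ with $\Phi^j(P)[T_j^{-1}]$ on homotopy. One must check that an $E_\infty$-map $\Phi^j(R)\to\Phi^j(S)$ which is algebraically a localization on $\pi_*$ is equivalent to the telescopic localization. This holds because the telescope $\Phi^j(R)[T_j^{-1}]$ is itself an $E_\infty$-localization, and the comparison map to $\Phi^j(S)$ is a $\pi_*$-isomorphism. The Künneth-type flatness of the telescope then gives the tensor-product formula. A second point to handle carefully is that $\Phi^j$ commutes with the bar construction computing relative tensor products; this uses that $\Phi^j$ preserves geometric realizations, being a left adjoint, and is strong monoidal.
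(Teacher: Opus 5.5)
Your proof is correct. Its first half is the paper's argument: form $S\sm_R P$ in $\CAlg(\Cc)$, use that each $\Phi^j$ preserves it, that $\Phi^j(i)$ is an $E_\infty$-localization at $T_j$, and that the factorization clause of (c) makes the images of $T_j$ already invertible in $\pi_*\Phi^j(P)$. Joint conservativity then gives that $\iota\colon P\to S\sm_R P$ is an equivalence. Your smashing/telescope description of $\Phi^j(S)\sm_{\Phi^j(R)}\Phi^j(P)$ is only a module-level rephrasing of the paper's remark that a cobase change of a localization by $T$ is a localization by the image of $T$.

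The second half takes a genuinely different route.

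\emph{The paper's route.} It never specializes to $P=S$. It notes that $q=\iota\circ f$ is a coproduct of $f$ and $i$, so restriction gives an equivalence $\Wc(q,f)\simeq\Wc(f,f)\times\Wc(i,f)$. The first component, precomposition with the equivalence $\iota$, is itself an equivalence. Since $\Wc(f,f)\neq\emptyset$, the factor $\Wc(i,f)$ must be contractible.

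\emph{Your route.} You apply the key step to $f=i$ and conclude that $S\sm_R S\to S$ is an equivalence, i.e.\ $i$ is an epimorphism. You then use the general facts that $S\bs\CAlg(\Cc)\to R\bs\CAlg(\Cc)$ is fully faithful, with left adjoint $S\sm_R-$, and that its essential image consists of the objects on which the unit is an equivalence.

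\emph{What each buys.} The paper's argument is shorter and self-contained: it uses only the universal property of a coproduct, and the key step once per $f$. Yours needs the extra input about epimorphisms and reflective subcategories, but exhibits more structure. It shows that $i$ is an epimorphism and that $S$ is an idempotent $R$-algebra. It also shows that all of $\Wc$ lies in a reflective subcategory with reflector $S\sm_R-$, which is the conceptual reason $i$ is initial.

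\emph{An aside on (b).} Neither argument actually uses hypothesis (b). The memberships $\iota\circ f\in\Wc$ and $(S\to S\sm_R P)\in\Wc$ that you derive from it are never used later. In the paper's argument, the coproduct decomposition of mapping spaces holds in $R\bs\CAlg(\Cc)$ whether or not $q$ lies in $\Wc$.
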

\begin{proof}
  Throughout the proof we simplify notation by abbreviating $\pi_*(\Phi^j(R))$
  to $\Phi^j_*(R)$.
  In the examples where $\Phi^j$ is a geometric fixed point functor $\Phi^G$,
  the resulting notational ambiguity is resolved by Corollary \ref{cor:fix_ring_iso}.

  We let $f\colon R\to P$ be any object of $\Wc$.
  Since $\Cc$ is presentably symmetric monoidal, the $\infty$-category $\CAlg(\Cc)$
  has colimits.  We choose a pushout in $\CAlg(\Cc)$:
    \[  \xymatrix{ R\ar[r]^-i\ar[d]_f & S\ar[d]^g \\ 
P\ar[r]_-h & Q}       \]
  For every $j\in J$, the functor $\CAlg(\Phi^j)$ preserves colimits, so the square
  \[  \xymatrix@C=15mm{ \Phi^j(R)\ar[r]^-{\Phi^j(i)}\ar[d]_{\Phi^j(f)} & \Phi^j(S)\ar[d]^{\Phi^j(g)} \\
      \Phi^j(P)\ar[r]_-{\Phi^j(h)} & \Phi^j(Q)}       \]
  is a pushout of commutative ring spectra.
  By hypothesis, $\Phi_*^j(i)\colon \Phi_*^j(R)\to \Phi_*^j(S)$
  is a localization of graded-commutative rings by some multiplicative
  subset $T$ of homogeneous elements of $\Phi_*^j(R)$.
  So the morphism of commutative ring spectra
  $\Phi^j(i)\colon \Phi^j(R)\to \Phi^j(S)$ is also a localization by $T$,
  compare \cite[Proposition 4.3.17]{lurie:ellipticII}.
  Hence its cobase change
  $\Phi^j(h)\colon \Phi^j(P)\to \Phi^j(Q)$
  is a localization by the subset
  $\Phi^j_*(f)(T)=\{\Phi^j_*(f)(x)\mid x\in T\}$ of $\Phi^j_*(P)$.
  Since $\Phi_*^j(i)$ inverts the elements of $T$,
  and since $\Phi_*^j(f)$ factors through $\Phi_*^j(i)$,
  also $\Phi_*^j(f)$ inverts the elements of $T$.
  So $\Phi^j(h)$ is a localization by a set of elements that are already invertible,
  and hence an equivalence.
  So $h$ is an equivalence by the joint conservativity of the functors $\Phi^j$.
  
  Since $\Wc$ is closed under postcomposition, $q=h f\simeq g i\colon R\to Q$ lies in $\Wc$,
  and the pushout square exhibits $q$ as a coproduct of $f$ and $i$ in $\Wc$.
  As a coproduct, the map
  \[     \Wc(q,f) \ \to \ \Wc(f,f)\times\Wc(i,f)\]
  is an equivalence of animae.
  Since $h$ is an equivalence, the first projection is an equivalence.
  And since $\Wc(f,f)$ is non-empty, the anima $\Wc(i,f)$ must be contractible.
  Since $f$ was an arbitrary object of $\Wc$, this proves that $i\colon R\to S$
  is indeed an initial object of $\Wc$.
\end{proof}

We specialize the localization recognition of Theorem \ref{thm:loc_reg}
to our main case of interest, the presentably symmetric monoidal $\infty$-category
of global spectra.
The composite functor
\[  \Sc p_{\gl}^\sm \ \xra{\res^\sm_K}\  \Sc p^\sm_K \ \xra{\Phi^K}\ \Sc p^\sm \]
is a strong symmetric monoidal left adjoint.
To simplify the exposition we abusively abbreviate the induced functor
on $\infty$-categories of commutative algebras to 
\[ \Phi^K \ =  \ \CAlg(\Phi^K\circ \res^\sm_K)\ \colon\
  \CAlg(\Sc p_{\gl}^\sm)\ \to\ \CAlg(\Sc p^\sm)\ .\]
\Danger In the proof of the following theorem, and in many arguments that follow,
we shall systematically blur the distinction between two different, but naturally isomorphic,
geometric fixed point rings associated with a commutative $K$-ring spectrum $R$.
On the one hand, `geometric fixed point ring' could refer to the homotopy groups
of the geometric fixed point ring spectrum $\Phi^K(R)$ in $\CAlg(\Sc p^\sm)$.
On the other hand, it could refer to the explicit point-set definition
from \cite[Definition 3.3.1]{schwede:global},
which we recall in \eqref{eq:define_geom_fix} below,
descended from the 1-category of orthogonal $K$-spectra to the $\infty$-category of $K$-spectra.
These two interpretations are naturally and multiplicatively isomorphic,
by the isomorphism \eqref{eq:Phi_vs_Phi} that is characterized by
its effect on inflated $K$-spectra. So whenever we pretend that the
rings $\pi_*(\Phi^K(R))$ and $\Phi^K_*(R)$ were equal, we are implicitly applying
the preferred natural isomorphism of Corollary \ref{cor:fix_ring_iso}.

\begin{theorem}[Geometric fixed point criterion]\label{thm:fix_criterion}
  Let $i\colon R\to S$ be a morphism of commutative global ring spectra,
  and let $\Xc$ be a set of representation-graded equivariant homotopy classes of $R$.
  Suppose that for every compact Lie group $K$, the morphism
  of $K$-geometric fixed point rings
  $\Phi^K_*(i)\colon \Phi^K_*(R)\to \Phi^K_*(S)$
  is a localization by the set of classes $\Phi^K(\beta^*(x))$
  for all compact Lie groups $G$,
  all continuous homomorphisms $\beta\colon K\to G$,
  and all classes $x\in\Xc$ supported at $G$.
  Then the morphism $i\colon R\to S$ is a global localization
  by $\Xc$ in $\CAlg(\Sc p_{\gl}^\sm)$.
\end{theorem}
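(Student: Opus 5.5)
We deduce the statement from the localization recognition Theorem \ref{thm:loc_reg}, applied with $\Cc=\Sc p_{\gl}^\sm$. The index set $J$ is a set of representatives of isomorphism classes of compact Lie groups $K$, and we take $\Phi^j=\Phi^K\circ\res^\sm_K$. We take $\Wc\subseteq R\bs\CAlg(\Sc p_{\gl}^\sm)$ to be the full subcategory spanned by the $\Xc$-inverting morphisms. Each $\Phi^K\circ\res^\sm_K$ is a strong symmetric monoidal left adjoint, as recorded just before the theorem.

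First we check that $i$ itself lies in $\Wc$. For $x\in\Xc$ supported at $(G,V)$, Proposition \ref{prop:invert_criterion} reduces invertibility of $i_*(x)$ to showing that $\Phi^H(i_*(x))=\Phi^H_*(i)(\Phi^H(x))$ is a graded unit for every closed subgroup $H\le G$. Taking $K=H$ and $\beta$ the inclusion $H\to G$, the class $\Phi^H(x)=\Phi^H(\beta^*(x))$ belongs to the set by which $\Phi^H_*(i)$ is a localization. Hence its image is a unit.

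Next we verify the three hypotheses of Theorem \ref{thm:loc_reg}.
\begin{enumerate}[(a)]
\item Joint conservativity: a morphism of global spectra is an equivalence if and only if it is an equivalence of underlying $K$-spectra for all $K$. For a fixed $K$, this holds if and only if it induces equivalences on geometric fixed points for all closed subgroups. Since closed subgroups are again compact Lie groups, the family $\{\Phi^K\circ\res_K\}$ is jointly conservative. We will cite the appendix statement that global equivalences are detected by geometric fixed points.
\item Closure under postcomposition is Proposition \ref{prop:invert_inherit} (i), applied to the underlying homotopy $G$-ring spectra.
\item The first half of (c) is exactly the hypothesis, via the identification of Corollary \ref{cor:fix_ring_iso}.
\end{enumerate}

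For the second half of (c), let $f\colon R\to P$ be in $\Wc$. It suffices to show that $\Phi^K_*(f)$ sends each generator $\Phi^K(\beta^*(x))$ to a unit; the universal property of the algebraic localization then gives the factorization through $\Phi^K_*(i)$. Since $f_*(x)$ is invertible in $\pi^G_V(P)$, Proposition \ref{prop:restriction_and_invert} shows that $\beta^*(f_*(x))=f_*(\beta^*(x))$ is invertible in $\pi^K_{\beta^*V}(P)$. Proposition \ref{prop:invert_criterion} with $H=K$ then makes $\Phi^K(f_*\beta^*(x))=\Phi^K_*(f)(\Phi^K(\beta^*x))$ a graded unit. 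This uses naturality of $\beta^*$ and of $\Phi^K$ in morphisms of ring spectra. Theorem \ref{thm:loc_reg} now yields that $i$ is initial in $\Wc$, which is the definition of a global localization.

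The main obstacle is bookkeeping rather than mathematics. One must make sure that the geometric fixed point homomorphisms on representation-graded classes are compatible with the $\infty$-categorical $\pi_*(\Phi^K(-))$ under the isomorphism of Corollary \ref{cor:fix_ring_iso}, including the orientation choices; these only change signs and so do not affect units. One must also pin down the conservativity statement (a) precisely.
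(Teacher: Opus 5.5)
Your proposal is correct and follows the paper's proof essentially step for step. The paper also applies Theorem \ref{thm:loc_reg} to $\Sc p^\sm_{\gl}$ with the functors $\Phi^K\circ\res^\sm_K$, and checks $i\in\Wc$ via Proposition \ref{prop:invert_criterion} applied to restrictions to closed subgroups. It gets closure under postcomposition from Proposition \ref{prop:invert_inherit} (i), and the factorization in hypothesis (c) from Propositions \ref{prop:restriction_and_invert} and \ref{prop:invert_criterion}. The joint conservativity you want to cite is Theorem \ref{thm:gl2G} (iii).
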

\begin{proof}
  We apply the localization recognition of Theorem \ref{thm:loc_reg}
  to the presentably symmetric monoidal $\infty$-category $\Sc p^\sm_{\gl}$
  of global spectra, and to the family of geometric fixed point functors
  $\Phi^K\circ\res_K^\sm\colon\Sc p^\sm_{\gl}\allowbreak\to \Sc p^\sm$,
  for $K$ ranging through representatives of all isomorphism classes of compact Lie groups.
  These functors are strong symmetric monoidal left adjoints, and jointly conservative.
  The full subcategory $\Wc$ of $R\bs\CAlg(\Sc p^\sm_{\gl})$
  is spanned by the $\Xc$-inverting morphisms; it is closed under postcomposition
  by Proposition \ref{prop:invert_inherit} (i).
  The morphism $i\colon R\to S$ lies in $\Wc$: for every $v\in \Xc$
  supported at $G$, and all closed subgroups $K$ of $G$,
  the class $\Phi^K(i_*(v))=\Phi^K_*(i)(\Phi^K(\res^G_K(v)))$ is invertible by hypothesis,
  and so the class $i_*(v)$ is invertible by Proposition \ref{prop:invert_criterion}.

  We claim that hypothesis (c) of Theorem \ref{thm:loc_reg} holds for every compact Lie group $K$.
  Indeed, $\Phi^K_*(i)\colon \Phi^K_*(R)\to \Phi^K_*(S)$
  is a localization by hypothesis, by the set of classes $\Phi^K(\beta^*(x))$
  as in the statement of the theorem.
  Moreover, if $f\colon R\to P$ belongs to $\Wc$, then it inverts $\Xc$,
  and hence it inverts all the classes $\Phi^K(\beta^*(x))$ by
  Propositions  \ref{prop:invert_criterion} and \ref{prop:restriction_and_invert}.
  Since  $\Phi^K_*(i)\colon \Phi^K_*(R)\to \Phi^K_*(S)$ is a localization by these
  classes, the morphism $\Phi^K_*(f)$ factors through $\Phi^K_*(i)$.
  So Theorem \ref{thm:loc_reg} applies and shows that $i\colon R\to S$
  is a global localization by $\Xc$ in $\CAlg(\Sc p^\sm_{\gl})$.
\end{proof}

The $\infty$-category of {\em ultra-commutative ring spectra} $\UR_{\gl}$
is the localization of the 1-category of commutative orthogonal ring spectra
at the class of global equivalences, see Definition \ref{def:com_global_ring} for more details.
In \eqref{eq:ucom2CR_gl} we exhibit a forgetful functor
\[ u_{\gl}\ :\ \UR_{\gl}\ \to\  \CAlg(\Sc p^\sm_{\gl})  \]
from the $\infty$-category of ultra-commutative ring spectra to the
$\infty$-category of commutative global ring spectra.
This functor is both monadic and comonadic,
by Theorem \ref{thm:UR_monadic} (iii).
We define the representation-graded homotopy groups 
for ultra-commutative ring spectra
as those of the underlying commutative global ring spectra,
and hence as the ones for the underlying $G$-spectra,
and similarly with the notion of invertibility.

The algebraic shadows of the additional structure that ultra-commutative
ring spectra enjoy are power operations and norm maps.
Power operations also enter into the construction of the higher pre-Bott classes,
see \eqref{eq:define_preBott}, so we review these now in the representation-graded context.
The following construction is much like in the special case  $V=0$ covered in
\cite[(5.1.2)]{schwede:global}.

\begin{construction}[Representation-graded power operations]
  We let $G$ be a compact Lie group.
  We recall that the wreath product $\Sigma_m\wr G$ with the symmetric group $\Sigma_m$ 
    is the semidirect product
  \[ \Sigma_m\wr G \ = \ \Sigma_m \ltimes G^m\] 
  formed with respect to the action of $\Sigma_m$ by permuting the
  factors of $G^m$. For every $G$-space $E$,
  the wreath product $\Sigma_m\wr G$ acts on the space $E^m$ by
  \[ (\sigma;\,g_1,\dots,g_m) \cdot (e_1,\dots, e_m) \ = \ 
    (g_{\sigma^{-1}(1)}e_{\sigma^{-1}(1)},\dots,g_{\sigma^{-1}(m)} e_{\sigma^{-1}(m)}) \ . \]
  For every $G$-representation $V$, this action even makes $V^m$ 
  a $(\Sigma_m\wr G)$-representation.
  
  For a commutative orthogonal ring spectrum $R$,
  and for a $G$-representation $U$, we let
  \[ \mu_{U,\dots,U}\ : \
    R(U)\sm\dots\sm R(U) \ \to \ R(U\oplus\dots\oplus U) \]
  denote the $(U,\dots,U)$-component of the iterated multiplication map of $R$.
  This map is $(\Sigma_m\wr G)$-equivariant because the multiplication
  of $R$ is commutative.
  The $m$-th power operation
  \begin{equation}\label{eq:define_P^m}
    P^m \ : \ \pi_V^G(R)\ \to \ \pi^{\Sigma_m\wr G}_{V^m}(R)    
  \end{equation}
  takes the class represented by a based $G$-map $f\colon S^{U\oplus V}\to R(U)$
  to the class of the composite $(\Sigma_m\wr G)$-map
  \begin{align*}
    S^{U^m\oplus V^m} \,  \iso \ S^{(U\oplus V)^m}\
    \xra{f^{\sm m}}\ R(U)^{\sm m} \  \xra{\mu_{U,\dots,U}} \ R(U^m) \ .
  \end{align*}
  The unnamed homeomorphism shuffles factors.
  Clearly, this power construction takes $G$-homotopic maps to 
  $(\Sigma_m\wr G)$-homotopic maps. And if we stabilize $f$ by a $G$-representation
  $W$, then the composite gets stabilized by the
  $(\Sigma_m\wr G)$-representation $W^m$. The upshot is thus a well-defined map
  \eqref{eq:define_P^m}.

  Much like for degree 0 homotopy classes in \cite[Remark 5.1.7]{schwede:global},
  the representation-graded power operations can be used to define
  representation-graded {\em norm maps}
  \[ N_H^G \ : \ \pi_W^H(R) \ \to \ \pi_{\ind_H^G(W)}^G(R)\ ,\]
  for any finite index subgroup $H$ of $G$, where $W$ is an $H$-representation,
  and $\ind_H^G(W)$ is the induced $G$-representation.
  We refrain from spelling out the details because we will not use
  these norm maps in this paper.
\end{construction}

The power operations \eqref{eq:define_P^m}
satisfy a whole list of relations that describe their interaction
with each other, as well as with addition, multiplication,
restriction homomorphisms and transfers.
For $V=0$, the relations can be summarized
by saying that the power operations make the ring-valued global functor $\upi_0(R)$
into a {\em global power functor} in the sense of \cite[Definition 5.1.6]{schwede:global},
see \cite[Theorem 5.1.11]{schwede:global}.
The power operations on representation-graded homotopy groups \eqref{eq:define_P^m}
satisfy suitable generalizations of the global power functor relations,
but we refrain from spelling these out.

For $m\geq 2$, the power operation \eqref{eq:define_P^m} is {\em not} additive,
but it is always multiplicative. It should thus not come as a surprise
that power operations preserve invertibility.
The next proposition formally proves that.

\begin{prop}\label{prop:invertibility_power}
  Let $R$ be an ultra-commutative ring spectrum,
  and let $V$ be a representation of a compact Lie group $G$.
  Let $v\in \pi_V^G(R)$ be an invertible representation-graded homotopy class.
  Then for all $m\geq 1$, the class $P^m(v)$ in $\pi_{V^m}^{\Sigma_m\wr G}(R)$ is invertible.
\end{prop}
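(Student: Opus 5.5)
The quickest route is to exploit multiplicativity of the power operation. Invertibility of $v$ gives, by Proposition \ref{prop:invert_criterion} (b), an element $u\in\pi_0^G(R\sm S^V)$ with $v^\flat_*(u)=1$. A cleaner formulation uses an inverse in representation grading. Choose a $G$-representation $W$ with $V\oplus W$ isomorphic to a trivial representation $\mR^n$, for instance $W=V$ and $V\oplus V$ replaced by a bigger trivial summand. Since $G$ is compact, one can embed $V$ into a trivial-plus-complement situation only up to stable isomorphism, so I would instead work directly with the condition of Proposition \ref{prop:invert_criterion} (c) on geometric fixed points. This avoids constructing inverses at all.

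The plan is therefore as follows. We let $L$ be a closed subgroup of $\Sigma_m\wr G$, and we must show that $\Phi^L(P^m(v))$ is a graded unit in $\Phi^L_*(R)$. The key input is a formula for geometric fixed points of power operations, which I would establish at the level of representatives. The subgroup $L$ acts on $\{1,\dots,m\}$ through its image in $\Sigma_m$; we decompose into orbits $O_1,\dots,O_r$. For an orbit $O$ with chosen point $i$, the stabilizer $L_i$ maps to $G$ via the $i$-th coordinate projection $\beta_i\colon L_i\to G$. An $L$-fixed point of $(U\oplus V)^m$ is then determined by one $L_i$-fixed vector in the $i$-th factor for each orbit representative. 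So the fixed points of the composite $S^{(U\oplus V)^m}\to R(U)^{\sm m}\to R(U^m)$ decompose as a smash product over the orbits of the $L_i$-fixed points of $\beta_i^*(f)$. These are then multiplied via the ring structure, composed with the maps $\prod_i R(U)^{L_i}\to R(U^m)^L$ (a 'diagonal' / norm-type map on geometric fixed points). The outcome should be
\[ \Phi^L(P^m(v))\ = \ \pm\prod_{\text{orbits}} \nu_i\bigl(\Phi^{L_i}(\beta_i^*(v))\bigr), \]
where $\nu_i\colon\Phi^{L_i}_*(R)\to\Phi^L_*(R)$ is a multiplicative map of graded rings, namely the geometric-fixed-point norm. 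The sign accounts for reordering orientations of fixed subspaces. For $R$ ultra-commutative, this multiplicativity of $\nu_i$ is exactly what commutativity of the orthogonal ring spectrum provides, at the level of the explicit point-set model \eqref{eq:define_geom_fix}.

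Given the formula, the conclusion follows from earlier results. By Proposition \ref{prop:restriction_and_invert}, each $\beta_i^*(v)$ is invertible, so each $\Phi^{L_i}(\beta_i^*(v))$ is a graded unit by Proposition \ref{prop:invert_criterion}. Multiplicative maps preserve units, and products of units are units, so $\Phi^L(P^m(v))$ is a graded unit. Since $L$ was arbitrary, Proposition \ref{prop:invert_criterion} (c)$\Rightarrow$(a) gives invertibility of $P^m(v)$.

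The main obstacle is the geometric fixed point formula: I must make the norm maps $\nu_i$ on $\Phi^{L_i}_*\to\Phi^L_*$ precise and check their compatibility with stabilization. As an alternative, one could avoid the fixed-point formula by another route. This would use that $P^m$ is multiplicative, $P^m(v)\cdot P^m(v')=P^m(v\cdot v')$, together with the existence of $v'\in\pi^G_{V'}(R)$ with $v\cdot v'$ equal to the image of an invertible unit-type class $e\in\pi_{V\oplus V'}^G$ coming from a $G$-isometry $V\oplus V'\cong\mR^n\oplus W$... but this requires more bookkeeping about representation gradings. I would therefore try the geometric fixed point formula first.
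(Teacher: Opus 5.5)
Your main line via criterion (c) is plausible in outline, but as written it is not a proof. Everything rests on the displayed formula for $\Phi^L(P^m(v))$, which you only assert (``should be'') and then yourself identify as the main obstacle. To complete it you would have to do four things:
\begin{enumerate}
\item Construct the norms $\nu_i\colon\Phi^{L_i}_*(R)\to\Phi^L_*(R)$ on the colimit systems \eqref{eq:define_geom_fix}. A representative $f$ supported at an $L_i$-representation $W$ should go to the restriction of $\mu\circ f^{\sm r}$, $r=[L:L_i]$, to diagonal fixed points, now supported at $\ind_{L_i}^L W$.
\item Check compatibility with all stabilizations. The trivial summands $\mR^n$ become $\mR[L/L_i]^n$, whose $L$-fixed part is again $\mR^n$.
\item Prove that $\nu_i$ is a unital ring homomorphism. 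This needs commutativity of $R$ plus a rescaling homotopy on diagonal fixed points, of the kind used in the proof of Proposition \ref{prop:Phi_after_P^m}.
\item Verify the orbit decomposition, including orientations.
\end{enumerate}
I believe all of this can be done. For transitive $L$ it is consistent with the additivity of $\Phi^G\circ\Psi_{\bar g}^*\circ P^m$ established in the proof of Proposition \ref{prop:monomial_basis} (iv). But it is a substantial piece of machinery that you have not supplied. Incidentally, your opening suggestion cannot work: if $V\oplus W$ is trivial, then so is $V$.

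The missing idea is that criterion (b), which you invoke in your first sentence and then drop, finishes the proof at once; this is what the paper does.
\begin{itemize}
\item Represent $u\in\pi_0^G(R\sm S^V)$ by a $G$-map $f\colon S^U\to R(U)\sm S^V$.
\item Define $P^m(u)\in\pi_0^{\Sigma_m\wr G}(R\sm S^{V^m})$ by the same recipe as the power operation: apply $f^{\sm m}$, then the shuffle $(R(U)\sm S^V)^{\sm m}\iso R(U)^{\sm m}\sm S^{V^m}$, then $\mu_{U,\dots,U}\sm S^{V^m}$.
\item Comparing representatives, using commutativity of $\mu$, gives $(P^m(v))^\flat_*(P^m(u))=P^m(v^\flat_*(u))=P^m(1)=1$ in $\pi_0^{\Sigma_m\wr G}(R)$.
\end{itemize}
So $P^m(u)$ witnesses criterion (b) of Proposition \ref{prop:invert_criterion} for $P^m(v)$.

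This is precisely the ``inverse plus multiplicativity of $P^m$'' idea of your last paragraph. The difference is that the inverse lives in degree $0$ of $R\sm S^V$ rather than in some representation grading, so no complementary representations or grading bookkeeping arise. Your route, if completed, would buy more: a general formula for $\Phi^L\circ P^m$ in terms of geometric-fixed-point norms, extending Propositions \ref{prop:Phi_after_P^m} and \ref{prop:monomial_basis} (iv)--(v). For this proposition, though, it is a large and unnecessary detour.
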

\begin{proof}
  The morphism of $G$-spectra $v^\flat\colon R\sm S^V\to R$
  was defined in \eqref{eq:define_v_flat}.
  Proposition \ref{prop:invert_criterion} provides an element
  $u\in\pi_0^G(R\sm S^V)$ such that $v^\flat_*(u)=1$ in $\pi_0^G(R)$.
  We suppose that $u$ is represented by the 
  continuous based $G$-map $f\colon S^U\to R(U)\sm S^V$.
  We define $P^m(u)\in \pi_0^{\Sigma_m\wr G}(R\sm S^{V^m})$
  as the class represented by the $(\Sigma_m\wr G)$-map
  \[ 
    S^{U^m}\! =  (S^U)^{\sm m} \xra{\ f^{\sm m}} (R(U)\sm S^V)^{\sm m}
    \xra[\iso]{\text{shuffle}}  R(U)^{\sm m}\sm S^{V^m}
    \xra{\mu_{U,\dots,U}\sm S^{V^m}}\ R(U^m)\sm S^{V^m} .
  \]
  Then this class satisfies the relation
  \[  (P^m(v))^\flat_*(P^m(u)) \ = \ 1 \]
  in the group $\pi_0^{\Sigma_m\wr G}(R)$.
  So the class $P^m(u)$ witnesses criterion (b) of Proposition \ref{prop:invert_criterion}
  for the class $P^m(v)$, and thus $P^m(v)$ is invertible.
\end{proof}

\begin{defn}[Ultra-commutative localization]\label{def:ucom_loc}
  Let $R$ be an ultra-commutative ring spectrum, and
  let $\Xc$ be a set of representation-graded equivariant homotopy classes of $R$.
  An {\em ultra-commutative localization by $\Xc$}
  is an initial object of the full subcategory of the
  slice category $R\bs \UR_{\gl}$ spanned by the
  $\Xc$-inverting morphisms.
\end{defn}

\Danger  The localization process at a set of representation-graded homotopy classes
does {\em not} in general commute with forgetting from ultra-commutative
to commutative global ring spectra:

\begin{itemize}
\item if $i\colon R\to S$ is an ultra-commutative localization by $\Xc$,
  then the morphism of commutative global ring spectra $u_{\gl} i\colon u_{\gl} R\to u_{\gl} S$
  need not be a localization by $\Xc$ in $\CAlg(\Sc p^\sm_{\gl})$;
\item if $j\colon u_{\gl} R\to T$ is a global localization by $\Xc$ in $\CAlg(\Sc p^\sm_{\gl})$,
  then the localization $T$ need not admit any ultra-commutative structure.
\end{itemize}
In fact, the example  $R=\Sigma^\infty_+\bP$ and $\Xc=\{b_1\}$,
the set consisting only of the first pre-Bott class, illustrates both phenomena.
In Theorem \ref{thm:glcom b_1 loc} we identify the
commutative global localization of $\Sigma^\infty_+\bP$ by $b_1$
with $L_{\Ab}(\bKU)$, the left approximation of $\bKU$ by the global family
of abelian compact Lie groups.
We show in Example \ref{eg:Sigma_3_witnesses} that
$L_{\Ab}(\bKU)$ does not underlie any ultra-commutative ring spectrum,
witnessing the second item.
In particular, $L_{\Ab}(\bKU)$ is not equivalent to
the underlying global ring spectrum of the ultra-commutative
localization of  $\Sigma^\infty_+\bP$ by $b_1$, witnessing the first item.

The difference between the two kinds of localizations
is closely related to the additional operations:
inverting homotopy elements for ultra-commutative ring spectra
not only inverts all their restrictions along continuous homomorphisms of compact Lie groups,
but also all power operations of such restrictions, by Proposition \ref{prop:invertibility_power}.
We mention without proof that a morphism $i\colon R\to S$
is an ultra-commutative localization by $\Xc$ if and only if
the underlying morphism of commutative global ring spectra $u_{\gl} i\colon u_{\gl} R\to u_{\gl} S$
is a localization in $\CAlg(\Sc p^\sm_{\gl})$ by the set $\{P^m(x)\colon m\geq 1, x\in \Xc\}$
of all power operations of all elements in $\Xc$.\medskip

While in general the localizations as ultra-commutative ring spectra
and as commutative global ring spectra can differ,
there are examples that are simultaneously localizations in both worlds.
The next theorem describes a class of such examples.

A morphism $a\colon A\to B$ of animae is a {\em monomorphism}
if it is fully faithful, i.e., an inclusion of some of the path components.
Equivalently, the commutative square of animae
\[ \xymatrix{ A \ar[r]^{\Id_A} \ar[d]_{\Id_A} & A\ar[d]^-a\\ A\ar[r]_-a &  B } \]
is a pullback.
A morphism $i\colon R\to S$ in an $\infty$-category $\Cc$ is an {\em epimorphism}
if for every $\Cc$-object $T$, the map
$\Cc(i,T) \colon  \Cc(S,T) \to  \Cc(R,T)$ is a monomorphism of animae.
Equivalently, the commutative square
\[ \xymatrix{ R \ar[r]^-i\ar[d]_i & S\ar[d]^{\Id_S} \\
    S\ar[r]_-{\Id_S} & S} \]
is a pushout in $\Cc$.
For this paper, an important class of examples of epimorphisms
consists of all localizations of commutative global or of ultra-commutative
ring spectra at some set of representation-graded equivariant homotopy classes.

By Theorem \ref{thm:UR_monadic} (iii),
the forgetful functor $u_{\gl}\colon \UR_{\gl}\to\CAlg(\Sc p_{\gl}^\sm)$
has a left adjoint $\lambda_{\gl}\colon\allowbreak\CAlg(\Sc p_{\gl}^\sm)\to\UR_{\gl}$.
We write $\epsilon_R\colon \lambda_{\gl}(u_{\gl} R)\to R$ for the counit of the adjunction.
  
\begin{theorem}\label{thm:UR2CR}
  Let $i\colon R\to S$ be a morphism of ultra-commutative ring spectra.
  Let $\Xc$ be a set of representation-graded equivariant homotopy classes of $R$.
  \begin{enumerate}[\em (i)]
  \item If the underlying morphism of commutative global ring spectra
    $u_{\gl} i\colon u_{\gl} R\to u_{\gl} S$ is an epimorphism,
    then the following commutative square is a pushout in $\UR_{\gl}$:
    \[  \xymatrix@C=13mm{ \lambda_{\gl}(u_{\gl} R)\ar[r]^-{\lambda_{\gl}(u_{\gl} i)}\ar[d]_{\epsilon_R} &
      \lambda_{\gl}(u_{\gl} S)\ar[d]^{\epsilon_S}\\ R\ar[r]_-i & S}       \] 
\item Let $j\colon u_{\gl} R\to T$ be a global localization by $\Xc $ in $\CAlg(\Sc p_{\gl}^\sm)$,
  and let
  \[  \xymatrix@C=12mm{ \lambda_{\gl}(u_{\gl} R)\ar[r]^-{\lambda_{\gl}(j)}\ar[d]_{\epsilon_R} &
        \lambda_{\gl}(T)\ar[d]^g \\ R\ar[r]_-i & S}       \]
    be a pushout in $\UR_{\gl}$.
    Then $i\colon R\to S$ is an ultra-commutative  lo\-cal\-iza\-tion
    by $\Xc$ in $\UR_{\gl}$.
  \item If the underlying morphism of commutative global ring spectra
    $u_{\gl} i\colon u_{\gl} R\to u_{\gl} S$ is a global localization by $\Xc$ in $\CAlg(\Sc p_{\gl}^\sm)$,
    then $i$ is an ultra-commutative localization by $\Xc$ in $\UR_{\gl}$.
  \end{enumerate}
\end{theorem}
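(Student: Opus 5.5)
We prove the three parts in order; each reduces to the adjunction $\lambda_{\gl}\dashv u_{\gl}$ and mapping-space computations. For (i), we test the square against an arbitrary ultra-commutative ring spectrum $T$. Applying $\UR_{\gl}(-,T)$ and using the adjunction identifies $\UR_{\gl}(\lambda_{\gl}u_{\gl}R,T)\simeq \CAlg(u_{\gl}R,u_{\gl}T)$ and similarly for $S$. The restriction along $\epsilon_R$ then becomes the map $\UR_{\gl}(R,T)\to\CAlg(u_{\gl}R,u_{\gl}T)$ induced by $u_{\gl}$. So we must show that the square of animae
\[ \xymatrix{ \UR_{\gl}(S,T)\ar[r]\ar[d] & \UR_{\gl}(R,T)\ar[d]\\ \CAlg(u_{\gl}S,u_{\gl}T)\ar[r] & \CAlg(u_{\gl}R,u_{\gl}T)} \]
is a pullback. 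By hypothesis the bottom map is a monomorphism, i.e.\ an inclusion of path components. A square whose bottom row is a monomorphism is a pullback exactly when the top map is also a monomorphism and a point of the top right lands in the image of the bottom exactly when it comes from the top left. So it suffices to show two things. First, that $i^*\colon\UR_{\gl}(S,T)\to\UR_{\gl}(R,T)$ is a monomorphism. Second, that a map $\varphi\colon R\to T$ factors through $i$ as soon as $u_{\gl}\varphi$ factors through $u_{\gl}i$.

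For the first point we use that $u_{\gl}$ is conservative and preserves pushouts. Preservation of pushouts holds because $u_{\gl}$ is comonadic (Theorem \ref{thm:UR_monadic} (iii)), hence a left adjoint. Consequently the codiagonal square for $i$ in $\UR_{\gl}$ maps to the codiagonal square for $u_{\gl}i$. The latter is a pushout since $u_{\gl}i$ is an epimorphism, and conservativity together with pushout preservation shows that $S\amalg_R S\to S$ is an equivalence. Hence $i$ is an epimorphism in $\UR_{\gl}$. For the second point, the fiber of $\UR_{\gl}(S,T)\to\UR_{\gl}(R,T)$ over $\varphi$ is computed by a relative mapping space. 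Here I would argue directly with the pushout $P=S\amalg_R T$ in $\UR_{\gl}$: a factorization exists iff $T\to P$ is an equivalence. Again $u_{\gl}$ preserves this pushout. The map $u_{\gl}T\to u_{\gl}S\amalg_{u_{\gl}R}u_{\gl}T$ is an equivalence because $u_{\gl}\varphi$ factors through the epimorphism $u_{\gl}i$, and an epimorphism is idempotent under cobase change. Conservativity then finishes. This pushout-and-conservativity step is the main point to get right.

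For (ii), cobase change along $\epsilon_R$ turns initiality into initiality. For any $f\colon R\to P$ in $\UR_{\gl}$, the space of extensions of $f$ along $i$ is equivalent, by the pushout, to the space of extensions of $f\epsilon_R$ along $\lambda_{\gl}(j)$. By the adjunction, the latter is the space of extensions of the adjoint $u_{\gl}f\colon u_{\gl}R\to u_{\gl}P$ along $j$ in $\CAlg$. This space is contractible if $u_{\gl}f$ inverts $\Xc$ and empty otherwise, since $j$ is a localization and hence an epimorphism. Invertibility is defined on underlying global ring spectra, so $u_{\gl}f$ inverts $\Xc$ iff $f$ does. It remains to check that $i$ itself inverts $\Xc$. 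Its adjoint factorization gives $u_{\gl}i$ factoring as $u_{\gl}R\xra{j}T\to u_{\gl}S$, where the second map is adjoint to $g$. Thus $u_{\gl}i$ inverts $\Xc$ by Proposition \ref{prop:invert_inherit} (i). Hence $i$ is initial among $\Xc$-inverting maps.

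For (iii), set $T=u_{\gl}S$ and $j=u_{\gl}i$. Localizations are epimorphisms, so part (i) shows that the square there is a pushout. It has exactly the shape required in (ii), with $g=\epsilon_S$, and (ii) then yields the claim.
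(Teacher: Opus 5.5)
Your proof is correct. Parts (ii) and (iii) follow the paper's proof. For (ii), the paper likewise uses the pushout and the adjunction to identify $\UR_{\gl}(S,E)$ with a pullback over $\CAlg(\Sc p_{\gl}^\sm)(u_{\gl}R,u_{\gl}E)$; your fibrewise formulation is the same computation. Part (iii) is deduced from (i) and (ii) with $g=\epsilon_S$, exactly as you do.

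For (i) your route differs. The paper forms a pushout $Q$ of $\epsilon_R$ and $\lambda_{\gl}(u_{\gl}i)$, with comparison map $h\colon Q\to S$. It notes that the cobase change $R\to Q$ of the epimorphism $\lambda_{\gl}(u_{\gl}i)$ is an epimorphism, and remains one after applying $u_{\gl}$. By cancellation, $u_{\gl}h$ is then an epimorphism. A unit--counit computation shows that $u_{\gl}g\circ\eta_{u_{\gl}S}$ is a retraction of $u_{\gl}h$, so $u_{\gl}h$ is an equivalence, and conservativity finishes.

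You instead test against all $T$ and apply the pullback criterion relative to the monomorphism $(u_{\gl}i)^*$. This reduces (i) to two facts, each needing only that $u_{\gl}$ is conservative and preserves pushouts:
\begin{enumerate}
\item $i$ is itself an epimorphism in $\UR_{\gl}$;
\item $\varphi$ factors through $i$ as soon as $u_{\gl}\varphi$ factors through $u_{\gl}i$.
\end{enumerate}
Your route gives slightly more information: in effect, $u_{\gl}$ reflects epimorphisms and detects factorizations through them. It also avoids the unit/counit bookkeeping and the epimorphism-cancellation step, which in an $\infty$-category genuinely requires both factors to be epimorphisms. The paper's version is shorter.

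Two phrasings should be tightened.
\begin{itemize}
\item Replace ``an epimorphism is idempotent under cobase change'' by the precise statement you use. If $u_{\gl}\varphi\simeq\psi'\circ u_{\gl}i$, then pasting with $u_{\gl}S\amalg_{u_{\gl}R}u_{\gl}S\simeq u_{\gl}S$ shows that the cobase change of $u_{\gl}i$ along $u_{\gl}\varphi$ is an equivalence.
\item In (ii), when $u_{\gl}f$ does not invert $\Xc$, the extension space is empty because $\Xc$-inverting maps are closed under postcomposition (Proposition \ref{prop:invert_inherit} (i)). It is not empty because $j$ is an epimorphism; that property alone only gives ``empty or contractible''.
\end{itemize}
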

\begin{proof}
  (i)
  We choose a pushout of $\epsilon_R$ and $\lambda_{\gl}(u_{\gl} i)$ in $\UR_{\gl}$:
  \begin{equation}\begin{aligned}\label{eq:choose_po}
    \xymatrix@C=13mm{ \lambda_{\gl}(u_{\gl} R)\ar[r]^-{\lambda_{\gl}(u_{\gl} i)}\ar[d]_{\epsilon_R} &
      \lambda_{\gl}(u_{\gl} S)\ar[d]^g \\ R\ar[r]_-j & Q} 
    \end{aligned} \end{equation}
  Since $\lambda_{\gl}$ is a left adjoint, it preserves pushouts and thus epimorphisms,
  so $\lambda_{\gl}(u_{\gl} i)$ is an epimorphism in $\UR_{\gl}$.
  Hence also its cobase change $j$ is an epimorphism.
  Since the functor $u_{\gl}$ is a left adjoint, it preserves pushouts and thus epimorphisms,
  so $u_{\gl}(\lambda_{\gl}(u_{\gl} i))$ and $u_{\gl} j$ are epimorphisms in $\CAlg(\Sc p^\sm_{\gl})$.

  The universal property of the pushout \eqref{eq:choose_po} provides a unique morphism
  $h\colon Q\to S$ equipped with homotopies
  \[ h\circ j \ \sim \ i \text{\qquad and\qquad} h\circ g \ \sim \ \epsilon_S\ . \]
  Since $(u_{\gl} h)\circ (u_{\gl} j)\sim u_{\gl} i$ and both $u_{\gl} j$ and $u_{\gl} i$ are epimorphisms,
  also $u_{\gl} h\colon u_{\gl} Q\to u_{\gl} S$ is an epimorphism.
  We write $\eta_P\colon P\to u_{\gl}(\lambda_{\gl}(P))$ for the unit of the adjunction.
  The sequence of homotopies
  \begin{align*} (u_{\gl} g)\circ\eta_{u_{\gl} S}\circ(u_{\gl} h)\circ (u_{\gl} j)\
    &\sim \ (u_{\gl} g)\circ\eta_{u_{\gl} S}\circ(u_{\gl} i)\\
    &\sim \  (u_{\gl} g)\circ(u_{\gl}(\lambda_{\gl}(u_{\gl} i)))\circ\eta_{u_{\gl} R} \\
    &\sim \  (u_{\gl} j)\circ (u_{\gl}(\epsilon_R))\circ\eta_{u_{\gl} R}    \ \sim \ u_{\gl} j
  \end{align*}
  and the fact that $u_{\gl} j$ is an epimorphism show that
  the composite
  \[  u_{\gl} Q \xra{\ u_{\gl} h\ }\ u_{\gl} S \ \xra{\ \eta_{u_{\gl} S}\ }\ u_{\gl}(\lambda_{\gl}(u_{\gl} S))\
    \xra{u_{\gl} g}\ u_{\gl} Q \]
  is homotopic to the identity.
  So $u_{\gl} h$ is an epimorphism with a retraction, and thus an equivalence.
  Since $u_{\gl}$ is conservative, also $h\colon Q\to S$ is an equivalence.
  Since the square \eqref{eq:choose_po} is a pushout, so is the square of the theorem.

  (ii)
  For every ultra-commutative ring spectrum $E$, the upper rectangle
  in the commutative diagram of animae
  \[  \xymatrix@C=12mm{
      \UR_{\gl}(S,E)\ar[r]^-{i^*}\ar[d]_{g^*} & \UR_{\gl}(R,E)\ar[d]^{\epsilon_R^*}
      \ar@<4ex>@/^3pc/[dd]^(.3){u_{\gl}}\\
       \UR_{\gl}(\lambda_{\gl}(T),E)\ar[r]_-{\lambda_{\gl}(j)^*}\ar[d]_{\sim} &
       \UR_{\gl}(\lambda_{\gl}(u_{\gl} R),E)\ar[d]^\sim \\
              \CAlg(\Sc p_{\gl}^\sm)(T,u_{\gl} E)\ar[r]_-{j^*}&
        \CAlg(\Sc p_{\gl}^\sm)(u_{\gl} R,u_{\gl} E) }       \]
   is a pullback, by the universal property of pushouts.
   The two lower vertical maps are the adjunction equivalences.
   So the outer rectangle is a pullback, too.
   Since $j$ is a localization by $\Xc$ in $\CAlg(\Sc p_{\gl}^\sm)$,
   the lower horizontal map is an equivalence onto the path components
   of $\Xc$-inverting morphisms.
   A morphism $R\to E$ in $\UR_{\gl}$ is $\Xc$-inverting if and
   only if the underlying morphism in $\CAlg(\Sc p_{\gl}^\sm)$ is 
   $\Xc$-inverting.
   Hence the upper horizontal map is an equivalence onto the path components
   of $\Xc$-inverting morphisms. So $i$ has the defining property
   of an ultra-commutative localization by $\Xc$.

   (iii) Since localizations are epimorphisms, the square of part (i)
   is a pushout in $\UR_{\gl}$.
   Since $u_{\gl} i\colon u_{\gl} R\to u_{\gl} S$ is a global localization by $\Xc$ in $\CAlg(\Sc p_{\gl}^\sm)$
   by hypothesis, 
   $i$ is an ultra-commutative localization by $\Xc$ in $\UR_{\gl}$ by (ii).   
\end{proof}

\section{The pre-Bott classes}

To make sense of the slogan that global $K$-theory is obtained by `inverting the Bott classes',
we need to specify the classes we are inverting.
A naive attempt would be to invert the non-equivariant Bott class
in $\pi_2(\Sigma^\infty_+ \mC P^\infty)$ that features in Snaith's theorem.
However, inverting the classical Bott class does {\em not} yield the global $K$-theory spectrum,
not even at abelian compact Lie groups.
In this section we introduce the {\em pre-Bott classes}
$b_m\in \pi_{\nu_m}^{U(m)}(\Sigma^\infty_+\bP)$,
certain representation-graded equivariant homotopy classes
of $\Sigma^\infty_+ \bP$, see \eqref{eq:define_preBott}.  
The name is justified by the fact that
the morphism $\eta\colon \Sigma^\infty_+ \bP \to \bKU$ takes the pre-Bott class $b_m$
to the equivariant Bott class of the tautological $U(m)$-representation,
see Remark \ref{rk:preBott2Bott}.
One of our main results, Theorem \ref{thm:main},
is that the morphism $\eta$ is a localization,
both as commutative global ring spectra and as ultra-commutative ring spectra,
by the classes $b_m$ for $m\geq 1$.

The first pre-Bott class $b_1$ in $\pi_{\nu_1}^{T}(\Sigma^\infty_+\bP)$
is somewhat more distinguished than the higher pre-Bott classes.
On the one hand, the class $b_1$ is constructed first, and then
the class $b_m$ for $m\geq 2$ is defined from $b_1$ as a power operation
transferred up to $U(m)$, see \eqref{eq:define_preBott}.
On the other hand, $b_1$ is the unique class in
$\pi_{\nu_1}^T(\Sigma^\infty_+\bP)$ that maps to the equivariant Bott class
of the tautological $T$-representation, see Remark \ref{rk:b_1_unique};
for $m\geq 2$, the map $\eta_*\colon \pi_{\nu_m}^{U(m)}(\Sigma^\infty_+\bP)\to
\pi_{\nu_m}^{U(m)}(\bKU)$ has a nontrivial kernel, so the $U(m)$-equivariant
Bott class of $\nu_m$ has other preimages besides $b_m$.

Inverting only $b_1$ already goes quite a long way towards global $K$-theory:
as we show in Theorem \ref{thm:glcom b_1 loc},
the commutative global localization of $\Sigma^\infty_+\bP$ by $b_1$ is
$\Ab$-globally equivalent to $\bKU$,
where $\Ab$ is the global family of abelian compact Lie groups.
And we show in Theorem \ref{thm:ucom b_1 loc} that the ultra-commutative localization 
of $\Sigma^\infty_+\bP$ by $b_1$ is even $\Ab^\circ$-globally equivalent to $\bKU$,
where $\Ab^\circ$ is the global family of compact Lie groups
whose identity component is abelian; this family in particular includes
all finite groups and all abelian compact Lie groups.\medskip

A central player in this paper is the ultra-commutative monoid $\bP$;
we recall its definition in Construction \ref{con:P}.
We have settled on the notation $\bP$ because it is short and the letter `$P$'
also features in $\mC P^\infty$, which is its underlying non-equivariant homotopy type.
As $\bP$ is a global classifying space for the circle group
$T=U(1)$, other popular names for this global homotopy type are
$B_{\gl}T$, $\mathbb B T$ or  $*/\!/T$.
In \cite[(2.3.20)]{schwede:global}, we use the notation $\bP^\mC$ to distinguish this version
made from complex projective spaces from the real version.
In this paper, the real version plays no role, so we simplify notation
and drop the superscript `$\mC$'.
It is important for our purposes that the global classifying space
of the circle group (and in fact of any abelian compact Lie group)
can be endowed with an ultra-commutative global multiplication.
In informal terms,
an ultra-commutative multiplication consists of compatible $E_\infty$-multiplications
on the underlying $G$-spaces, natural for restriction along continuous
homomorphisms of compact Lie groups, together with
transfer morphisms for finite index inclusions of compact Lie groups.

An {\em ultra-commutative monoid} is a commutative orthogonal monoid space
in the sense of \cite[Definition 2.1.1]{schwede:global}.
In other words, it is a commutative monoid
in the category of orthogonal spaces under the box product
(convolution product). By the universal property of convolution products,
this data is equivalent to a lax symmetric monoidal functor from
the linear isometries category (under orthogonal direct sum)
to the category of spaces (under cartesian product).
We use the term `ultra-commutative monoid' to emphasize that we
consider objects in this 1-category through the eyes of global equivalences
in the sense of \cite[Definition 1.1.1]{schwede:global}.
Consequently, the $\infty$-category of {\em ultra-commutative monoids}
is the higher categorical localization of the 1-category of
commutative orthogonal monoid spaces at the class of global equivalences.

\begin{construction}[The global ultra-commutative monoid $\bP$]\label{con:P}
  We let 
  \[ \Sym(V_\mC) \ = \ {\bigoplus}_{i\geq 0}\,\Sym^i(V_\mC) \]
  denote the symmetric algebra of the complexification of an inner product space $V$.
  The value of the ultra-commutative monoid $\bP$ at $V$ is then
  \[ \bP(V)\ = \ P(\Sym(V_\mC)) \ ,\]
  the complex projective space of the symmetric algebra.
  The structure map $\bP(\varphi)\colon \bP(V)\to\bP(W)$
  induced by a linear isometric embedding $\varphi\colon V\to W$
  is given by  
  \[ \bP(\varphi)(L) \ = \ \Sym(\varphi_\mC)(L)\ , \]
  where $\Sym(\varphi_\mC)\colon \Sym(V_\mC)\to \Sym(W_\mC)$
  is the induced map of symmetric algebras.   
  The symmetric algebra $\Sym(V_\mC)$ is a commutative $\mC$-algebra without zero divisors.
  So the multiplication of $\Sym(V_\mC)$ descends to a well-defined abelian monoid structure 
  \begin{align*}
     \bP(V)\times \bP(V) = P(\Sym(V_\mC))\times P(\Sym(V_\mC))\ &\to \ P(\Sym(V_\mC)) =\bP(V)\\
    (\mC\!\cdot\! x,\mC\!\cdot\! y)\qquad &\longmapsto \qquad \mC\!\cdot\!x y
  \end{align*}
  on the projective space. The map $\bP(\varphi)$
  associated to a linear isometric embedding $\varphi$
  is a monoid homomorphism, so $\bP$ becomes a continuous functor
  from the linear isometries category to topological abelian monoids.
  As explained in \cite[Proposition 2.3.5]{schwede:global},
  this structure gives rise to an ultra-commutative multiplication on $\bP$,
  whose $(V,W)$-component is the composite
  \[ \mu_{V,W}\ : \ \bP(V)\times\bP(W)\ \xra{\bP(i_V)\times\bP(i_W)} \
    \bP(V\oplus W)\times\bP(V\oplus W)\ \xra{\text{multiply}} \
    \bP(V\oplus W)\ .  \]
  Here $i_V\colon V\to V\oplus W$ and $i_W\colon W\to V\oplus W$ are the direct summand embeddings.
  The multiplicative unit is the unique element of $\bP(0)=P(\Sym(0))=\{\mC\!\cdot\! 1\}$.
\end{construction}

The underlying global space of $\bP$ is a global classifying space,
in the sense of \cite[Definition 1.1.27]{schwede:global},
for the circle group $T=U(1)$; in particular, the associated Rep-functor
$\upi_0(\bP)$ is represented by $T$, see \cite[Proposition 1.5.12]{schwede:global}.
Since we will need the representability isomorphism, we review it in more detail.
We let $\nu_1$ denote the tautological unitary $T$-representation on $\mC$ by scalar multiplication.
We write $\mL$ for the complex line in the linear summand 
of $\Sym(u(\nu_1)_\mC)$ spanned by the vector $1\tensor 1 - i\tensor i$.
This line $\mL$ is $T$-invariant, and thus a $T$-fixed point of $\bP(u(\nu_1))$. We write
\[ u_T \ = \ [\mL]\ \in \ \pi_0^T(\bP) \]
for the class of this $T$-fixed point, the {\em unstable tautological class}.
Then for every compact Lie group $G$, the map
\begin{equation}\label{eq:Hom(G,T)}
\Hom(G,T)\ \to \  \pi_0^G(\bP) \ , \quad \alpha \ \longmapsto \ \alpha^*(u_T)  
\end{equation}
is bijective, where $\Hom(G,T)$ is the set of continuous homomorphisms.
The {\em stable tautological class}
\begin{equation} \label{eq:define_e_T}
 e_T\ = \ \sigma^T(u_T)\in\  \pi_0^T(\Sigma^\infty_+\bP)   
\end{equation}
is the stabilization \cite[(3.3.12)]{schwede:global}
of $u_T$, i.e., the one represented by the $T$-equivariant map
\[ S^{\nu_1}\ \xra{-\sm\mL}\ S^{\nu_1}\sm \bP(u(\nu_1))_+\ = \ (\Sigma^\infty_+\bP)(u(\nu_1))\ .\]
Because $\bP$ is a global classifying space for the circle group $T$,
the pair $(\Sigma^\infty_+\bP,e_T)$ represents the  functor $\pi_0^T$
on the global stable homotopy category, compare \cite[Theorem 4.4.3]{schwede:global}.

The unreduced suspension spectrum functor $\Sigma^\infty_+$ from
orthogonal spaces to orthogonal spectra \cite[Construction 4.1.7]{schwede:global}
is strong symmetric monoidal.
So the unreduced suspension spectrum of a commutative orthogonal monoid
space becomes a commutative orthogonal ring spectrum.
In particular, the orthogonal spectrum $\Sigma^\infty_+ \bP$ comes with
commutative multiplication, thereby modeling an ultra-commutative ring spectrum.

\begin{construction}[The pre-Bott classes]\label{con:pre-Bott}  
  We let $\nu_1$ denote the tautological representation of $T=U(1)$
  on $\mC$ by scalar multiplication.
  The cofiber sequence of based $T$-spaces
  \begin{equation} \label{eq:Tcof}
    T_+ \ \to \ S^0 \ \xra{\ i \ } \ S^{\nu_1} 
  \end{equation}
  induces a long exact sequence in any $T$-equivariant cohomology theory;
  here $i\colon S^0\to S^{\nu_1}$ is the inclusion of $T$-fixed points.
  For global spectra, the restriction homomorphism $\res^T_{\{1\}}$
  is split by the inflation homomorphism 
  $p_T^*\colon\pi_*(\Sigma^\infty_+\bP)\to\pi_*^T(\Sigma^\infty_+\bP)$
  along the unique homomorphism $p_T\colon T\to\{1\}$, see \eqref{eq:define_restriction}.
  So the long exact sequence splits off a short exact sequence:
  \[   \xymatrix@C=10mm{
      0\ar[r] & \pi_{\nu_1}^T(\Sigma^\infty_+\bP)\ar[r]^-{i^*} &
      \pi_0^T(\Sigma^\infty_+\bP)\ar[r]^-{\res^T_{\{1\}}}&
      \pi_0(\Sigma^\infty_+\bP)\ar[r] & 0 }   \]
  The class $1=p_T^*(\res^T_{\{1\}}(e_T))$ is the multiplicative
  unit of the ring $\pi_0^T(\Sigma^\infty_+\bP)$. Because $\res^T_{\{1\}}(1- e_T)=0$,
  there is a unique class, the first {\em pre-Bott class} $b_1\in \pi_{\nu_1}^T(\Sigma^\infty_+\bP)$
  such that
  \[     i^*(b_1)\ = \ 1- e_T\ \in \pi_0^T(\Sigma^\infty_+\bP)\ . \]
  For $m\geq 2$, we define the $m$-th pre-Bott class by transferring
  the $m$-th power operation of $b_1$ from $\Sigma_m\wr T$ to $U(m)$.
  To this end we identify $\Sigma_m\wr T$ with a maximal torus normalizer of $U(m)$
  via the preferred embedding
  \begin{equation}\label{eq:preferred_embed}
    \Sigma_m\wr T\ \to \ U(m)    
  \end{equation}
  that embeds $T^m$ as diagonal matrices,
  and that sends elements of $\Sigma_m$ to the associated permutation matrices.
  The image of this embedding then consists of the {\em monomial matrices},
  i.e., those unitary matrices
  that have precisely one nonzero entry in every row and in every column.
  The {\em $m$-th pre-Bott class} is then defined as
  \begin{equation}\label{eq:define_preBott}
    b_m \ = \ \tr_{\Sigma_m\wr T}^{U(m)}(P^m(b_1))\ \in \ \pi_{\nu_m}^{U(m)}(\Sigma^\infty_+\bP) \ .
  \end{equation}
  The power operation $P^m$ was defined in \eqref{eq:define_P^m}.
  The gradings work out because the restriction of the tautological $U(m)$-representation
  $\nu_m$ to $\Sigma_m\wr T$ is the representation $\nu_1^m$.
  For definiteness, we also set $b_0=1$ in $\pi_0^{\{1\}}(\Sigma^\infty_+\bP)$.
\end{construction}

\begin{rk}[Uniqueness of $b_1$]\label{rk:b_1_unique}
  Since $\eta\colon\Sigma^\infty_+\bP\to\bKU$ is a morphism of global spectra,
  it induces a commutative diagram of short exact sequences:
  \[ \xymatrix{
      0\ar[r] & \pi_{\nu_1}^T(\Sigma^\infty_+\bP)\ar[r]^-{i^*} \ar[d]_{\eta_*}&
      \pi_0^T(\Sigma^\infty_+\bP)\ar[r]^-{\res^T_{\{1\}}}\ar[d]_{\eta_*}&
      \pi_0(\Sigma^\infty_+\bP)\ar[r] \ar[d]^{\eta_*}& 0   \\
      0\ar[r] & \pi_{\nu_1}^T(\bKU)\ar[r]_-{i^*} &
      \pi_0^T(\bKU)\ar[r]_-{\res^T_{\{1\}}}&
      \pi_0(\bKU)\ar[r] & 0     } \]
  The rings $\pi_0(\Sigma^\infty_+\bP)$ and $\pi_0(\bKU)$
  are both isomorphic to $\mZ$, and the right vertical map is a ring homomorphism,
  and hence an isomorphism.
  Both the rings $\pi_0^T(\Sigma^\infty_+\bP)$ and $\pi_0^T(\bKU)$
  are Laurent polynomial rings over $\mZ$ in one variable,
  namely the stable tautological class $e_T$
  in the first case by \cite[Corollary 4.1.13]{schwede:global},
  and its image $\eta_*(e_T)=[\nu_1]$ in the second case.
  So also the middle vertical map is a ring isomorphism.
  The five lemma lets us conclude that the left vertical map is an isomorphism.
  We show in Remark \ref{rk:preBott2Bott} below that the map $\eta_*$
  takes the pre-Bott class $b_1$ to the $T$-equivariant Bott class $\beta_{T,\nu_1}$
  of the $T$-representation $\nu_1$.
  By injectivity, $b_1$ is the unique preimage of $\beta_{T,\nu_1}$.
\end{rk}

In the remainder of this section, we establish various fundamental properties
of the pre-Bott classes, such as their behavior under the power morphisms
of $\bP$ (Proposition \ref{prop:phi^m_on_b_1}),
multiplicativity properties (Theorem \ref{thm:fix of pre-Bott}),
and incompatibility with power operations (Example \ref{eg:norm_does_not_match}).
For every $k\geq 1$, the collection of maps
\begin{equation}\label{eq:define_power_P}
\phi^k(V)\ \colon \ \bP(V)\ \to \ \bP(V)\ , \quad \phi^k(V)(L)\ = \ L^k  
\end{equation}
forms a morphism of ultra-commutative monoids
$\phi^k\colon \bP\to\bP$, the {\em $k$-th power morphism}.
Hence $\Sigma^\infty_+\phi^k\colon\Sigma^\infty_+\bP\to \Sigma^\infty_+\bP$
is a morphism of ultra-commutative ring spectra.
  
\begin{prop}\label{prop:phi^m_on_b_1}
  For every $k\geq 1$, the relation
  \[ (\Sigma^\infty_+\phi^k)_*(e_T)\ = \ e_T^k \]
  holds in the ring $\pi_0^T(\Sigma^\infty_+\bP)$, and the relation
  \[ (\Sigma^\infty_+\phi^k)_*(b_1)\ = \ (1+e_T+\dots+e_T^{k-1})\cdot b_1 \]
  holds in the group $\pi_{\nu_1}^T(\Sigma^\infty_+\bP)$.
\end{prop}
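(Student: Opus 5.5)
The first relation should be checked directly on the representative of $e_T$, and the second then follows from the first by the injectivity of $i^*$ in the short exact sequence of Construction \ref{con:pre-Bott}.

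\emph{The relation for $e_T$.} The stable tautological class $e_T$ is represented by the $T$-map $S^{\nu_1}\to S^{\nu_1}\sm\bP(u(\nu_1))_+$, $x\mapsto x\sm\mL$. Applying $\Sigma^\infty_+\phi^k$ replaces $\mL$ by $\mL^k$, which is again a $T$-fixed point of $\bP(u(\nu_1))$. So $(\Sigma^\infty_+\phi^k)_*(e_T)=\sigma^T([\mL^k])$.

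On the other side, the power $e_T^k$ is the stabilization of the $k$-fold product $u_T^k\in\pi_0^T(\bP)$. This is because stabilization $\sigma^T\colon\pi_0^T(\bP)\to\pi_0^T(\Sigma^\infty_+\bP)$ is multiplicative: the multiplication on $\Sigma^\infty_+\bP$ is the suspension of that on $\bP$. It therefore suffices to show $[\mL^k]=u_T^k$ in $\pi_0^T(\bP)$.

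By construction of the multiplication, $u_T^k$ is represented by the $T$-fixed point $\mu(\mL,\dots,\mL)$ of $\bP(u(\nu_1)^{\oplus k})$. This point is the product of the images of $\mL$ under the $k$ summand embeddings. Both classes are determined by the homomorphism $T\to T$ through which $T$ acts on the chosen vector.
\begin{itemize}
\item The class $[\mL^k]$ corresponds to the character $z\mapsto z^k$, since $\mL$ is spanned by a vector on which $T$ acts by the tautological character, so $T$ acts on $\mL^k$ by its $k$-th power.
\item The class $u_T^k$ also corresponds to $z\mapsto z^k$, since the bijection \eqref{eq:Hom(G,T)} is compatible with the monoid structures: the product of fixed lines has character equal to the product of the characters.
\end{itemize}
Injectivity of \eqref{eq:Hom(G,T)} then gives $[\mL^k]=u_T^k$. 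The alternative is to note that $\mL^k$ and $\mu(\mL,\dots,\mL)$ become the same point after applying the structure maps, up to the path of isometric embeddings used for the colimit. This identification of characters is the one step needing care. Everything else is formal.

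\emph{The relation for $b_1$.} The map $i^*\colon\pi_{\nu_1}^T\to\pi_0^T$ is natural in the spectrum, so it commutes with $(\Sigma^\infty_+\phi^k)_*$. This gives
\[ i^*\bigl((\Sigma^\infty_+\phi^k)_*(b_1)\bigr)\ =\ (\Sigma^\infty_+\phi^k)_*(1-e_T)\ =\ 1-e_T^k\ , \]
where we use that $\Sigma^\infty_+\phi^k$ is a ring map, hence preserves $1$. On the other hand, $i^*$ is $\pi_0^T(\Sigma^\infty_+\bP)$-linear, because it is induced by the $T$-map $i\colon S^0\to S^{\nu_1}$ and the module structures come from multiplication in $R$. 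Hence
\[ i^*\bigl((1+e_T+\dots+e_T^{k-1})\cdot b_1\bigr)\ =\ (1+\dots+e_T^{k-1})(1-e_T)\ =\ 1-e_T^k\ . \]
Since $i^*$ is injective by the short exact sequence in Construction \ref{con:pre-Bott}, the two classes agree. The main point to verify in this step is the module-linearity of $i^*$ for the pairing \eqref{eq:ROpairing} with a degree-$0$ class. This holds at the level of representatives, because precomposing with $i$ commutes with smashing with a representative of a degree-$0$ class and then multiplying.
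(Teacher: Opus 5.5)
Your proposal is correct and follows the paper's proof. The first relation reduces, via naturality and multiplicativity of stabilization, to $\phi^k_*(u_T)=u_T^k$ in $\pi_0^T(\bP)$, and the second follows from naturality and $\pi_0^T(\Sigma^\infty_+\bP)$-linearity of $i^*$ together with its injectivity. The one difference is in how $[\mL^k]=u_T^k$ is justified: you compare characters of $T$-fixed lines using the multiplicative bijection \eqref{eq:Hom(G,T)}, whereas the paper simply notes that the objectwise $k$-th power map $\phi^k$ induces the $k$-th power map on the monoid $\pi_0^G(\bP)$, which is the ``alternative'' you also mention.
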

\begin{proof}
   Since $\phi^k\colon \bP\to\bP$ is objectwise the $k$-th power map,
  its effect on $\pi_0^G(\bP)$ is the $k$-th power map in the abelian monoid structure,
  for every compact Lie group $G$. In particular, the effect of $\phi^k$
  on the unstable tautological class $u_T \in \pi_0^T(\bP)$ is given by
  \[ \phi^k_*(u_T)\ = \  u_T^k \ . \]
  The stabilization map $\sigma^T\colon \pi_0^T(\bP)\to \pi_0^T(\Sigma^\infty_+\bP)$
  is multiplicative, and the stable tautological class $e_T=\sigma^T(u_T)$
  is the stabilization of $u_T$. So the first relation
  $(\Sigma^\infty_+\phi^k)_*(e_T) =  e_T^k$ follows.

  The $T$-equivariant fixed point inclusion $i\colon S^0\to S^{\nu_1}$
  induces a commutative square:
  \[ \xymatrix@C=12mm{
      \pi_{\nu_1}^T(\Sigma^\infty_+\bP)\ar[r]^-{ (\Sigma^\infty_+\phi^k)_*}\ar[d]_{i^*} &
          \pi_{\nu_1}^T(\Sigma^\infty_+\bP)\ar[d]^{i^*}\\
      \pi_0^T(\Sigma^\infty_+\bP)\ar[r]_-{(\Sigma^\infty_+\phi^k)_*} &
          \pi_0^T(\Sigma^\infty_+\bP)
    }    \]
  We thus obtain the relation
  \begin{align*}
    i^*((\Sigma^\infty_+\phi^k)_*(b_1)) \
    &= \ (\Sigma^\infty_+\phi^k)_*(i^*(b_1)) \ = \ (\Sigma^\infty_+\phi^k)_*(1-e_T) \ = \ 1-e_T^k\\
    &= \ (1+e_T+\dots+e_T^{k-1}) \cdot (1-e_T) \\
    &= \  (1+e_T+\dots+e_T^{k-1}) \cdot i^*(b_1) \\
    &= \ i^*( (1+e_T+\dots+e_T^{k-1})\cdot b_1) \ .
  \end{align*}
  The second relation then follows because the restriction map $i^*$ is injective.
\end{proof}

The pre-Bott class $b_m$ for $m\geq 2$ is defined by a transfer; so identifying its restriction
to a subgroup of $U(m)$ involves a double coset formula.
The following double coset formula
ought to be well-known to experts, but I do not know a reference.
The case $m=n=1$ is classical because $T\times T$ is a maximal
torus of $U(2)$, and $\Sigma_2\wr T$ is the maximal torus normalizer;
so for $m=n=1$, the following double coset formula is a special case of
\cite[IV Corollary 6.7 (i)]{lms}.
In the following we write 
\[ U(m,n)\ = \ \{(\begin{smallmatrix}A & 0 \\ 0 & B \end{smallmatrix})\colon (A,B)\in U(m)\times U(n)\} \]
for the block subgroup of $U(m+n)$.
The embedding \eqref{eq:preferred_embed}
takes $(\Sigma_m\wr T)\times (\Sigma_n\wr T)$ into $U(m,n)$.

\begin{prop}\label{prop:double_coset}
  For all $m,n\geq 1$, the following double coset
  formula holds:
  \[ \res^{U(m+n)}_{U(m,n)}\circ\tr^{U(m+n)}_{\Sigma_{m+n}\wr T}\ = \
    \tr^{U(m,n)}_{(\Sigma_m\wr T)\times (\Sigma_n\wr T)}\circ
     \res^{\Sigma_{m+n}\wr T}_{(\Sigma_m\wr T)\times (\Sigma_n\wr T)}
   \]
\end{prop}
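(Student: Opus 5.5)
We will derive this from the general double coset formula for compact Lie groups, as in \cite[IV Corollary 6.7]{lms} or \cite[Theorem 3.4.9]{schwede:global}. Set $G=U(m+n)$, $H=U(m,n)$ and $K=\Sigma_{m+n}\wr T$. For closed subgroups $H,K\le G$, the composite $\res^G_H\circ\tr^G_K$ is a sum over the orbit-type strata of the double coset space $H\backslash G/K$. The coefficients are internal Euler characteristics of these strata. Each summand has the form $\tr^H_{H\cap {}^gK}\circ c_g^*\circ\res^K_{H^g\cap K}$. The formula is natural for any equivariant cohomology theory, so it also holds in the representation-graded setting; the gradings are compatible because $\nu_{m+n}$ restricts to $\nu_m\oplus\nu_n$ on $H$ and to $\nu_1^{m+n}$ on $K$. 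The task therefore reduces to showing that exactly one stratum contributes, namely the one of the identity coset, and that it does so with coefficient $1$.

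First, we identify $G/K$ with the space of unordered $(m+n)$-tuples of pairwise orthogonal complex lines in $\mC^{m+n}$, i.e.\ unordered orthogonal line decompositions. Under this identification, $H\backslash G/K$ becomes the orbit space of such decompositions under $U(m)\times U(n)$. The key step is to use the maximal torus $T^{m+n}\le H$ for an Euler characteristic computation. The Euler characteristic of each stratum can be computed on fixed points of a torus acting on it. A line decomposition fixed by the full maximal torus of $U(m+n)$ must consist of the coordinate lines. Consequently, the $T^{m+n}$-fixed points of $G/K$ form a single point, the identity coset.

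Second, we verify the claim stratum by stratum. The stratum of a double coset $HgK$ is $H$-homotopy equivalent to a bundle whose fibre is a homogeneous space, and the internal Euler characteristic of the stratum equals the Euler characteristic of the compactly supported cohomology of its image in $H\backslash G/K$. For strata whose stabilizer $H\cap{}^gK$ has rank less than $m+n$, we pass to $(T^{m+n})$-fixed points inside $H/(H\cap{}^gK)$ and sum over $W$-orbits. Alternatively, we argue via the $H$-space $G/K$ directly: the torus $T^{m+n}$ acts on each $H$-orbit, and every non-identity orbit has no $T^{m+n}$-fixed point, so its Euler characteristic contribution vanishes. This vanishing is the main point at which care is needed. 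It must be formulated in terms of the strata, which are possibly non-compact manifolds, using the fact that the Euler characteristic with compact supports of a manifold with a torus action without fixed points is zero.

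Third, we analyse the surviving stratum, the identity coset. Its stabilizer is $H\cap K=(\Sigma_m\wr T)\times(\Sigma_n\wr T)$, since monomial matrices that are block diagonal are products of monomial blocks. The stratum is the orbit $H/(H\cap K)$, whose Euler characteristic is $|W_H|/|\Sigma_m\times\Sigma_n|=1$ by the torus-fixed point count. This contributes exactly $\tr^{U(m,n)}_{(\Sigma_m\wr T)\times(\Sigma_n\wr T)}\circ\res^{K}_{H\cap K}$.

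The main obstacle is the bookkeeping in the second step: the orbit-type stratification of $H\backslash G/K$ is infinite-dimensional in type, and one must confirm that all other strata have vanishing coefficient. Our first approach to this step is the torus fixed point argument above, following the proof of the $m=n=1$ case in \cite{lms}.
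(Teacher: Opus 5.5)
Your overall strategy (apply the general double coset formula and show that only the identity double coset contributes) matches the paper's. So do your identification of $G/K$ with unordered orthogonal line decompositions, of $H\cap K=(\Sigma_m\wr T)\times(\Sigma_n\wr T)$, and of the $T^{m+n}$-fixed points of $G/K$.

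\textbf{The gap is in your second step.} The coefficients in the double coset formula are internal Euler characteristics $\chi^\sharp(M)$ of orbit-type components $M$ of the \emph{orbit space} $H\backslash G/K$, and $T^{m+n}\le H$ acts trivially there. Your torus argument only shows that each non-identity orbit $H/(H\cap{}^gK)$, or equivalently the preimage of $M$ in $G/K$, has Euler characteristic zero. In other words, it shows that the stabilizer $H\cap{}^gK$ has rank $<m+n$. That does not force $\chi^\sharp(M)=0$. Already for $m=n=1$, the orbit space $T^2\backslash U(2)/(\Sigma_2\wr T)\cong[0,1]$ has two non-principal strata:
\begin{itemize}
\item an endpoint with stabilizer $\{(a,\pm a)\}$ and $\chi^\sharp=1$;
\item an open interval with stabilizer the diagonal circle and $\chi^\sharp=-1$.
\end{itemize}
Nor does rank deficiency alone kill the corresponding summand: degree-zero transfers from lower-rank subgroups with finite Weyl group are nonzero in general. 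For example, $\tr^{SO(3)}_{A_5}(1)$ is a basis element of $\pi_0^{SO(3)}(\mS)$. The same conflation appears in your third step. The relevant coefficient is $\chi^\sharp$ of the point $HeK$ of $H\backslash G/K$, which is $1$ because that point is its own orbit-type component. It is not $\chi(H/(H\cap K))$; the two numbers merely coincide here.

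\textbf{The missing ingredient.} The non-principal summands vanish because their stabilizers have \emph{infinite} Weyl group in $H=U(m,n)$. For such $L$, the degree-zero transfer $\tr^H_L$ is zero. The paper proves this via the centre, as follows.
\begin{itemize}
\item Suppose some line of the decomposition lies in neither $\mC^m\oplus 0^n$ nor $0^m\oplus\mC^n$. Then no element $(\lambda E_m)\oplus(\mu E_n)$ of $Z(U(m,n))\cong T\times T$ with $\lambda\ne\mu$ fixes it. Hence the set-stabilizer meets $Z(U(m,n))$ only in a $1$-dimensional group (the centre of $U(m+n)$, up to finite index).
\item The centre normalizes every subgroup, so $Z(U(m,n))/(\text{stabilizer}\cap Z(U(m,n)))$ injects into the Weyl group. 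This quotient is positive-dimensional, so the Weyl group is infinite and the orbit contributes nothing.
\item The remaining decompositions, in which every line lies in one of the two blocks, form a single $U(m,n)$-orbit. This gives exactly the principal summand.
\end{itemize}
With this argument replacing your torus-fixed-point step, the rest of your proposal goes through.
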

\begin{proof}
  We refer to \cite[IV Theorem 6.3]{lms} or \cite[Theorem 3.4.9]{schwede:global} for the general
  double coset formula for $\res^G_K\circ\tr_H^G$ for two closed subgroups $H$ and $K$
  of a compact Lie group $G$; we specialize it to the situation at hand.
  We write $\Lc_{m+n}$ for the space of sets $\{L_1,\dots,L_{m+n}\}$
  of $m+n$ pairwise orthogonal lines in $\mC^{m+n}$.
  The tautological $U(m+n)$-action on $\mC^{m+n}$ induces a transitive action on $\Lc_{m+n}$,
  and the stabilizer of the set of coordinate lines
  is the subgroup $\Sigma_{m+n}\wr T$, embedded into $U(m+n)$ by \eqref{eq:preferred_embed}.
  So the map
  \begin{equation}\label{eq:coset2L_n}
         U(m+n)/(\Sigma_{m+n}\wr T) \ \iso \ \Lc_{m+n}\ , \quad
    A\cdot(\Sigma_{m+n}\wr T)\ \longmapsto \ \{\mC\cdot A\cdot e_1,\dots,\mC\cdot A\cdot e_{m+n}\}
  \end{equation}
  that sends $A\cdot(\Sigma_{m+n}\wr T)$
  to the set of lines spanned by the columns of $A$
  is an equivariant homeomorphism.

  Now we let $\{L_1,\dots,L_{m+n}\}$ be any set of $m+n$ pairwise orthogonal lines in $\mC^{m+n}$.
  We consider the special case where at least one line $L_i$
  is neither contained in $\mC^m\oplus 0^n$ nor in $0^m\oplus \mC^n$.
  We claim that then the intersection of the stabilizer of the set $\{L_1,\dots,L_{m+n}\}$
  with the center of $U(m,n)$ is 1-dimensional.
  The center $Z(U(m,n))$ of $U(m,n)$ consists of
  the matrices  $(\lambda E_m)\oplus (\mu E_n)$ for all $\lambda,\mu\in T$.
  Since some line $L_i$ is neither contained
  in $\mC^m\oplus 0^n$ nor in $0^m\oplus \mC^n$,
  it is not stabilized by the matrix $(\lambda E_m)\oplus (\mu E_n)$
  for any $\lambda,\mu\in T$ with $\lambda\ne \mu$.
  So the intersection of the stabilizer of $L_i$ with $Z(U(m,n))$ 
  equals the center of $U(m+n)$.
  Since any line is stabilized by the center of $U(m+n)$,
  also the intersection of the stabilizer of the tuple $(L_1,\dots,L_{m+n})$
  of lines with $Z(U(m,n))$ equals the center of $U(m+n)$,
  and is thus 1-dimensional.
  The stabilizer of the tuple $(L_1,\dots,L_{m+n})$
  has finite index in the stabilizer of the set $\{L_1,\dots,L_{m+n}\}$.
  So the intersection of the stabilizer of the set $\{L_1,\dots,L_{m+n}\}$
  with $Z(U(m,n))$ is also 1-dimensional, as claimed.

  For every compact Lie group $G$,
  the center $Z(G)$ of $G$ normalizes any given closed subgroup $H$,
  so the group $Z(G)/(H\cap Z(G))$ injects into the Weyl group $W_G H=(N_G H)/H$.
  The center of $G=U(m,n)$ is isomorphic to $T\times T$, and hence 2-dimensional.
  We take $H=\text{Stab}\{L_1,\dots,L_{m+n}\}\cap U(m,n)$
  as the intersection of the stabilizer of the set $\{L_1,\dots,L_{m+n}\}$
  with $U(m,n)$; the claim we just established says that $H\cap Z(U(m,n))$
  is 1-dimensional. So the group $Z(U(m,n))/(H\cap Z(U(m,n)))$
  is 1-dimensional, and the Weyl group of $\text{Stab}\{L_1,\dots,L_{m+n}\}\cap U(m,n)$
  inside $U(m,n)$ is infinite.
  This means that the $U(m,n)$-orbit of the set $\{L_1,\dots,L_{m+n}\}$
  does not contribute to the double coset formula.
  
  We conclude that all contributions to the double coset formula
  come from sets $\{L_1,\dots,L_{m+n}\}$ of pairwise orthogonal lines
  for which each line is either contained in $\mC^m\oplus 0^n$ or in
  $0^m\oplus\mC^n$. The group $U(m,n)$ acts transitively on these,
  so they only contribute the principal summand to the double coset formula.
\end{proof}

\begin{construction}[Pre-Bott classes of unitary representations]\label{con:pre-Bott_of_V}  
  We let $V$ be a unitary representation of a compact Lie group $G$,
  of complex dimension $m$.
  We let $\alpha\colon G\to U(m)$ classify $V$, i.e., $V$ is isomorphic to $\alpha^*(\nu_m)$. 
  We let $\psi\colon V\iso\alpha^*(\nu_m)$ be a $G$-equivariant $\mC$-linear isometry.
  We write $\psi^\flat\colon \pi_{\alpha^*(\nu_m)}^G(\Sigma^\infty_+\bP)\to \pi_V^G(\Sigma^\infty_+\bP)$
  for the isomorphism given by precomposition with
  the equivariant homeomorphism $S^\psi\colon S^V\iso S^{\alpha^*(\nu_m)}$.
  The {\em pre-Bott class} of $V$ is
  \[  b_{G,V}\ = \ \psi^\flat(\alpha^*(b_m))\ \in \ \pi_V^G(\Sigma^\infty_+\bP)\ .     \]
  We claim that the class $b_{G,V}$ is independent of the choice of the pair $(\alpha,\psi)$.
  Indeed, if $\bar\alpha\colon G\to U(m)$ is another continuous homomorphism
  with $\bar\alpha^*(\nu_m)\iso V$,
  then $\bar\alpha=c_g\circ\alpha$ for some $g\in U(m)$.
  The group of equivariant self-isometries of a unitary representation
  is path connected.
  So if $\bar\psi\colon V\iso \bar\alpha^*(\nu_m)$ is an equivariant isometry,
  then the composite
  \[ V \ \xra[\iso]{\ \bar\psi\ }\ \bar\alpha^*(\nu_m)\ = \ \alpha^*(c_g^*(\nu_m))
    \ \xra[\iso]{\alpha^*(l_g)\ }\ \alpha^*(\nu_m) \]
  is homotopic to $\psi$ through $G$-equivariant isometries,
  where $l_g\colon c_g^*(\nu_m)\iso \nu_m$ denotes translation by the element $g$.
  Hence $(\alpha^*(l_g)\circ\bar\psi)^\flat=\psi^\flat$.
  Inner automorphisms act as the identity on representation-graded homotopy groups
  of a global spectrum $X$, in the sense that
  \[ c_g^*\ = \ l_g^\flat\ \colon \ \pi_{\nu_m}^{U(m)}(X)\ \xra{\ c_g^*\ } \  \pi_{c_g^*(\nu_m)}^{U(m)}(X)
  \ ,\]
  by the same kind of argument that proves \cite[Proposition 1.5.10]{schwede:global}.
  Hence
  \begin{align*}
    \bar\psi^\flat(\bar\alpha^*(b_m))\
    &= \  \bar\psi^\flat(\alpha^*(c_g^*(b_m))) \
    = \  \bar\psi^\flat(\alpha^*(l_g^\flat(b_m))) \\
    &= \  \bar\psi^\flat((\alpha^*(l_g))^\flat(\alpha^*(b_m)))\
    = \  (\alpha^*(l_g)\circ\bar\psi)^\flat(\alpha^*(b_m))
    \ = \ \psi^\flat(\alpha^*(b_m)) \ ,
  \end{align*}
  as claimed.   
\end{construction}

For an orthogonal ring spectrum $R$, representing a global ring spectrum,
we can `externalize' the homotopy group pairing \eqref{eq:ROpairing}
in the group variable by using the global structure. Given two compact Lie groups $G$ and $K$,
a $G$-representation $V$, and a $K$-representation $W$, we define a
pairing
\[  \times \ : \ \pi_V^G(R)\times \pi_W^K(R)\ \to \ \pi_{V\times W}^{G\times K}(R)   \]
as the composite
\[  \pi_V^G(R)\times \pi_W^K(R)\
  \xra{p_G^*\times p_K^*} \ \pi_{p_G^*(V)}^{G\times K}(R)\times \pi_{p_K^*(W)}^{G\times K}(R)\
\xra{\ \cdot \ } \ \pi_{V\times W}^{G\times K}(R) \ .\]
Here $p_G\colon G\times K\to G$ and $p_K\colon G\times K\to K$ are the projections,
and $V\times W=p_G^*(V)\oplus p_K^*(W)$ is the external product,
a representation of $G\times K$.
For $G=K$, the `internal' product \eqref{eq:ROpairing}
can be obtained from the external product by restricting to the diagonal in $G\times G$.

By the following theorem, the class $b_m$ is a factor of a restriction of $b_{m+1}$.
So every morphism of commutative global ring
spectra from $\Sigma^\infty_+\bP$ that inverts $b_{m+1}$ also inverts $b_1,b_2,\dots,b_m$.
We shall show in Theorem \ref{thm:all_different} below that the ultra-commutative localization
by $b_m$ does {\em not} invert $b_{m+1}$.
Hence the localizations by the classes $b_m$
are all distinct for different values of $m$,
and all distinct from the localization by the entire set $\{b_m\}_{m\geq 1}$.

\begin{theorem}[Multiplicativity of pre-Bott classes]\label{thm:fix of pre-Bott}\ 
  \begin{enumerate}[\em (i)]
    \item For all $m,n\geq 1$, the relation
      \[ 
        \res^{U(m+n)}_{U(m,n)}(b_{m+n}) \ = \  b_m\times b_n
      \]
  holds in the group $\pi_{\nu_m\times\nu_n}^{U(m)\times U(n)}(\Sigma^\infty_+\bP)$.
  \item
    For all unitary representations $V$ and $W$ of a compact Lie group $G$,
    the relation $b_{G,V}\cdot b_{G,W}=b_{G,V\oplus W}$
    holds in $\pi_{V\oplus W}^G(\Sigma^\infty_+\bP)$.
    \end{enumerate}
\end{theorem}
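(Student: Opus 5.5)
For part (i), I will start from the definition $b_{m+n}=\tr^{U(m+n)}_{\Sigma_{m+n}\wr T}(P^{m+n}(b_1))$. The double coset formula of Proposition \ref{prop:double_coset} rewrites the left-hand side as
\[ \res^{U(m+n)}_{U(m,n)}(b_{m+n}) \ = \ \tr^{U(m,n)}_{(\Sigma_m\wr T)\times(\Sigma_n\wr T)}\bigl(\res^{\Sigma_{m+n}\wr T}_{(\Sigma_m\wr T)\times(\Sigma_n\wr T)}(P^{m+n}(b_1))\bigr)\ . \]
The next step is the standard restriction relation for power operations: restricting $P^{m+n}(x)$ from $\Sigma_{m+n}\wr G$ to $(\Sigma_m\wr G)\times(\Sigma_n\wr G)$ gives the external product $P^m(x)\times P^n(x)$. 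For $V=0$ this is one of the global power functor axioms of \cite[Theorem 5.1.11]{schwede:global}. In the representation-graded case I will verify it directly on representatives. If $f\colon S^{U\oplus V}\to R(U)$ represents $x$, then $P^{m+n}(x)$ is represented by $\mu\circ f^{\sm(m+n)}$. Restricted to the product subgroup, this map factors as $(\mu\circ f^{\sm m})\sm(\mu\circ f^{\sm n})$ followed by the multiplication $\mu_{U^m,U^n}$, and that is exactly the representative of the external product. The only bookkeeping needed is that the shuffle homeomorphisms match, and that the identification $V^{m+n}\iso V^m\times V^n$ is the restriction of $\nu_{m+n}$ to $\nu_m\times\nu_n$.

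It then remains to show that the transfer of an external product is the external product of the transfers:
\[ \tr^{U(m)\times U(n)}_{(\Sigma_m\wr T)\times(\Sigma_n\wr T)}(y\times z) \ = \ \tr^{U(m)}_{\Sigma_m\wr T}(y)\times\tr^{U(n)}_{\Sigma_n\wr T}(z)\ . \]
I will write $y\times z=p_1^*(y)\cdot p_2^*(z)$ and transfer in two stages, through $(\Sigma_m\wr T)\times U(n)$. Each stage uses Frobenius reciprocity together with the compatibility of transfers with inflation along the projections. For transfers along product subgroups $H\times K\leq G\times K$ these are standard properties \cite[Section 3.4]{schwede:global}, and they pass to representation-graded groups because those are degree-zero groups of $\Omega^V$-spectra. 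With $y=P^m(b_1)$ and $z=P^n(b_1)$ this gives $b_m\times b_n$, proving (i). Part (i) is also meant for $m=0$ or $n=0$ using $b_0=1$, but only the case $m,n\geq1$ is needed.

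For part (ii), choose classifying homomorphisms $\alpha\colon G\to U(m)$ and $\beta\colon G\to U(n)$ for $V$ and $W$. Then $V\oplus W$ is classified by the composite of $(\alpha,\beta)\colon G\to U(m)\times U(n)$ with the block inclusion into $U(m+n)$. By the independence statement in Construction \ref{con:pre-Bott_of_V},
\[ b_{G,V\oplus W} \ = \ (\alpha,\beta)^*\bigl(\res^{U(m+n)}_{U(m,n)}(b_{m+n})\bigr) \]
after suitable isometry identifications. Part (i) turns this into $(\alpha,\beta)^*(b_m\times b_n)$. Since $p_{U(m)}\circ(\alpha,\beta)=\alpha$ and $p_{U(n)}\circ(\alpha,\beta)=\beta$, and restriction along homomorphisms is multiplicative, this equals $\alpha^*(b_m)\cdot\beta^*(b_n)=b_{G,V}\cdot b_{G,W}$. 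If $V$ or $W$ is zero, the statement is trivial because $b_0=1$.

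I expect the main obstacle to be the compatibility of transfers with external products. The power-operation restriction formula is routine at the level of representatives, while the transfer step requires care with Frobenius reciprocity for non-finite-index transfers and with inflation in the representation-graded setting. I would first try to prove it by reducing to degree zero via $\Omega^{V}$, and if that is unwieldy, derive it from the Pontryagin–Thom description of transfers, which is visibly multiplicative in products of embeddings.
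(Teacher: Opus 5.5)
Your proposal is correct and follows the paper's proof essentially step for step. You use the double coset formula of Proposition \ref{prop:double_coset}, the restriction relation for power operations checked on representatives, and the compatibility of transfers with external products; for (ii) you pull back $b_m\times b_n$ along $(\alpha,\beta)\colon G\to U(m)\times U(n)$. The only difference is that you justify the transfer/external-product step (via Frobenius reciprocity and compatibility of transfers with inflation, in two stages), which the paper uses without comment.
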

\begin{proof}
  (i) We need a property of the power operations \eqref{eq:define_P^m}
  that specializes to the `restriction relation' in the definition
  of a global power functor for $V=0$, compare \cite[Definition 5.1.6]{schwede:global}.
  The concatenation group monomorphism $+\colon \Sigma_m\times \Sigma_n \to \Sigma_{m+n}$
  is defined by
  \[ (\sigma+\sigma')(i) \ = \ 
    \begin{cases}
      \ \sigma(i) & \text{ for $1\leq i\leq m$, and}\\
      \sigma'(i-m)+m & \text{ for $m+1\leq i\leq m+n$.}
    \end{cases}
  \]
  We embed a product of wreath products by the monomorphism
  \begin{align*}
    (\Sigma_m\wr G)\ \times\ (\Sigma_n\wr G)\hspace*{1.3cm}
    &\to\quad \Sigma_{m+n}\wr G \\
    ((\sigma;\, g_1,\dots,g_m),\, (\sigma';\, g_{m+1},\dots,g_{m+n})) \
    &\longmapsto \ (\sigma+\sigma';\, g_1,\dots,g_{m+n})\ .
  \end{align*}
  Then for all $x\in \pi_V^G(R)$, the power operations satisfy the relation
  \begin{equation}\label{eq:power_restriction}
    \res^{\Sigma_{m+n}\wr G}_{ (\Sigma_m\wr G)\ \times\ (\Sigma_n\wr G)}( P^{m+n}(x) )\ = \
    P^m(x)\times P^n(x)      
  \end{equation}
  in $\pi_{V^m\times V^n}^{(\Sigma_m\wr G)\times(\Sigma_n\wr G)}(R)$,
  exploiting that $ \res^{\Sigma_{m+n}\wr G}_{ (\Sigma_m\wr G)\ \times\ (\Sigma_n\wr G)}(V^{m+n})=V^m\times V^n$
  as representations of the group $(\Sigma_m\wr G)\times(\Sigma_n\wr G)$.
  This relation is proved by direct comparison of representatives.
  Thus
  \begin{align*}
    \res^{U(m+n)}_{U(m,n)}(b_{m+n}) \
    &= \   \res^{U(m+n)}_{U(m,n)}(\tr_{\Sigma_{m+n}\wr T}^{U(m+n)}(P^{m+n}(b_1))) \\
    &= \ \tr_{(\Sigma_m\wr T)\times (\Sigma_n\wr T)}^{U(m,n)}(\res^{\Sigma_{m+n}\wr T}_{(\Sigma_m\wr T)\times (\Sigma_n\wr T)}(P^{m+n}(b_1))) \\
_{\eqref{eq:power_restriction}}  &= \ \tr_{(\Sigma_m\wr T)\times (\Sigma_n\wr T)}^{U(m,n)}(P^m(b_1)\times P^n(b_1)) \\
    &= \ \tr_{\Sigma_m\wr T}^{U(m)}(P^m(b_1)) \times
             \tr_{\Sigma_n\wr T}^{U(n)}(P^n(b_1)) \ =\ b_m\times b_n \ .
  \end{align*}
  The second relation is the double coset formula of Proposition \ref{prop:double_coset}.
  
  (ii)
  We may assume that $V=\alpha^*(\nu_m)$ and $W=\beta^*(\nu_n)$ for continuous homomorphisms
  $\alpha\colon G\to U(m)$ and $\beta\colon G\to U(n)$.
  Then $V\oplus W$ is the restriction of the tautological $U(m+n)$-representation $\nu_{m+n}$
  along the composite
  \[ G\ \xra{(\alpha,\beta)}\ U(m, n)\ \xra{\text{incl}} \ U(m+n)\ . \]
  Hence
  \begin{align*}
    b_{G,V}\cdot b_{G,W}
    &=     \alpha^*(b_m)\cdot \beta^*(b_n)  =  (\alpha,\beta)^*(b_m\times b_n)
    =  (\alpha,\beta)^*(\res^{U(m+n)}_{U(m,n)}(b_{m+n})) 
    =  b_{G,V\oplus W}\ .\qedhere
  \end{align*}
\end{proof}

In equivariant $K$-theory, the Thom classes of the Atiyah--Bott--Shapiro orientation
are compatible with power operations, see \cite{berwick-evans-guo}.
The Bott classes of unitary representations are particular cases of such Thom classes,
so in equivariant $K$-theory, for a compact Lie group $G$,
the power operation $P^m$ takes the $G$-equivariant Bott class of a unitary $G$-representation
$V$ to the $(\Sigma_m\wr G)$-equivariant Bott class of the representation $V^m$.
Since norms are defined from restriction homomorphisms and power operations
as in \cite[Remark 5.1.7]{schwede:global}, 
the norm operation $N_H^G$, for a finite index subgroup $H$ of $G$,
takes the $H$-equivariant Bott class of a unitary $H$-representation
$W$ to the $G$-equivariant Bott class of the induced representation $\ind_H^G(W)$.

\Danger
As we show in the remainder of this section, our pre-Bott classes are {\em not} compatible
in the same way with power operations:
for a unitary representation $V$ of a compact Lie group $G$,
the class $P^m(b_{G,V})$ is typically different from the class $b_{\Sigma_m\wr G,V^m}$
for $m\geq 2$.
In fact, this feature is already visible in the simplest possible
case, for $m=2$ and the pre-Bott class of the 1-dimensional representation
of the trivial group, see Example \ref{eg:norm_does_not_match}.
Since power operations can be derived from restriction homomorphisms and norms
as in \cite[(5.1.8)]{schwede:global},
the pre-Bott classes are not compatible with norms, either.\medskip
  
For an inner product space $V$, we write $\lambda_V\colon V\to V^k$ for the linear isometry
\[ \lambda_V(v)\ = \ 1/\sqrt{k}\cdot (v,\dots,v)\ ; \]
the image of $\lambda_V$ coincides with the $\Sigma_k$-fixed subspace of $V^k$.
In the geometric fixed point homomorphism appearing in the next proposition,
we orient the $\Sigma_k$-fixed points of $(\mR^n)^k$
by the isometry $\lambda_{\mR^n}\colon\mR^n\iso ((\mR^n)^k)^{\Sigma_k}$.
The power endomorphism $\phi^k\colon\bP\to\bP$ was defined in \eqref{eq:define_power_P},
and $p_{\Sigma_k}^*$ is the inflation homomorphism \eqref{eq:define_inflation}
along the unique homomorphism $p_{\Sigma_k}\colon\Sigma_k\to\{1\}$.
In part (ii), we write $b_{\cl}=\res^T_{\{1\}}(b_1)$ for the classical, non-equivariant
pre-Bott class in $\pi_2(\Sigma^\infty_+\bP)$.

\begin{prop}\label{prop:Phi_after_P^m}
  Let $k\geq 2$.
  \begin{enumerate}[\em (i)]
  \item 
  The following diagram commutes for all $n\geq 0$:
  \[ \xymatrix@C=6mm{
      \pi_n(\Sigma^\infty_+ \bP)\ar[rrr]^-{P^k}\ar[d]_{(\Sigma^\infty_+ \phi^k)_*} &&&
      \pi_{(\mR^n)^k}^{\Sigma_k}(\Sigma^\infty_+\bP)\ar[d]^-{\Phi^{\Sigma_k}} \\
      \pi_n(\Sigma^\infty_+\bP)\ar@{=}[r] &
      \Phi_n^{\{1\}}(\Sigma^\infty_+\bP) \ar[rr]_{p_{\Sigma_k}^*} &&
      \Phi^{\Sigma_k}_n(\Sigma^\infty_+\bP)  } \]
\item
  The relation
  \[ \Phi^{\Sigma_k}(P^k(b_{\cl}))\ = \ k\cdot p_{\Sigma_k}^*(b_{\cl}) \]
  holds in the group $\Phi^{\Sigma_k}_2(\Sigma^\infty_+\bP)$.
\end{enumerate}
\end{prop}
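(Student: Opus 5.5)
For part (i), I will compare representatives directly. A class in $\pi_n(\Sigma^\infty_+\bP)$ is represented by a based map $f\colon S^{U\oplus\mR^n}\to S^U\sm\bP(U)_+$ for some inner product space $U$. By definition, $P^k[f]$ is represented by the $\Sigma_k$-map
\[ S^{U^k\oplus(\mR^n)^k}\iso (S^{U\oplus\mR^n})^{\sm k}\xra{f^{\sm k}} (S^U\sm\bP(U)_+)^{\sm k}\xra{\mu} S^{U^k}\sm\bP(U^k)_+ . \]
Its geometric fixed point class is the restriction of this map to $\Sigma_k$-fixed points.
\begin{itemize}
\item The fixed points of the source are $S^{\lambda_U(U)\oplus\lambda_{\mR^n}(\mR^n)}$.
\item On the diagonal, $f^{\sm k}$ restricts to $f$ itself, up to the isometries $\lambda$. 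The factor $1/\sqrt k$ only rescales, and it is homotopic to the identity through equivariant isometries.
\item The fixed points of the target include $S^{\lambda_U(U)}\sm\bP(U^k)^{\Sigma_k}_+$.
\end{itemize}
The key computation is the composite of the diagonal $\bP(U)\to\bP(U)^k$ with the multiplication map. It sends a line $L$ to the product of the images of $L$ under the $k$ summand embeddings $i_j\colon U\to U^k$, which is a $\Sigma_k$-fixed line in $\bP(U^k)$.

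I then need to identify this map with $\bP(\lambda_U)\circ\phi^k(U)$, where $\phi^k(U)(L)=L^k$, up to homotopy. I will do this through a path of linear isometric embeddings $U\to U^k$ inside the $\Sigma_k$-invariant part. Concretely, I would interpolate the $k$ embeddings $i_1,\dots,i_k$ towards the common embedding $\lambda_U$. Along $t\in[0,1]$ I use the embeddings $\varphi_{j,t}$ built from $\sqrt{1-t}\,i_j+\dots$, keeping the family of embeddings permuted by $\Sigma_k$. This gives the homotopy
\[ L\ \longmapsto\ \prod_j \Sym(\varphi_{j,t})(L) , \]
which at the end point equals $\Sym(\lambda_U)(L^k)$.

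This is the main obstacle. The family $(\varphi_{j,t})_j$ must remain permuted by $\Sigma_k$ so that the homotopy stays inside fixed points. Each $\varphi_{j,t}$ must remain an isometric embedding for every $t$, although joint orthogonality is not needed because the multiplication is defined on a single $\bP(V)$. The homotopy must also be compatible with stabilization in $U$.

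If the explicit homotopy becomes messy, I fall back on a coordinate-free argument. The space of $\Sigma_k$-equivariant isometric embeddings $U\to U^k\oplus W$ becomes contractible as $W$ ranges over a complete universe. So after stabilizing, any two such identifications agree. The endpoint identifies $\Phi^{\Sigma_k}(P^k[f])$ with the inflation of $(\Sigma^\infty_+\phi^k)_*[f]$, using the description of inflation on geometric fixed points from \eqref{eq:define_inflation}. This proves (i).

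Part (ii) follows from (i) with $n=2$ and $b_{\cl}=\res^T_{\{1\}}(b_1)$, which gives
\[ \Phi^{\Sigma_k}(P^k(b_{\cl}))=p^*_{\Sigma_k}\big((\Sigma^\infty_+\phi^k)_*(b_{\cl})\big). \]
By Proposition \ref{prop:phi^m_on_b_1}, $(\Sigma^\infty_+\phi^k)_*(b_1)=(1+e_T+\dots+e_T^{k-1})b_1$. Restricting to the trivial group sends $e_T$ to $\res^T_{\{1\}}(e_T)=1$, so $(\Sigma^\infty_+\phi^k)_*(b_{\cl})=k\cdot b_{\cl}$. Since $p^*_{\Sigma_k}$ is additive, this yields (ii).
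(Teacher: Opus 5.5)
Your proposal is correct and follows the paper's proof essentially step by step. You represent the class at a trivial representation, compare with its stabilization along $\lambda_U$, and remove the $1/\sqrt{k}$ scaling by a path of positive scalars. You then homotope $L\mapsto\prod_j\bP(i_j)(L)$ to $\bP(\lambda_U)(L^k)$ inside $\Sigma_k$-fixed points via a $\Sigma_k$-permuted family of isometric embeddings from $i_j$ to $\lambda_U$, and deduce (ii) from Proposition~\ref{prop:phi^m_on_b_1} exactly as you do.

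The paper fills in your ``$\sqrt{1-t}\,i_j+\dots$'' with the explicit path $H_j(t)(v)=\cos(t\theta)\,i_j(v)+\frac{\sin(t\theta)}{\sqrt{k-1}}\sum_{l\neq j}i_l(v)$, where $\theta=\arccos(1/\sqrt{k})$. So no fallback is needed. Your fallback as stated would not apply directly anyway, since the diagonal-then-multiply map is not induced by a single equivariant embedding.
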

\begin{proof}
  (i)
  The counterclockwise composite sends the class of
  a continuous map $f\colon S^{V\oplus\mR^n}\to S^V\sm \bP(V)_+=(\Sigma^\infty_+\bP)(V)$
  to the class of the composite
  \[ S^{V\oplus\mR^n}\ \xra{\ f\ }\ S^V\sm\bP(V)_+ \ \xra{S^V\sm\phi^k(V)_+}\
    S^V\sm\bP(V)_+ \ ,\]
  supported in the colimit system \eqref{eq:define_geom_fix}
  for $\Phi^{\Sigma_k}_n(\Sigma^\infty_+ \bP)$
  at the trivial $\Sigma_k$-representation $V$.
  We get another representative for the same class in $\Phi^{\Sigma_k}_n(\Sigma^\infty_+\bP)$
  by stabilizing this representative along the $\Sigma_k$-equivariant
  linear isometric embedding $\lambda_V\colon V\to V^k$.
  By expansion of definitions, this stabilized representative
  features as the counterclockwise composite in the following diagram:
  \[ \xymatrix@C=10mm@R=6mm{
      S^{(V^k)^{\Sigma_k}\oplus\mR^n}\ar[r]^-\iso \ar[d]_{(\lambda_V\oplus\mR^n)^{-1}}^\iso &
       ((S^{V\oplus\mR^n})^{\sm k})^{\Sigma_k}\ar[r]^-{(f^{\sm k})^{\Sigma_k}} &
       ((S^V\sm \bP(V)_+)^{\sm k})^{\Sigma_k}\ar[d]^\iso\\
       S^{V\oplus\mR^n}  \ar[d]_f\ar[ur]_{\quad x\mapsto(x\sm\dots\sm x)}
       && (S^{V^k}\sm \bP(V)^k_+)^{\Sigma_k}\ar[dd]^{(S^{V^k}\sm(\mu_{V,\dots,V})_+)^{\Sigma_k}}\\
       S^V\sm \bP(V)_+ \ar[d]_{S^V\sm \phi^k(V)_+}
       \ar[urr]_{\qquad v\sm L\mapsto  (v\sm\dots\sm v)\sm (L,\dots,L)}
       && \\
       S^V\sm \bP(V)_+\ar[rr]_-{S^{\lambda_V}\sm \bP(\lambda_V)_+} &&
       \left(S^{V^k}\sm \bP(V^k)_+\right)^{\Sigma_k} = ((\Sigma^\infty_+\bP)(V^k))^{\Sigma_k}\quad
    } \]
  The unnamed isomorphisms consist of shuffling of factors and instances of $\lambda_V$.
  The clockwise composite around the diagram sends the class of
  $f$ to the class of $\Phi^{\Sigma_k}(P^k[f])$,
  so the claim comes down to the commutativity of the outer diagram up to based homotopy.
  The proof of that homotopy commutativity has two ingredients:
  \begin{enumerate}[(a)]
  \item
    a path in $\mR_{>0}$ from $1/\sqrt{k}$ to 1 to eliminate the scaling factor
    that is needed to make $\lambda_V$ an isometry;
    this rescaling path is used in the upper left triangle and in the lower trapezoid;
  \item
    the homotopy commutativity of the following diagram of spaces:
    \[ \xymatrix@C=25mm{
        \bP(V)\ar[d]_{\phi^k(V)}\ar[r]^-{\text{diag}} &
        ( \bP(V)^k)^{\Sigma_k} \ar[d]^{(\mu_{V,\dots,V})^{\Sigma_k}}\\
        \bP(V) \ar[r]_-{\bP(\lambda_V)} &  \bP(V^k)^{\Sigma_k}
      } \]
    We write  $i_1,\dots,i_k\colon V\to V^k$ for the embeddings of the summands.
    The clockwise composite sends $L\in \bP(V)$ to $\bP(i_1)(L)\cdot\ldots\cdot \bP(i_k)(L)$,
    whereas the counterclockwise composite sends it to $\bP(\lambda_V)(L^k)=(\bP(\lambda_V)(L))^k$.
    We set $\theta=\arccos(1/\sqrt{k})$.
    For $1\leq j\leq k$, 
    we consider the path of linear isometries $H_j\colon [0,1]\to \bL(V,V^k)$
    from $i_j$ to $\lambda_V$ defined by
    \[ H_j(t)(v)\ = \ \cos(t\theta)\cdot (0,\dots,0,v,0,\dots,0)\ +\ \frac{\sin(t\theta)}{\sqrt{k-1}}\cdot (v,\dots,v,0,v,\dots,v)\ , \]
    with the distinguished entries in this formula occurring in the $j$-th coordinate.
    Then 
    \[ \bP(V)\times [0,1]\ \to \ \bP(V^k)^{\Sigma_k}\ , \quad
    (L,t)\ \longmapsto \ \bP(H_1(t))(L)\cdot\ldots\cdot \bP(H_k(t))(L)\]
    is the desired homotopy for the above diagram.
  \end{enumerate}
  This concludes the proof that 
  $p_{\Sigma_k}^*((\Sigma^\infty_+\phi^k)_*[f])=\Phi^{\Sigma_k}(P^k[f])$.

  (ii) The restriction homomorphism $\res^T_{\{1\}}\colon\pi_0^T(\Sigma^\infty_+\bP)\to \pi_0(\Sigma^\infty_+\bP)$ takes the stable tautological class $e_T$ to the multiplicative unit $1$.
  So
  \begin{align*}
    (\Sigma^\infty_+\phi^k)_*( b_{\cl})\
    &= \ (\Sigma^\infty_+\phi^k)_*( \res^T_{\{1\}}(b_1))\
    = \      \res^T_{\{1\}}((\Sigma^\infty_+\phi^k)_*(b_1))\\
    & = \ \res^T_{\{1\}}((1+e_T+\dots+e_T^{k-1})\cdot b_1) \
     = \ k\cdot \res^T_{\{1\}}(b_1)\ = \ k\cdot b_{\cl}\ . 
  \end{align*}
  The third equation is provided by Proposition \ref{prop:phi^m_on_b_1}.
  Since
  $\Phi^{\Sigma_k}(P^k(b_{\cl}))=p_{\Sigma_k}^*((\Sigma^\infty_+\phi^k)_*(b_{\cl}))$
  by part (i), this proves the claim.
\end{proof}

\begin{eg}[Incompatibility with power operations]\label{eg:norm_does_not_match}
  As we now explain, the pre-Bott classes are not compatible with power operations,
  in the sense that
  the class $P^m(b_{G,V})$ is typically different from the class $b_{\Sigma_m\wr G,V^m}$
  for $m\geq 2$ and $V\ne 0$.
  The classical pre-Bott class $b_{\cl}=\res^T_{\{1\}}(b_1)$ is also
  the pre-Bott class $b_{\{1\},\mC}$ of the 1-dimensional representation
  of the trivial group.
  The $\Sigma_2$-representation $\mC^2$ is isomorphic to $\sigma\oplus\mC$,
  the sum of the sign and the trivial $\Sigma_2$-representation.
  So on the one hand,
  \[ 
    b_{\Sigma_2,\mC^2}\
    = \ b_{\Sigma_2,\sigma\oplus\mC} \ = \  b_{\Sigma_2,\sigma} \cdot b_{\Sigma_2,\mC}\
    = \
    b_{\Sigma_2,\sigma}\cdot p_{\Sigma_2}^*(b_{\cl}) \ . 
  \]
  Hence
  \[ \Phi^{\Sigma_2}(b_{\Sigma_2,\mC^2}) \ = \ \Phi^{\Sigma_2}(b_{\Sigma_2,\sigma})\cdot p_{\Sigma_2}^*(b_{\cl})\ =_{\eqref{eq:Phi^G_of_char}} \
    (1-\sigma^\sharp)\cdot p_{\Sigma_2}^*(b_{\cl}) \ . \]
  On the other hand,
  \[ \Phi^{\Sigma_2}(P^2(b_{\{1\},\mC}))\ = \ \Phi^{\Sigma_2}(P^2(b_{\cl}))\ = \ 2\cdot p_{\Sigma_2}^*(b_{\cl})\ , \]
  by Proposition \ref{prop:Phi_after_P^m} (ii).
  Proposition \ref{prop:monomial_basis} (ii) below and the fact that $b_{\cl}\ne 0$
  show that $2\cdot p_{\Sigma_2}^*(b_{\cl})\ne  (1-\sigma^\sharp)\cdot p_{\Sigma_2}^*(b_{\cl})$
  in $\Phi^{\Sigma_2}_2(\Sigma^\infty_+\bP)$. So also $P^2(b_{\{1\},\mC})\ne b_{\Sigma_2,\mC^2}$.
\end{eg} 

\section{Geometric fixed points of pre-Bott classes}

In this section we shall acquire
a firm grip on the images of the pre-Bott classes
$b_{G,V}$ in the $G$-geometric fixed points of $\Sigma^\infty_+\bP$.
This is the technical heart of this paper that makes our main results work.
It is well-known that the $G$-geometric fixed points of the equivariant
$K$-theory spectrum are trivial for all non-cyclic compact Lie groups $G$.
What we need in the remainder of the paper is that our pre-Bott classes 
are sufficiently plentiful so that inverting them enforces the same property,
i.e., kills the geometric fixed points of $\Sigma^\infty_+\bP$
for all non-cyclic compact Lie groups.
This feature is ensured by the main result of this section,
Theorem \ref{thm:rep_with_trivial_Phi-Lie}: for
every non-cyclic compact Lie group $G$, it provides 
a unitary $G$-representation $V$ such that
$\Phi^G(b_{G,V})=0$ in $\Phi^G_0(\Sigma^\infty_+\bP)$.

A key ingredient for the proof of
Theorem \ref{thm:rep_with_trivial_Phi-Lie}
is a formula for $\Phi^G(b_{G,V})$ in the case when $V$ is irreducible.
Our formula in Theorem \ref{thm:Phi_on_irrep}
is a sum indexed by all ways to write $V$ as induced from
a 1-dimensional representation of some finite index subgroup of $G$;
it says in particular that if $V$ is irreducible and not monomial,
i.e., not induced from a 1-dimensional representation
of any finite index subgroup, then $\Phi^G(b_{G,V})=0$.
This takes care of all compact Lie groups that have non-monomial irreducible representations.
Additional arguments are needed to deal with monomial groups, i.e., ones where
every irreducible representation is monomial. For those we provide ad hoc arguments
involving some explicit group theoretic manipulations.
\medskip

Our main calculations live in the geometric fixed point ring $\Phi^G_0(\Sigma^\infty_+\bP)$,
for varying compact Lie groups $G$;
the next proposition exhibits a monomial basis of this ring.
The $G$-equivariant homotopy ring $\pi_0^G(\Sigma^\infty_+\bP)$ 
is free as an abelian group with basis the classes $\tr_H^G(\alpha^*(e_T))$
for all $G$-conjugacy classes of pairs $(H,\alpha)$ consisting of a
closed subgroup $H$ of $G$ with finite Weyl group, and a continuous homomorphism
$\alpha\colon H\to T$, see \cite[Corollary 4.1.13]{schwede:global}.
We  write
\[\alpha^\sharp \ = \ \Phi^G(\alpha^*(e_T)) \ \in \ \Phi_0^G(\Sigma^\infty_+\bP)\ ,    \]
where $\Phi^G\colon\pi_0^G(\Sigma^\infty_+\bP)\to \Phi_0^G(\Sigma^\infty_+\bP)$
is the geometric fixed point homomorphism \eqref{eq:geom_fix_map} or \eqref{eq:define_Phi^G_map}.

\begin{eg}
  Let $\alpha\colon G\to T$ be a nontrivial character of a compact Lie group $G$.
  Then the geometric fixed point homomorphism
  $\Phi^G\colon \pi^G_{\alpha}(\Sigma^\infty_+\bP)\to \Phi^G_0(\Sigma^\infty_+\bP)$
  from \eqref{eq:geom_fix_map} factors as the composite
  \[ \pi^G_{\alpha}(\Sigma^\infty_+\bP)\ \xra{\ i^* \ }\
    \pi^G_0(\Sigma^\infty_+\bP)\ \xra{\Phi^G}\ \Phi^G_0(\Sigma^\infty_+\bP)\ , \]
  where $i\colon S^0\to S^\alpha$ is the inclusion of $G$-fixed points. We record the relation
  \begin{equation}\label{eq:i^*_of_b}
    i^*(b_{G,\alpha})\ = \ i^*(\alpha^*(b_1))\ = \ \alpha^*(i^*(b_1))\
    = \ \alpha^*(1-e_T)\ = \ 1-\alpha^*(e_T)\ .
  \end{equation}
  Hence
  \begin{equation}\label{eq:Phi^G_of_char}
    \Phi^G(b_{G,\alpha})\ = \ \Phi^G(i^*(b_{G,\alpha}))\ = \
    1-\Phi^G(\alpha^*(e_T))\ = \ 1 -\alpha^\sharp\ .
  \end{equation}
\end{eg}

The structure of an ultra-commutative monoid on $\bP$ gives the
Rep-functor $\upi_0(\bP)$ the structure of a {\em global power monoid}
in the sense of \cite[Definition 2.2.8]{schwede:global}.
Roughly speaking, this means that $\upi_0(\bP)$ becomes a functor
to abelian monoids that is additionally equipped with additive power operations.
By \cite[Proposition 2.2.23]{schwede:global},
the represented functor $\Hom(-,T)$---and hence also the Rep-functor $\upi_0(\bP)$---in fact
has a {\em unique} structure of global power monoid.
Namely, the abelian monoid structure on $\Hom(G,T)$
is elementwise multiplication of homomorphisms;
and for $m\geq 1$, the power operation
\[ [m] \ : \ \Hom(G,T)\ \to \ \Hom(\Sigma_m\wr G,T)\]
sends $\alpha\colon G\to T$ to $[m](\alpha)\colon  \Sigma_m\wr G\to T$
defined by
\[     [m](\alpha)(\sigma;\lambda_1,\dots,\lambda_m) \ = \ \alpha(\lambda_1)\cdot\ldots\cdot\alpha(\lambda_m)\  .\]

\begin{construction}[Transfer]
  We recall the construction of the transfer
  \[  \tr_H^G \ \colon \ \Hom(H,T)\ \to \ \Hom(G,T)\ , \]
  where $H$ is a finite index subgroup of a compact Lie group $G$.
  The construction uses a transversal of $H$ in $G$, i.e., an ordered $m$-tuple 
  $\bar g=(g_1,\dots,g_m)$ of elements in disjoint right $H$-orbits such that
  \[ G \ = \ {\bigcup}_{i=1}^m \ g_i H \ ,\]
  where $m=[G:H]$ is the index.  The wreath product $\Sigma_m\wr H$ acts freely and transitively 
  from the right on the set of all such transversals, by the formula
  \[ (g_1,\dots,g_m)\cdot (\sigma;\,h_1,\dots,h_m)\ = \ 
    (g_{\sigma(1)} h_1,\dots,g_{\sigma(m)}h_m)\ .  \]
  We obtain a continuous monomorphism 
  \begin{equation}\label{eq:define_Psi}
    \Psi_{\bar g}\ \colon\  G\ \to\ \Sigma_m\wr H
  \end{equation}
  by requiring that
  \[  \gamma \cdot\bar g \ = \ \bar g\cdot\Psi_{\bar g}(\gamma)  \ .     \]
  In more detail: the image $\Psi_{\bar g}(\gamma)=(\sigma;\,h_1,\dots,h_m)$ is determined
  by the relations
  \[ \gamma\cdot g_i \ = \ g_{\sigma(i)}\cdot h_i \]
  for all $1\leq i\leq m$. The transfer of an $H$-character $\chi\colon H\to T$
  is then defined as the composite
  \[  G \ \xra{\Psi_{\bar g}}\ \Sigma_m\wr H \ \xra{[m](\chi)}\ T \ . \]
  The resulting $G$-character $\tr_H^G(\chi)$ is independent of the choice
  of $H$-transversal.
\end{construction}

The inflation homomorphism
\[ p_G^* \ \colon \ \pi_k(\Sigma^\infty_+ \bP)\ = \ \Phi_k^{\{1\}}(\Sigma^\infty_+\bP)\ \to\
  \Phi_k^G(\Sigma^\infty_+ \bP)    \]
associated with the unique homomorphism $p_G\colon G\to\{1\}$
is defined in \cite[Construction 3.3.4]{schwede:global}, see also \eqref{eq:define_inflation}. 

\begin{prop}\label{prop:monomial_basis}
  Let $G$ be a compact Lie group.
  \begin{enumerate}[\em (i)]
  \item The map
    \[ \Hom(G,T) \ \to \ \Phi_0^G(\Sigma^\infty_+\bP)\ , \quad
    \alpha \ \longmapsto \ \alpha^\sharp \ = \ \Phi^G(\alpha^*(e_T)) \]
    is multiplicative, and its extension to a ring homomorphism
    \[ \mZ[\Hom(G,T)] \ \to \ \Phi_0^G(\Sigma^\infty_+\bP)\]
    is an isomorphism of rings.
  \item The morphism of graded-commutative rings
    \[  \mZ[\Hom(G,T)] \tensor \pi_*(\Sigma^\infty_+ \bP)
      \ \to \ \Phi_*^G(\Sigma^\infty_+\bP)\ , \quad
      \alpha\tensor y\ \longmapsto \ \alpha^\sharp\cdot p_G^*(y)\]
    is an isomorphism.
  \item  Let $\alpha\colon G\to T$ be a  continuous homomorphism.
    Then for every  $m\geq 1$, the relation
    \[ P^m(\alpha^*(e_T))\ = \ ([m](\alpha))^*(e_T) \]
    holds in the ring $\pi^{\Sigma_m\wr G}_0(\Sigma^\infty_+\bP)$.
  \item  Let $H$ be a finite index subgroup of $G$,
    and let $\chi\colon H\to T$ be a nontrivial continuous homomorphism.
    Then the relation
    \[ \Phi^G(\Psi_{\bar g}^*(P^m(b_{H,\chi})))\ = \ 1- \tr_H^G(\chi)^\sharp \]
    holds in the ring $\Phi^G_0(\Sigma^\infty_+\bP)$,
    where $m=[G:H]$ and $\Psi_{\bar g}$ is the monomorphism \eqref{eq:define_Psi}
    based on some $H$-transversal $\bar g$ in $G$.
  \item Let $\alpha\colon G\to T$ be a nontrivial continuous homomorphism.
    Then for every  $m\geq 1$, the relation
    \[ \Phi^{\Sigma_m\wr G}(P^m(b_{G,\alpha}))\ = \ 1- ([m](\alpha))^\sharp \]
    holds in the ring $\Phi^{\Sigma_m\wr G}_0(\Sigma^\infty_+\bP)$.  
  \end{enumerate}
\end{prop}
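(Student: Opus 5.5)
The plan is to handle (i) and (ii) by the tom Dieck splitting, (iii) by the uniqueness of the global power monoid structure on $\upi_0(\bP)$, and (iv) and (v) by a fixed point reduction followed by an expansion of power operations of differences.

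For (i) and (ii), I would use that for suspension spectra of orthogonal spaces the geometric fixed points are the suspension spectrum of the fixed points: $\Phi^G_*(\Sigma^\infty_+\bP)\iso \pi_*(\Sigma^\infty_+ \bP(\Uc_G)^G)$, multiplicatively, compare \cite[Proposition 3.3.9]{schwede:global}. A $G$-fixed line in $\Sym$ of a complete complex universe is a $G$-invariant line. Hence it lies in the isotypical summand of a unique character $\alpha\colon G\to T$. So $\bP(\Uc_G)^G=\coprod_{\alpha\in\Hom(G,T)} P(\Uc_{G,\alpha})$, and each summand is an infinite complex projective space, hence a copy of $\mC P^\infty$. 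The monoid structure multiplies the $\alpha$- and $\beta$-components into the $\alpha\beta$-component. The class $\alpha^\sharp$ is the class of a point in the $\alpha$-component, because $\alpha^*(u_T)$ is a $G$-fixed line of character $\alpha$. Multiplication by such a point is a map $P(\Uc_{G,1})\to P(\Uc_{G,\alpha})$ between copies of $\mC P^\infty$ induced by a linear embedding, hence a homotopy equivalence. The component of the trivial character receives $p_G^*$ isomorphically. Together these give (ii), and (i) is its degree $0$ part.

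For (iii), the stabilization map $\sigma$ from $\upi_0(\bP)$ to $\upi_0(\Sigma^\infty_+\bP)$ commutes with power operations, as one checks directly on representatives. Moreover $\alpha^*(e_T)=\sigma(\alpha^*(u_T))$, and the unstable power operation on $\Hom(-,T)$ is forced to be $[m]$ by \cite[Proposition 2.2.23]{schwede:global}. This gives (iii).

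For (iv), the first step is to note that $\Psi_{\bar g}^*(\chi^m)=\ind_H^G(\chi)$. Its $G$-fixed points vanish by Frobenius reciprocity, since $\chi$ is nontrivial. Hence $\Phi^G$ factors through $i^*$ for the fixed point inclusion $i\colon S^0\to S^{\ind_H^G\chi}$. This inclusion is $\Psi_{\bar g}^*$ of the $m$-fold smash power of $S^0\to S^\chi$, so $i^*$ commutes with $P^m$ and $\Psi_{\bar g}^*$. Using \eqref{eq:i^*_of_b}, we obtain
\[ \Phi^G(\Psi_{\bar g}^*(P^m(b_{H,\chi})))\ =\ \Phi^G(\Psi_{\bar g}^*(P^m(1-\chi^*(e_T))))\ .\]
I would then expand $P^m$ of the sum using the additivity formula for power operations. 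The cross terms are transfers from $(\Sigma_k\wr H)\times(\Sigma_{m-k}\wr H)$ for $0<k<m$. After restricting along $\Psi_{\bar g}$, the double coset formula turns them into transfers from subgroups of $G$ that stabilize a $k$-element subset of $G/H$. Since $G$ acts transitively on $G/H$, these subgroups are proper and of finite index, so $\Phi^G$ annihilates them. The remaining terms are $P^m(1)=1$ and $P^m(-1)\cdot P^m(\chi^*(e_T))$. By (iii) the second factor is $([m](\chi))^*(e_T)$, and $\Psi_{\bar g}^*$ of it is $\tr_H^G(\chi)^*(e_T)$.

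The main obstacle is the sign: one must show $\Phi^G(\Psi_{\bar g}^*(P^m(-1)))=-1$. The class $P^m(-1)$ is inflated from the Burnside ring $\pi_0^{\Sigma_m}(\mS)$. There I would verify that its mark at a subgroup $K\leq\Sigma_m$ is $(-1)^{\#\text{orbits of }K}$, by checking the formula on $P^m(1)$ and using the additivity formula inductively. The image of $G$ in $\Sigma_m$ is transitive, so there is exactly one orbit and the mark is $-1$. Since geometric fixed points of inflated Burnside classes are marks, this gives $-1$, as required.

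Part (v) follows by the identical argument with the group $\Sigma_m\wr G$ in place of $G$ and no $\Psi$. The representation $\alpha^m$ has no $(\Sigma_m\wr G)$-fixed vectors because $\alpha$ is nontrivial. The cross terms are transfers from proper finite index subgroups, which $\Phi^{\Sigma_m\wr G}$ kills. The class $P^m(-1)$ has mark $-1$ at the transitive group $\Sigma_m$, so $\Phi^{\Sigma_m\wr G}(P^m(-1))=-1$.
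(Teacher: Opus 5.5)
Your proposal is correct. For (i)--(iii) it is essentially the paper's argument: the $G$-fixed points of $\bP$ split by characters into copies of $\mC P^\infty$, stabilization commutes with power operations, and the global power monoid structure on $\Hom(-,T)$ is unique. Your observation that multiplication by $\alpha^\sharp$ is induced by an injective linear map usefully makes explicit a step the paper leaves implicit.

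In (iv) you also follow the paper's skeleton: reduce along the fixed point inclusion, then kill the cross terms by transitivity of $G$ on $G/H$. However, the step you call the main obstacle is not one. Your cross-term argument uses nothing about the particular summands, so it shows that
\[ N\ =\ \Phi^G\circ\Psi_{\bar g}^*\circ P^m\ \colon\ \pi_0^H(\Sigma^\infty_+\bP)\ \to\ \Phi^G_0(\Sigma^\infty_+\bP) \]
is additive. Hence $N(1-x)=N(1)-N(x)=1-N(x)$ with no sign computation, and this is exactly how the paper proceeds. Your detour through $P^m(-x)=P^m(-1)\cdot P^m(x)$ and the mark $-1$ of $P^m(-1)$ at a transitive subgroup of $\Sigma_m$ is correct. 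It is, however, just the same additivity statement applied to the sphere spectrum.

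For (v) you genuinely diverge. The paper deduces (v) from (iv) using the index-$m$ subgroup $G\times(\Sigma_{m-1}\wr G)$ of $\Sigma_m\wr G$ and the transversal $((1\,i);1,\dots,1)$, whose $\Psi_{\bar g}$ is a section of $\Sigma_m\wr q$. It then needs the identity $\tr_{G\times(\Sigma_{m-1}\wr G)}^{\Sigma_m\wr G}(\alpha\circ q)=[m](\alpha)$ from \cite[Proposition 2.2.30 (iv)]{schwede:global}. Your direct repetition of the expansion for $\Sigma_m\wr G$ avoids both the section argument and that transfer identity. It is arguably simpler, because the cross terms are now transfers from the subgroups $(\Sigma_k\wr G)\times(\Sigma_{m-k}\wr G)$, which are themselves proper in $\Sigma_m\wr G$, so no double coset formula is needed. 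The paper's route has the advantage of reusing (iv) verbatim; both routes work.
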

\begin{proof}
  (i) By \cite[Proposition 2.2.23]{schwede:global},
  the bijection \eqref{eq:Hom(G,T)}  is multiplicative in the sense that
  \[ \alpha^*(u_T)\cdot \beta^*(u_T)\ = \ (\alpha\cdot\beta)^*(u_T)\]
  in $\pi_0^G(\bP)$, for all continuous homomorphisms $\alpha,\beta\colon G\to T$.
  The stabilization map \cite[(3.3.12)]{schwede:global}
  is multiplicative and natural for restriction homomorphisms, and thus
  \begin{align*}
    \alpha^*(e_T)\cdot \beta^*(e_T)\
   &=_{\eqref{eq:define_e_T}}  \  \alpha^*(\sigma^T(u_T))\cdot \beta^*(\sigma^T(u_T))\
      = \  \sigma^G(\alpha^*(u_T)\cdot \beta^*(u_T))\\
    &= \  \sigma^G( (\alpha\cdot \beta)^*(u_T))\
      = \   (\alpha\cdot \beta)^*(\sigma^T(u_T))\
  =_{\eqref{eq:define_e_T}} \ (\alpha\cdot \beta)^*(e_T)\ .
  \end{align*}
  This relation and the multiplicativity of the geometric fixed point map
  yield
  \[ \alpha^\sharp\cdot\beta^\sharp\ =\ \Phi^G(\alpha^*(e_T))\cdot \Phi^G(\beta^*(e_T))
    \ = \  \Phi^G(\alpha^*(e_T)\cdot \beta^*(e_T))
    \ = \  \Phi^G( (\alpha\cdot \beta)^*(e_T))\ = \ (\alpha\cdot\beta)^\sharp\ . \]
  For every compact Lie group $G$ and every $G$-space $A$,
  the abelian group $\Phi_0^G(\Sigma^\infty_+ A)$ is free, with a basis
  indexed by the set $\pi_0(A^G)$. More precisely, the composite
  \[  \pi_0(A^G)\ \xra{\sigma^G} \ \pi_0^G(\Sigma^\infty_+ A)
    \ \xra{\Phi^G} \ \Phi_0^G(\Sigma^\infty_+ A) \]
  of the stabilization map
  and the geometric fixed point map identifies $\pi_0(A^G)$ with a $\mZ$-basis of
  $\Phi_0^G(\Sigma^\infty_+ A)$. For the underlying $G$-space of the global space $\bP$,
  we have $\pi_0(\bP^G)=\pi_0^G(\bP)$, and   \eqref{eq:Hom(G,T)} identifies this set with
  $\Hom(G,T)$.
  Moreover,
  \begin{align*}
    \Phi^G(\sigma^G(\alpha^*(u_T)))\ = \
    \Phi^G(\alpha^*(\sigma^T(u_T)))\ = \
    \Phi^G(\alpha^*(e_T))\ = \ \alpha^\sharp
  \end{align*}
  for every continuous homomorphism $\alpha\colon G\to T$,
  and hence these classes form a $\mZ$-basis of $\Phi_0^G(\Sigma^\infty_+ \bP)$.

  (ii) Part (i) shows that the graded ring
  $\Phi_0^G(\Sigma^\infty_+\bP)\tensor \pi_*(\Sigma^\infty_+ \bP)$
  is a free module over $\pi_*(\Sigma^\infty_+ \bP)$ with basis the classes
  $\alpha^\sharp\tensor 1$ for all $\alpha\in \Hom(G,T)$.
  Suspension spectra and geometric fixed points commute, in the sense that the composite
  \[ \pi_*(\Sigma^\infty_+ \bP^G)\ \xra{\ \sigma^G\ } \ \pi_*^G(\Sigma^\infty_+ \bP )\ \xra{\Phi^G}
    \Phi_*^G(\Sigma^\infty_+\bP) \]
  of the stabilization map and the geometric fixed point map is an isomorphism. 
  The fixed point space $\bP^G=P(\Sym(\Uc_G^\mC))^G$ is the space of $G$-invariant
  lines in a complete complex $G$-universe, and hence the disjoint union over all
  $\alpha\in \Hom(G,T)$ of copies of $\mC P^\infty$.
  More precisely, choices of linear isometries
  $\alpha_\flat\colon \mC^\infty\iso \Hom^G(\alpha,\Sym(\Uc_G^\mC))$
  to the $\alpha$-isotypical summands in the complete universe $\Sym(\Uc_G^\mC)$
  assemble into a homeomorphism
  \[ \Hom(G,T)\times \mC P^\infty\ \iso \ \bP^G\ . \]
  Stable homotopy groups take disjoint unions to direct sums, so the induced map 
  \[ \mZ[\Hom(G,T)]\tensor\pi_*(\Sigma^\infty_+\bP)\ \to \ 
    \pi_*(\Sigma^\infty_+ \bP^G)\]
  is an isomorphism.

  (iii)
  Like for any ultra-commutative monoid, the stabilization maps
  $\sigma^G\colon\pi_0^G(\bP)\to\pi_0^G(\Sigma^\infty_+\bP)$
  send power operations in the global power monoid $\upi_0(\bP)$
  to power operations in the global power functor $\upi_0(\Sigma^\infty_+ \bP)$,
  by direct inspection.
  We abbreviate
  \[ p_m=[m](\Id_T)\ \colon \ \Sigma_m\wr T \ \to \ T \text{,\quad so that\quad}
    p_m(\sigma;\lambda_1,\dots,\lambda_m)\ = \ \lambda_1\cdot\ldots\cdot \lambda_m \ .  \]
  Then
    \begin{align*}
    P^m(\alpha^*(e_T))\
      &= \  (\Sigma_m\wr \alpha)^*(P^m(e_T))\\
    _{\eqref{eq:define_e_T}}   &= \  (\Sigma_m\wr \alpha)^*(P^m(\sigma^T(u_T)))\
      = \  (\Sigma_m\wr \alpha)^*(\sigma^{\Sigma_m\wr T}([m](u_T)))\\
      &= \  (\Sigma_m\wr \alpha)^*(\sigma^{\Sigma_m\wr T}(p_m^*(u_T)))\
      = \  (\Sigma_m\wr \alpha)^*( p_m^*(\sigma^T(u_T)))\\
   _{\eqref{eq:define_e_T}} &= \  (p_m\circ(\Sigma_m\wr \alpha))^*(e_T)\  = \  ([m](\alpha))^*(e_T) \ .
  \end{align*}
  
  (iv)
  We show first that for every ultra-commutative ring spectrum $R$,
  the composite
  $\Phi^G\circ\Psi_{\bar g}^*\circ P^m\colon \pi_0^H(R)\to\Phi_0^G(R)$  is additive.
  Indeed, the composite
  \[ G \ \xra{\ \Psi_{\bar g}\ }\ \Sigma_m\wr H \ \xra{\text{proj}} \ \Sigma_m \]
  classifies the action of $G$ on $G/H$ by left translation.
  Since this action is transitive, the image of the composite is a transitive subgroup
  of $\Sigma_m$, and hence {\em not} subconjugate to $\Sigma_k\times \Sigma_{m-k}$
  for any $1\leq k\leq m-1$. Consequently, the image of $\Psi_{\bar g}\colon G\to\Sigma_m\wr H$
  is not subconjugate to $(\Sigma_k\wr H)\times(\Sigma_{m-k}\wr H)$ for any $1\leq k\leq m-1$. 
  Thus the composite
  \[ \Phi^G\circ \Psi_{\bar g}^*\circ \tr_{(\Sigma_k\wr H)\times (\Sigma_{m-k}\wr H)}^{\Sigma_m\wr H}\]
  vanishes for $1\leq k\leq m-1$, by \cite[Proposition 3.4.2 (ii)]{schwede:global}.
  So the desired additivity follows:
  \begin{align*}
   \Phi^G(\Psi_{\bar g}^*(P^m(x+y)))\
    &= \ \sum_{k=0}^m \Phi^G(\Psi_{\bar g}^*(\tr_{(\Sigma_k\wr H)\times (\Sigma_{m-k}\wr H)}^{\Sigma_m\wr H}(P^k(x)\times P^{m-k}(y))))\\
    &=\ \Phi^G(\Psi_{\bar g}^*(P^m(x)))\ +\ \Phi^G(\Psi_{\bar g}^*(P^m(y)))\ .
  \end{align*}
  The first equation is the generic formula for a power operation on a sum in a global power
  functor, proved for $\upi_0(R)$ in \cite[Theorem 5.1.11]{schwede:global}.

  Now we turn to $R=\Sigma^\infty_+\bP$.
  The class $\Psi_{\bar g}^*(P^m(b_{H,\chi}))$ has representation-grading $\Psi_{\bar g}^*(\chi^m)$.
  Since $\chi\colon H\to T$ is nontrivial, the $G$-fixed points
  of $\Psi_{\bar g}^*(\chi^m)$ are also trivial. So the geometric fixed point homomorphism
  $\Phi^G\colon \pi^G_{\Psi_{\bar g}^*(\chi^m)}(\Sigma^\infty_+\bP)\allowbreak\to \Phi^G_0(\Sigma^\infty_+\bP)$
  factors as the composite
  \[ \pi^G_{\Psi_{\bar g}^*(\chi^m)}(\Sigma^\infty_+\bP)\ \xra{\ i^* \ }\
    \pi^G_0(\Sigma^\infty_+\bP)\ \xra{\Phi^G}\ \Phi^G_0(\Sigma^\infty_+\bP)\ . \]
  Altogether we can now conclude
  \begin{align*}    \Phi^G( \Psi_{\bar g}^*(P^m(b_{H,\chi})))\
    &= \  \Phi^G(i^*(\Psi_{\bar g}^*(P^m(b_{H,\chi}))))\\
   &= \  \Phi^G(\Psi_{\bar g}^*(P^m(i^*(b_{H,\chi}))))\\
_{\eqref{eq:i^*_of_b}}    &= \  \Phi^G(\Psi_{\bar g}^*(P^m(1-\chi^*(e_T))))\\
    &= \  1- \Phi^G(\Psi_{\bar g}^*(P^m(\chi^*(e_T))))\\
_{\text{(iii)}}    &= \  1- \Phi^G(\Psi_{\bar g}^*(([m](\chi))^*(e_T)))\\
   &= \  1- \Phi^G( (\tr_H^G(\chi))^*(e_T))\
      = \  1- \tr_H^G(\chi)^\sharp\ .
    \end{align*}

  (v)
  We apply part (iv) to the ambient group $\Sigma_m\wr G$, its subgroup
  $H=G\times(\Sigma_{m-1}\wr G)$ of index $m$, and with $\chi=\alpha\circ q$,
  for $q\colon G\times(\Sigma_{m-1}\wr G)\to G$ the projection to the first coordinate.
  We use the transversal $\bar g$ of $G\times(\Sigma_{m-1}\wr G)$ in
  $\Sigma_m\wr G$ consisting of the elements $((1 i);1,\dots,1)$ for $1\leq i\leq m$.
  Then the associated monomorphism \eqref{eq:define_Psi} 
  \[ \Psi_{\bar g}\  \colon \ \Sigma_m\wr G \ \to \ \Sigma_m \wr(G\times(\Sigma_{m-1}\wr G)) \]
  is a section to the homomorphism
  $\Sigma_m\wr q\colon\Sigma_m \wr(G\times(\Sigma_{m-1}\wr G))\to\Sigma_m\wr G$.
  So we obtain
  \begin{align*}
    \Phi^{\Sigma_m\wr G}(P^m(b_{G,\alpha}))\
    &= \ \Phi^{\Sigma_m\wr G}(\Psi_{\bar g}^*((\Sigma_m\wr q)^*(P^m(b_{G,\alpha}))))\\
    &= \ \Phi^{\Sigma_m\wr G}(\Psi_{\bar g}^*(P^m( q^*(b_{G,\alpha}))))\\
    &= \
      \Phi^{\Sigma_m\wr G}(\Psi_{\bar g}^*(P^m(b_{G\times(\Sigma_{m-1}\wr G) ,\alpha\circ q})))\\
   _{\text{(iv)}} &= \ 1- (\tr_{G\times(\Sigma_{m-1}\wr G)}^{\Sigma_m\wr G}(\alpha\circ q))^\sharp
      \ = \ 1- ([m](\alpha))^\sharp \ .
  \end{align*}
  The final equation uses that
  $\tr_{G\times(\Sigma_{m-1}\wr G)}^{\Sigma_m\wr G}(\alpha\circ q)=[m](\alpha)$,
  see \cite[Proposition 2.2.30 (iv)]{schwede:global}.
\end{proof}

Now we embark on the journey towards Theorem \ref{thm:Phi_on_irrep}
which provides a formula for $\Phi^G(b_{G,V})$ when $V$ is irreducible.
The theorem refers to the ways to write an irreducible $G$-representation
as induced from a 1-dimensional representation of a finite index subgroup;
we set up some terminology and notation for dealing with this.

\begin{construction}[Monomial structures]
  We let $V$ be a unitary representation of a compact Lie group $G$.
  A {\em monomial datum} for $V$ is a pair $(H,\chi)$
  consisting of a finite index subgroup $H$ of $G$,
  and a continuous homomorphism $\chi\colon H\to T$,
  such that $V$ is isomorphic to the induced representation $\ind_H^G(\chi)$.
  For $g\in G$, let $^g H=g H g^{-1}$ be the conjugate subgroup,
  and let $c_g\colon {^g H}\to H$ be the conjugation isomorphism defined by $c_g(h)=g^{-1}h g$.
  Then the $G$-representation $\ind_{^g H}^G(\chi\circ c_g)$ is isomorphic to $\ind_H^G(\chi)$.
  So the group $G$ acts on the set of monomial data for $V$ by 
  \[ {^g(H,\chi)}\ = \ (^g H, \chi\circ c_g) \ .\]
  A {\em monomial structure} on $V$ is a $G$-orbit of monomial data.
  We write $\mon(V)$ for the set of monomial structures on $V$,
  and $[H,\chi]\in \mon(V)$ for the monomial structure represented
  by a monomial datum $(H,\chi)$.
\end{construction}

\begin{rk}[Finiteness of $\mon(V)$]
  If $(H,\chi)$ is a monomial datum for $V$, then in particular, the following
  two properties must be satisfied:
  \begin{itemize}
  \item Since $\dim(\ind_H^G(\chi))=[G:H]$,
    the index of $H$ in $G$ equals the dimension of $V$.
  \item Since the character $\chi$ is a summand in $\res^G_H(\ind_H^G(\chi))$,
    it is also a summand in $\res^G_H(V)$.
  \end{itemize}
  A compact Lie group $G$ has only finitely many subgroups of finite index,
  and $\res^G_H(V)$ has only finitely many non-zero isotypical summands.
  So $V$ has only finitely many monomial data.
  Hence also the set $\mon(V)$ of monomial structures is finite.
\end{rk}

Every 1-dimensional representation has a unique monomial datum,
and a unique monomial structure.
Tautologically, the pair $(H,\chi)$ is a monomial datum for
the induced representation $\ind_H^G(\chi)$.
We alert the reader that even irreducible representations can have
several monomial structures, see Example \ref{eg:several_monomial_structures}.
The next proposition exhibits a situation where monomial structures are unique.

\begin{prop}\label{prop:unique_mon}
  Let $V$ be a unitary representation of a compact Lie group $G$.
  Suppose that $G$ has a unique subgroup of index $\dim(V)$.
  Then $V$ has at most one monomial structure.
\end{prop}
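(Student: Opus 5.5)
Let $(H,\chi)$ and $(H',\chi')$ be two monomial data for $V$; the goal is to show that they are $G$-conjugate. By the finiteness remark, both $H$ and $H'$ have index $\dim(V)$ in $G$. The hypothesis that $G$ has a unique subgroup of that index therefore gives $H=H'$, and this subgroup is normal in $G$, since every conjugate of $H$ also has index $\dim(V)$. It remains to show that $\chi'$ is a $G$-conjugate of $\chi$, i.e., $\chi'=\chi\circ c_g$ for some $g\in G$. The conjugation action of $G$ on monomial data then identifies the two data, and $V$ has at most one monomial structure.

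To compare $\chi$ and $\chi'$, I will use Mackey's formula for the restriction of an induced representation along a normal subgroup of finite index. Choose a transversal $g_1,\dots,g_m$ of $H$ in $G$. Then
\[ \res^G_H(\ind_H^G(\chi))\ \iso\ {\bigoplus}_{i=1}^m\, \chi\circ c_{g_i}\ , \]
because $H$ is normal, so $g_i H g_i^{-1}=H$ and each double coset $Hg_iH$ equals $g_iH$. In concrete terms, the induced representation is $\bigoplus_i g_i\otimes\mC$, and $h\in H$ acts on the summand $g_i\otimes\mC$ through $\chi(g_i^{-1}hg_i)$. Thus $\res^G_H(V)$ is a sum of one-dimensional characters, and these characters are exactly the $G$-conjugates of $\chi$.

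Since $V\iso\ind_H^G(\chi')$ as well, the second bullet of the finiteness remark says that $\chi'$ occurs as a summand of $\res^G_H(V)$. Characters of $H$ are pairwise non-isomorphic unless equal, so $\chi'$ must coincide with one of the characters $\chi\circ c_{g_i}$, which is what we need.

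The only step that needs care is the identification of the restriction of the induced representation, in particular checking the direction of the conjugation convention so that it matches ${^g(H,\chi)}=({^gH},\chi\circ c_g)$. Since $H$ is normal, either convention runs over all $G$-conjugates of $\chi$ as $g_i$ varies, so the argument holds in both cases. The topology of the compact Lie group plays no role, because $H$ has finite index and the induction is purely algebraic.
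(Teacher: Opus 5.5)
Your proof is correct and follows the paper's argument: uniqueness puts every monomial datum on the same normal subgroup $H$, the restriction $\res^G_H(\ind_H^G(\chi))$ decomposes as the sum of the characters $\chi\circ c_g$, and so $\chi'$ must equal one of these conjugates. Your check that the action of $h$ on the summand $g_i\otimes\mC$ is through $\chi(g_i^{-1}hg_i)=(\chi\circ c_{g_i})(h)$ confirms that the convention matches ${^g(H,\chi)}=({^gH},\chi\circ c_g)$.
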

\begin{proof}
  Every monomial datum is supported on the unique subgroup $H$ of $G$
  of index $\dim(V)$.
  By uniqueness, the group $H$ is in particular normal in $G$,
  so if $(H,\chi)$ is a monomial datum, then 
  \[ \res^G_H(V)\ \iso \
    \res^G_H(\ind_H^G(\chi))\ \iso \ \bigoplus_{g H\in G/H}  \chi\circ c_g\ .\]
  So any other monomial datum equals $(H,\chi\circ c_g)$
  for some $g\in G$, and is thus conjugate to $(H,\chi)$.
\end{proof}

\begin{eg}[Inflation of monomial structures]\label{eg:inflate_mon}
  We let $\epsilon\colon G\to K$ be a continuous epimorphism of compact Lie groups.
  If $(L,\lambda)$ is a monomial datum for a $K$-representation $W$, then
  \[ \epsilon^*(W)\ \iso \ \epsilon^*(\ind_L^K(\lambda))\ \iso \
    \ind_H^G(\lambda\circ \epsilon|_H)\ , \]
  where $H=\epsilon^{-1}(L)$. So $(H,\lambda\circ\epsilon|_H)$ is a monomial datum
  for the inflated $G$-representation $\epsilon^*(W)$.
  We claim that the resulting {\em inflation map}
  \begin{equation}\label{eq:inflate_mon}
    \epsilon^* \ \colon \ \mon(W)\ \to \ \mon(\epsilon^*(W))\ , \quad
    [L,\lambda]\ \longmapsto \ [\epsilon^{-1}(L),\lambda\circ \epsilon|_{\epsilon^{-1}(L)}]
  \end{equation}
  is bijective.
  For injectivity we let $(L,\lambda)$ and $(L',\lambda')$ be monomial data for $W$ 
  that become conjugate after inflation. So ${^g}(\epsilon^{-1}(L))=\epsilon^{-1}(L')$
  and $(\lambda\circ\epsilon|_{\epsilon^{-1}(L)})\circ c_g=\lambda'\circ\epsilon|_{{\epsilon^{-1}}(L')}$ for some $g\in G$.
  Then $k=\epsilon(g)$ satisfies ${^k}L=L'$
  and $\lambda\circ c_k=\lambda'$, and thus $[L,\lambda]=[L',\lambda']$.
  For surjectivity we let $(H,\chi)$ be a monomial datum for $\epsilon^*(W)$.
  Then the kernel $N=\ker(\epsilon)$ of the epimorphism acts trivially
  on $\epsilon^*(W)\iso\ind_H^G(\chi)$.
  Since $\res_N^G(\ind_H^G(\chi))$ contains $\ind_{N\cap H}^N(\chi|_{N\cap H})$
  as a direct summand, this forces $N\subseteq H$, and thus $H=\epsilon^{-1}(\epsilon(H))$.
  Since $\chi$ is a summand in $\res^G_H(\ind_H^G(\chi))$,
  and hence in $\res^G_H(\epsilon^*(W))$, the character $\chi$ is trivial on $H\cap N$.
  Hence $\chi=\lambda\circ \epsilon|_H$ for a unique continuous
  homomorphism $\lambda\colon\epsilon(H)\to T$.
  Because
  \[ \epsilon^*(\ind_{\epsilon(H)}^K(\lambda)) \ \iso \ \ind_{\epsilon^{-1}(\epsilon(H))}^G(\lambda\circ\epsilon|_H)\ = \ \ind_H^G(\chi) \ \iso \ \epsilon^*(W) \]
  as $G$-representations, we conclude that $\ind_{\epsilon(H)}^K(\lambda)$ is $K$-isomorphic to $W$.
  So $[\epsilon(H),\lambda]$ is a  monomial structure on $W$, and it inflates
  to the given monomial structure $[H,\chi]$.
\end{eg}

Now we can establish our formula for the geometric fixed points
of the pre-Bott class of an irreducible representation.
For irreducible $G$-representations $V$ without monomial structure,
the following theorem says that $\Phi^G(b_{G,V})=0$.

\begin{theorem}\label{thm:Phi_on_irrep}
  For every nontrivial irreducible unitary representation $V$ of a compact Lie group $G$,
  the relation
  \[ \Phi^G(b_{G,V})\ = \ \sum_{[H,\chi]\in \mon(V)}( 1-\tr_H^G(\chi)^\sharp )\]
  holds in the ring $\Phi_0^G(\Sigma^\infty_+\bP)$.
\end{theorem}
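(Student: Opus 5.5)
The plan is to unwind $b_{G,V}=\psi^\flat(\alpha^*(b_m))$, where $m=\dim(V)$ and $\alpha\colon G\to U(m)$ classifies $V$, as $\alpha^*\bigl(\tr^{U(m)}_{\Sigma_m\wr T}(P^m(b_1))\bigr)$. We then compute $\Phi^G$ of this class by a geometric-fixed-point version of the double coset formula. We may replace $G$ by its image $\alpha(G)$ in $U(m)$ and use \eqref{eq:infl_and_fix-ROG}, together with the bijection \eqref{eq:inflate_mon} of monomial structures under inflation along the epimorphism $G\to\alpha(G)$, to assume that $\alpha$ is the inclusion of a closed subgroup. We also have to check that $\tr^G_H(\chi)^\sharp$ inflates correctly; this should be formal, because transfers of characters commute with inflation.

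For the restriction $\res^{U(m)}_G\circ\tr^{U(m)}_{\Sigma_m\wr T}$ we use the general double coset formula \cite[Theorem 3.4.9]{schwede:global}. It is indexed by the $G$-orbit type components of $U(m)/(\Sigma_m\wr T)\iso\Lc_m$, the space of unordered $m$-tuples of pairwise orthogonal lines in $V$, via \eqref{eq:coset2L_n}. Each summand is a transfer from the stabilizer of a point. After applying $\Phi^G$, every summand transferred from a proper subgroup dies by \cite[Proposition 3.4.2 (ii)]{schwede:global}. Hence only the $G$-fixed points of $\Lc_m$ contribute, i.e., the $G$-invariant decompositions $V=L_1\oplus\dots\oplus L_m$ into lines.

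The key combinatorial step is to identify $(\Lc_m)^G$ with $\mon(V)$ and to show that it is discrete.
\begin{enumerate}[(a)]
\item Since $V$ is irreducible, $G$ permutes the lines of a fixed decomposition transitively, since a proper invariant subset would span a proper subrepresentation.
\item Consequently the stabilizer $H$ of $L_1$ has index $m$, and $H$ acts on $L_1$ through a character $\chi$ with $V\iso\ind_H^G(\chi)$.
\item Conversely, given a monomial datum $(H,\chi)$, Mackey's irreducibility criterion says that $\chi\circ c_g\neq\chi$ on $H\cap{}^gH$ for $g\notin H$. So the $\chi$-isotypic summand of $\res^G_H(V)$ is a single line, and the decomposition is recovered from $(H,\chi)$.
\item Conjugate data give the same unordered set of lines.
\end{enumerate}
Thus $(\Lc_m)^G\iso\mon(V)$ is finite, each point is an isolated component with stabilizer $G$ (so finite Weyl group), and each contributes with coefficient $1$.

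At the point corresponding to $[H,\chi]$, choose unit vectors $g_i\cdot v$ in $L_i=g_iL_1$ for an $H$-transversal $\bar g$. Conjugating the point into the coordinate frame turns the inclusion $G\to U(m)$ into the homomorphism $(\Sigma_m\wr\chi)\circ\Psi_{\bar g}\colon G\to\Sigma_m\wr T$. Since $P^m$ is natural for homomorphisms, this summand becomes
\[ \Phi^G\bigl(\Psi_{\bar g}^*(P^m(\chi^*(b_1)))\bigr)\ =\ \Phi^G\bigl(\Psi_{\bar g}^*(P^m(b_{H,\chi}))\bigr)\ , \]
up to the grading isometries, which act trivially as in Construction \ref{con:pre-Bott_of_V}. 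Proposition \ref{prop:monomial_basis} (iv) then gives $1-\tr_H^G(\chi)^\sharp$. That proposition needs $\chi$ nontrivial. This holds: if $\chi$ were trivial, then $\ind_H^G(\chi)$ would contain the trivial representation, which for irreducible $V$ forces $V$ to be trivial, and this case is excluded.

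Summing over the contributions gives the stated formula. I expect the main obstacle to be the precise bookkeeping in the double coset step. That step requires checking that $\Phi^G$ kills exactly the non-fixed orbits in the form needed, and that the isolated fixed points contribute with coefficient one and the correct conjugating homomorphism.
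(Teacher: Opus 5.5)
Your proposal is correct and follows the paper's proof essentially step by step. The shared steps are:
\begin{enumerate}[(1)]
\item reduction to a closed subgroup $G\leq U(m)$ via inflation and the bijection \eqref{eq:inflate_mon};
\item identification of $(U(m)/(\Sigma_m\wr T))^G\iso(\Lc_m)^G$ with the finite set $\mon(V)$;
\item a geometric fixed point double coset formula in which each isolated fixed point contributes with coefficient one;
\item rewriting the conjugated inclusion as $(\Sigma_m\wr\chi)\circ\Psi_{\bar g}$, followed by Proposition \ref{prop:monomial_basis} (iv).
\end{enumerate}
The differences are only in presentation. The paper cites the geometric fixed point form \cite[Proposition 3.4.2 (ii)]{schwede:global} directly, rather than deriving it from the general double coset formula. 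It also proves injectivity of $(\Lc_m)^G\to\mon(V)$ using the explicit function $\chi^\dagger$ and Schur's lemma, where you argue that the $\chi$-isotypic summand of $\res^G_H(V)$ is a single line.
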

\begin{proof}
  We start with the special case where $G$ is a closed subgroup of $U(m)$,
  and  $V=\res^{U(m)}_G(\nu_m)$ is the restriction of the tautological representation.
  The proof of Proposition \ref{prop:double_coset}
  provides a  $U(m)$-equivariant homeomorphism
  from  $U(m)/(\Sigma_m\wr T)$ to
  the space $\Lc_m$ of sets $\ell=\{L_1,\dots,L_m\}$ of
  $m$ pairwise orthogonal 1-dimensional subspaces of $\mC^m$,
  by taking the columns of a matrix, see \eqref{eq:coset2L_n}.
  Hence the induced map on $G$-fixed points
  \[    (U(m)/(\Sigma_m\wr T))^G \ \iso \ (\Lc_m)^G\]
  is a homeomorphism, too.
  
  An element $\ell$ of $\Lc_m$ is $G$-fixed precisely if
  $G$ permutes the $m$ lines in $\ell$. Since $G$ acts irreducibly
  on $\nu_m$, the action of $G$ on such a set $\ell\in(\Lc_m)^G$ must be transitive.
  Hence the stabilizer $H$ of every element $L\in\ell$ has index $m$ in $G$. 
  We let $\chi\colon H\to T$ be the character that encodes the $H$-action on $L$, i.e.,
  \[ h\cdot x \ = \ \chi(h)\cdot x \]
  for all $(h,x)\in H\times L$.
  We let $x\in L$ be any unit vector.
  Then the map 
  \begin{equation}\label{eq:pref_iso}
    V\ \xra{\ \iso \ }\  \map^H(G,\chi^*(\nu_1)) =  \ind_H^G(\chi)\ , \quad
    v \ \longmapsto \ \{ g\mapsto \td{g\cdot v,x}\}  
  \end{equation}
  is an isomorphism of $G$-representations,
  so $(H,\chi)$ is a monomial datum for $V$.
  If $L'$ is another element of $\ell$, 
  then $L'=g L$ for some $g\in G$, so the stabilizer of $L'$ is ${^g H}$, and $^g H$ acts on $L'$
  through the character $\chi\circ c_g$. So the $G$-conjugacy class
  of the monomial datum is independent of the choice of line in $\ell\in (\Lc_m)^G$.
  In other words, the map
  \begin{equation}\label{eq:map_back}
    (\Lc_m)^G\ \to \ \mon(V) \ , \quad \ell \ \longmapsto [H,\chi]
  \end{equation}
  is well-defined.

  Now we let $\ell$, $\ell'$ be two elements of $(\Lc_m)^G$ such that $[H,\chi]=[H',\chi']$
  in $\mon(V)$. Then there are lines $L\in\ell$ and $L'\in \ell'$
  such that the associated monomial data $(H,\chi)$ and $(H',\chi')$ are $G$-conjugate.
  So after replacing $L'$ by a suitable $G$-translate, if necessary,
  we can assume that $L$ and $L'$ have the same stabilizer group $H$,
  and the $H$-action on $L$ and $L'$ is given by the same character $\chi$.
  After choosing unit vectors $x\in L$ and $x'\in L'$,
  each of them defines an isomorphism of $G$-representations \eqref{eq:pref_iso}.
  By direct inspection, these send $x$ and $x'$, respectively,
  to the distinguished function $\chi^\dagger\colon G\to\mC$ defined by
  \begin{equation}\label{eq:chi^dagger}
    \chi^\dagger(\gamma)\ = \
    \begin{cases}
      \chi(\gamma) & \text{ for $\gamma\in H$, and}\\
    \  0 & \text{ for $\gamma\not\in H$.}
    \end{cases}
   \end{equation}
  Since $V$ is irreducible, the two isomorphisms
  differ by multiplication by a scalar in $T$, so we must have $x'=\lambda x$
  for some $\lambda\in T$. In particular $L'=\mC\cdot x'=\mC\cdot x=L$.
  Hence also the $G$-orbits $\ell=G\cdot L$ and $\ell'=G\cdot L'$
  are equal in $(\Lc_m)^G$. In other words, the map \eqref{eq:map_back}
  is injective.

  If $(H,\chi)$ is any monomial datum for $V$, then the induced representation
  $\ind_H^G(\chi)$ comes with a preferred $G$-invariant set of pairwise orthogonal lines,
  namely the set
  \[ \{ \mC\cdot g\cdot \chi^\dagger\}_{g\in G} \]
  of lines spanned by the $G$-translates of the distinguished function \eqref{eq:chi^dagger}.
  Moreover, $H$ is the stabilizer group of the line $\mC\cdot \chi^\dagger$,
  and $H$ acts on this line by the character $\chi$.
  So the image of this set under any isomorphism of $G$-representations
  \[    \ind_H^G(\chi)\ \iso \ V  \]
  is an element of $(\Lc_m)^G$, and the image of $\mC\cdot \chi^\dagger$
  is an element with stabilizer group $H$ and character $\chi$.
  So the map \eqref{eq:map_back} is surjective, and hence bijective.
  In sum, we have now established a bijection
  \begin{equation}\label{eq:fix2Ind}
    (U(m)/(\Sigma_m\wr T))^G \ \xra{\ \iso\ } \ \mon(V)
  \end{equation}
  that sends a $G$-fixed coset $A\cdot(\Sigma_m\wr T)$ to the
  monomial structure $[H,\chi]$, where
  $H$ is the stabilizer group of the line $\mC\cdot A\cdot e_1$,
  and $\chi\colon H\to T$ encodes the $H$-action on this line.
  
  Now we prove the claim in the special case.
  Proposition 3.4.2 (ii) of \cite{schwede:global} shows that
  \[ \Phi^G\circ \res^{U(m)}_G\circ \tr_{\Sigma_m\wr T}^{U(m)}\ = \ \sum_{A(\Sigma_m\wr T)\in (U(m)/(\Sigma_m\wr T))^G} \Phi^G\circ A_\star\circ \res^{\Sigma_m\wr T}_{G^A}\ . \]
  Here $G^A=A^{-1} G A$, and $A_\star$ is restriction along the conjugation
  homomorphism $c_A\colon G\to G^A$.
  We have used that the space $(U(m)/(\Sigma_m\wr T))^G$
  is finite by the bijection \eqref{eq:fix2Ind}
  and the finiteness of $\mon(V)$, so that its path components
  are points and have Euler characteristic equal to 1. Then
  \begin{align*}
    \Phi^G(b_{G,V})\
    &= \ \Phi^G(\res^{U(m)}_G(b_m))\ = \ \Phi^G(\res^{U(m)}_G(\tr_{\Sigma_m\wr T}^{U(m)}(P^m(b_1))))\\
    &=\  \sum_{A (\Sigma_m\wr T)\in  (U(m)/(\Sigma_m\wr T))^G}
      \Phi^G(A_\star(\res^{\Sigma_m\wr T}_{G^A}(P^m(b_1))))\ .
  \end{align*}
  We use the bijection \eqref{eq:fix2Ind} to replace the indexing set
  of the sum by the set of monomial structures on $V$,
  and we rewrite the corresponding summand.

  Given any $G$-fixed point $A (\Sigma_m\wr T)\in  (U(m)/(\Sigma_m\wr T))^G$,
  we set $v_i=A\cdot e_i$, where $e_i$ is the $i$th vector of the standard basis
  of $\mC^m$. Then $(v_1,\dots,v_m)$ is a monomial basis for $V$, i.e.,
  an orthonormal basis in which the $G$-action is given entirely
  by monomial matrices in $\Sigma_m\wr T$.
  We let $H$ be the stabilizer group of the line $\mC\cdot v_1$,
  and we let $\chi\colon H\to T$ encode the $H$-action on this line,
  so that $[H,\chi]$ is the monomial structure corresponding to
  $A (\Sigma_m\wr T)$ under the bijection \eqref{eq:fix2Ind}.
  
  Because $G$ acts irreducibly on $V$,
  it acts transitively on the set of lines $\{\mC\cdot v_1,\dots,\mC\cdot v_m\}$.
  So we can choose an $H$-transversal $\bar g=(g_1,\dots,g_m)$ such that
  $g_i\cdot v_1=\lambda_i\cdot v_i$ for some $\lambda_i\in T$.
  The matrix $B=A\cdot(1;\lambda_1,\dots,\lambda_m)$ is then another representative
  of the same coset $A (\Sigma_m\wr T)=B (\Sigma_m\wr T)$.
  Moreover, $B \cdot e_i=A \cdot \lambda_i\cdot e_i=\lambda_i\cdot v_i$.
  We claim that  the following diagram commutes:
  \[ \xymatrix{ G \ar[r]^-{\Psi_{\bar g}}\ar[d]_{c_B} & \Sigma_m\wr H\ar[d]^{\Sigma_m\wr\chi} \\
      G^B\ar[r]_-{\text{incl}} & \Sigma_m\wr T } \]
  To this end we consider $\gamma\in G$ and set $\Psi_{\bar g}(\gamma)=(\sigma;h_1,\dots,h_m)$,
  which means that
  \[ \gamma\cdot g_i \ = \ g_{\sigma(i)}\cdot h_i \]
  for all $1\leq i\leq m$. Then
  \begin{align*}
    c_B(\gamma)\cdot e_i\
    &= \ B^{-1}\cdot\gamma\cdot B \cdot e_i\ = \ B^{-1} \cdot \gamma\cdot \lambda_i\cdot v_i\
    = \  B^{-1} \cdot \gamma\cdot g_i\cdot v_1\\
    &= \  B^{-1} \cdot g_{\sigma(i)}\cdot h_i\cdot v_1\
    = \  B^{-1} \cdot g_{\sigma(i)}\cdot \chi(h_i)\cdot v_1\
    = \ \chi(h_i)\cdot  B^{-1} \cdot \lambda_{\sigma(i)}\cdot v_{\sigma(i)}\\
    &= \ \chi(h_i)\cdot  e_{\sigma(i)}\ =\
      (\sigma;\chi(h_1),\dots,\chi(h_m))\cdot e_i\ = \ (\Sigma_m\wr \chi)(\Psi_{\bar g}(\gamma))\cdot e_i\ ,
  \end{align*}
  also for all $1\leq i\leq m$. Since the matrices $c_B(\gamma)$
  and $((\Sigma_m\wr \chi)\circ\Psi_{\bar g})(\gamma)$ have the same effect on a basis of $\mC^m$,
  they agree.
  The commutative square then yields
  \begin{align*}
    \Phi^G(B_\star(\res^{\Sigma_m\wr T}_{G^B}(P^m(b_1))))\
    &= \ \Phi^G( \Psi_{\bar g}^*((\Sigma_m\wr \chi)^*(P^m(b_1))))\\
    &= \ \Phi^G( \Psi_{\bar g}^*(P^m(\chi^*(b_1))))\
    = \ \Phi^G( \Psi_{\bar g}^*(P^m(b_{H,\chi})))\ = \  1- \tr_H^G(\chi)^\sharp\ .
  \end{align*}
  The second equation is the naturality of power operations
  for restriction along continuous homomorphisms.
  The final equation is Proposition \ref{prop:monomial_basis} (iv);
  we use here that $\chi$ is nontrivial because $V$ is nontrivial irreducible.
  This completes the proof in the special case.
  
  Now we consider a general nontrivial and irreducible $G$-representation $V$.
  It is isomorphic to $\alpha^*(\nu_m)$
  for some continuous homomorphism $\alpha\colon G\to U(m)$,
  so we may assume that $V=\alpha^*(\nu_m)$.
  We let $K=\alpha(G)$ be the image of $\alpha$, a closed subgroup of $U(m)$.
  We let $\epsilon\colon G\to K$ be the continuous epimorphism given by $\alpha$,
  with restricted codomain.
  Then $\res^{U(m)}_K(\nu_m)$ is nontrivial irreducible, and the special case applies to it.
  As we explained in Example \ref{eg:inflate_mon}, the inflation map \eqref{eq:inflate_mon}
  that sends a monomial structure $[L,\lambda]$ on $\res^{U(m)}_K(\nu_m)$
  to the monomial structure $[\epsilon^{-1}(L),\lambda\circ \epsilon|_{\epsilon^{-1}(L)}]$
  on $\epsilon^*(\res^{U(m)}_K(\nu_m))=V$ is bijective.
  Transfers commute with inflation, in the sense that
  \[ \epsilon^*(\tr_L^K(\lambda)) \ = \
    \tr_{\epsilon^{-1}(L)}^G(\epsilon^*(\lambda)) \ = \
    \tr_{\epsilon^{-1}(L)}^G(\lambda\circ\epsilon|_{\epsilon^{-1}(L)}) \ .
  \]
 Hence
  \begin{align*}
    \Phi^G(b_{G,V})\
    &= \ \Phi^G(\epsilon^*(b_{K,\res^{U(m)}_K(\nu_m)})) 
      \ = \ \epsilon^*(\Phi^K(b_{K,\res^{U(m)}_K(\nu_m)})) \\
    &= \  \sum_{[L,\lambda]\in\mon(\res^{U(m)}_K(\nu_m))} ( 1-\epsilon^*(\tr_L^K(\lambda)^\sharp ))\\
    &= \  \sum_{[L,\lambda]\in\mon(\res^{U(m)}_K(\nu_m))} ( 1-\tr_{\epsilon^{-1}(L)}^G(\lambda\circ \epsilon|_{\epsilon^{-1}(L)})^\sharp )\
    = \  \sum_{[H,\chi]\in\mon(V)} ( 1-\tr_H^G(\chi)^\sharp )\ .
  \end{align*}
  So the theorem holds for the $G$-representation $V$.
\end{proof}

\begin{eg}\label{eg:several_monomial_structures}
  An irreducible $G$-representation can have several monomial structures,
  so the right hand side of the formula in Theorem \ref{thm:Phi_on_irrep}
  may have more than one summand.
  An example is the quaternion group $G=Q=\{\pm 1,\pm i, \pm j, \pm k\}$,
  with $i^2=j^2=k^2=i j k=-1$.
  The group $Q$ has four 1-dimensional representations,
  and one other irreducible representation of dimension 2.
  The cyclic subgroups
  $I=\{\pm 1,\pm i\}$, $J=\{\pm 1,\pm j\}$ and $K=\{\pm 1,\pm k\}$
  of order~4 are normal in $Q$, but they are not conjugate.
  We let $\alpha\colon I\to T$,
  $\beta\colon J\to T$ and $\gamma\colon K\to T$  be any monomorphisms.
  Then the induced 2-dimensional representations
  $\ind_I^Q(\alpha)$, $\ind_J^Q(\beta)$ and $\ind_K^Q(\gamma)$
  are irreducible, and hence isomorphic.
  So $[I,\alpha]$, $[J,\beta]$ and $[K,\gamma]$ are three distinct
  monomial structures on the 2-dimensional irreducible $Q$-representation,
  and in fact all such monomial structures.
\end{eg}

We recall that a compact Lie group $G$ is called {\em cyclic}
if it has an element whose powers are dense in $G$.
Then $G$ is isomorphic to the product of a torus and a finite cyclic group,
see for example \cite[I Corollary 4.14]{broecker-tomdieck}.
Now we attack the main goal of this section:
every compact Lie group $G$ that is not cyclic
has a unitary representation $V$ such that $\Phi^G(b_{G,V})=0$.
This implies that for such groups,
inverting the pre-Bott classes kills the $G$-geometric fixed points of $\Sigma^\infty_+\bP$.
The easy case is when $G$ has an irreducible
representation without monomial structure: any such
representation serves the purpose by Theorem \ref{thm:Phi_on_irrep}.
So a large part of the upcoming work goes into dealing with monomial groups,
i.e., compact Lie groups all of whose irreducible representations are monomial.

Taketa showed in \cite{taketa:monomial} that
all finite monomial groups are solvable, see also
\cite[Corollary 5.13]{isaacs:character}; the same arguments also show
that monomial compact Lie groups are solvable.
The converse is not true, and the smallest witness is
the solvable group $S L_2(\mF_3)$ of order 24.
Every abelian compact Lie group is monomial,
because all irreducible representations are 1-dimensional.
Examples of nonabelian monomial groups are the symmetric group $\Sigma_3$,
the alternating group $A_4$, and the orthogonal group $O(2)$.
Further examples of monomial groups are those in class (b)
of Proposition \ref{prop:nearcyclic}.

The following definition will be convenient for our purposes.

\begin{defn}\label{def:near-cyclic}
  A finite group $G$ is {\em near-cyclic} if it is not cyclic,
  and for every normal subgroup $N$ of $G$ with $N\ne\{1\}$,
  the quotient group $G/N$ is cyclic.  
\end{defn}

The point of the previous definition is that every non-cyclic finite group admits
an epimorphism onto a near-cyclic group.
The next proposition separates the near-cyclic groups into three disjoint classes.
Of these, all groups in classes (a) and (b) are monomial, and those in class (c) are not.

\begin{prop}\label{prop:nearcyclic}
  Every finite near-cyclic group $G$ has exactly one of the following three properties.
  \begin{enumerate}[\em (a)]
  \item The group $G$ is an elementary abelian $p$-group of rank~2, for some prime $p$.
  \item The commutator subgroup $G'=[G,G]$ is nontrivial and abelian,
    and the conjugation action of the abelianization $G_{\ab}=G/G'$
    on $G'\setminus\{1\}$ is free.
  \item The commutator subgroup $G'$ is nontrivial and perfect.
    In particular, $G$ is not solvable.
  \end{enumerate}
  Moreover, if $G$ has property {\em (b)} or {\em (c)},
  then there is a subgroup $H$ of $G$ and a nontrivial character $\chi\colon H\to T$
  such that $\tr_H^G(\chi)=1$.
\end{prop}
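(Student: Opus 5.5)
The proof splits into a structural part (the trichotomy) and a transfer computation. Both rest on one elementary observation. If $N\ne\{1\}$ is normal in the near-cyclic group $G$, then $G/N$ is cyclic and hence abelian, so $G'\subseteq N$. Consequently, whenever $G'\ne\{1\}$, the commutator subgroup $G'$ is the \emph{unique minimal normal subgroup} of $G$. The three properties are mutually exclusive for simple reasons: in (a) $G'$ is trivial; in (b) $G'$ is nontrivial abelian, hence not perfect; in (c) $G'$ is nontrivial perfect.

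\emph{Case $G'=\{1\}$.} Then $G$ is a non-cyclic finite abelian group all of whose proper quotients are cyclic. Being non-cyclic, $G$ has a $p$-primary part of rank $\geq 2$ for some prime $p$, and thus surjects onto $(\mZ/p)^2$. If the kernel $N$ of such a surjection were nontrivial, $G/N\iso(\mZ/p)^2$ would be a non-cyclic proper quotient. Hence $N=\{1\}$ and $G\iso(\mZ/p)^2$, which is (a).

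\emph{Case $G'\ne\{1\}$.} The derived subgroup $G''$ is characteristic in $G'$, hence normal in $G$.
\begin{itemize}
\item If $G''\ne\{1\}$, minimality forces $G''=G'$, so $G'$ is perfect; this is (c).
\item Otherwise $G'$ is abelian and minimal normal, hence elementary abelian, say $G'\iso(\mZ/p)^n$. We must show that the cyclic group $C=G/G'$ acts freely on $G'\setminus\{1\}$.
\end{itemize}
For the second subcase, first $Z(G)=\{1\}$: otherwise $G'\subseteq Z(G)$ and $G/Z(G)$ is cyclic, which forces $G$ abelian. Next consider the centralizer $A=C_G(G')$. It contains $G'$ centrally with $A/G'$ cyclic, so $A$ is abelian and normal in $G$. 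Its $p'$-part is characteristic in $A$, hence normal in $G$, and meets $G'$ trivially; by minimality it is therefore trivial, so $A$ is an abelian $p$-group. I then want to conclude $A=G'$, i.e.\ that the action of $C$ on $G'$ is faithful. I would try to get this from a characteristic subgroup of $A$ such as $\Omega_1(A)$ or $\Phi(A)$, or by a Schur--Zassenhaus/coprimality argument once $p\nmid|C|$ is known.

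Granting faithfulness, $G'$ is a faithful irreducible $\mF_p[C]$-module for a cyclic group $C$. By Schur's lemma, $\mathrm{End}_{C}(G')$ is a finite field $\mF_{p^n}$, and $C$ embeds into $\mF_{p^n}^\times$ acting by multiplication. Multiplication by a field element $\ne 1$ fixes no nonzero vector, so the action on $G'\setminus\{1\}$ is free. This is (b). I expect the faithfulness step $A=G'$ to be the main technical obstacle.

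\emph{Transfers in case (b).} Take $H=G'$ and any nontrivial character $\chi\colon G'\to T$. By the construction via $\Psi_{\bar g}$, the character $\tr_{G'}^G(\chi)$ equals $\chi$ composed with the group-theoretic transfer $V\colon G\to G'$. The freeness in (b) shows that $|C|$ divides $p^n-1$, so $|C|$ is prime to $p$. The homomorphism $V$ lands in the abelian $p$-group $G'$, so it factors through $G_{\ab}=C$. A homomorphism from a group of order prime to $p$ to a $p$-group is trivial. Hence $\tr_{G'}^G(\chi)=1$.

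\emph{Transfers in case (c).} Choose a nontrivial cyclic subgroup $H\le G'$ and a nontrivial character $\chi$ of $H$; such $H$ exists because $G'\ne\{1\}$. By transitivity of transfers, $\tr_H^G(\chi)=\tr_{G'}^G(\tr_H^{G'}(\chi))$. The inner term is a character of the perfect group $G'$, hence trivial. Finally, the transfer of the trivial character is $[m](1)\circ\Psi_{\bar g}=1$, so $\tr_H^G(\chi)=1$.
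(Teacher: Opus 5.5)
Your trichotomy, the mutual exclusivity, and both transfer computations are fine. However, the proof of (b) has a genuine gap exactly at the step you flagged: you never prove that $A=C_G(G')$ equals $G'$, i.e.\ that $C=G/G'$ acts faithfully on $G'$. This step is essential. Freeness on $G'\setminus\{1\}$ is impossible without faithfulness, and your Schur's lemma argument relies on it to ensure that every $c\ne 1$ in $C$ maps to a field element $\ne 1$.

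The repairs you sketch do not close the gap. Any nontrivial characteristic subgroup of $A$, such as $\Omega_1(A)$ or $\Phi(A)$, is normal in $G$ and therefore simply contains $G'$; that alone yields no contradiction. The coprimality route is circular, because in your write-up $p\nmid|C|$ is only obtained as a consequence of freeness.

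The missing idea is short and uses only what you already have, namely $Z(G)=\{1\}$ and the cyclicity of $G/G'$:
\begin{enumerate}
\item Pick $t\in G$ with $G/G'=\langle tG'\rangle$.
\item Then $A/G'$ is a subgroup of this cyclic group, so it is generated by some $t^kG'$, and $t^k\in A$.
\item Thus $t^k$ centralizes $G'$ and commutes with $t$. Since $G$ is generated by $t$ and $G'$, we get $t^k\in Z(G)=\{1\}$, hence $A=G'$.
\end{enumerate}
This is exactly the paper's key step. The paper then reaches freeness more directly, without minimal normal subgroups, elementary abelian structure, or Schur's lemma. For $g\in G\setminus G'$, the subgroup $C_{G'}(g)$ is the fixed subgroup of $gG'$ on the $\mZ[G_{\ab}]$-module $G'$, hence normal in $G$ because $G_{\ab}$ is abelian. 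The argument just given shows it is a proper subgroup of $G'$. So $G/C_{G'}(g)$ is nonabelian, and near-cyclicity forces $C_{G'}(g)=\{1\}$.

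Once faithfulness is supplied, your module-theoretic route is correct. It gives the extra information that $G'\iso\mF_{p^n}$ with $C\le\mF_{p^n}^\times$ acting by multiplication. Your Verlagerung argument for $\tr_{G'}^G(\chi)=1$ is the dual of the paper's observation that $\tr_{G'}^G\colon\Hom(G',T)\to\Hom(G,T)$ is a homomorphism between groups of coprime orders.
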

\begin{proof}
  If $G$ is abelian and not cyclic, then it admits an epimorphism
  onto $C_p\times C_p$, for some prime $p$. So if $G$ is abelian and near-cyclic,
  it belongs to class (a).

  Now we suppose that $G$ is nonabelian, i.e., its commutator subgroup $G'$ is nontrivial.
  If the commutator subgroup is perfect, then $G$ belongs to class (c).
  We let $H$ be a nontrivial cyclic subgroup of $G'$, and we let $\chi\colon H\to T$
  be any nontrivial character. Since $G'$ is perfect, every $G'$-character is trivial,
  so in particular $\tr_H^{G'}(\chi)=1$. Transitivity of transfers then yields
  \[ \tr_H^G(\chi)\ = \ \tr_{G'}^G(\tr_H^{G'}(\chi))\ = \ \tr_{G'}^G(1)\ = \ 1 \ .  \]
  
  For the rest of the argument we assume that $G$ is near-cyclic, nonabelian
  and the commutator subgroup $G'$ is not perfect.
  We will argue in several steps that then $G$ has property (b).
  Since the commutator subgroup $G'$ is nontrivial and normal,
  and since $G$ is near-cyclic, the abelianization $G_{\ab}=G/G'$ is cyclic.
  The iterated commutator subgroup $G''=[G',G']$ is a normal subgroup of $G$.
  Since $G'$ is not perfect, $G''$ is strictly contained in $G'$. So $G/G''$ is
  a nonabelian quotient of $G$. Since $G$ is near-cyclic, $G''$ must be trivial,
  i.e., $G'$ is abelian. 

  Now we show that the center $Z(G)$ of $G$ is trivial.
  Otherwise, $G/Z(G)$ would be cyclic, by near-cyclicity of $G$.
  Then $G$ is generated by $Z(G)$ and any lift of a generator of $G/Z(G)$;
  since these elements commute with themselves and each other, $G$ would be abelian,
  contradicting the hypothesis.
  
  Now we show that $G$ belongs to class (b).
  For any element $g\in G\setminus G'$, we consider
  the centralizer of $g$ inside the commutator subgroup $G'$,
  \[ N\ =\ C_{G'}(g)\ = \ C_G(g)\cap G'\ . \]
  We show that $N$ is normal in $G$.
  Since $G'$  is abelian and normal, the conjugation action of 
  $G$ on $G'$ factors through $G_{\ab}=G/G'$, making 
  $G'$  a $\mZ[G_{\ab}]$-module. For any 
  $g\in G$ with image $\bar g=g G'$ in $G_{\ab}$, we have 
  $N=C_{G'}(g)=\{a\in G'\colon {^{\bar g} a}=a\}$, i.e., 
  $N$ is the fixed subgroup of $\bar g$ acting on 
  $G'$. Since $G_{\ab}$ is cyclic, hence abelian, $\bar g$
  commutes with every element of $G_{\ab}$, so the fixed-point subgroup 
  $N$ is $G_{\ab}$-invariant, and hence normal in $G$, as claimed.
  
  Now we show that $N$ is strictly smaller than $G'$.
  To prove that we argue by contradiction.
  If $N=G'$, then $g$ would centralize $G'$.
  We let $t\in G$ be an element whose coset $t G'\in G/G'=G_{\ab}$
  generates the abelianization.
  We write $g=t^i\cdot \gamma$ for some $i\geq 0$ and some $\gamma\in G'$.
  Because $G'$ is abelian, $\gamma$ centralizes $G'$.
  Hence $t^i=g\gamma^{-1}$ centralizes $G'$. Since $t^i$ also commutes with $t$,
  the element $t^i$ is central in $G$. Since $G$ is centerless,
  we conclude that $t^i=1$, and thus $g=\gamma\in G'$, contradicting the hypothesis on $g$.
  This proves that $N$ is a proper subgroup of $G'$.
  So $G/N$ is a nonabelian quotient of $G$.
  Since $G$ is near-cyclic, we must have $N=\{1\}$.
  So conjugation by $g$ does not fix any element of $G'\setminus\{1\}$,
  i.e., $G_{\ab}$ acts freely on $G'\setminus\{1\}$. So $G$ belongs to class (b).

  Since $G_{\ab}$ acts freely on $G'\setminus\{1\}$,
  the order of $G'$ is congruent to 1 modulo the order of $G_{\ab}$;
  in particular, the orders of $G'$ and $G_{\ab}$ are coprime.
  We let $\chi\colon G'\to T$ be any nontrivial character,
  which exists because $G'$ is abelian and nontrivial.
  The transfer $\tr_{G'}^G\colon\Hom(G',T)\to \Hom(G,T)$
  is a group homomorphism; its source has the same order as $G'$,
  and its target has the same order as $\Hom(G_{\ab},T)$, and hence as $G_{\ab}$.
  Since the orders of $G'$ and $G_{\ab}$ are coprime, this transfer homomorphism
  must be trivial. Hence $\tr_{G'}^G(\chi)=1$, and the addendum holds for $H=G'$.
\end{proof}

We can now prove the main result of this section for finite groups.

\begin{theorem}\label{thm:rep_with_trivial_Phi-finite}
  Let $G$ be a non-cyclic finite group.
  Then there is a unitary $G$-representation $V$ with $V^G=0$
  such that $\Phi^G(b_{G,V})=0$ in $\Phi^G_0(\Sigma^\infty_+\bP)$.
\end{theorem}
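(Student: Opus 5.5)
We first reduce to the near-cyclic case. Since $G$ is finite and not cyclic, we choose a normal subgroup $N$ that is maximal among those with $G/N$ non-cyclic. Then $K=G/N$ is near-cyclic, and we write $\epsilon\colon G\to K$ for the projection. Suppose $W$ is a unitary $K$-representation with $W^K=0$ and $\Phi^K(b_{K,W})=0$. Set $V=\epsilon^*(W)$. Since $\epsilon$ is surjective, $V^G=W^K=0$. Naturality of pre-Bott classes, using Construction \ref{con:pre-Bott_of_V} with a classifying map composed with $\epsilon$, gives $b_{G,V}=\epsilon^*(b_{K,W})$. The square \eqref{eq:infl_and_fix-ROG} then gives $\Phi^G(b_{G,V})=\epsilon^*(\Phi^K(b_{K,W}))=0$. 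So it suffices to treat near-cyclic $K$, and we split into the three classes of Proposition \ref{prop:nearcyclic}.

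\textbf{Classes (b) and (c).} The addendum of Proposition \ref{prop:nearcyclic} provides a subgroup $H$ and a nontrivial character $\chi\colon H\to T$ with $\tr_H^G(\chi)=1$. We take an irreducible summand and aim for a representation whose pre-Bott class has a factor $1-\tr_H^G(\chi)^\sharp=1-1^\sharp=0$.
\begin{itemize}
\item In class (b) we may take $H=G'$, with $\chi$ nontrivial on $G'$. Since $G_{\ab}$ acts freely on the nontrivial characters of $G'$, Mackey's criterion shows that $V=\ind_{G'}^G(\chi)$ is irreducible.
\item $V$ is nontrivial, since $\chi$ is nontrivial. Its fixed points vanish: $\res^G_{G'}(V)$ is a sum of nontrivial conjugates of $\chi$, so already $V^{G'}=0$.
\item In class (b) the subgroup $G'$ is the unique subgroup of index $[G:G']$. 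Any subgroup $L$ of that index contains $G'$, because $G/G'$ is cyclic and $G'$ is the kernel of the abelianization; I would double-check this point. Proposition \ref{prop:unique_mon} then shows $\mon(V)=\{[G',\chi]\}$.
\item Theorem \ref{thm:Phi_on_irrep} then gives $\Phi^G(b_{G,V})=1-\tr_{G'}^G(\chi)^\sharp=0$.
\end{itemize}
Class (c) is less clean, since irreducibles may have several monomial structures, or none. If $G$ has a non-monomial nontrivial irreducible $V$, Theorem \ref{thm:Phi_on_irrep} gives $\Phi^G(b_{G,V})=0$ directly, and $V^G=0$ by irreducibility. By Taketa's theorem a non-solvable group is not monomial, so such a $V$ exists in class (c).

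\textbf{Class (a), $G=C_p\times C_p$.} This is the main obstacle, since every irreducible is $1$-dimensional and $\Phi^G(b_{G,\alpha})=1-\alpha^\sharp$ is never zero. We must instead use a product and exploit the ring structure of $\mZ[\Hom(G,T)]$ from Proposition \ref{prop:monomial_basis} (i). Let $V$ be the sum of all nontrivial characters, so $V^G=0$. Then Theorem \ref{thm:fix of pre-Bott} (ii) and \eqref{eq:Phi^G_of_char} give
\[ \Phi^G(b_{G,V})\ =\ \prod_{\alpha\ne 1}(1-\alpha^\sharp)\ \in\ \mZ[C_p\times C_p]\ . \]
We must show this product vanishes.
\begin{itemize}
\item Group the $p^2-1$ nontrivial characters into the $p+1$ cyclic subgroups $C\le \Hom(G,T)$ of order $p$.
\item For a generator $\alpha$ of such a $C$, the product $\prod_{j=1}^{p-1}(1-\alpha^{j\sharp})$ acts as a nonzero integer on the augmentation-type part and kills the idempotent-like norm element $N_C=\sum_{\beta\in C}\beta^\sharp$.
\item It is cleanest to check vanishing after tensoring with $\mC$. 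There $\mZ[\Hom(G,T)]\otimes\mC$ is a product of copies of $\mC$ indexed by elements $g\in G$, via evaluation of characters; since $\mZ[\Hom(G,T)]$ is torsion free, vanishing after tensoring suffices.
\item At a point $g$, the factor $1-\alpha(g)$ vanishes whenever $\alpha(g)=1$. For every $g$ (including $g=1$) some nontrivial $\alpha$ has $\alpha(g)=1$, because the annihilator of $g$ in $\Hom(G,T)$ has order at least $p$. Hence every coordinate of the product is zero.
\end{itemize}

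This character-evaluation argument is actually uniform. For any non-cyclic finite $G$, take $V$ to be the sum of all nontrivial irreducibles; we need a formula for $\Phi^G(b_{G,V})$ as a function on $G$. For class (a) this is exactly what we used: $\Phi^G$ of a sum of characters is the product of the factors $1-\alpha^\sharp$. So I would present class (a) by this evaluation argument and classes (b) and (c) via Theorem \ref{thm:Phi_on_irrep} as above.
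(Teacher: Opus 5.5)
Your proposal follows the paper's proof. The structure is the same: reduce to a near-cyclic quotient by inflation and the square \eqref{eq:infl_and_fix-ROG}, then use the reduced regular representation in class (a), $\ind_{G'}^G(\chi)$ with its unique monomial structure in class (b), and a non-monomial irreducible via Taketa in class (c). Your evaluation argument in class (a) is a useful addition, since the paper only asserts $\prod_{\chi\ne 1}(1-\chi)=0$ in $\mZ[\Hom(C_p\times C_p,T)]$ without proof.

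The step you flagged in class (b) needs repair, because the reason you give is false. Cyclicity of $G/G'$ does not force a subgroup of index $[G:G']$ to contain $G'$. In $\Sigma_3\times C_3$ one has $G'=A_3\times 1$ and $G/G'\cong C_6$, yet $1\times C_3$ has index $6$ and does not contain $G'$.

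The correct reason is coprimality. Since $G_{\ab}$ acts freely on $G'\setminus\{1\}$, one has $|G'|\equiv 1\pmod{|G_{\ab}|}$. So if $|L|=|G'|$, the image of $L$ in $G_{\ab}$ has order dividing $\gcd(|G'|,|G_{\ab}|)=1$. Hence $L\subseteq G'$, and equality of orders gives $L=G'$.

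The same coprimality makes the transfer $\tr_{G'}^G\colon\Hom(G',T)\to\Hom(G,T)$ trivial. That is what justifies taking $H=G'$ with an arbitrary nontrivial $\chi$. The addendum of Proposition~\ref{prop:nearcyclic} as stated only promises some pair $(H,\chi)$, so you should state this explicitly rather than read it off the addendum.

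Your closing remark about a ``uniform'' evaluation argument is unsupported. For nonabelian $G$ you would need precisely the information in Theorem~\ref{thm:Phi_on_irrep}. Since you do not rely on it, it can simply be dropped.
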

\begin{proof}
  We start with the case where $G$ is near-cyclic.
  Then $G$ has one of the three properties (a), (b) or (c)
  specified in Proposition \ref{prop:nearcyclic}. We give separate constructions
  of the desired $G$-representation.

  (a) For $G=C_p\times C_p$ for some prime $p$,
  we can let $V$ be the reduced regular representation,
  which is isomorphic to the direct sum of all nontrivial
  irreducible $G$-representations;
  hence 
  \[ b_{G,V}\ =\  \prod_{\chi\ne 1} b_{C_p\times C_p,\chi}\ , \]
  where the product is over all nontrivial homomorphisms $\chi\colon C_p\times C_p\to T$.
  The relation
  \[ \prod_{\chi\ne 1} ( 1-\chi)\  =\  0\]
  holds in the group ring $\mZ[\Hom(C_p\times C_p,T)]$. 
  Multiplicativity of the pre-Bott classes then yields
  \[ \Phi^{C_p\times C_p}(b_{C_p\times C_p,V})\ = \
    \prod_{\chi\ne 1} \Phi^{C_p\times C_p}(b_{C_p\times C_p,\chi})\ =_{\eqref{eq:Phi^G_of_char}} \
    \prod_{\chi\ne 1} (1-\chi^\sharp)\ = \  0 \ . \]

  (b) We suppose that $G$ is nonabelian, the commutator subgroup $G'$ is abelian,
  and the conjugation action of $G_{\ab}=G/G'$ on $G'\setminus\{1\}$ is free.
  In this case the order of the commutator subgroup $G'$
  is congruent to 1 modulo the order of $G_{\ab}$,
  so the orders of $G'$ and $G_{\ab}$ are coprime.
  Because $G_{\ab}$ acts freely on $G'\setminus\{1\}$, it also acts freely
  on the set of nontrivial homomorphisms from $G'$ to $T$.
  We let $\chi\colon G'\to T$ be any nontrivial homomorphism.
  Then $\chi\circ c_g\ne \chi$ for all $g\in G\setminus G'$.
  The induced $G$-representation $\ind_{G'}^G(\chi)$ is thus irreducible.
  The transfer $\tr_{G'}^G\colon\Hom(G',T)\to \Hom(G,T)$
  is a group homomorphism; its source has the same order as $G'$,
  and its target has the same order as $\Hom(G_{\ab},T)$, and hence as $G_{\ab}$.
  Since the orders of $G'$ and $G_{\ab}$ are coprime, this transfer homomorphism
  must be trivial. In particular, $\tr_{G'}^G(\chi)=1$.
  
  Since the orders of $G'$ and $G_{\ab}=G/G'$ are coprime, $G'$ is the only
  subgroup of $G$ of index $[G:G']$.
  So $[G',\chi]$ is the only monomial structure on $\ind_{G'}^G(\chi)$,
  by Proposition \ref{prop:unique_mon}.
  Hence for $V=\ind_{G'}^G(\chi)$, the formula of Theorem \ref{thm:Phi_on_irrep}
  has only one summand, and it yields the relation
  \[ \Phi^G(b_{G,V})\ = \  1-\tr_{G'}^G(\chi)^\sharp \ = 0 \ . \]

  (c) If $G$ is not solvable, then $G$ cannot be a monomial group,
  see  \cite{taketa:monomial} or \cite[Corollary 5.13]{isaacs:character}.
  Hence $G$ has an irreducible representation $V$ that is not
  induced from a 1-dimensional representation of any subgroup of $G$.
  Then $\Phi^G(b_{G,V})=0$ by Theorem \ref{thm:Phi_on_irrep}.

  It remains to treat the case of a general non-cyclic finite group.
  Any non-cyclic group $G$ admits an epimorphism $\epsilon\colon G\to K$
  onto a near-cyclic group. By the above, there is a $K$-representation $W$
  with $W^K=0$ and such that $\Phi^K(b_{K,W})=0$.
  Then $V=\epsilon^*(W)$ is the desired $G$-representation, because
  the geometric fixed point homomorphisms commute with inflation, see \eqref{eq:infl_and_fix-ROG},
  and thus
 \[   \Phi^G(b_{G,V})\ = \ \Phi^G( \epsilon^*(b_{K,W}))\
   =  \  \epsilon^*(\Phi^K(b_{K,W}))\ =\ 0 \ .\qedhere \]
\end{proof}

Now that we took care of finite groups, we turn to compact Lie groups
of positive dimension.
For those, most of the work goes into the case where the identity
component is abelian, and the component group is cyclic.

\begin{theorem}\label{thm:G^circ_image}
  Let $G$ be a nonabelian compact Lie group whose identity component $G^\circ$ is abelian,
  and whose component group $\pi_0(G)$ is cyclic.
  \begin{enumerate}[\em (i)]
  \item 
    There is a unitary $G$-representation $V$ with $V^G=0$ such that
    $\Phi^G(b_{G,V})=0$ in $\Phi^G_0(\Sigma^\infty_+\bP)$.
  \item  There is a finite index subgroup $H$ of $G$ and
    a nontrivial continuous homomorphism $\chi\colon H\to T$
    such that $\tr_H^G(\chi)=1$.
\end{enumerate}
\end{theorem}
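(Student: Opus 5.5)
The plan is to prove (ii) first by an explicit construction and then derive (i) from the formula of Theorem \ref{thm:Phi_on_irrep}. Write $A=G^\circ$, which is a torus, and let $t\in G$ be an element whose image generates the cyclic group $\pi_0(G)=G/A$ of order $n$.

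\textbf{The automorphism of $A$ and a good character.} Conjugation by $t$ is an automorphism $\tau$ of $A$ with $\tau^n$ trivial, since $t^n\in A$ and $A$ is abelian. The automorphism $\tau$ is nontrivial. Otherwise $A$ would be central, and $G$ would be generated by $A$ and $t$, hence abelian, contrary to the hypothesis. So $\tau$ acts nontrivially on the character lattice $X=\Hom(A,T)$. Choose $\psi\in X$ with $\psi\circ\tau\ne\psi$, and set
\[ \chi\ =\ \psi\cdot(\psi\circ\tau)^{-1}\ \ne\ 1\ . \]
In additive notation $\chi$ lies in $(1-\tau)X$, so it is killed by the norm $1+\tau+\dots+\tau^{n-1}$. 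Moreover $\chi(a)=1$ for every $\tau$-fixed element $a\in A$.

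\textbf{Proof of (ii), with $H=A$.} Use the transversal $\bar g=(1,t,\dots,t^{n-1})$ and compute $\Psi_{\bar g}$.
\begin{itemize}
\item For $a\in A$, the transfer $\tr_A^G(\chi)(a)$ is a product of conjugates of $\chi$, namely the norm of $\chi$ evaluated at $a$, which is $1$.
\item For $t$ itself, the transversal elements are cyclically permuted, and the only nontrivial coefficient is $t^n\in A$. This element is $\tau$-fixed, so the value is $\chi(t^n)=1$.
\end{itemize}
Since $\tr_A^G(\chi)$ is a homomorphism and $G$ is generated by $A$ and $t$, it follows that $\tr_A^G(\chi)=1$.

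\textbf{Proof of (i).} Let $S\le G$ be the stabilizer of $\chi$ under conjugation. Then $A\le S$, the quotient $S/A$ is cyclic, and $S$ is a proper subgroup because $\chi\circ\tau\ne\chi$.
\begin{enumerate}[(1)]
\item Since $\chi$ is $S$-invariant and $S/A$ is cyclic, $\chi$ extends to a character $\tilde\chi\colon S\to T$. To build the extension, choose a value at a generator of $S/A$ compatible with $\chi$ on its $[S:A]$-th power.
\item By Mackey's criterion, $V=\ind_S^G(\tilde\chi)$ is irreducible. It is nontrivial, so $V^G=0$.
\item By Theorem \ref{thm:Phi_on_irrep}, it suffices to show that $\tr_{H'}^G(\chi')=1$ for every monomial datum $(H',\chi')$ of $V$. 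There is freedom in choosing the extension $\tilde\chi$, up to characters of $S/A$, and also in choosing $\psi$; I would use this to arrange these transfers.
\item Every $H'$ in a monomial datum has finite index, hence contains $A$. Also $\res_A\chi'$ lies in the $G$-orbit of $\chi$, since it is a summand of $\res^G_A V$. The restriction to $A$ of $\tr_{H'}^G(\chi')$ is therefore a product of $\tau$-conjugates of $\chi$. Combined with the norm relation this should give triviality on $A$, and an evaluation at a lift of a generator as in (ii) should give triviality there.
\end{enumerate}

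\textbf{Main obstacle.} The difficulty is step (4): controlling all monomial structures on $V$, including those supported on subgroups other than conjugates of $S$, and checking the value of each transfer at the non-identity components. A cleaner route would make $\mon(V)$ a singleton, as in case (b) of Theorem \ref{thm:rep_with_trivial_Phi-finite}. The approach I would try first is to pass to a quotient of $G$ through an epimorphism, using Example \ref{eg:inflate_mon} and the fact that geometric fixed points commute with inflation. I would choose the quotient so that $G/A$ acts freely on the nontrivial part of the image of $A$; then Proposition \ref{prop:unique_mon} would show that the index-$[G:S]$ subgroup is unique.
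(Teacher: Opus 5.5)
Your argument for (ii) is complete and correct, and more direct than the paper's. With $H=G^\circ$ and $\chi=\psi\cdot(\psi\circ\tau)^{-1}$, the restriction of $\tr_{G^\circ}^G(\chi)$ to $G^\circ$ is the norm of $\chi$, which is trivial, and its value at $t$ is $\chi(t^n)=1$. The paper instead reduces along epimorphisms to the case where $\pi_0(G)$ acts faithfully on $G^\circ$: it first divides out $Z(G)\cap G^\circ$ to get a split extension, then divides out the kernel of the action. In the faithful case it picks a character in $\ker(\tr_{G^\circ}^G)$ with a free $\pi_0(G)$-orbit, which lets it induce directly from $G^\circ$. Part (i) of your proposal, however, is not proved. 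Steps (3) and (4) only say what ``should'' happen, and you leave open the point you yourself call the main obstacle.

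That obstacle does not exist, and the detour through a quotient is unnecessary. (Read literally, freeness on the non-identity elements of the image of $G^\circ$ is not even attainable in general: every nonabelian quotient of $O(2)$ is again $O(2)$, where inversion fixes $-1$.) Every closed subgroup of finite index is open, hence contains $A=G^\circ$, and $G/A$ is cyclic. So $S$ is already the \emph{unique} subgroup of $G$ of index $d=[G:S]$, and Proposition~\ref{prop:unique_mon} gives $\mon(V)=\{[S,\tilde\chi]\}$ in $G$ itself.

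What is genuinely missing is the computation of the single transfer $\tr_S^G(\tilde\chi)$, using the transversal $1,t,\dots,t^{d-1}$. Three points are needed.
\begin{enumerate}
\item Its restriction to $A$ is the \emph{partial} norm $\lambda(a)=\prod_{i<d}\chi(t^{-i}at^i)$, not the full norm. Since $\chi$ is invariant under conjugation by $t^d$, the character $\lambda^{n/d}$ is the full norm of $\chi$, hence trivial. Only torsion-freeness of $\Hom(A,T)$ then yields $\lambda=1$.
\item Its value at $t$ is $\tilde\chi(t^d)$, so you must normalize the extension by $\tilde\chi(t^d)=1$. This is legitimate because $(t^d)^{n/d}=t^n$ and $\chi(t^n)=1$.
\item The same torsion-freeness argument supplies your unproved claim $S\neq G$. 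If $\chi\circ\tau=\chi$, then $\chi^n$ equals the norm of $\chi$, which is trivial, so $\chi=1$.
\end{enumerate}
With these points, Theorem~\ref{thm:Phi_on_irrep} gives $\Phi^G(b_{G,V})=1-\tr_S^G(\tilde\chi)^\sharp=0$. Inducing from the stabilizer $S$ then yields a uniform proof of (i) that needs none of the paper's reductions.
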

\begin{proof}
  We first show a reduction scheme for epimorphisms.
  Let $\epsilon\colon K\to G$ be a continuous epimorphism, and suppose that the
  conclusion of the theorem holds for $G$. Then the conclusion of
  the theorem also holds for $K$.
  Indeed, if $V$ is a unitary $G$-representation with $V^G=0$ and 
  $\Phi^G(b_{G,V})=0$, the inflated $K$-representation $W=\epsilon^*(V)$
  satisfies $W^K=V^G=0$ and 
  \[ \Phi^K(b_{K,W})\ = \ \Phi^K(\epsilon^*(b_{G,V}))\ =_{\eqref{eq:infl_and_fix-ROG}} \
    \epsilon^*(\Phi^G(b_{G,V}))\ = \ 0\ . \]
  And if $\chi\colon H\to T$ is a nontrivial continuous homomorphism from a
  finite index subgroup of $G$   
  such that $\tr_H^G(\chi)=1$, then $L=\epsilon^{-1}(H)$
  is a finite index subgroup of $K$ and  $\beta=\chi\circ \epsilon|_L\colon L\to T$
  is a nontrivial continuous homomorphism
  such that
  \[ \tr_L^K(\beta)\ = \ \tr_L^K(\epsilon|_L^*(\chi))\ = \
    \epsilon^*(\tr_H^G(\chi))\ = \ 1\ .\]  
  So whenever properties (i) and  (ii) hold for $G$, they also hold for $K$.

  Now we prove the theorem in three steps, for successively more general classes of groups.
  In the proof of the following first case, we shall need an algebraic fact.
  We let $A$ be a torsion-free abelian group
  equipped with a faithful action of a finite cyclic group $C$ of order $n$.
  Such an action has a free orbit, i.e., there is an element $a\in A$
  whose orbit $C a$ has $n$ elements.
  Now let $\tau\colon A\to B$ be a homomorphism of abelian groups that is $C$-equivariant
  for the trivial $C$-action on the target.
  We define $b\in A$ by
  \[ b\ = \ n\cdot a - (a + t a +\dots + t^{n-1} a)\ ,\]
  where $t$ is a generator of $C$.
  Then $b$ lies in the kernel of $\tau$,
  and the $C$-orbit of $b$ also has $n$ elements.
  Indeed, if $t^k b = b$ for some $0 < k < n$,
  then expanding gives $n(t^k a - a) = 0$, so $t^k a = a$
  by torsion-freeness, contradicting the freeness of the orbit $C a$.
  So the kernel of $\tau$ contains a free $C$-orbit.
  
  Case 1: The conjugation action of $\pi_0(G)$ on $G^\circ$ is faithful.
  Then also the induced $\pi_0(G)$-action on $\Hom(G^\circ,T)$ is faithful.
  Since $G^\circ$ is a torus, the abelian group $\Hom(G^\circ,T)$ is torsion-free.
  Each element of $\pi_0(G)$ acts on $G^\circ$ by the restriction of an 
  inner automorphism of $G$. So the transfer homomorphism 
  \[ \tr_{G^\circ}^G \ \colon \ \Hom(G^\circ,T)\ \to \ \Hom(G,T) \]
  is equivariant for the trivial $\pi_0(G)$-action on the target.
  Since the $\pi_0(G)$-action on $\Hom(G^\circ,T)$ is faithful, the kernel of this transfer
  homomorphism contains a free $\pi_0(G)$-orbit, by the comment above.
  In other words: there is 
  a continuous homomorphism $\chi\colon G^\circ\to T$ such that
  $\tr_{G^\circ}^G(\chi)=1$ and such that
  $\chi\circ c_g\ne \chi$ for all $g\in G\setminus G^\circ$.
  In particular, condition (ii) holds for $H=G^\circ$.

  The fact that $\chi\circ c_g\ne \chi$ for all $g\in G\setminus G^\circ$
  means that the induced $G$-representation $\ind_{G^\circ}^G(\chi)$
  is irreducible.
  Since $G^\circ$ is the only subgroup of $G$ whose index equals $[G:G^\circ]$,
  $[G^\circ,\chi]$ is the only monomial structure on $\ind_{G^\circ}^G(\chi)$,
  by Proposition \ref{prop:unique_mon}.
  Theorem  \ref{thm:Phi_on_irrep} then lets us conclude
  \[ \Phi^G(b_{G,\ind_{G^\circ}^G(\chi)}) \ = \ 1 -\tr_{G^\circ}^G(\chi)^\sharp\ = \ 0\ .\]
  So condition (i) holds for $V=\ind_{G^\circ}^G(\chi)$.
  
  Case 2: The projection $q\colon G\to G/G^\circ=\pi_0(G)$ has a multiplicative section.
  We can assume without loss of generality that $G=C\ltimes_\psi G^\circ$
  is the semidirect product of an action of a finite cyclic group $C$
  on a torus $G^\circ$, given by a homomorphism $\psi\colon C\to \Aut(G^\circ)$.
  We let $D$ be the kernel of $\psi$, i.e., the subgroup
  of $C$ consisting of the elements that act trivially on $G^\circ$.
  Then the $C$-action descends uniquely to a faithful action $\bar\psi\colon C/D\to\Aut(G^\circ)$
  of the quotient group,
  and the group $G$ surjects onto the semidirect product $\bar G=(C/D)\ltimes_{\bar\psi}G^\circ$.
  Since $G$ is nonabelian, the $C$-action on $G^\circ$ is nontrivial.
  Hence also the induced $(C/D)$-action on $G^\circ$ is nontrivial,
  and the group $\bar G$ is nonabelian.
  Moreover the component group $\pi_0(\bar G)$ is isomorphic to $C/D$,
  hence cyclic, in a way that identifies the conjugation action
  on $\bar G^\circ=G^\circ$ with the action $\bar\psi\colon C/D\to\Aut(G^\circ)$.
  Since $C/D$ acts faithfully on $G^\circ$ by design, the group $\bar G$
  satisfies the hypothesis of case 1.
  Since the theorem holds for $\bar G$ by case 1,
  the epimorphic reduction scheme proves the theorem for the group $G$.

  Case 3, the general case:
  We let $N=Z(G)\cap G^\circ$ be the intersection of the center of $G$
  with the identity component $G^\circ$.
  We claim that the quotient group $G/N$ is nonabelian.
  If $G/N$ were abelian, then the commutator subgroup $G'$ of $G$ would
  be contained in $N$, and hence central in $G$. Since the abelianization $G_{\ab}=G/G'$
  has a cyclic component group $\pi_0(G_{\ab})=(\pi_0(G))_{\ab}=\pi_0(G)$,
  the abelianization is itself cyclic.
  But any group that has a central subgroup with cyclic quotient is abelian,
  contradicting the hypothesis on $G$. This shows that $G/N$ is nonabelian.
  The identity component of $G/N$ is $G^\circ/N$, and thus abelian.
  Since $N$ is contained in $G^\circ$, the quotient map $G\to G/N$
  induces an isomorphism of component groups $\pi_0(G)\iso\pi_0(G/N)$.
  
  We claim that the projection $G/N\to\pi_0(G/N)$ has a multiplicative section.
  To this end we let $t\in G$ be an element whose class $t\cdot G^\circ$ in $G/G^\circ=\pi_0(G)$
  generates the cyclic component group.
  Then the image of $t$ also generates the cyclic group $\pi_0(G/N)$.
  If $n=[G:G^\circ]$ is the order of $\pi_0(G)$, and hence also of $\pi_0(G/N)$,
  then $t^n\in G^\circ$.
  Since $G^\circ$ is abelian, $t^n$ commutes with $t$ and with $G^\circ$.
  Since $G$ is generated by $t$ and $G^\circ$, the element $t^n$ is central,
  and thus $t^n\in N$.
  So the quotient map $G/N\to \pi_0(G/N)$
  has a multiplicative section, as claimed.
  Since the theorem holds for $G/N$ by case 2,
  the epimorphic reduction scheme proves the theorem for the group $G$.
\end{proof}

The following theorem is the main result of this section,
implying that inverting pre-Bott classes kills the $G$-geometric
fixed points for all non-cyclic compact Lie groups.
 
\begin{theorem}\label{thm:rep_with_trivial_Phi-Lie}
  Let $G$ be a non-cyclic compact Lie group.
  Then there is a unitary $G$-representation $V$ with $V^G=0$ such that
  $\Phi^G(b_{G,V})=0$ in $\Phi^G_0(\Sigma^\infty_+\bP)$.
\end{theorem}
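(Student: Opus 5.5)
We reduce to the two special cases already treated, Theorem \ref{thm:rep_with_trivial_Phi-finite} for finite groups and Theorem \ref{thm:G^circ_image} for groups with abelian identity component and cyclic component group. The tool is the epimorphic reduction scheme from the proof of Theorem \ref{thm:G^circ_image}. If $\epsilon\colon G\to K$ is a continuous epimorphism and $W$ is a $K$-representation with $W^K=0$ and $\Phi^K(b_{K,W})=0$, then $V=\epsilon^*(W)$ satisfies $V^G=0$. By \eqref{eq:infl_and_fix-ROG} it also satisfies $\Phi^G(b_{G,V})=\epsilon^*(\Phi^K(b_{K,W}))=0$. So it suffices to show that every non-cyclic compact Lie group $G$ surjects onto a group of one of the following types: a non-cyclic finite group, a nonabelian group with abelian identity component and cyclic component group, or a group handled by a separate argument.

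We split into cases.

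\textbf{Case 1: $\pi_0(G)$ is not cyclic.} Then $G\to\pi_0(G)$ is an epimorphism onto a non-cyclic finite group, and Theorem \ref{thm:rep_with_trivial_Phi-finite} together with the reduction scheme finishes this case.

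\textbf{Case 2: $\pi_0(G)$ is cyclic and $G^\circ$ is abelian.} If $G$ is nonabelian, Theorem \ref{thm:G^circ_image} (i) applies directly. If $G$ is abelian, then $G\cong T^r\times A$ with $A$ finite abelian. Since $G$ is not cyclic, $A$ is not cyclic, so $G$ surjects onto a non-cyclic finite group and we are back in Case 1.

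\textbf{Case 3: $G^\circ$ is nonabelian.} This is the main obstacle. Here I would avoid quotients and use Theorem \ref{thm:Phi_on_irrep} directly. I claim that some irreducible $G$-representation $V$ admits no monomial structure, which gives $\Phi^G(b_{G,V})=0$. Such a $V$ is nontrivial, and being irreducible and nontrivial it has $V^G=0$.

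To find $V$, suppose $(H,\chi)$ is a monomial datum for $V$. Since $H$ has finite index, $H\supseteq G^\circ$. Then $\res^G_{G^\circ}(V)\cong\res^G_{G^\circ}\ind_H^G(\chi)$ is a sum of characters of $G^\circ$, namely conjugates of $\chi|_{G^\circ}$ by the double coset formula. So every irreducible $V$ whose restriction to $G^\circ$ contains a summand of dimension $\geq 2$ has $\mon(V)=\emptyset$.

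It remains to produce such a $V$. Since $G^\circ$ is a nonabelian compact connected Lie group, it has an irreducible representation $W$ with $\dim W\geq 2$, because compact groups all of whose irreducibles are $1$-dimensional are abelian by Peter--Weyl. Any irreducible summand $V$ of $\ind_{G^\circ}^G(W)$ contains $W$ in its restriction to $G^\circ$, by Frobenius reciprocity. This $V$ does the job, and the check that the restriction contains the higher-dimensional summand is the one point I would verify carefully.
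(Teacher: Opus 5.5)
Your proof is correct, and your case division ($\pi_0(G)$ non-cyclic; $G^\circ$ nonabelian; $\pi_0(G)$ cyclic with $G^\circ$ abelian) matches the paper's. The first and last cases are handled identically.

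Your abelian sub-case of Case 2 is in fact empty. There $A\cong\pi_0(G)$ is cyclic by the case hypothesis, so $G\cong T^r\times A$ would itself be cyclic. This is the paper's remark that $G$ must be nonabelian in that case.

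The genuine difference is in the case of nonabelian $G^\circ$, namely in how you certify $\mon(V)=\emptyset$.
\begin{itemize}
\item The paper chooses an irreducible $V$ with $\dim(V)>|\pi_0(G)|$. Such a $V$ exists because a nonabelian compact connected Lie group has irreducible representations of arbitrarily large dimension, a fact the paper cites from Br{\"o}cker--tom Dieck. It then notes that every finite index subgroup contains $G^\circ$, so its index is at most $|\pi_0(G)|$ and cannot equal $\dim(V)$.
\item You instead note that $\ind_H^G(\chi)$ restricts to $G^\circ$ as a sum of characters. Hence any irreducible $V$ whose restriction to $G^\circ$ has a constituent of dimension $\geq 2$ is non-monomial. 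This needs only a single irreducible $G^\circ$-representation of dimension $\geq 2$, which Peter--Weyl gives at once. Your Frobenius reciprocity step producing such a $V$ is sound.
\end{itemize}

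Your criterion is also at least as strong as the paper's. If all $G^\circ$-constituents of an irreducible $V$ are characters, then $V$ embeds into $\ind_{G^\circ}^G(\chi)$ for one such character $\chi$, so $\dim(V)\leq|\pi_0(G)|$. In exchange, the paper's version is a one-line counting argument once the cited fact about large irreducibles is available.
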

\begin{proof}
  We distinguish three cases.
  
  (a) If the component group $\pi_0(G)$ is not cyclic,
  then Theorem \ref{thm:rep_with_trivial_Phi-finite}
  provides a $\pi_0(G)$-representation $W$ such that
  $W^{\pi_0(G)}=0$ and $\Phi^{\pi_0(G)}(b_{\pi_0(G),W})=0$.
  We let $q\colon G\to G/G^\circ=\pi_0(G)$ denote the projection.
  Then the inflation $q^*(W)$ has the desired property, because
   $(q^*(W))^G=W^{\pi_0(G)}=0$ and
  \[   \Phi^G( b_{G,q^*(W)})\ = \  \Phi^G( q^*(b_{\pi_0(G),W}))\
    =  \  q^*(\Phi^{\pi_0(G)}(b_{\pi_0(G),W}))\ =\ 0 \ .\]
  
  (b) If the identity component $G^\circ$ is nonabelian,
  then $G^\circ$ has irreducible representations
  of arbitrarily large dimensions, see for example
  \cite[Chapter VI, (1.20), Exercise 2]{broecker-tomdieck}.
  Since every irreducible $G^\circ$-representation is a
  $G^\circ$-equivariant summand of some irreducible $G$-representation,
  $G$ itself has irreducible representations of arbitrarily large dimensions.
  In particular, there is an irreducible $G$-representation $V$ whose dimension exceeds
  the cardinality of the finite group $\pi_0(G)$.
  Then $G$ has no closed subgroup of finite index equal to the dimension of $V$,
  so $V$ has no monomial structure.
  Hence $\Phi^G(b_{G,V})=0$ by Theorem \ref{thm:Phi_on_irrep}.
  
  (c) If the component group $\pi_0(G)$ is cyclic
  and the identity component $G^\circ$ is abelian,
  then $G$ itself must be nonabelian, for otherwise it would be cyclic.
  So Theorem \ref{thm:G^circ_image} (i) provides the desired $G$-representation.
\end{proof}

\section{Global \texorpdfstring{$K$}{K}-theory as a global localization}

In this section we show our main result, Theorem \ref{thm:main},
namely that the morphism 
$\eta\colon \Sigma^\infty_+ \bP \to \bKU$ is a global localization by
the set of pre-Bott classes $\{b_m\}_{m\geq 1}$, both in the $\infty$-category of
commutative global ring spectra 
and also  in the $\infty$-category of ultra-commutative ring spectra.
The key ingredient is Theorem \ref{thm:geomfix},
saying that for every compact Lie group $G$,
the morphism $\Phi^G_*(\eta)\colon\Phi^G_*(\Sigma^\infty_+\bP)\to\Phi^G_*(\bKU)$
of geometric fixed point rings is a localization of graded-commutative rings by the images
of the pre-Bott classes of all unitary $G$-representations.\medskip

To set the stage, we first review some facts about the global $K$-theory spectrum
and about the morphism $\eta\colon \Sigma^\infty_+ \bP  \to \bKU$.
The global $K$-theory spectrum $\bKU$ was introduced
by Joachim \cite[Definition 4.3]{joachim:coherences},
see also \cite[Construction 6.4.9]{schwede:global};
for every compact Lie group $G$, the underlying genuine $G$-spectrum of $\bKU$ represents
$G$-equivariant complex $K$-theory,
see \cite[Theorem 4.4]{joachim:coherences} or \cite[Corollary 6.4.23]{schwede:global}.
In particular, the equivariant homotopy group $\pi_0^G(\bKU)$
is isomorphic to the complex representation ring of $G$;
a specific ring isomorphism $RU(G)\iso \pi_0^G(\bKU)$
is described in \cite[Theorem 6.4.24]{schwede:global}.
As the group $G$ varies, these isomorphisms are compatible with all the additional structure
in sight, i.e., with restriction homomorphisms, transfers, power operations and norms,
see again \cite[Theorem 6.4.24]{schwede:global}.

\begin{construction}
  The morphism of non-equivariant spectra from $\Sigma^\infty_+ \mC P^\infty$ to $K U$ 
  that classifies the tautological complex line bundle over $\mC P^\infty$
  has a particularly nice and prominent global-equivariant refinement,
  a morphism of ultra-commutative ring spectra:
  \[ \eta\  : \ \Sigma^\infty_+ \bP\ \to\ \bKU\ . \]
  The morphism is defined 
  as a composite of two morphisms of ultra-commutative ring spectra
  \[ \Sigma^\infty_+ \bP\ \xra{\ \mu\ }\  \bku\ \xra{\ j \ }\ \bKU \ .\]
  The intermediate object is
  the connective global $K$-theory spectrum \cite[Construction 6.3.9]{schwede:global},
  and the first morphism $\mu$ is the inclusion of the `rank 1' part
  in the rank filtration, compare \cite[Construction 6.3.40]{schwede:global}.
  The second morphism $j$ is a `periodization morphism' $j\colon \bku\to\bKU$
  defined in \cite[Construction 6.4.13]{schwede:global}.

  The underlying morphism of global spectra classifies
  the tautological $T$-representation on $\mC$, in the following sense.
  As mentioned earlier, $\bP$ is a global classifying space for the group $T$,
  so the pair $(\Sigma^\infty_+ \bP,e_T)$  represents the functor $\pi_0^T$,
  where $e_T \in  \pi_0^T(\Sigma^\infty_+\bP)$
  is the stable tautological class \eqref{eq:define_e_T}.
  Under the preferred identification \cite[Theorem 6.4.24]{schwede:global}
  of $\pi_0^T(\bKU)$ with the complex representation ring $RU(T)$, 
  the element $e_T$ maps to the class of the
  tautological $T$-representation on $\mC$, i.e.,
  \[ \eta_*(e_T)\ = \ [\nu_1] \quad \text{in $\pi_0^T(\bKU)\iso RU(T)$.} \]
  Because $\eta$ is a morphism of ultra-commutative ring spectra,
  its effect on equivariant homotopy groups is not only compatible with restrictions,
  inflations and transfers, but also with multiplicative power operations and norms.
\end{construction}

\begin{rk}[Pre-Bott classes map to Bott classes]\label{rk:preBott2Bott}
  We will show in Theorem \ref{thm:pre-Bott} below
  that any morphism $\eta\colon \Sigma^\infty_+\bP\to\bKU$ of commutative global ring spectra
  satisfying $\eta_*(e_T)=[\nu_1]$  inverts all the pre-Bott classes $b_m$.
  An invertible class in the same representation-grading as $\eta_*(b_m)$
  is the $U(m)$-equivariant Bott class, also called the {\em Thom class},
  of the tautological $U(m)$-representation $\nu_m$.
  More generally, the equivariant Bott class of a unitary $G$-representation $V$
  is defined for example
  by Atiyah \cite[Section 4]{atiyah:elliptic},
  Segal \cite[\S 3]{segal:equivariant K-theory},
  or Karoubi \cite[Section III]{karoubi:sur la K-theorie}.  
  We write $\beta_{G,V}\in  \pi_V^G(\bKU)$
  for the image of this Bott class under the isomorphism
  \[  \widetilde{KU}_G(S^V)  \ \xra{\ \iso \ } \ \widetilde{\bKU}_G^0(S^V)  =  \pi_V^G(\bKU)      \]
  established in \cite[Corollary 6.4.23]{schwede:global}.
  We claim that $\eta_*(b_{G,V})=\beta_{G,V}$.
  
  We start with the case $G=T$ and $V=\nu_1$.
  The $T$-equivariant Bott class of $\nu_1$ is in essence defined as the reduced
  $K$-theory class of the tautological $T$-equivariant line bundle
  over $P(\mC\oplus\nu_1)$, pulled back along a specific equivariant
  homeomorphism to $S^{\nu_1}$. All we need to know in the following is that
  it restricts to the classical non-equivariant Bott class,
  and that its image under
  $i^*\colon\widetilde{KU}_T(S^{\nu_1})\to  \widetilde{KU}_T(S^0)= RU(T)$
  is the class $1-[\nu_1]$. Since the isomorphisms of
  \cite[Corollary 6.4.23]{schwede:global} are natural for equivariant maps,
  and compatible with $\pi_0^T(\bKU)=RU(T)$, also  $i^*(\beta_{T,\nu_1})=1-[\nu_1]$.
  Moreover
  \[ i^*(\eta_*(b_1))\ = \ \eta_*(i^*(b_1)) \ = \ \eta_*(1-e_T)\ = \  1-[\nu_1] \ .\]
  Like for any global spectrum, the restriction homomorphism
  $i^*\colon \pi_{\nu_1}^T(\bKU)\to\pi_0^T(\bKU)$ is injective,
  compare the short exact sequence in Remark \ref{rk:b_1_unique},
  which proves that $\eta_*(b_1)=\beta_{T,\nu_1}$.

  The equivariant Bott classes are multiplicative and natural for restriction.
  For $m\geq 2$, this yields
  \begin{align*}
    \res^{U(m)}_{T^m}(\eta_*(b_m))\
    &= \  \eta_*(\res^{U(m)}_{T^m}(b_m))\
      = \  \eta_*(b_1\times\dots\times b_1)\\
    &= \  \eta_*(b_1)\times\dots\times\eta_*(b_1)\ = \
      \beta_{T,\nu_1}\times\dots\times\beta_{T,\nu_1}\\ 
    &= \  \beta_{T^m,\nu_1^m}\ =\ \res^{U(m)}_{T^m}(\beta_{U(m),\nu_m})\ .
  \end{align*}
  The second equation is Theorem \ref{thm:fix of pre-Bott} (i). 
  Because the restriction map of representation rings $\res^{U(m)}_{T^m}\colon RU(U(m))\to RU(T^m)$
  is injective, so is the restriction map
  $\res^{U(m)}_{T^m}\colon \pi_{\nu_m}^{U(m)}(\bKU)\to\pi_{\nu_1^m}^{T^m}(\bKU)$,
  by Bott periodicity for unitary representations in equivariant $K$-theory.
  Hence the previous relation implies that $\eta_*(b_m)=\beta_{U(m),\nu_m}$.
  A general unitary representation is isomorphic to $\alpha^*(\nu_m)$
  for some continuous homomorphism $\alpha\colon G\to U(m)$.
  Since the pre-Bott classes and the equivariant Bott classes
  are all natural for restriction along continuous homomorphisms,
  the relation $\eta_*(b_{G,V})=\beta_{G,V}$ follows.
\end{rk}

Our main result in particular says that the morphism $\eta$ inverts the pre-Bott classes,
which we show now.

\begin{theorem}\label{thm:pre-Bott}
  Let $\eta\colon \Sigma^\infty_+ \bP \to \bKU$ be a morphism of commutative global ring spectra
  such that $\eta_*(e_T)=[\nu_1]$ in $\pi_0^T(\bKU)=RU(T)$.
    Then $\eta$ inverts the  pre-Bott classes $b_m$ for all $m\geq 1$.
\end{theorem}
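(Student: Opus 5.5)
Since invertibility is detected on geometric fixed points (Proposition \ref{prop:invert_criterion}), and this is compatible with restriction (Proposition \ref{prop:invert_inherit} (ii) and Proposition \ref{prop:restriction_and_invert}), I would reduce the statement to the case $m=1$ plus a multiplicativity argument, without using Bott periodicity or the point-set model for $\eta$. The only input about $\eta$ is $\eta_*(e_T)=[\nu_1]$.

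\emph{Step 1: $\eta_*(b_1)$ is invertible.} By Proposition \ref{prop:invert_criterion} it suffices to show that $\Phi^H(\eta_*(b_1))$ is a graded unit in $\Phi^H_*(\bKU)$ for every closed subgroup $H\le T$. For $H=\{1\}$, the class $\res^T_{\{1\}}(\eta_*(b_1))$ lies in $\pi_2(KU)$. It is determined by the short exact sequence of Remark \ref{rk:b_1_unique}, and by naturality of the splitting it is the image of $b_{\cl}$. Non-equivariantly, any $E_\infty$ (even homotopy ring) map $\Sigma^\infty_+\mC P^\infty\to KU$ classifying the tautological line bundle sends $b_{\cl}$ to $\pm$ the Bott class, which is a unit. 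For a nontrivial subgroup $H$ (finite cyclic, or $T$ itself), $\nu_1^H=0$, so $\Phi^H(\eta_*(b_1))=\Phi^H(i^*\eta_*(b_1))=\Phi^H(1-[\nu_1|_H])$ lies in $\Phi^H_0(\bKU)$. I then need the well-known fact that $\Phi^H_0(\bKU)$ is a localization of $RU(H)$ in which $1-[\chi]$ is invertible for a faithful character $\chi$. Concretely, for $H=C_n$, $\Phi^{C_n}_*(\bKU)$ is $\mZ[\zeta_n,1/n][\beta^{\pm1}]$, in which $1-\zeta_n$ is a unit; for $H=T$ one gets a rational localization in which $1-z$ is inverted. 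One could also argue intrinsically: geometric fixed points of $\bKU$ are obtained by inverting Euler classes $1-[\lambda]$ of representations without fixed vectors.

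\emph{Step 2: the torus.} By Theorem \ref{thm:fix of pre-Bott} (i), iterated, $\res^{U(m)}_{T^m}(b_m)=b_1\times\dots\times b_1$. The external product is the product of inflations $p_j^*(b_1)$, each of which is invertible after applying $\eta_*$ by Proposition \ref{prop:restriction_and_invert}. So $\res^{U(m)}_{T^m}\eta_*(b_m)$ is invertible by Proposition \ref{prop:invert_inherit} (iii).

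\emph{Step 3: from $T^m$ to $U(m)$.} This is the main obstacle, since restriction to a maximal torus does not a priori detect invertibility. I would use criterion (c) of Proposition \ref{prop:invert_criterion}: for every closed $H\le U(m)$, show that $\Phi^H(\eta_*(b_m))$ is a unit in $\Phi^H_*(\bKU)$. The geometric fixed points of $\bKU$ vanish unless $H$ is cyclic, and a zero ring has every element a unit. If $H$ is cyclic, it is topologically generated by one element, hence conjugate into the maximal torus $T^m$. Inner automorphisms act trivially (Construction \ref{con:pre-Bott_of_V}), so $\Phi^H(\eta_*(b_m))$ agrees, up to the orientation sign, with $\Phi^{H'}$ of the restriction to $T^m$ for a conjugate $H'\le T^m$. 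That class is a unit by Step 2.

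The vanishing of $\Phi^H(\bKU)$ for non-cyclic $H$ is a classical input, and I would cite it; it is stated at the start of Section 3. The delicate points are the sign and orientation bookkeeping, and making sure the non-equivariant identification in Step 1 uses only that $\eta_*(e_T)=[\nu_1]$.
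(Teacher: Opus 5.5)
Your argument takes a genuinely different route from the paper, and it is valid once its external inputs are granted. The paper does not use geometric fixed points of $\bKU$ here at all.
\begin{itemize}
\item For $m=1$, it identifies $\eta_*(b_1)$ with $\beta_{T,\nu_1}$ (Remark \ref{rk:preBott2Bott}). It then applies the five lemma to the short exact sequence for $\pi_0^T(\bKU\sm S^{\nu_1})$ induced by $T_+\to S^0\to S^{\nu_1}$, compared with $0\to\pi_0^T(\bKU)\xra{1-[\nu_1]}\pi_0^T(\bKU)\to\pi_0(\bKU)\to 0$. This needs only non-equivariant Bott periodicity and the vanishing of odd-degree $T$-equivariant homotopy of $\bKU$.
\item For $m\geq 2$, it writes down the explicit inverse $\tr^{U(m)}_{\Sigma_m\wr T}(\lambda)$, where $\lambda$ is the inverse of $P^m(\beta)$ in $\bKU$. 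It checks the defining relation in $\pi_0^{U(m)}(\bKU)\iso RU(U(m))$ after restricting to $T^m$, using the double coset formula and the injectivity of $RU(U(m))\to RU(T^m)$.
\end{itemize}
You replace power operations, transfers and the double coset formula by one observation. Once $\Phi^H(\bKU)=0$ for all non-cyclic $H$, criterion (c) of Proposition \ref{prop:invert_criterion} only needs checking on cyclic subgroups, and these are conjugate into $T^m$. Granting that input, your Steps 2 and 3 are correct.

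The price is that input, and you need to be careful about where it comes from.
\begin{itemize}
\item In this paper, the vanishing of $\Phi^H(\bKU)$ for non-cyclic $H$ is \emph{derived from} the statement you are proving. Remark \ref{rk:fix_of_KU} combines it with Theorem \ref{thm:rep_with_trivial_Phi-Lie}. The sentence at the start of Section 3 only calls the fact well known, and the author says he knows no reference for the folklore proof. So citing the paper is circular.
\item You would therefore have to supply the folklore argument yourself. It goes through the Euler classes $i^*(\beta_{H,V})$, so it needs the $H$-equivariant Thom isomorphism for unitary representations of arbitrary non-cyclic $H\leq U(m)$.
\item Your Step 1 for nontrivial $H\leq T$ has the same dependence. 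The description of $\Phi^H_*(\bKU)$ as $RU(H)[\beta^{\pm 1}]$ with the classes $1-[\chi]$ inverted also rests on $H$-equivariant Thom isomorphisms.
\end{itemize}
So your opening claim that you avoid Bott periodicity is not accurate: you use considerably more equivariant periodicity than the paper does. The paper's leaner argument is exactly what lets Remark \ref{rk:fix_of_KU} give an independent proof of the vanishing.

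Finally, for $H=\{1\}$, the assertion that $\eta_*(b_{\cl})$ is $\pm$ the Bott class needs an identification of $b_{\cl}$, which you do not give. The paper obtains it from $\eta_*(b_1)=\beta_{T,\nu_1}$. Alternatively, you can represent $b_1$ via $S^{\nu_1}\iso P(\mC\oplus\nu_1)\to\bP$ as in Remark \ref{rk:spitz-oest}. Combine this with the fact that the underlying non-equivariant map of $\eta$ classifies the line bundle associated with $\nu_1$.
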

\begin{proof}
  We start with the case $m=1$, showing that $\eta_*(b_1)$ is invertible.
  We use that $\eta_*(b_1)$ equals the equivariant
  Bott class $\beta=\beta_{T,\nu_1}$, see Remark \ref{rk:preBott2Bott}.
  While the Thom isomorphism says that exterior multiplication by $\beta$
  is an isomorphism of reduced equivariant $K$-groups, this is not quite
  the same statement as invertibility in the sense of Definition \ref{def:RO-invertible},
  so we need an extra argument.
  The right square in the following diagram
  commutes because $\beta^\flat\colon\bKU\sm S^{\nu_1}\to \bKU$ is a morphism of $T$-spectra:
  \[ \xymatrix@C=12mm{
      0\ar[r] & \pi_0^T(\bKU)\ar[r]^-{(\bKU\sm i)_*}\ar@{=}[d] &  \pi_0^T(\bKU\sm S^{\nu_1})\ar[r]^-{\res^T_{\{1\}}} \ar[d]^{\beta^\flat_*} &
      \pi_0(\bKU\sm S^2) \ar[d]^{\beta^\flat_*} \ar[r]& 0\\
      0\ar[r] &\pi_0^T(\bKU)\ar[r]_-{(1-[\nu_1])\cdot -} &    \pi_0^T(\bKU)\ar[r]_-{\res^T_{\{1\}}} &
      \pi_0(\bKU)\ar[r] & 0 
    } \]
  The left square commutes because
  \[ \beta^\flat_*((\bKU\sm i)_*(x)) \ = \  i^*(\beta)\cdot x \ = \ (1-[\nu_1])\cdot x\]
  for all $x\in\pi_0^T(\bKU)$.
  We claim that both rows are exact.
  The upper row is part of the long exact homology sequence of
  the cofiber sequence of based $T$-spaces \eqref{eq:Tcof}, combined with
  the Wirthm{\"u}ller isomorphism $\pi_0^T(\bKU\sm T_+\sm S^1)\iso\pi_0(\bKU\sm S^2)$.
  It is short exact because the adjacent terms are non-equivariant and $T$-equivariant
  homotopy groups of $\bKU$ of odd degrees, and thus trivial.
  The lower row is exact because $\pi_0^T(\bKU)$
  is a Laurent polynomial ring on the class $[\nu_1]$, and hence a domain
  whose augmentation ideal is generated by $1-[\nu_1]$.
  
  The $T$-equivariant Bott class restricts
  to the classical non-equivariant Bott class in $\pi_2(\bKU)$,
  which is invertible in the graded ring $\pi_*(\bKU)$;
  in the model for complex $K$-theory that we work with, this is
  proved in \cite[Theorem 6.4.29]{schwede:global}.
  So the right vertical map in the above commutative diagram is bijective.
  The middle vertical map is thus bijective by the five lemma.
  In particular, there is a class $u\in\pi_0^T(\bKU\sm S^{\nu_1})$
  such that $\beta^\flat_*(u)=1$, and so the class $\beta=\eta_*(b_1)$
  is invertible by Proposition \ref{prop:invert_criterion}.

  Now we consider $m\geq 2$.
  Since the class $\eta_*(b_1)=\beta$ is invertible, so is its
  power $P^m(\beta)$, by Proposition \ref{prop:invertibility_power}.
  We let $\lambda\in \pi_0^{\Sigma_m\wr T}(\bKU\sm S^{\nu_1^m})$ be the inverse of $P^m(\beta)$,
  i.e., the unique class such that $(P^m(\beta))^\flat_*(\lambda)=1$.
  We claim that
  \[ \tr_{\Sigma_m\wr T}^{U(m)}(\lambda) \ \in \ \pi_0^{U(m)}(\bKU\sm S^{\nu_m})\]
  is inverse to $\eta_*(b_m)$.
  To this end we calculate
  \begin{align*}
    \res^{U(m)}_{T^m}((\eta_*(b_m))^\flat_*(\tr_{\Sigma_m\wr T}^{U(m)}(\lambda)))\
    &= \   (\eta_*(\res^{U(m)}_{T^m}(b_m)))^\flat_*(\res^{U(m)}_{T^m}(\tr_{\Sigma_m\wr T}^{U(m)}(\lambda)))\\
    &= \   (\eta_*(b_1\times\dots\times b_1))^\flat_*(\res_{T^m}^{\Sigma_m\wr T}(\lambda))\\
  &= \   (\beta\times\dots\times\beta)^\flat_*(\res_{T^m}^{\Sigma_m\wr T}(\lambda))\\
    &= \   \res^{\Sigma_m\wr T}_{T^m}( (P^m(\beta))^\flat_*(\lambda))\
      = \   \res^{\Sigma_m\wr T}_{T^m}(1)\ =\ 1\ .
  \end{align*}
  The second equation uses Theorem \ref{thm:fix of pre-Bott} (i) and the double coset formula
  $\res^{U(m)}_{T^m}\circ \tr_{\Sigma_m\wr T}^{U(m)}=\res^{\Sigma_m\wr T}_{T^m}$, compare
  \cite[IV Corollary 6.7 (i)]{lms}.
  
  The restriction homomorphism $\res^{U(m)}_{T^m}\colon \pi_0^{U(m)}(\bKU)\to
  \pi_0^{T^m}(\bKU)$ is isomorphic to the
  restriction homomorphism of representation rings $\res^{U(m)}_{T^m}\colon RU(U(m))\to RU(T^m)$,
  which is injective. So we can conclude that
  $(\eta_*(b_m))^\flat_*\allowbreak(\tr_{\Sigma_m\wr T}^{U(m)}(\lambda))=1$, as claimed.
  In particular, the class $\eta_*(b_m)$ is invertible.
\end{proof}

\begin{rk}[Geometric fixed points of $\bKU$]\label{rk:fix_of_KU}
  As already mentioned, a special feature of the global $K$-theory spectrum $\bKU$
  is that its geometric fixed points vanish
  for all non-cyclic compact Lie groups.
  The classical folklore proof proceeds via the Euler classes of unitary representations,
  but I do not know a reference. Here is an independent proof that
  uses our results on the pre-Bott classes.

  We let $G$ be a non-cyclic compact Lie group. 
  Theorem \ref{thm:rep_with_trivial_Phi-Lie} provides a unitary $G$-representation $V$
  with $V^G=0$ such that $\Phi^G(b_{G,V})=0$ in $\Phi^G_0(\Sigma^\infty_+\bP)$.  
  Because $\eta\colon\Sigma^\infty_+\bP\to \bKU$ inverts all pre-Bott classes $b_m$
  by Theorem \ref{thm:pre-Bott},
  and because $b_{G,V}$ is a restriction of some $b_m$ along a continuous homomorphism,
  $\eta$ also inverts the class  $b_{G,V}$ by Proposition \ref{prop:restriction_and_invert}.
  Hence the class $\Phi^G(\eta_*(b_{G,V}))=\Phi_*^G(\eta)(\Phi^G(b_{G,V})) $ is simultaneously
  invertible and zero. So the ring $\Phi^G_0(\bKU)$ is trivial.
\end{rk}

The next theorem identifies the morphism induced by 
$\eta\colon \Sigma^\infty_+\bP\to \bKU$ on $G$-geometric fixed points
as a specific localization; this property is the
key calculational input for the eventual proof that $\eta$ is
a localization by the pre-Bott classes.
We will make our statements about the morphism
$\eta\colon \Sigma^\infty_+\bP\to \bKU$ independent of the specific construction,
thereby making them future-proof in the following sense.
At the time of this writing, there does not yet seem to be
a model-independent definition of ultra-commutative ring spectra,
nor of the morphism $\eta$ in this category. However, I expect it to just be a matter
of time until the technology will catch up and provide  
an intrinsically higher categorical construction
of the $\infty$-category of ultra-commutative ring spectra.
For me, a litmus test for such a construction is whether it accommodates $\bKU$
as an ultra-commutative ring spectrum, as well as the morphism
$\eta\colon \Sigma^\infty_+\bP\to \bKU$, without reference to models.
If so, then our localization results will automatically apply to any such incarnation of $\eta$.

\begin{theorem}\label{thm:geomfix}
  Let $\eta\colon \Sigma^\infty_+ \bP \to \bKU$
  be a morphism of commutative global ring spectra
  such that $\eta_*(e_T)=[\nu_1]$ in $\pi_0^T(\bKU)=RU(T)$.
  Then for every compact Lie group $G$,
  the morphism of geometric fixed point rings
  \[ \Phi^G_*(\eta)\ : \ \Phi^G_*(\Sigma^\infty_+\bP)\ \to \ \Phi^G_*(\bKU) \]
  is a localization by the set of classes $\Phi^G(b_{G,V})$
  for all unitary $G$-representations $V$.
\end{theorem}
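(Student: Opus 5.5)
We split into two cases according to whether $G$ is cyclic; in both we compare $\Phi^G_*(\Sigma^\infty_+\bP)$ and $\Phi^G_*(\bKU)$ via the explicit description of Proposition \ref{prop:monomial_basis} (ii).

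\emph{Non-cyclic $G$.} Here $\Phi^G_*(\bKU)=0$ by Remark \ref{rk:fix_of_KU}. Theorem \ref{thm:rep_with_trivial_Phi-Lie} provides a unitary $G$-representation $V$ with $\Phi^G(b_{G,V})=0$. Inverting the set of all $\Phi^G(b_{G,W})$ therefore inverts $0$, so the localization is the zero ring. Hence $\Phi^G_*(\eta)$, a map from $\Phi^G_*(\Sigma^\infty_+\bP)$ to the zero ring, is the localization.

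\emph{Cyclic $G$.} Write $G\cong T^r\times C_n$. First I would reduce the set of classes to invert. Every unitary $G$-representation is a sum of characters, so Theorem \ref{thm:fix of pre-Bott} (ii) makes every $b_{G,V}$ a product of classes $b_{G,\alpha}$ for characters $\alpha$. Proposition \ref{prop:invert_inherit} (iii) then shows that inverting all $\Phi^G(b_{G,V})$ amounts to inverting the elements $\Phi^G(b_{G,\alpha})$.
\begin{itemize}
\item For $\alpha$ nontrivial, \eqref{eq:Phi^G_of_char} gives $\Phi^G(b_{G,\alpha})=1-\alpha^\sharp$.
\item For $\alpha$ trivial, the class is $p_G^*(b_{\cl})$ in degree $2$.
\end{itemize}
By Proposition \ref{prop:monomial_basis} (ii), the source is $\mZ[\Hom(G,T)]\tensor\pi_*(\Sigma^\infty_+\mC P^\infty)$. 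The algebraic localization is therefore
\[ \mZ[\Hom(G,T)][(1-\alpha)^{-1}\colon \alpha\ne 1]\tensor \pi_*(\Sigma^\infty_+\mC P^\infty)[b_{\cl}^{-1}] .\]
By Snaith's theorem (in its homotopy-ring form) the second factor is $\pi_*(KU)=\mZ[\beta^{\pm1}]$. So the target of the localization is $\mathcal{O}_G[\beta^{\pm 1}]$, where $\mathcal{O}_G$ denotes $\mZ[\Hom(G,T)]$ with all $1-\alpha$ for $\alpha\ne1$ inverted.

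Next I would compute $\Phi^G_*(\bKU)$ for cyclic $G$, using that it is the localization of $\pi^G_\star(\bKU)$ at the Euler classes of representations with $V^G=0$. Concretely, $\Phi^G_0(\bKU)=RU(G)[e_V^{-1}\colon V^G=0]$, where for a character $\alpha$ the Euler class is $e_\alpha=i^*(\beta_{G,\alpha})=1-\alpha$. By Bott periodicity, $\Phi^G_*(\bKU)$ is $2$-periodic over $\Phi^G_0(\bKU)$ via $p_G^*(\beta)$, with vanishing odd part. Since $RU(G)=\mZ[\Hom(G,T)]$ for abelian $G$, this gives
\[ \Phi^G_*(\bKU)\iso \mZ[\Hom(G,T)][(1-\alpha)^{-1}\colon\alpha\ne1][\beta^{\pm1}] .\]
It remains to check that $\Phi^G_*(\eta)$ realizes this identification. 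It sends $\alpha^\sharp\mapsto\Phi^G([\alpha])$ because $\eta_*(e_T)=[\nu_1]$ and $\eta$ commutes with restriction, and it sends $p_G^*(y)\mapsto p_G^*(\eta_*(y))$. So the map is the tensor product of the algebraic localization on the group ring with Snaith's localization on the non-equivariant factor.

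The main obstacle is the cyclic-group computation of $\Phi^G_*(\bKU)$ in a form compatible with $\eta$. Tom Dieck's description of geometric fixed points as the localization of $\pi^G_\star$ at Euler classes is needed, and it must be matched with $\pi^G_\star(\bKU)$ through equivariant Bott periodicity. I would try first to express everything via the telescope over $\bigoplus_{V^G=0}S^{V}$ applied to $\bKU$: equivariant Bott periodicity identifies the transition maps with multiplication by the Euler classes $i^*(\beta_{G,V})$ in $RU(G)[\beta^{\pm1}]$. I would also invoke the non-equivariant Snaith theorem for the $\pi_*$-level statement. If that is unavailable, I would instead reduce to $\pi_*(\Sigma^\infty_+\mC P^\infty)[b_{\cl}^{-1}]\to\pi_*(KU)$, using the known homology computation.
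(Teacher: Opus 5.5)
Your proposal is correct and is essentially the paper's proof. The non-cyclic case is identical (Theorem \ref{thm:rep_with_trivial_Phi-Lie} plus Remark \ref{rk:fix_of_KU}), and in the cyclic case the paper uses the same inputs: Proposition \ref{prop:monomial_basis} (ii), Snaith's theorem, and tom Dieck's identification of $\Phi^G_*(\bKU)$ as the localization of $\pi_*^G(\bKU)\iso\mZ[\Hom(G,T)]\tensor KU_*$ at the Euler classes $1-[\lambda]$. The paper only phrases this as a factorization of $\Phi^G_*(\eta)$ into two successive localizations, rather than computing both sides and matching them.

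One small citation fix: the reduction to characters only needs that a product in a graded-commutative ring is a unit if and only if its factors are. Proposition \ref{prop:invert_inherit} (iii) is not the right reference, since it concerns invertibility of classes in ring spectra.
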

\begin{proof}
  If the group $G$ is not cyclic,
  then Theorem \ref{thm:rep_with_trivial_Phi-Lie} provides a $G$-representation $V$ with $V^G=0$
  such that $\Phi^G(b_{G,V})=0$
  in the ring $\Phi^G_0(\Sigma^\infty_+\bP)$.
  So the localization of $\Phi^G_*(\Sigma^\infty_+\bP)$ by $\Phi^G(b_{G,V})$ is trivial.
  Since $\Phi^G_*(\bKU)$ is also trivial, for example by Remark \ref{rk:fix_of_KU},
  the claim holds for all non-cyclic groups.

  Now we treat the case where $G$ is cyclic, hence abelian.
  Then all irreducible $G$-representations are 1-dimensional,
  so we must show that $\Phi^G_*(\eta)$ is a localization away
  from the classes $\Phi^G(b_{G,\lambda})$ for all continuous homomorphisms $\lambda\colon G\to T$.
  The following diagram of graded rings commutes because
  \[ \Phi^G_*(\eta)(\alpha^\sharp)\ = \ \Phi^G_*(\eta)(\Phi^G(\alpha^*(e_T))) \ = \  \Phi^G(\alpha^*(\eta_*(e_T))) \ = \ \Phi^G(\alpha^*[\nu_1])\ ,  \]
  and by naturality and multiplicativity of the geometric fixed point maps:
    \[ \xymatrix@C=30mm{
      \mZ[\Hom(G,T)]\tensor \pi_*(\Sigma^\infty_+\bP)\ar[d]_{\mZ[\Hom(G,T)]\tensor\pi_*(\eta)}
      \ar[rr]_-\iso^-{\alpha\tensor x\mapsto \alpha^\sharp\cdot p_G^*(x)} &&
      \Phi^G_*(\Sigma^\infty_+\bP)\ar[d]^{\Phi^G_*(\eta)}\\
      \mZ[\Hom(G,T)]\tensor K U_* \ar[r]_-\iso^-{\alpha\tensor x\mapsto \alpha^*[\nu_1]\cdot p_G^*(x)} \ar@<-1ex>@/_1pc/[rr]_(.2){\alpha\tensor x\mapsto \Phi^G(\alpha^*[\nu_1])\cdot p_G^*(x)}
      & \pi_*^G(\bKU) \ar[r]^-{\Phi^G}    &\Phi^G_*(\bKU)
    } \]
  The upper horizontal map is an isomorphism by Proposition \ref{prop:monomial_basis} (ii);
  the left lower horizontal map is an isomorphism because $G$ is abelian,
  and hence all irreducible $G$-representations are 1-dimensional.
  We showed in Remark \ref{rk:preBott2Bott} that $\eta$ sends the pre-Bott class $b_1$
  to the $T$-equivariant Bott class $\beta_{T,\nu_1}$.
  So the underlying morphism of non-equivariant ring spectra
  sends the classical non-equivariant pre-Bott class $b_{\cl}=\res^T_{\{1\}}(b_1)$
  in $\pi_2(\Sigma^\infty_+\bP)$  to the classical Bott class in $\pi_2(\bKU)$.
  By Snaith's theorem, the morphism
  $\eta_*\colon \pi_*(\Sigma^\infty_+ \bP)\to K U_*$ of non-equivariant
  homotopy rings is a localization by $b_{\cl}$.
  So the left vertical morphism in the diagram is a localization by $1\tensor b_{\cl}$.

  If $\lambda\colon G\to T$ is nontrivial, then
  $\Phi^G(b_{G,\lambda})=1-\lambda^\sharp$ by \eqref{eq:Phi^G_of_char};
  hence the isomorphism of Proposition \ref{prop:monomial_basis} (i)
  identifies $\Phi^G(b_{G,\lambda})$  with $1-\lambda$.  
  The first two maps in the above composite send $1-\lambda$
  to $1-[\lambda]\in\pi_0^G(\bKU)$.
  It is a well-known fact that the geometric fixed point map
  $\Phi^G\colon \pi_*^G(\bKU)\to \Phi_*^G(\bKU)$ is localization away
  from the set of classes $1-[\lambda]$ for all nontrivial
  continuous homomorphisms $\lambda\colon G\to T$; we recall the classical argument.
  The global $K$-theory spectrum is $G$-equivariantly complex oriented, with the
  Bott classes as Thom classes.
  So the geometric fixed point morphism $\Phi^G\colon \pi_*^G(\bKU)\to \Phi^G_*(\bKU)$
  presents the target as the localization of the source by
  the Euler classes of all nontrivial irreducible complex $G$-representations,
  see \cite[Proposition 7.4.1]{tomDieck:and representation theory} or
  \cite[Proposition 3.20]{greenlees-may:equivariant_stable}.
  Moreover, the Euler class of $\lambda$ is precisely
  \[ i^*(\beta_{G,\lambda}) \ = \  i^*(\lambda^*(\beta_{T,\nu_1})) \ = \
    \lambda^*(i^*(\beta_{T,\nu_1})) \ = \     \lambda^*(1-[\nu_1]) \ = \ 1-[\lambda]\ . \]
  After inverting the upper horizontal isomorphism in the diagram,
  we have factored the morphism
  $\Phi^G_*(\eta)\colon\allowbreak \Phi^G_*(\Sigma^\infty_+ \bP)\to\Phi^G_*(\bKU)$
  as the composite of two localizations:
  \begin{itemize}
  \item a morphism
    $\Phi^G_*(\Sigma^\infty_+\bP)\to\pi_*^G(\bKU)$ that inverts the class
    $p_G^*(b_{\cl})=\Phi^G(b_{G,\mC})$ in $\Phi^G_2(\Sigma^\infty_+\bP)$
    arising from the trivial  homomorphism $G\to T$;
  \item
    followed by the morphism $\Phi^G\colon \pi_*^G(\bKU)\to\Phi_*^G(\bKU)$
    that inverts the images of the classes $\Phi^G(b_{G,\lambda})$
    for all nontrivial homomorphisms $\lambda\colon G\to T$.
  \end{itemize}
  So the composite $\Phi^G_*(\eta)\colon \Phi^G_*(\Sigma^\infty_+\bP)\to\Phi^G_*(\bKU) $
  is a localization by the classes
  $\Phi^G(b_{G,\lambda})$ for all continuous homomorphisms $\lambda\colon G\to T$, trivial or not.
\end{proof}

Now we have all tools at hand to prove our main result.

\begin{theorem}\label{thm:main}\
  \begin{enumerate}[\em (i)]
  \item 
  Let $\eta\colon \Sigma^\infty_+ \bP \to \bKU$ be a morphism of commutative global ring spectra
  such that $\eta_*(e_T)=[\nu_1]$ in $\pi_0^T(\bKU)$.
  Then $\eta$ is a global localization in $\CAlg(\Sc p_{\gl}^\sm)$
  by the pre-Bott classes $\{b_m\}_{m\geq 1}$.
\item
  Let $\eta\colon \Sigma^\infty_+ \bP \to \bKU$ be a morphism of ultra-commutative ring spectra
  such that $\eta_*(e_T)=[\nu_1]$ in $\pi_0^T(\bKU)$.
  Then $\eta$ is  an ultra-commutative localization in $\UR_{\gl}$
  by the pre-Bott classes $\{b_m\}_{m\geq 1}$.
  \end{enumerate}
\end{theorem}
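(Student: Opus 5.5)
For part (i), I will apply the geometric fixed point criterion, Theorem \ref{thm:fix_criterion}, with $R=\Sigma^\infty_+\bP$, $S=\bKU$, $i=\eta$ and $\Xc=\{b_m\}_{m\geq 1}$. What has to be checked is this: for every compact Lie group $K$, the map $\Phi^K_*(\eta)$ is a localization by the set
\[ \Xc_K=\{\Phi^K(\beta^*(b_m))\colon m\geq 1,\ \beta\colon K\to U(m)\}. \]
Theorem \ref{thm:geomfix} says that $\Phi^K_*(\eta)$ is a localization by the set $\Yc_K=\{\Phi^K(b_{K,V})\}$, with $V$ ranging over all unitary $K$-representations. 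So the task reduces to comparing the two multiplicative systems $\Xc_K$ and $\Yc_K$ in $\Phi^K_*(\Sigma^\infty_+\bP)$.

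I expect $\Xc_K$ and $\Yc_K$ to agree up to units. By Construction \ref{con:pre-Bott_of_V}, every $b_{K,V}$ has the form $\psi^\flat(\beta^*(b_m))$, where $\beta$ classifies $V$ and $\psi^\flat$ is precomposition with a homeomorphism $S^V\iso S^{\beta^*(\nu_m)}$. After applying $\Phi^K$, this changes the class at most by the sign coming from the orientation convention, and a sign does not affect the localization. Conversely, each $\Phi^K(\beta^*(b_m))$ equals $\pm\Phi^K(b_{K,\beta^*(\nu_m)})$. For $m=0$ the class is $b_0=1$, which changes nothing.

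Localizing a graded-commutative ring by a set of homogeneous elements depends only on the set up to units. Hence the localization by $\Xc_K$ coincides with the one given by Theorem \ref{thm:geomfix}, the hypothesis of Theorem \ref{thm:fix_criterion} holds for every $K$, and (i) follows. The only point needing care is the sign bookkeeping for the orientations of $V^K$ versus $(\beta^*\nu_m)^K$. This is harmless, since the paper already notes that orientation changes only introduce a sign.

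For part (ii), let $\eta$ be a morphism of ultra-commutative ring spectra with $\eta_*(e_T)=[\nu_1]$. Its underlying morphism $u_{\gl}\eta$ of commutative global ring spectra satisfies the same hypothesis, because homotopy groups and classes are defined through $u_{\gl}$. So by part (i), $u_{\gl}\eta$ is a global localization by $\{b_m\}$ in $\CAlg(\Sc p_{\gl}^\sm)$. Theorem \ref{thm:UR2CR}~(iii) then says directly that $\eta$ is an ultra-commutative localization by $\{b_m\}$ in $\UR_{\gl}$.

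I expect no real obstacle at this stage. The substantive input is Theorem \ref{thm:geomfix}, which in turn rests on Theorem \ref{thm:rep_with_trivial_Phi-Lie} and Snaith's theorem, together with the monadicity argument behind Theorem \ref{thm:UR2CR}. Both are already available.
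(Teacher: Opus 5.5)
Your proposal is correct and follows the paper's proof exactly. Part (i) is Theorem \ref{thm:geomfix} fed into the geometric fixed point criterion of Theorem \ref{thm:fix_criterion}, and part (ii) is part (i) combined with Theorem \ref{thm:UR2CR}~(iii); your explicit matching of the classes $\Phi^K(\beta^*(b_m))$ with the classes $\Phi^K(b_{K,V})$ up to orientation signs is the step the paper leaves implicit, and it is even cleaner than you state, since $b_{K,\beta^*(\nu_m)}=\beta^*(b_m)$ holds on the nose by Construction \ref{con:pre-Bott_of_V} (take $\psi$ to be the identity).
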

\begin{proof}
  Theorem \ref{thm:geomfix} witnesses the geometric fixed point criterion
  of Theorem \ref{thm:fix_criterion} for the pre-Bott classes $\{b_m\}_{m\geq 1}$,
  and for the morphism $\eta\colon \Sigma^\infty_+ \bP \to \bKU$, proving (i).
  In the situation of (ii), the underlying morphism of commutative global ring spectra
  is a global localization in  $\CAlg(\Sc p_{\gl}^\sm)$ by $\{b_m\}_{m\geq 1}$, by (i).
  Theorem \ref{thm:UR2CR} (iii) then shows that $\eta$ is also
  an ultra-commutative localization in $\UR_{\gl}$ by $\{b_m\}_{m\geq 1}$.
\end{proof}

\section{Equivariant \texorpdfstring{$K$}{K}-theory as a \texorpdfstring{$G$}{G}-equivariant localization}

In this section we fix a compact Lie group $G$.
Restricting the global morphism $\eta$ to underlying $G$-spectra
yields a morphism of ultra-commutative $G$-ring spectra 
\[ \eta_G\ :\ \Sigma^\infty_+\bP_G\ \to\  \bKU_G\ . \]
We will show in this section that also $\eta_G$ is a localization---in
several different ways---by the set of pre-Bott classes
of all unitary $G$-representations; or, equivalently,
by the pre-Bott classes of all irreducible unitary $G$-representations.
One of these localization properties is directly analogous to the properties
in the global context in Theorem \ref{thm:main}: $\eta_G$ is a localization
in the  $\infty$-category of commutative $G$-ring spectra,
see Theorem \ref{thm:main-G-structured}.
In contrast to the global situation, the morphism $\eta_G$ is also a
telescopic localization (sequential colimit)
in the $\infty$-category of $G$-spectra, along morphisms that realize multiplication
by the pre-Bott classes of all unitary $G$-representations,
see Theorem \ref{thm:telescopic localization}.
And finally, $\eta_G$ induces algebraic, $RO(G)$-graded localizations,
again by the pre-Bott classes of all unitary  $G$-representations,
for $\Sigma^\infty_+\bP_G$-modules, at the level of $RO(G)$-graded
homotopy rings, and on equivariant cohomology theories,
see Theorem \ref{thm:main-G-RO-version}.

In Corollary \ref{cor:no_Z_localization} we show a complementary result,
namely that for $G\ne\{1\}$,
one {\em cannot} obtain $\bKU_G$ from $\Sigma^\infty_+ \bP_G$ by telescopically
inverting any set of integer-graded $G$-equivariant homotopy classes;
so $RO(G)$-grading is essential here.
In particular, the main result of \cite{spitzweck-oestvaer:equivariant},
claiming the contrary for abelian compact Lie groups, is incorrect;
see Remark \ref{rk:spitz-oest} for more details.
\medskip

In the following a {\em set of representation-graded $G$-equivariant homotopy classes}
of a $G$-spectrum $R$ will mean a set $\Xc$ of
classes in $\pi_V^G(R)$ for varying $G$-representations $V$.
We will say that $v\in \pi_V^G(R)$ is {\em supported} at $V$.
The main example for this paper is the set $\{b_{G,V}\}$
of pre-Bott classes of the $G$-ring spectrum $\Sigma^\infty_+\bP_G$,
for unitary $G$-representations $V$. 

\begin{defn}[Localization in $\CAlg(\Sc p_G^\sm)$]
  Let $\Xc$ be a set of representation-graded $G$-equivariant homotopy classes of
  a commutative $G$-ring spectrum $R$.
  A {\em localization by $\Xc$}
  is an initial object of the full subcategory of the
  slice category $R\bs\CAlg(\Sc p_G^\sm)$ spanned by the
  $\Xc$-inverting  morphisms.
\end{defn}

The next result is a $G$-equivariant analog of
the global localization result in Theorem \ref{thm:main} (i).

\begin{theorem}\label{thm:main-G-structured}
  Let $G$ be a compact Lie group.
  Let $\eta\colon \Sigma^\infty_+ \bP \to \bKU$ be a morphism of commutative global ring spectra
  such that $\eta_*(e_T)=[\nu_1]$ in $\pi_0^T(\bKU)=RU(T)$.
  Then the morphism $\eta_G\colon\Sigma^\infty_+ \bP_G\to\bKU_G$ 
  of commutative $G$-ring spectra is a localization in $\CAlg(\Sc p_G^\sm)$
  by the pre-Bott classes $b_{G,V}$ for all unitary $G$-representations $V$.
\end{theorem}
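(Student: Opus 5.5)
We apply the localization recognition of Theorem \ref{thm:loc_reg} directly, now to the presentably symmetric monoidal $\infty$-category $\Cc=\Sc p_G^\sm$ of genuine $G$-spectra. The family of functors is $\Phi^H\circ\res^G_H\colon\Sc p_G^\sm\to\Sc p^\sm$, with $H$ running through representatives of conjugacy classes of closed subgroups of $G$. These are strong symmetric monoidal left adjoints, and they are jointly conservative, since a morphism of $G$-spectra is an equivalence if and only if it induces isomorphisms on all geometric fixed points. For $\Wc$ we take the full subcategory of $\Sigma^\infty_+\bP_G\bs\CAlg(\Sc p_G^\sm)$ spanned by the morphisms inverting all $b_{G,V}$. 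Proposition \ref{prop:invert_inherit} (i) shows that $\Wc$ is closed under postcomposition, which is condition (b).

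To see that $\eta_G$ lies in $\Wc$, write $b_{G,V}=\psi^\flat(\alpha^*(b_m))$. Theorem \ref{thm:pre-Bott} and Proposition \ref{prop:restriction_and_invert} show that $\eta_*(b_{G,V})$ is invertible: precomposition with the homeomorphism $S^\psi$ clearly preserves invertibility, for instance via Proposition \ref{prop:invert_criterion} (c). Since invertibility of a class in $\pi^G_V$ of a global ring spectrum is defined on the underlying $G$-spectrum, this is exactly invertibility of $(\eta_G)_*(b_{G,V})$.

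Condition (c) is the key step. Fix a closed subgroup $H\le G$. By Theorem \ref{thm:geomfix} applied to the group $H$, the map $\Phi^H_*(\eta)$ is a localization by the classes $\Phi^H(b_{H,W})$ for all unitary $H$-representations $W$. This is a larger set than the restrictions $\Phi^H(\res^G_H b_{G,V})=\Phi^H(b_{H,\res V})$, and condition (c) requires a localization that every $f\in\Wc$ factors through. We therefore show that inverting the classes $\Phi^H(b_{H,\res^G_H V})$ already inverts all $\Phi^H(b_{H,W})$. Every $H$-representation $W$ is a summand of the restriction of some $G$-representation $V$: take a summand of $\res^G_H\ind_H^G W$, which contains $W$ by Frobenius reciprocity. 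Write $\res^G_H V\cong W\oplus W'$. Theorem \ref{thm:fix of pre-Bott} (ii), together with the independence of the pre-Bott class from the chosen isometry (Construction \ref{con:pre-Bott_of_V}), gives
\[ \Phi^H(b_{H,\res V})\ =\ \pm\,\Phi^H(b_{H,W})\cdot\Phi^H(b_{H,W'}).\]
Hence in any graded-commutative ring receiving $\Phi^H_*(\Sigma^\infty_+\bP_G)$ in which the left side is a unit, so is $\Phi^H(b_{H,W})$. Consequently $\Phi^H_*(\eta_G)$ is also a localization by the smaller set $\{\Phi^H(b_{G,V})\}_V$, since the two localizations agree.

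Now let $f\in\Wc$. Then $f_*(b_{G,V})$ is invertible, so $\Phi^H(f_*b_{G,V})$ is a unit by Proposition \ref{prop:invert_criterion}. Therefore $\Phi^H_*(f)$ inverts these classes and factors through $\Phi^H_*(\eta_G)$. This verifies condition (c), and Theorem \ref{thm:loc_reg} shows that $\eta_G$ is initial in $\Wc$.

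The main obstacle is bookkeeping rather than substance. One must check that the geometric fixed points of the underlying $G$-spectrum restricted to $H$ agree with the $H$-geometric fixed points of the global spectrum, so that Theorem \ref{thm:geomfix} for $H$ applies verbatim; the identification of Corollary \ref{cor:fix_ring_iso} should handle this. One must also check the orientation sign in the product formula, which is harmless for detecting units.
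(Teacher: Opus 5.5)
Your proposal is correct and follows the paper's proof essentially step for step. It uses the same application of Theorem~\ref{thm:loc_reg} to $\Sc p_G^\sm$ with the functors $\Phi^H\circ\res^G_H$, and the same subcategory $\Wc$. It then makes the same reduction of the localization from Theorem~\ref{thm:geomfix} (applied to $H$, via $\res^G_H\circ\res_G\simeq\res_H$) to the classes $\Phi^H(b_{G,V})$, using multiplicativity of pre-Bott classes and the fact that every $H$-representation is a summand of a restricted $G$-representation.

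The one point to tighten is the justification of that last fact. When $H$ has infinite index, $\ind_H^G W$ is infinite-dimensional, so "take a summand of $\res^G_H\ind_H^G W$" does not directly produce a $G$-representation. Instead, use Frobenius reciprocity together with Peter--Weyl to find, for each irreducible $H$-representation $W$, an irreducible $G$-representation $V$ with $\Hom_H(\res^G_H V,W)\neq 0$.
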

\begin{proof}
  We apply the localization recognition of Theorem \ref{thm:loc_reg}
  to the presentably symmetric monoidal $\infty$-category $\Sc p^\sm_G$
  of $G$-spectra, to the morphism $\eta_G\colon\Sigma^\infty_+ \bP_G\to\bKU_G$,
  and to the family of geometric fixed point functors
  $\Phi^H\circ\res^G_H\colon\Sc p^\sm_G\to \Sc p^\sm$,
  for $H$ ranging over all closed subgroups of $G$.
  These functors are strong symmetric monoidal left adjoints, and jointly conservative.
  The full subcategory $\Wc$ of $(\Sigma^\infty_+\bP_G)\bs\CAlg(\Sc p^\sm_G)$
  is spanned by the morphisms that invert the pre-Bott classes $b_{G,V}$
  for all unitary $G$-representations $V$; it is closed under postcomposition
  by Proposition \ref{prop:invert_inherit} (i),
  and it contains $\eta_G$ by Theorem \ref{thm:pre-Bott}
  and Proposition \ref{prop:restriction_and_invert}.
 
  We claim that hypothesis (c) of Theorem \ref{thm:loc_reg} holds for every
  closed subgroup $H$ of $G$.
  Since $\bKU$ is a global ring spectrum, $\res^G_H(\bKU_G)$ is equivalent to $\bKU_H$.
  By Theorem \ref{thm:geomfix},
  the morphism $\Phi^H_*(\eta_G)\colon \Phi^H_*(\Sigma^\infty_+\bP_G)\to \Phi^H_*(\bKU_G)$
  is a localization by the set of classes
  $\Phi^H(b_{H,W})$ for all unitary $H$-representations $W$.
  While not every $H$-representation underlies a $G$-representation,
  every $H$-representation is a direct summand in a $G$-representation.
  Since the pre-Bott classes are multiplicative (Theorem \ref{thm:fix of pre-Bott} (ii)),
  and a product in a graded-commutative ring is invertible
  if and only if its factors are, we deduce that
  $\Phi^H_*(\eta_G)$ is also a localization by the set of classes
  $\Phi^H(b_{G,V})$ for all unitary  $G$-representations $V$.

  Any $f\colon \Sigma^\infty_+\bP_G\to P$ in $\Wc$
  inverts $b_{G,V}$ for all unitary $G$-representations $V$,
  and hence it inverts all the classes $\Phi^H(b_{G,V})$ by
  Propositions \ref{prop:invert_criterion} and \ref{prop:invert_inherit} (ii).
  Since  $\Phi^H_*(\eta_G)\colon\allowbreak\Phi^H_*(\Sigma^\infty_+\bP_G)\to \Phi^H_*(\bKU_G)$
  is a localization by these classes, the morphism $\Phi^H_*(f)$ factors through $\Phi^H_*(\eta_G)$.
  So Theorem \ref{thm:loc_reg} applies and shows that $\eta_G\colon \Sigma^\infty_+\bP_G\to\bKU_G$
  is a localization by $\{b_{G,V}\}$ in $\CAlg(\Sc p^\sm_G)$.
\end{proof}

\begin{rk}[$\eta_G\colon\Sigma^\infty_+ \bP_G\to\bKU_G$
  as an ultra-commutative $G$-localization]
  Besides commutative $G$-ring spectra, there are even more highly
  structured kinds of genuine equivariant ring spectra:
  the {\em $\infty$-category of ultra-commutative $G$-ring spectra}  $\UR_G$
  is the localization of the 1-category of commutative orthogonal $G$-ring spectra
  at the class of those morphisms that are $\upi_*$-isomorphisms
  of underlying orthogonal $G$-spectra.
  A {\em localization} of an ultra-commutative $G$-ring spectrum $R$
  by a set of representation-graded $G$-equivariant homotopy classes
  $\Xc$ is then an initial object of the full subcategory of the
  slice category $R\bs\UR_G$ spanned by the $\Xc$-inverting morphisms.

  Much like in the global context in \eqref{eq:ucom2CR_gl}, one can define a `forgetful' functor
  \[ u_G\ :\ \UR_G\ \to\  \CAlg(\Sc p_G^\sm)  \]
  from the $\infty$-category of ultra-commutative $G$-ring spectra to the
  $\infty$-category of commutative $G$-ring spectra.
  Also like in the global context,
  this forgetful functor is {\em not} an equivalence
  (except when the group $G$ is trivial),
  and one can show that it exhibits the $\infty$-category of ultra-commutative $G$-ring spectra
  as bimonadic over the $\infty$-category of commutative $G$-ring spectra.
   In the $G$-equivariant context, the analogous warning as in the global context applies:
  for ultra-commutative $G$-ring spectra,
  the two localizations in $\UR_G$ and in $\CAlg(\Sc p_G^\sm)$ need not coincide
  in general.

  One can adapt the proof of Theorem \ref{thm:UR2CR} (iii)
  from the global to the $G$-equivariant context to show the following:
  Let $i\colon R\to S$ be a morphism of ultra-commutative $G$-ring spectra
  such that the underlying morphism $u_G i\colon u_G R\to u_G S$
  is a localization by $\Xc$ in $\CAlg(\Sc p_G^\sm)$.
  Then $i$ is a localization by $\Xc$ in $\UR_G$.
  In fact, the proof of Theorem \ref{thm:UR2CR} (iii) literally goes through
  in the $G$-equivariant context
  if one uses that the forgetful functor $u_G$ is bimonadic.
  This fact, in turn, can be proved in much the same way as its global analog in
  Theorem \ref{thm:UR_monadic} (iii).
  However, as this paper is already quite long, we will not go
  into a detailed proof.  

  Assuming all this, we obtain the $G$-ultra-commutative analog of
  Theorem \ref{thm:main-G-structured}:
  Let $\eta\colon \Sigma^\infty_+ \bP \to \bKU$ be a morphism of ultra-commutative ring spectra
  such that $\eta_*(e_T)=[\nu_1]$ in $\pi_0^T(\bKU)=RU(T)$.
  Then the morphism of ultra-commutative $G$-ring spectra
  $\eta_G\colon\Sigma^\infty_+ \bP_G\to\bKU_G$ is a localization in $\UR_G$ 
  by the pre-Bott classes $b_{G,V}$ for all unitary $G$-representations $V$.
\end{rk}

Our next task is to prove that the $RO(G)$-graded homotopy ring of $\bKU_G$
is an algebraic localization of the $RO(G)$-graded homotopy ring of $\Sigma^\infty_+\bP_G$
at the pre-Bott classes of the unitary $G$-representations, see Theorem \ref{thm:main-G-RO-version}.
The topic of `$RO(G)$-gradings'
has a somewhat colorful track record, and provides some pitfalls to be avoided.
Readers unfamiliar with the subtleties of $RO(G)$-gradings are encouraged to consult Adams' 
entertaining introduction to \cite[\S 6]{adams:prerequisites}.
Since we want to use $RO(G)$-graded multiplicative structure,
we give a brief review of how we handle these issues.

\begin{construction}
  Lewis and Mandell \cite[Theorem A.1]{lewis-mandell}
  and Dugger \cite{dugger:multigrade} show that one can make consistent choices of
  invertible $G$-spectra for virtual representations, in the following sense.
  We view the representation ring $RO(G)$ as a
  discrete symmetric monoidal category, i.e., with only identity morphisms,
  and with monoidal product the addition in $RO(G)$.
  This symmetric monoidal structure is extremely strict, in that all coherence isomorphisms,
  for unitality, associativity, and even commutativity, are identities.
  The construction of \cite{dugger:multigrade, lewis-mandell} provides a strong monoidal functor
  \begin{equation}\label{eq:duggerlewismandell}
  \mS^\bullet_G\  : \  RO(G)\ \to\ h(\Sc p_G^\sm)   
  \end{equation}
  with values in smash-invertible objects. Moreover, for every orthogonal $G$-representation $V$,
  the invertible object $\mS^{[V]}_G=\mS^\bullet_G([V])$ is isomorphic in 
  $h(\Sc p_G^\sm)$ to the suspension spectrum $\Sigma^\infty S^V$,
  but not in any preferred or specified way.   
  The construction proceeds from enumerated choices of representatives for the isomorphism
  classes of irreducible orthogonal $G$-representations $\lambda$,
  plus choices of inverses of $\Sigma^\infty S^\lambda$, including the data
  witnessing the inverse property.
  There is no intrinsic way to make `canonical' choices of these data,
  and the choices affect the resulting $RO(G)$-graded rings.
  For definiteness, we shall work with Dugger's construction,
  because it also enjoys a specific symmetry feature discussed below.
  Other than that, we do not need to know the precise construction
  of \eqref{eq:duggerlewismandell}.
   
  Part of the data of the strong monoidal functor \eqref{eq:duggerlewismandell}
  are isomorphisms
  \begin{equation}\label{eq:define_omega}
    \omega_{\alpha,\beta}\ : \ \mS^{\alpha+\beta}_G\ \xra{\ \iso\ }\ \mS^\alpha_G \sm \mS^\beta_G
  \end{equation}
  for all $\alpha,\beta\in RO(G)$, which satisfy the associativity constraints.
  We emphasize that the functor $\mS^\bullet_G$ is not {\em symmetric} monoidal;
  this is already visible for $G=\{1\}$, where $RO(\{1\})=\mZ$,
  simply because the twist isomorphism of $S^1\sm S^1$ has degree $-1$. 
  But the functor $\mS_G^\bullet$ has a particular kind of symmetry behavior:
  since $\mS_G^{\alpha+\beta}=\mS_G^{\beta+\alpha}$ is invertible,
  there is a unique invertible element $t_{\alpha,\beta}\in \pi_0^G(\mS)$
  that measures the deviation from symmetry, in the sense that
  the following diagram commutes:
  \[   \xymatrix@C=18mm@R=6mm{
      \mS_G^{\alpha+\beta}\ar[r]^-{t_{\alpha,\beta}\cdot -}  \ar[d]_{\omega_{\alpha,\beta}}  &
      \mS_G^{\beta+\alpha}\ar[d]^{\omega_{\beta,\alpha}}\\
      \mS_G^\alpha\sm \mS_G^\beta \ar[r]_-{\text{symmetry}}  &
      \mS_G^\beta\sm \mS_G^\alpha  } \]
  Then clearly $t_{\alpha,\beta}=t_{\beta,\alpha}^{-1}$,
  and $t_{\alpha,0}=1$ by the unitality condition.
  The hexagon coherence conditions between the
  associativity and symmetry isomorphisms in $h(\Sc p_G^\sm)$,
  and the fact that the isomorphisms $\omega_{\alpha,\beta}$ are associative yield
  the relations
  \begin{equation}\label{eq:t_biadd}
    t_{\alpha,\gamma}\cdot t_{\beta,\gamma}\ = \ t_{\alpha+\beta,\gamma}   \text{\quad and \quad}
    t_{\alpha,\beta}\cdot t_{\alpha,\gamma}\ = \ t_{\alpha,\beta+\gamma}
  \end{equation}
  for all $\alpha,\beta,\gamma$ in $RO(G)$.
  Moreover, the element $\varepsilon_\alpha=t_{\alpha,\alpha}$ in $\pi_0^G(\mS)$ 
  satisfies $\varepsilon_\alpha^2=1$, and it equals the trace of the identity of $\mS^\alpha_G$,
  and hence the Euler characteristic of the invertible object $\mS^\alpha_G$.
  The relations \eqref{eq:t_biadd}
  imply that the units $t_{\alpha,\beta}$ are determined by the special cases
  where $\alpha=[\lambda]$ and $\beta=[\mu]$ are the classes of two irreducible
  representations $\lambda$ and $\mu$. A property of Dugger's construction is
  that $t_{[\lambda],[\mu]}=1$ whenever $\lambda$ and $\mu$ are non-isomorphic irreducibles.
  So in fact
  \[ t_{\alpha,\beta}\ = \ \prod_{\lambda} \varepsilon_{[\lambda]}^{m(\lambda,\alpha)\cdot m(\lambda,\beta)}\ ,\]
  where the product is over a set of representatives of the isomorphism
  classes of irreducible representations, and $m(\lambda,\alpha)\in\mZ$
  is the multiplicity of $\lambda$ in $\alpha$, compare \cite[Proposition 1.2]{dugger:multigrade}.
  
  For later use in Construction \ref{con:RO(G) pre-Bott} we note that
  $\varepsilon_{[\lambda]}=1$ whenever the orthogonal representation $\lambda$
  underlies a unitary $G$-representation, simply because for every
  unitary $G$-representation $W$, the twist self-isometry of $S^{W\oplus W}$
  is equivariantly homotopic to the identity.
  Moreover, if $\lambda$ is an irreducible orthogonal representation of real type,
  then it occurs in any unitary representation with even multiplicity.
  Consequently, the symmetry markers $t_{\alpha,\beta}$ equal $1$
  whenever $\alpha$ and $\beta$ are in the image of the forgetful homomorphism $RU(G)\to RO(G)$.
\end{construction}

\begin{construction}[$RO(G)$-graded homotopy groups]\label{con:pi_star^G}
  The strong monoidal functor $\mS^\bullet_G$ from \eqref{eq:duggerlewismandell}
  allows us to introduce $RO(G)$-graded $G$-equivariant homotopy groups,
  as follows. For a $G$-spectrum $X$ and a virtual representation $\alpha\in RO(G)$,
  we set
  \[ \pi_\alpha^G(X)\ = \ h(\Sc p_G)(\mS_G^\alpha,X)\ ,\]
  the abelian group of morphisms, in the homotopy category of $G$-spectra,
  from $\mS_G^\alpha$ to $X$.
  Since the object  $\mS_G^\alpha$ is invertible, it is in particular compact
  in $h(\Sc p_G)$, so the functor $\pi_\alpha^G$ sends arbitrary coproducts
  to direct sums.  We shall write
  \[ \pi_\star^G(X)\ = \ \{\pi_\alpha^G(X)\}_{\alpha\in RO(G)} \]
  for the resulting $RO(G)$-graded abelian group.

  If $Y$ is another $G$-spectrum and $\beta\in RO(G)$, we obtain a biadditive pairing
  \begin{align}\label{eq:RO-sm}
  \sm \ : \   \pi_\alpha^G(X)\times \pi_\beta^G(Y)\quad
    = \quad  &h(\Sc p_G)(\mS_G^\alpha,X)\times h(\Sc p_G)(\mS_G^\beta,Y)\\
        \xra{\quad \sm \quad}\  &h(\Sc p_G)(\mS_G^\alpha\sm \mS_G^\beta,X\sm Y)\nonumber\\
    \xra[\eqref{eq:define_omega}]{-\circ \omega_{\alpha,\beta}}\  &h(\Sc p_G)(\mS_G^{\alpha+\beta},X\sm Y)\ =\ \pi_{\alpha+\beta}^G(X\sm Y)\ .\nonumber
  \end{align}
   It is a routine check, using that the functor $\mS_G^\bullet$ is monoidal,
   that these pairings are associative and unital.
   Since the functor $\mS_G^\bullet$ is not symmetric monoidal,
   the pairing is not strictly commutative, which is already the case for
   the usual $\mZ$-grading in stable homotopy theory.
   The deviation from strict commutativity is measured by the units $t_{\alpha,\beta}$,
   in the sense that
   \[ x\sm y \ =\ t_{\alpha,\beta}\cdot (\tau_{Y,X})_*(y\sm x)
     \text{\quad   in $\pi_{\alpha+\beta}^G(X\sm Y)$,} \]
   for all $x\in \pi_\alpha^G(X)$ and $y\in\pi_\beta^G(Y)$.
   Here $\tau_{Y,X}\colon Y\sm X\to X\sm Y$ denotes the symmetry equivalence for
   the smash product of $G$-spectra.
   
   Now we let $R$ be a homotopy $G$-ring spectrum
   with multiplication morphism $\mu\colon R\sm R\to R$ in $h(\Sc p_G^\sm)$.
   Then the composite maps
   \begin{equation}\label{eq:RO_product}
 \cdot \ : \ \pi_\alpha^G(R)\times \pi_\beta^G(R)\ \xra{\ \sm \ }\
    \pi_{\alpha+\beta}^G(R\sm R)\ \xra{\ \mu_*\ }\
    \pi_{\alpha+\beta}^G(R)       
   \end{equation}
  enhance $\pi_\star^G(R)$ to an $RO(G)$-graded ring.
  And for every left homotopy $R$-module $M$
  in $h(\Sc p_G^\sm)$, the composite maps
  \[ \pi_\alpha^G(R)\times \pi_\beta^G(M)\ \xra{\ \sm \ }\
    \pi_{\alpha+\beta}^G(R\sm M)\ \xra{\ \text{act}_*\ }\
    \pi_{\alpha+\beta}^G(M)   \]
  enhance $\pi_\star^G(M)$ to an $RO(G)$-graded left $\pi_\star^G(R)$-module.
  If the multiplication of $R$ is homotopy-commutative,
  then
  $\pi_\star^G(R)$ is a `graded-commutative' $RO(G)$-graded ring,
  in the sense that
  \[ y\cdot x \ =\ t_{\alpha,\beta}\cdot (x\cdot y)
     \text{\quad   in $\pi_{\beta+\alpha}^G(R)$,} \]
   for all $x\in \pi_\alpha^G(R)$ and $y\in\pi_\beta^G(R)$.
\end{construction}

We can descend the representation-graded pre-Bott classes
to $RO(G)$-graded equivariant homotopy classes, as follows.

\begin{construction}[$RO(G)$-graded pre-Bott classes]\label{con:RO(G) pre-Bott}
  We let $G$ be a compact Lie group.
  As we shall now explain, we can make `consistent' choices
  of pre-Bott classes $\mathbbm{b}_{G,\td{V}}\in \pi_{[V]}^G(\Sigma^\infty_+\bP_G)$
  in the $RO(G)$-graded homotopy ring.
  These are indexed by effective classes in $RU(G)$,
  i.e., ones represented by actual unitary $G$-representations;
  as the notation suggests, the class $\mathbbm{b}_{G,\td{V}}$ only depends
  on the isomorphism class $\td{V}$ of $V$ in $RU(G)$,
  but not on the specific representative.
  One should mind the appearance of both $RU(G)$ and $RO(G)$:
  the classes $\mathbbm{b}_{G,\td{V}}$ are defined for effective elements
  in the complex representation ring $RU(G)$, but they live in the
  $RO(G)$-graded homotopy ring, in the degree of the underlying
  orthogonal $G$-representation.
  
  For the construction, we choose representatives $\{\mu_i\}_{i\in I}$
  for the isomorphism classes of irreducible unitary $G$-representations.
  For every chosen representative, we choose an equivalence of $G$-spectra
  $\kappa_i\colon \mS_G^{[\mu_i]}\iso\Sigma^\infty S^{\mu_i}$,
  where $[\mu_i]$ is the class in $RO(G)$ of the underlying orthogonal
  $G$-representation of $\mu_i$. Then we define
  \[ \mathbbm{b}_{G,\td{\mu_i}}\ = \
    \kappa^*_i(b_{G,\mu_i})\ \in \ \pi_{[\mu_i]}^G(\Sigma^\infty_+\bP_G)\ .\]
  A general unitary $G$-representation $V$ is isomorphic to
  $\mu_{i_1}\oplus\dots\oplus\mu_{i_k}$
  for suitable $i_1,\dots,i_k\in I$, and we set
  \begin{equation}\label{eq:RO-pre-Bott}
    \mathbbm{b}_{G,\td{V}}\ = \
    \mathbbm{b}_{G,\td{\mu_{i_1}}}\cdot\ldots\cdot \mathbbm{b}_{G,\td{\mu_{i_k}}}
    \ \in \ \pi_{[V]}^G(\Sigma^\infty_+\bP_G)  \ ,  
  \end{equation}
  where the product is taken in the $RO(G)$-graded homotopy ring as in \eqref{eq:RO_product}.
  As mentioned above, the symmetry markers $t_{\alpha,\beta}$ equal $1$
  whenever $\alpha$ and $\beta$ are in the image of the forgetful homomorphism $RU(G)\to RO(G)$.
  Hence the class \eqref{eq:RO-pre-Bott} does not depend on the ordering of the
  irreducible summands of $V$, and moreover
  \[ \mathbbm{b}_{G,\td{V}}\cdot \mathbbm{b}_{G,\td{W}}\ = \ \mathbbm{b}_{G,\td{V\oplus W}}\ .\]
  A final property is the following consistency: for every unitary $G$-representation $V$,
  there is an equivalence $\kappa_V\colon \mS_G^{[V]}\xra{\sim}\Sigma^\infty S^V$
  such that $\mathbbm{b}_{G,\td{V}}=\kappa_V^*(b_{G,V})$.
  Indeed, we may choose a $\mC$-linear $G$-equivariant isometry
  $\psi\colon\mu_{i_1}\oplus\dots\oplus\mu_{i_k}\iso V$, and then the composite equivalence
  \begin{align*}
    \mS_G^{[V]}\ = \ \mS_G^{[\mu_{i_1}]+\dots+[\mu_{i_k}]} \ \xra{\omega_{[\mu_{i_1}],\dots,[\mu_{i_k}]}}\
    &\mS_G^{[\mu_{i_1}]}\sm \dots\sm\mS_G^{[\mu_{i_k}]} \\
    \xra{\kappa_{i_1}\sm\dots\sm\kappa_{i_k}}\ &(\Sigma^\infty S^{\mu_{i_1}})\sm\dots\sm(\Sigma^\infty S^{\mu_{i_k}})\\
    \iso \quad &\Sigma^\infty S^{\mu_{i_1}\oplus\dots\oplus\mu_{i_k}}\ \xra{\ \Sigma^\infty S^\psi\ } \
                 \Sigma^\infty S^V
  \end{align*}
  has the desired property.
\end{construction}

\begin{construction}[Telescopic localization]
  We choose an exhaustive sequence
  \begin{equation}\label{eq:exhaust}
    U_1 \ , \ U_2 \ , \ U_3\  , \ \dots    
  \end{equation}
  of finite-dimensional unitary $G$-representations,
  i.e., such that every finite-dimensional unitary $G$-re\-presen\-ta\-tion
  embeds, equivariantly and $\mC$-linearly, into 
  \[ V_n\ =  \ U_1\oplus\dots\oplus U_n\]
  for sufficiently large $n$.
  We set
  \[ \alpha_n\ =  \ [V_n]\ = \ [U_1\oplus\dots\oplus U_n] \]
  in $RO(G)$, including $\alpha_0=0$.
  We represent the pre-Bott class $\mathbbm{b}_{G,\td{U_n}}\in\pi_{[U_n]}^G(\Sigma^\infty_+\bP_G)$
  by a morphism
  \[ h_n\ : \ \Sigma^\infty_+\bP_G\sm \mS_G^{-\alpha_{n-1}}\ \to \
    \Sigma^\infty_+\bP_G \sm \mS_G^{-\alpha_n} \]
  in $\Mod_{\Sigma^\infty_+\bP_G}(\Sc p_G^\sm)$, i.e.,
  such that 
  \[ h_n(1\sm \mS_G^{-\alpha_{n-1}})\ =\ \mathbbm{b}_{G,\td{U_n}}\sm \mS_G^{-\alpha_n} \ ,\]
  where the pairing on the right hand side is
  the $RO(G)$-graded external multiplication  
  \[ \sm \ : \ \pi_{[U_n]}^G(\Sigma^\infty_+\bP_G)\times \pi^G_{-\alpha_n}(\mS_G^{-\alpha_n})\
    \to \
    \pi_{-\alpha_{n-1}}^G(\Sigma^\infty_+\bP_G\sm\mS_G^{-\alpha_n})  \]
  defined in \eqref{eq:RO-sm}; this defines the morphism $h_n$ uniquely up to homotopy.
  The sequence of morphisms
  \begin{equation}\label{eq:KU_G-sequence}
    \Sigma^\infty_+ \bP_G\xra{\ h_1\ } 
    \Sigma^\infty_+ \bP_G\sm \mS^{-\alpha_1}_G \xra{\ h_2\ }
    \dots \xra{\ h_n\ } \Sigma^\infty_+ \bP_G\sm \mS_G^{-\alpha_n} \dots
  \end{equation}
  then determines a functor
  $F\colon \mN\to\Mod_{\Sigma^\infty_+\bP_G}(\Sc p_G^\sm)$.

 We let $\Sigma^\infty_+\bP_G$ act on $\bKU_G$ through the morphism
 $\eta_G$ of commutative $G$-ring spectra, making $\bKU_G$ an object of
 $\Mod_{\Sigma^\infty_+\bP_G}(\Sc p_G^\sm)$. We define 
 \[ k_n\ :\  \Sigma^\infty_+ \bP_G\sm \mS^{-\alpha_n}_G \ \to \ \bKU_G \]
 as a morphism in $\Mod_{\Sigma^\infty_+\bP_G}(\Sc p_G^\sm)$,
 again unique up to homotopy, such that
 \[  k_n(1\sm \mS_G^{-\alpha_n})\ =\ (\eta_*(\mathbbm{b}_{G,\td{V_n}}))^{-1} \ , \]
 where we exploit that the morphism $\eta_G$ inverts all pre-Bott classes.
 The relations
  \[ \eta_*(\mathbbm{b}_{G,\td{U_n}})\cdot  (\eta_*(\mathbbm{b}_{G,\td{V_n}}))^{-1} =
    \eta_*(\mathbbm{b}_{G,\td{U_n}})\cdot  (  \eta_*(\mathbbm{b}_{G,\td{U_n}})
   \cdot\eta_*(\mathbbm{b}_{G,\td{V_{n-1}}}) )^{-1} = (\eta_*(\mathbbm{b}_{G,\td{V_{n-1}}}))^{-1} \]
  in $\pi_{-\alpha_{n-1}}^G(\bKU_G)$ show that
  $k_n\circ h_n$ equals $k_{n-1}$ in the homotopy category
  $h(\Mod_{\Sigma^\infty_+\bP_G}(\Sc p_G^\sm))$.
  We choose homotopies witnessing this, and they 
  extend the functor  $F\colon \mN\to\Mod_{\Sigma^\infty_+\bP_G}(\Sc p_G^\sm)$ to a cocone diagram
  $\bar F\colon \mN^\triangleright\to\Mod_{\Sigma^\infty_+\bP_G}(\Sc p_G^\sm)$.
\end{construction}

\begin{theorem}[$\bKU_G$ as a telescopic localization]\label{thm:telescopic localization}
  Let $G$ be a compact Lie group.
  Let $\eta\colon \Sigma^\infty_+ \bP \to \bKU$ be a morphism of commutative global ring spectra
  such that $\eta_*(e_T)=[\nu_1]$ in $\pi_0^T(\bKU)=RU(T)$.
  Then the cocone diagram  $\bar F\colon \mN^\triangleright\to\Mod_{\Sigma^\infty_+\bP_G}(\Sc p_G^\sm)$
  presents $\bKU_G$ as a colimit in $\Mod_{\Sigma^\infty_+\bP_G}(\Sc p_G^\sm)$
  of the sequence \eqref{eq:KU_G-sequence}.
\end{theorem}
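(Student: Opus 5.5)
We have to show that the canonical morphism $\kappa\colon \colim_n \Sigma^\infty_+\bP_G\sm\mS_G^{-\alpha_n}\to\bKU_G$ induced by the cocone $\bar F$ is an equivalence. The forgetful functor $\Mod_{\Sigma^\infty_+\bP_G}(\Sc p_G^\sm)\to\Sc p_G$ preserves and detects colimits and detects equivalences, so it is enough to check this on underlying $G$-spectra. For that we test on geometric fixed points: for every closed subgroup $H$ of $G$ we show that $\Phi^H_*(\kappa)$ is an isomorphism. The functor $\Phi^H\circ\res^G_H$ is a left adjoint and hence preserves sequential colimits, and $\pi_*$ commutes with sequential colimits of spectra. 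Therefore $\Phi^H_*$ of the telescope is the colimit of the sequence of graded groups $\Phi^H_*(\Sigma^\infty_+\bP_G\sm\mS_G^{-\alpha_n})$.

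Next we identify this colimit algebraically. We fix orientations and use $\kappa_{V_n}$ from Construction \ref{con:RO(G) pre-Bott} to get isomorphisms $\Phi^H_*(\Sigma^\infty_+\bP_G\sm\mS_G^{-\alpha_n})\iso\Phi^H_{*+d_n}(\Sigma^\infty_+\bP_G)$, where $d_n=\dim_\mR(V_n^H)$; this is the same kind of isomorphism as \eqref{eq:induced_fix_iso}. Under these isomorphisms the map $\Phi^H_*(h_n)$ becomes multiplication by $\pm\Phi^H(b_{G,U_n})$, by the multiplicativity of $\Phi^H$ and the consistency property $\mathbbm{b}_{G,\td{U_n}}=\kappa_{U_n}^*(b_{G,U_n})$. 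Hence the colimit is the graded localization $\Phi^H_*(\Sigma^\infty_+\bP_G)[\Phi^H(b_{G,V_n})^{-1}\colon n\geq 1]$.

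We then show that this is the localization at all $\Phi^H(b_{G,V})$ for all unitary $G$-representations $V$. Since the sequence is exhaustive, every $V$ embeds into some $V_n$, so $V_n\iso V\oplus W$. By Theorem \ref{thm:fix of pre-Bott} (ii), $\Phi^H(b_{G,V})$ divides $\Phi^H(b_{G,V_n})$. By the argument in the proof of Theorem \ref{thm:main-G-structured}, which combines Theorem \ref{thm:geomfix} with the fact that every $H$-representation is a summand of a restricted $G$-representation, $\Phi^H_*(\eta_G)$ is precisely the localization at these classes.

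Finally we check compatibility. Under the identifications above, the maps $\Phi^H_*(k_n)$ send $x$ to $\Phi^H_*(\eta)(x)\cdot\Phi^H(\eta_*(b_{G,V_n}))^{-1}$, up to the same signs. So the induced map from the colimit to $\Phi^H_*(\bKU_G)$ is the canonical map from the localization, which is an isomorphism. Joint conservativity of the $\Phi^H$ then finishes the proof. The step I expect to be the main obstacle is the bookkeeping in the identification of $\Phi^H_*(h_n)$ and $\Phi^H_*(k_n)$ with multiplication by the geometric fixed points of the pre-Bott classes. This requires matching the $RO(G)$-graded equivalences $\mS_G^{-\alpha_n}$ with suspension spectra of representation spheres compatibly in $n$. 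I would handle it by working throughout with the fixed equivalences $\kappa_{V_n}$, chosen compatibly via $\omega$ as $\kappa_{V_{n-1}}\sm\kappa_{U_n}$. Signs from orientations only change units and do not affect the localization.
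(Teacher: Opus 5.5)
Your route is the paper's. You pass through the conservative, colimit-preserving forgetful functor and the jointly conservative functors $\Phi^H\circ\res^G_H$, then identify $\pi_*\Phi^H$ of the telescope with the localization that Theorem \ref{thm:geomfix} identifies with $\Phi^H_*(\bKU_G)$. Your bookkeeping for the maps $h_n$ and $k_n$ is more explicit than the paper's.

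There is one step that does not follow as written. From the transition maps being multiplication by $x_n=\pm\Phi^H(b_{G,U_n})$, you conclude \emph{hence} that the colimit is the localization at the classes $y_n=x_1\cdots x_n=\pm\Phi^H(b_{G,V_n})$. For a general sequence this is false. The colimit of $A\xra{x_1}A\xra{x_2}\cdots$ with $x_1=t$ and $x_n=1$ for $n\geq 2$ is $A$, not $A[t^{-1}]$.

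The colimit consists of fractions $a/y_n$. For it to agree with the localization, you need that for every unitary $G$-representation $V$ and every $n$, the class $\Phi^H(b_{G,V})$ divides some tail product $x_{n+1}\cdots x_N$. Without this, injectivity of the comparison map fails: $s\cdot a=0$ for some $s$ in the multiplicative set need not force $a\,x_{n+1}\cdots x_N=0$. This is exactly what the paper means by saying that the restricted sequence contains every irreducible $H$-representation infinitely often.

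You can supply the missing point from exhaustiveness, applied to the tail rather than to the whole sequence. Given $V$ and $n$, the representation $V_n\oplus V$ embeds into some $V_N$. By cancellation of multiplicities, $U_{n+1}\oplus\cdots\oplus U_N\iso V\oplus W$ for some $W$. Theorem \ref{thm:fix of pre-Bott} (ii) then shows that $\Phi^H(b_{G,V})$ divides $x_{n+1}\cdots x_N$ up to sign.

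Your later use of exhaustiveness, that every $V$ embeds into some $V_n$, only shows that inverting the $y_n$ is the same as inverting all $\Phi^H(b_{G,V})$. That does not settle the telescope-versus-localization point. With this addition the argument is complete.
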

\begin{proof}
  The forgetful functor $u\colon\Mod_{\Sigma^\infty_+\bP_G}(\Sc p_G^\sm)\to\Sc p_G$
  preserves colimits and is conservative.
  The geometric fixed point functors $\Phi^H\circ\res^G_H\colon \Sc p_G\to \Sc p$
  preserve colimits, and are jointly conservative for all closed subgroups of $G$.
  And the functor $\pi_*\colon \Sc p\to \GrAb$ is conservative and preserves sequential colimits.
  So it suffices to show that
  $\pi_*\circ \Phi^H\circ\res^G_H\circ u\circ\bar F\colon \mN^\triangleright\to\GrAb$ is a colimit,
  for all $H\leq G$.
  Since the $\{U_i\}$ are an exhaustive sequence of $G$-representations,
  the underlying $H$-representations are also exhaustive,
  and thus contain every irreducible $H$-representation infinitely often.
  So the colimit of $\pi_*\circ \Phi^H\circ\res^G_H\circ u\circ F\colon \mN\to\GrAb$
  calculates the localization of the geometric fixed point ring $\Phi^H_*(\Sigma^\infty_+\bP_G)$
  obtained by inverting the classes  $\Phi^H(b_{H,\lambda})$ for all irreducible
  $H$-representations $\lambda$. 
  By Theorem \ref{thm:geomfix}, the morphism $\Phi^H_*(\eta_G)$ exhibits $\Phi^H_*(\bKU_G)$
  as such a localization.
  Hence $\pi_*\circ \Phi^H\circ\res^G_H\circ u\circ \bar F$ is a colimit
  for all $H\leq G$, and so
  $\bar F\colon \mN^\triangleright\to\Mod_{\Sigma^\infty_+\bP_G}(\Sc p_G^\sm)$
  is a colimit, too.
\end{proof}

\Danger We alert the reader that in general,
the underlying $G$-spectrum of a global localization by $\Xc$
in $\CAlg(\Sc p^\sm_{\gl})$ need not be a telescopic localization
in $\Sc p_G$ by the `obvious' $G$-equivariant classes, i.e.,
the ones obtained from $\Xc$ by restriction along all homomorphisms from $G$.

\begin{theorem}[$\pi_\star^G(\bKU_G)$ as an $RO(G)$-graded localization]\label{thm:main-G-RO-version}
  Let $G$ be a compact Lie group.
  Let $\eta\colon \Sigma^\infty_+ \bP \to \bKU$ be a morphism of commutative global ring spectra
  such that $\eta_*(e_T)=[\nu_1]$ in $\pi_0^T(\bKU)=RU(T)$.
  \begin{enumerate}[\em (i)]
  \item For every $\Sigma^\infty_+ \bP_G$-module $M$ in $\Sc p_G^\sm$,
    the morphism
    \[ \pi_\star^G(\eta_G\sm_{\Sigma^\infty_+\bP_G} M)\ : \  \pi_\star^G(M)  \ \to \ \pi_\star^G(\bKU_G\sm_{\Sigma^\infty_+\bP_G} M) \]
    exhibits the target as a localization,
    in $RO(G)$-graded  $\pi_\star^G(\Sigma^\infty_+ \bP_G)$-modules,
    of the source by the set of pre-Bott classes of all unitary $G$-representations.
   \item For every finite based $G$-CW-complex $A$, the morphism 
    \[  \eta^\star_G(A)\ : \  (\Sigma^\infty_+ \bP_G)^\star(A)  \ \to \ \bKU_G^\star(A) \]
    exhibits the target as a localization,
    in $RO(G)$-graded  $(\Sigma^\infty_+ \bP_G)^\star$-modules,
    of the source by the set of pre-Bott classes of all unitary $G$-representations.
  \item The morphism of graded-commutative $RO(G)$-graded rings
    \[  \pi_\star^G(\eta_G)\ : \  \pi_\star^G(\Sigma^\infty_+ \bP_G) \ \to \ \pi_\star^G(\bKU_G) \]
    exhibits the target as an $RO(G)$-graded localization of the source by the set
    of pre-Bott classes of all unitary $G$-representations.
  \end{enumerate}
\end{theorem}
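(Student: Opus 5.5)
The plan is to deduce everything from the telescopic localization of Theorem \ref{thm:telescopic localization}. For part (i), I would smash the cocone $\bar F$ over $\Sigma^\infty_+\bP_G$ with the module $M$. The functor $-\sm_{\Sigma^\infty_+\bP_G}M$ preserves colimits, so $\bKU_G\sm_{\Sigma^\infty_+\bP_G}M$ is the colimit of the sequence
\[ M\ \to\ \mS_G^{-\alpha_1}\sm M\ \to\ \mS_G^{-\alpha_2}\sm M\ \to\ \dots \]
whose maps are $h_n\sm_{\Sigma^\infty_+\bP_G} M$. The next step is to compute $\pi_\star^G$ of this colimit. Each $\mS_G^{\alpha}$ is invertible, hence compact, so $\pi_\beta^G$ commutes with sequential colimits. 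Moreover $\pi_\beta^G(\mS_G^{-\alpha_n}\sm M)\iso\pi_{\beta+\alpha_n}^G(M)$ via $\omega$.

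Under these identifications, the map induced by $h_n$ is multiplication by $\mathbbm{b}_{G,\td{U_n}}$, up to a symmetry unit $t$. I expect this bookkeeping to be the main obstacle: checking that the $RO(G)$-graded identifications intertwine $h_n$ with the module action of $\pi_\star^G(\Sigma^\infty_+\bP_G)$ on $\pi_\star^G(M)$. I would use the defining property $h_n(1\sm\mS_G^{-\alpha_{n-1}})=\mathbbm{b}_{G,\td{U_n}}\sm\mS_G^{-\alpha_n}$, the fact that $h_n$ is a module map, and the vanishing of the symmetry markers on the image of $RU(G)$. Once this is in place, the colimit is
\[ \colim_n\ \pi^G_{\star+\alpha_n}(M)\quad\text{along multiplication by } \mathbbm{b}_{G,\td{U_n}}. \]

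Since the sequence $U_n$ is exhaustive and $\mathbbm{b}_{G,\td{V\oplus W}}=\mathbbm{b}_{G,\td V}\,\mathbbm{b}_{G,\td W}$, this colimit is the algebraic localization of $\pi_\star^G(M)$ at the multiplicative set of all $\mathbbm{b}_{G,\td V}$. This is the standard description of a localization at a multiplicative set that admits a cofinal sequence. The classes are central up to the units $t$, which equal $1$ in these degrees, so the Ore issues disappear. It remains to identify the map from $\pi_\star^G(M)$ with the map induced by $\eta_G\sm M$. This holds because $k_0=\eta_G$, so the colimit map out of stage $0$ is $\eta_G\sm M$. Finally, I would note that inverting the classes $\mathbbm{b}_{G,\td V}$ is the same as inverting the pre-Bott classes $b_{G,V}$, since $\mathbbm{b}_{G,\td V}=\kappa_V^*(b_{G,V})$ (Remark \ref{rk:2invertibles}).

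Part (iii) is the special case $M=\Sigma^\infty_+\bP_G$. For part (ii), I would take $M=F(\Sigma^\infty A,\Sigma^\infty_+\bP_G)$ with its module structure. Since $A$ is finite, $\Sigma^\infty A$ is dualizable. Hence $\bKU_G\sm_{\Sigma^\infty_+\bP_G}M\simeq F(\Sigma^\infty A,\bKU_G)$, because smashing with $D(\Sigma^\infty A)$ commutes with base change. Cohomology in degree $\alpha$ is then $\pi^G_{-\alpha}$ of these function spectra, so part (i) applies and gives (ii).
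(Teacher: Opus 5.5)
Your proposal is correct and follows the paper's proof essentially step by step. Like the paper, you smash the telescope of Theorem~\ref{thm:telescopic localization} with $M$ over $\Sigma^\infty_+\bP_G$, use that $\pi^G_\alpha$ commutes with sequential colimits and that $M\sm h_n$ realizes multiplication by $\mathbbm{b}_{G,\td{U_n}}$, and conclude by exhaustiveness of the sequence; (iii) is the case $M=\Sigma^\infty_+\bP_G$, and (ii) is (i) applied to $\map_*(A,\Sigma^\infty_+\bP_G)$, where the paper phrases your dualizability argument as the assembly map being an equivalence for finite $A$.
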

\begin{proof}
  (i)  The functor
  $-\sm_{\Sigma^\infty_+\bP_G}M\colon\Mod_{\Sigma^\infty_+\bP_G}(\Sc p_G^\sm)\to\Sc p_G$
  preserves all colimits. 
  So Theorem \ref{thm:telescopic localization}
  shows that the cocone diagram
  $\bar F\sm_{\Sigma^\infty_+\bP_G}M\colon \mN^\triangleright\to\Sc p_G$
  presents $\bKU_G\sm_{\Sigma^\infty_+\bP_G}M$
  as a colimit in $\Sc p_G$ of the sequence
\[ 
    M\xra{M\sm h_1}  M\sm \mS_G^{-\alpha_1} \xra{M\sm h_2}
    \dots \xra{M\sm h_n}  M\sm \mS_G^{-\alpha_n} \dots\ .
 \]
  We have exploited
  \[  (\Sigma^\infty_+ \bP_G\sm \mS_G^{-\alpha_n})\sm_{\Sigma^\infty_+\bP_G}M
    \ \iso \ M\sm \mS_G^{-\alpha_n}\ . \]
  By design, the morphism $M\sm h_n$ induces multiplication
  by the pre-Bott class $\mathbbm{b}_{G,\td{U_n}}$ on $RO(G)$-graded homotopy groups.

  For every $\alpha\in RO(G)$, the functor $\pi_\alpha^G\colon \Sc p_G\to\rAb$
  preserves sequential colimits. 
  So $\pi_\star^G(\bKU_G\allowbreak\sm_{\Sigma^\infty_+\bP_G}M)$ is exhibited
  as a sequential colimit, over multiplications by pre-Bott classes,
  of appropriate $RO(G)$-graded shifts of instances of $\pi_\star^G(M)$.
  Since the sequence of $G$-representations \eqref{eq:exhaust} is exhaustive,
  the algebraic effect of the sequential colimit
  is precisely $RO(G)$-graded localization by the pre-Bott classes
  of all unitary $G$-representations.

  (ii) By the finiteness of $A$, the assembly morphism
  \[ \map_*(A,\Sigma^\infty_+\bP_G)\sm_{\Sigma^\infty_+\bP_G}\bKU_G \ \to \
    \map_*(A,\bKU_G) \]
  is an equivalence of $\bKU_G$-modules. So part (ii) follows from part (i),
  applied to the $\Sigma^\infty_+\bP_G$-module $\map_*(A,\Sigma^\infty_+\bP_G)$.

  (iii) The localization property in the category of graded-commutative $RO(G)$-graded rings
  is detected by the analogous localization property of the underlying
  $RO(G)$-graded  $\pi_\star^G(\Sigma^\infty_+ \bP_G)$-modules.
  So property (iii) follows from (i) for $M=\Sigma^\infty_+\bP_G$.
\end{proof}

\begin{rk}[Segal--Becker splitting]
  The effect of the morphism $\eta_G\colon\Sigma^\infty_+\bP_G\to \bKU_G$
  on equivariant cohomology theories has previously received some attention
  under the heading `Segal--Becker splitting'.
  Segal \cite{segal:CP infty} and Becker \cite{becker:characteristic_and_K}
  showed in the non-equivariant context that the transformation of
  cohomology theories induced by $\eta$ is split surjective; in fact,
  the induced map of infinite loop spaces
  $\Omega^\infty\eta\colon \Omega^\infty(\Sigma^\infty_+\mC P^\infty)\to \Omega^\infty K U\sim \mZ\times BU$
  admits a section by a loop map.
  The Segal--Becker splitting was generalized to the equivariant context by
  Iriye and Kono  \cite{iriye-kono:Segal-Becker KG} for finite groups,
  and by Crabb \cite{crabb:brauer} for general compact Lie groups.
  It says that for every finite $G$-CW-complex $A$, the map
  \begin{equation}\label{eq:split_epi}
    \eta_G^0(A)\ :\ (\Sigma^\infty_+ \bP)^0_G(A)\ \to\ \bKU_G^0(A)
  \end{equation}
  is a naturally split epimorphism.
  Since the map \eqref{eq:split_epi} is surjective already before any localization,
  its extension to the algebraic $RO(G)$-graded localization
  by the pre-Bott classes is also surjective.
  Hence the new insight provided by Theorem \ref{thm:main-G-RO-version} (ii) is the fact that
  every element in the kernel of \eqref{eq:split_epi} is `pre-Bott torsion',
  i.e., annihilated by the pre-Bott class of some unitary $G$-representation.
\end{rk}

As we shall explain now, it is necessary to invert {\em representation-graded}
(as opposed to integer-graded) equivariant homotopy classes to obtain
a generalization of Snaith's theorem to genuine equivariant spectra.
To show this we need some control over the integer-graded ring $\pi_*^G(\Sigma^\infty_+\bP)$,
and the following proposition determines it modulo torsion for cyclic groups $C$ of prime order.
We write
\begin{equation}\label{eq:naive_pre-Bott}
 b_{\nv}\ =\ b_{C,\mC}\ = \ p_C^*(\res^T_{\{1\}}(b_1))\ \in \ \pi_2^C(\Sigma^\infty_+\bP)  
\end{equation}
for the {\em naive pre-Bott class},
the pre-Bott class of the trivial 1-dimensional $C$-representation.
In the next proposition and below, we write `torsion' for the torsion subgroup
of an abelian group.

\begin{prop}\label{prop:modulo_torsion}
  Let $C$ be a cyclic group of prime order $p$.
  \begin{enumerate}[\em (i)]
  \item 
    The composite
    \[ \pi_0^C(\Sigma^\infty_+\bP)[b_{\nv}]\ \to \
      \pi_*^C(\Sigma^\infty_+\bP)\ \xra{\text{\em proj}} \
      \pi_*^C(\Sigma^\infty_+\bP)/\text{\em (torsion)}  \]
    is an isomorphism of graded rings.
  \item The morphism $\pi_*^C(\eta)\colon\pi_*^C(\Sigma^\infty_+\bP)\to\pi_*^C(\bKU)$
    is not a localization of graded-commutative rings by any subset
    of homogeneous elements of the source.
\end{enumerate}
\end{prop}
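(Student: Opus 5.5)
The plan is to compute $\pi_*^C(\Sigma^\infty_+\bP)$ rationally via the tom Dieck splitting, and then compare it with $\pi_*^C(\bKU)$.

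For a $C$-space $A$, the tom Dieck splitting gives
\[ \pi_*^C(\Sigma^\infty_+ A)\ \iso\ \pi_*(\Sigma^\infty_+ A^C)\oplus \pi_*(\Sigma^\infty_+ (EC\times A)/C) . \]
Since $C$ has prime order, the only subgroups are $C$ and $\{1\}$, and both have finite Weyl group. Rationally the second summand is $\pi_*(\Sigma^\infty_+ \bP)\tensor\mQ\iso H_*(\mC P^\infty;\mQ)$, because the homotopy orbits of a finite group have the rational homology of the coinvariants. Here $C$ acts trivially on $H_*(\mC P^\infty)$, being the restriction of an action of a connected group. By Proposition \ref{prop:monomial_basis} (ii), the fixed point summand is $\mZ[\Hom(C,T)]\tensor\pi_*(\Sigma^\infty_+\mC P^\infty)$. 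Modulo torsion, $\pi_*(\Sigma^\infty_+\mC P^\infty)$ is $\mZ[b_{\cl}]$, by Snaith's theorem or by rational Hurewicz: the rational homotopy is $H_*(\mC P^\infty;\mQ)$, concentrated in even degrees, and $b_{\cl}^k$ generates modulo torsion. Hence modulo torsion $\pi_{2k}^C$ is free of rank $p+1$, and it vanishes in odd degrees.

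On the other side, $\pi_0^C(\Sigma^\infty_+\bP)$ is free with basis $\alpha^*(e_T)$ for $\alpha\in\Hom(C,T)$, together with the single class $\tr_1^C(1)$; this is again rank $p+1$. To prove (i), I will show that multiplication by $b_{\nv}^k$ carries this basis to a basis of $\pi_{2k}^C/\text{torsion}$. I detect classes using the pair of maps $\Phi^C$ and $\res^C_1$. The geometric fixed point map $\Phi^C$ sends $\alpha^*(e_T)\, b_{\nv}^k$ to $\alpha^\sharp\, p_C^*(b_{\cl}^k)$ and kills the transfer. The restriction $\res^C_1$ sends $\alpha^*(e_T) b_{\nv}^k$ to $b_{\cl}^k$ and the transfer term to $p\, b_{\cl}^k$. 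Surjectivity modulo torsion is then the main obstacle. Detection only gives injectivity plus finite index, so I must check that the image is saturated. For this I would use the isotropy separation sequence integrally: the transfer summand $\pi_*((EC\times\bP)/C)$ modulo torsion should be generated by $\tr(b_{\cl}^k)=\tr_1^C(1)\,b_{\nv}^k$, via Frobenius reciprocity, and the quotient by it maps isomorphically to the fixed point part.

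For (ii), suppose $\pi_*^C(\eta)$ were a localization by some set $S$ of homogeneous elements. I would look at degree $0$ modulo torsion, and use that $\pi_*^C(\bKU)=RU(C)[\beta^{\pm1}]$ is torsion free. Any $s\in S$ becomes a unit in $RU(C)[\beta^{\pm 1}]$. Its image modulo torsion is $x\,b_{\nv}^k$ with $x\in\pi_0^C$. The map $\eta$ sends $b_{\nv}$ to the invertible naive Bott class and sends $x$ to $\eta(x)\in RU(C)$, so $\eta(x)$ must be a unit of $RU(C)$. The units of $RU(C)$ are $\pm[\lambda]$, whose images under $\res^C_1$ and under all characters are roots of unity. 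The localized ring modulo torsion is then $\pi_0^C[b_{\nv}^{\pm1}][x^{-1}]$; torsion dies after tensoring with $\mQ$, so compare rationally. Rationally the source in degree $0$ has rank $p+1$, and inverting elements that map to units should not collapse the transfer direction. Concretely, $\tr_1^C(1)-\sum_\alpha\alpha^*(e_T)$ lies in the kernel of $\eta$ in $\pi_0^C$, since it maps to $[\rho]-[\rho]=0$. It is nonzero, and it is not killed by any $s$ whose restriction to $\{1\}$ is $\pm b_{\cl}^k\neq0$. Indeed $\res^C_1$ of the product is $(p-p)\cdot(\dots)=0$, so restriction does not help; instead I would use $\Phi^C$ together with the ring structure, noting that $(\tr_1^C 1)\cdot y=\tr_1^C(\res y)$ to compute its product with $s$. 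I expect some care is needed at this step, possibly reducing to showing that the kernel element times $s^n$ has nonzero restriction or nonzero $\Phi^C$ component modulo torsion. This would yield a contradiction, because localization kernels consist of $S$-torsion.
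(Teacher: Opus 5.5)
\textbf{Part (i).} Your argument is sound and uses the same ingredients as the paper. These are the split isotropy separation sequence (your tom Dieck splitting), the Adams isomorphism identifying the free part with $\pi_*(\Sigma^\infty_+(BC\times\mC P^\infty))$, and the fact that $\pi_*(\Sigma^\infty_+\mC P^\infty)/\text{(torsion)}=\mZ[b_{\cl}]$. Tracking an explicit basis instead of applying the five lemma to multiplication by $b_{\nv}$ is only a cosmetic difference. For the saturation step you flag: under the Adams isomorphism, $\tr_1^C$ becomes the map induced by the fibre inclusion $BT\to B(C\times T)$. This is a split summand inclusion, so $\tr_1^C(b_{\cl}^k)=\tr_1^C(1)\cdot b_{\nv}^k$ does generate the free part modulo torsion.

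\textbf{Part (ii).} There is a genuine gap, exactly at the step you leave open. You never show that $\kappa=\sum_\alpha\alpha^*(e_T)-\tr_1^C(1)$ (the negative of your element) survives inverting $S$, and the heuristics you offer for it do not work.

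First, the units of $RU(C)$ are not only $\pm[\lambda]$ once $p\geq 5$. In $RU(C_5)$ the element $[\lambda]+[\lambda]^{-1}-1$ has inverse $[\lambda]^2+[\lambda]^{-2}-1$, and its character values are not roots of unity. All you may use is $\res^C_1(\eta(s))=\pm1$.

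Second, a purely rational comparison cannot succeed on its own. Since $\kappa^2=p\kappa$, the element $\kappa/p$ is an idempotent of $\pi_0^C(\Sigma^\infty_+\bP)\otimes\mQ$. The map $\pi_0^C(\eta)\otimes\mQ$ is precisely the localization at $1-\kappa/p$, which maps to $1$ and kills $\kappa$. So the obstruction is a mod $p$ phenomenon.

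The missing idea is the identity $x\kappa=\epsilon(\Phi^C(x))\cdot\kappa$ for $x\in\pi_0^C(\Sigma^\infty_+\bP)$, where $\epsilon$ is the augmentation of $\mZ[\Hom(C,T)]$. You can check it on the basis $\alpha^*(e_T)$, $t=\tr_1^C(1)$ using Frobenius reciprocity and $t^2=pt$. Combine it with the congruence $\epsilon(\Phi^C(x))\equiv\res^C_1(x)\pmod p$, which holds because $\Phi^C(t)=0$ while $\res^C_1(t)=p$.

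Now suppose $\eta(x)$ is a unit. Then $\res^C_1(x)=\dim\eta(x)=\pm1$, so $\epsilon(\Phi^C(x))\neq 0$ and hence $x\kappa\neq 0$. Your reduction modulo torsion and part (i) then give $s\kappa\neq0$ for every product $s$ of elements of $S$. This contradicts the fact that kernels of localizations consist of $S$-torsion.

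The paper encodes the same congruence differently. It uses $\kappa^2=p\kappa$ and the ring section $\sigma$ of $\pi_0^C(\eta)$ to write any such $s$ as $u(1+a\kappa)$ with $u$ a unit. This yields $s\kappa=u(1+pa)\kappa\neq0$.
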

\begin{proof}
  (i) The abelian group $\pi_0^C(\Sigma^\infty_+\bP)$ is torsion-free,
  see for example \cite[Corollary 4.1.13]{schwede:global};
  so the result is tautologically true in dimension zero.
  By design, the graded ring $\pi_0^C(\Sigma^\infty_+\bP)[b_{\nv}]$ is concentrated
  in even degrees, and the class $b_{\nv}$ acts bijectively in non-negative degrees.
  Since the claim holds in dimension 0, it remains to show that 
  the graded ring $\pi_*^C(\Sigma^\infty_+\bP)$ is torsion in all odd degrees,
  and the naive pre-Bott class acts bijectively modulo torsion, i.e.,
  \[ -\cdot b_{\nv}\ \colon \ \pi_k^C(\Sigma^\infty_+\bP)/\text{(torsion)}
    \ \to \ \pi_{k+2}^C(\Sigma^\infty_+\bP)/\text{(torsion)}  \]
  is bijective for all $k\geq 0$.
  
  We use the classical fact that the composite
  \[ \mZ[b_{\cl}] \ \to \ \pi_*(\Sigma^\infty_+ \mC P^\infty)\ \xra{\text{proj}} \
    \pi_*(\Sigma^\infty_+ \mC P^\infty)/\text{(torsion)}  \]
  is an isomorphism of graded rings, with $b_{\cl}=\res^T_{\{1\}}(b_1)$
  the classical pre-Bott class. Indeed, the quotient of
  $\pi_*(\Sigma^\infty_+ \mC P^\infty)$ by its torsion ideal
  embeds into $\mQ\tensor\pi_*(\Sigma^\infty_+ \mC P^\infty)\allowbreak\iso H_*(\mC P^\infty;\mQ)=\mQ[b_{\cl}]$,
  and the powers of $b_{\cl}$ cannot be divisible because their images in $\pi_*(KU)$
  are not divisible.
  In particular,
  $\pi_*(\Sigma^\infty_+ \mC P^\infty)$ is torsion in odd degrees, and
  the classical pre-Bott class acts bijectively modulo torsion.

  For the suspension spectrum of any $C$-space $X$, the geometric fixed point
  homomorphism is a split epimorphism.
  So the isotropy separation sequence becomes a split short exact sequence:
  \[ 0 \to \pi_*^C(\Sigma^\infty_+ (EC \times X)) \xra{\ q_*\ }
    \pi_*^C(\Sigma^\infty_+ X) \xra{\ \Phi^C\ }  \pi_*(\Sigma^\infty_+ X^C) \to 0     \]
  Here $q\colon EC\times X\to X$ is the projection.
  Since the sequence is split exact, it remains exact after dividing out the torsion subgroups
  everywhere.
  
  Now we specialize to $X=\bP_C$, the underlying $C$-space of the global space $\bP$.
  We let $b_{\nv}$ act on $\pi_*(\Sigma^\infty_+ (\bP_C)^C)\iso\Phi_*^C(\Sigma^\infty_+ \bP)$
  by multiplication by the class $\Phi^C(b_{\nv})$.
  The torsion subgroups are invariant under the $b_{\nv}$-action,
  so we arrive at a short exact sequence of graded $\mZ[b_{\nv}]$-modules:
  \[      0 \to \pi_*^C(\Sigma^\infty_+ (EC \times \bP_C))/(\text{tors.}) \xra{\ q_*\ }
    \pi_*^C(\Sigma^\infty_+ \bP)/(\text{tors.}) \xra{\ \Phi^C\ } 
    \Phi_*^C(\Sigma^\infty_+ \bP)/(\text{tors.}) \to 0 \]
  The above classical fact and the description of
  Proposition \ref{prop:monomial_basis} (ii) show that the term
  $\Phi_*^C(\Sigma^\infty_+ \bP)/\allowbreak(\text{torsion})$  is trivial in all odd degrees,
  and that $b_{\nv}$ acts bijectively on it.
  
  We claim that also
  $\pi_*^C(\Sigma^\infty_+ (EC\times \bP_C))/(\text{torsion})$ is trivial in odd degrees,
  and $b_{\nv}$ acts bijectively on it.
  Granted this, the five lemma shows that $b_{\nv}$ acts bijectively on
  $\pi_*^C(\Sigma^\infty_+ \bP)/(\text{torsion})$, which is also concentrated in even degrees.

  Since $\bP$ is a global classifying space for the circle group $T$,
  the $C$-space $\bP_C$ is $C$-homotopy equivalent to $(E_C T)/T$,
  with $E_C T$ a universal $(C\times T)$-space for the family of graph subgroups
  of $C\times T$, see \cite[Proposition 1.1.26 (i)]{schwede:global}.
  The $(C\times T)$-space $EC\times (E_C T)$ is free and underlying contractible,
  so
  \[ EC\times_C\bP_C\ \simeq \  (EC\times (E_C T))/(C\times T)\ \simeq \
    B(C\times T) \ \simeq\ BC \times \mC P^\infty \]
  is a non-equivariant classifying space for $C\times T$.
  The Adams isomorphism thus identifies $\pi_*^C(\Sigma^\infty_+ (EC \times \bP_C))$
  with the non-equivariant stable homotopy groups $\pi_*(\Sigma^\infty_+ (BC \times \mC P^\infty))$.
  A choice of basepoint of $B C$ splits this graded group as the direct sum of
  \[ \pi_*(\Sigma^\infty_+ \mC P^\infty) \text{\quad and\quad}
    \pi_*((\Sigma^\infty  B C)\sm \mC P^\infty_+)\ .\]
  The second of these summands is $p$-power torsion because the reduced suspension spectrum
  $\Sigma^\infty BC$ becomes trivial in the stable homotopy category after inverting $p$;
  so it contributes a torsion direct summand to $\pi_*^C(\Sigma^\infty_+ (EC \times \bP_C))$.
  The torsion subgroup is $b_{\nv}$-invariant, so 
  $\pi_*^C(\Sigma^\infty_+ (EC\times \bP_C))/(\text{torsion})$ 
  is $\mZ[b_{\nv}]$-linearly isomorphic to
  $\pi_*(\Sigma^\infty_+ \mC P^\infty)/(\text{torsion})$,
  with $b_{\nv}$ acting as the classical pre-Bott class $b_{\cl}$.
  One more time by the above classical fact,
  $b_{\nv}$ indeed acts bijectively on $\pi_*(\Sigma^\infty_+ \mC P^\infty)/(\text{torsion})$,
  and hence also on $\pi_*^C(\Sigma^\infty_+ (EC\times \bP_C))/(\text{torsion})$, as claimed. 
  This completes the proof of (i).
  
  (ii)   We show first that the ring homomorphism
  $\pi_0^C(\eta)\colon  \pi_0^C(\Sigma^\infty_+\bP)\to\pi_0^C(\bKU)$
  is not a localization by any multiplicative subset of the source.
  We argue by contradiction and suppose that $\pi_0^C(\eta)$ presents $\pi_0^C(\bKU)$
  as a localization by a multiplicative subset $S$ of $\pi_0^C(\Sigma^\infty_+\bP)$.
  We set $\xi=\res^T_C(e_T)\in \pi_0^C(\Sigma^\infty_+\bP)$, for some embedding $C\to T$.
  The abelian group $\pi_0^C(\Sigma^\infty_+\bP)$ is free of rank $p+1$,
  with basis the classes $\xi^k$ for $0\leq k\leq p-1$,
  and the class $t=\tr_{\{1\}}^C(1)$, see for example \cite[Corollary 4.1.13]{schwede:global}.
  The ring structure is determined by the relations
  \[ \xi^p \ = \ 1\ ,\quad   \xi\cdot t\  = \ t  \text{\quad and\quad} t^2 \ = \ p\cdot t\  . \]  
  The kernel of $\pi_0^C(\eta)$ is free of rank 1 as an abelian group,  generated by the element 
  \[ \kappa\ =\ (1+\xi+\dots+\xi^{p-1})- t \ . \]
  The localization property provides an element $s\in S$ such that $s\cdot \kappa=0$.
  The ring homomorphism $\pi_0^C(\eta)$
  has a section $\sigma\colon\pi_0^C(\bKU)\to \pi_0^C(\Sigma^\infty_+\bP)$
  by a ring homomorphism, determined by sending $\res^T_C[\nu_1]=\pi_0^C(\eta)(\xi)$
  to the element $\xi$ in $\pi_0^C(\Sigma^\infty_+\bP)$.
  The element $\pi_0^C(\eta)(s)$ is a unit in $\pi_0^C(\bKU)$ by the localization property.
  So $u=\sigma(\pi_0^C(\eta)(s))$ is a unit in $\pi_0^C(\Sigma^\infty_+\bP)$,
  and $\pi_0^C(\eta)(s\cdot u^{-1})=1$.
  Hence
  \[ s \ = \ u\cdot(1+ a\kappa) \]
  for some $a\in\mZ$, because $\kappa$ generates the kernel of $\pi_0^C(\eta)$.
  The relation $\kappa^2=p\cdot\kappa$ yields
  \[ s\cdot \kappa \ = \ u\cdot(1+ a\kappa)\cdot \kappa
    \ = \ u\cdot(1+ p a)\cdot \kappa\ .  \]
  Since $\kappa$ is not a torsion element, $(1+pa)\cdot\kappa$ is non-zero,
  and hence $s\cdot\kappa$ is non-zero.
  We have arrived at a contradiction, so $\pi_0^C(\eta)$
  is not a localization by any multiplicative subset, as claimed.

  Now we show that the composite
  \begin{equation}\label{eq:2nd_compo}
    \pi_0^C(\Sigma^\infty_+\bP)[b_{\nv}]\ \to \
    \pi_*^C(\Sigma^\infty_+\bP)\ \xra{\pi_*^C(\eta)} \
    \pi_*^C(\bKU) = \pi_0^C(\bKU)[\beta_{\nv}^\pm]    
  \end{equation}
  is not a localization of graded-commutative rings by any set of homogeneous elements of
  the source, where $\beta_{\nv}=\pi_2^C(\eta)(b_{\nv})=p_C^*(\res^T_{\{1\}}(\beta_{T,\nu_1}))\in \pi_2^C(\bKU)$.
  Again we argue by contradiction and suppose that  \eqref{eq:2nd_compo}
  were a localization by such a set $\Yc$.
  We write each element of this set as $y=w\cdot b_{\nv}^k$,
  with $w\in \pi_0^C(\Sigma^\infty_+\bP)$ and $k\geq 0$.
  Then $k$ must be positive for at least one element of $\Yc$,
  for else the localization would be concentrated in non-negative degrees.
  Inverting $y$ can be achieved by first inverting $b_{\nv}$, if $k>0$, and then inverting $w$.
  Hence the extension of \eqref{eq:2nd_compo} to a homomorphism
  \[ \pi_0^C(\Sigma^\infty_+\bP)[b_{\nv}^\pm]\ \to \ \pi_0^C(\bKU)[\beta_{\nv}^\pm]\]
  is a localization by the set of factors $w$ of degree 0 in the above products.
  Thus $\pi_0^C(\eta)\colon\pi_0^C(\Sigma^\infty_+\bP)\to\pi_0^C(\bKU)$
  is a localization by the set of these $w$, contradicting the previous paragraph.

  Now we prove part (ii) by contradiction and suppose that
  $\pi_*^C(\eta)\colon \pi_*^C(\Sigma^\infty_+\bP)\allowbreak\to\pi_*^C(\bKU)$
  were a localization by a set $\Yc$ of homogeneous elements of the source.
  Since $\pi_*^C(\bKU)$ is torsion-free, every torsion element of
  $\pi_*^C(\Sigma^\infty_+\bP)$ lies in the kernel of $\pi_*^C(\eta)$.
  Since localization is exact, any localization of $\pi_*^C(\Sigma^\infty_+\bP)$ by $\Yc$
  is thus also a localization of
  $\pi_*^C(\Sigma^\infty_+\bP)/(\text{torsion})$ by the image of $\Yc$.
  Part (i) identifies $\pi_*^C(\Sigma^\infty_+\bP)/(\text{torsion})$
  with $\pi_0^C(\Sigma^\infty_+\bP)[b_{\nv}]$, so we deduce that the composite \eqref{eq:2nd_compo}
  is a localization, contradicting the previous paragraph.
\end{proof}

Now we are ready to prove that inverting integer-graded classes
cannot turn $\Sigma^\infty_+\bP_G$ into $\bKU_G$, unless the group $G$ is trivial.

\begin{cor}\label{cor:no_Z_localization}
  Let $G$ be a nontrivial compact Lie group.  
  There is no set $\Yc$ of homogeneous elements of
  the integer-graded ring $\pi_*^G(\Sigma^\infty_+\bP)$
  such that the morphism $\eta_G\colon\Sigma^\infty_+\bP_G\to\bKU_G$
  extends to an equivalence of $G$-spectra from the telescopic localization
  of $\Sigma^\infty_+\bP_G$ by $\Yc$.
\end{cor}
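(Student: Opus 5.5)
We argue by contradiction: suppose $\Yc\subseteq\pi_*^G(\Sigma^\infty_+\bP)$ is a set of homogeneous integer-graded classes such that the telescopic localization $(\Sigma^\infty_+\bP_G)[\Yc^{-1}]$ maps by an equivalence to $\bKU_G$, compatibly with $\eta_G$. The plan is to reduce to a cyclic subgroup $C\le G$ of prime order $p$ and derive a contradiction with Proposition \ref{prop:modulo_torsion}~(ii). Since $G$ is nontrivial, such a $C$ exists: a nontrivial finite group has an element of prime order, and a group of positive dimension contains a circle and hence $C_p$ for every prime $p$.

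First, restriction $\res^G_C\colon\Sc p_G\to\Sc p_C$ is exact and preserves sequential colimits. It is also strong monoidal and compatible with the module structures. Hence it carries the telescope along multiplication by the classes in $\Yc$ (realized as maps of $\Sigma^\infty_+\bP_G$-modules $\Sigma^\infty_+\bP_G\sm S^{-n}\to\Sigma^\infty_+\bP_G$) to the telescope along multiplication by the restricted classes $\res^G_C(y)\in\pi_*^C(\Sigma^\infty_+\bP)$. Because $\bKU$ and $\Sigma^\infty_+\bP$ are global, the restricted equivalence identifies $\eta_C\colon\Sigma^\infty_+\bP_C\to\bKU_C$ with the telescopic localization of $\Sigma^\infty_+\bP_C$ by the set $\res^G_C(\Yc)$ of integer-graded classes.

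Next, apply $\pi_*^C$. The spheres $S^{-n}$ carry trivial action and are invertible with integer grading, and $\pi_*^C$ commutes with sequential colimits. Therefore $\pi_*^C$ of the telescope is the algebraic localization of the graded-commutative ring $\pi_*^C(\Sigma^\infty_+\bP)$ at the homogeneous elements $\res^G_C(\Yc)$. To organize the telescope one can use a sequence listing each element infinitely often, as in the non-equivariant situation of \cite[Proposition 4.3.17]{lurie:ellipticII}. Graded commutativity up to sign handles left versus right multiplication. Consequently $\pi_*^C(\eta)\colon\pi_*^C(\Sigma^\infty_+\bP)\to\pi_*^C(\bKU)$ would be a localization of graded-commutative rings by a set of homogeneous elements. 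This contradicts Proposition \ref{prop:modulo_torsion}~(ii).

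The main point requiring care is the middle step. There we must check that the abstract telescopic localization, formed from module maps representing multiplication by integer-graded classes, really computes the algebraic localization on $\pi_*^C$ after restriction. This holds because the shifts involved are integer spheres, whose $C$-homotopy groups are simply regraded copies; this is precisely what fails in the $RO(G)$-graded setting. Everything else is formal from the exactness and colimit preservation of restriction.
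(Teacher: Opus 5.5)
Your proposal is correct and follows the paper's own argument. Both restrict to a cyclic subgroup $C\le G$ of prime order, observe that telescopic localization by integer-graded classes restricts along $\res^G_C$ and induces algebraic localization on $\pi_*^C$, and then derive a contradiction with Proposition~\ref{prop:modulo_torsion}~(ii). You also spell out the formal points the paper leaves implicit: colimit preservation of restriction, compactness, and organizing the telescope.
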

\begin{proof}
  Every nontrivial compact Lie group $G$
  has a closed cyclic subgroup $C\leq G$ of prime order.
  If $\bKU_G$ were a telescopic localization by a set $\Yc$
  of homogeneous elements of $\pi_*^G(\Sigma^\infty_+\bP)$,
  then the underlying $C$-spectrum $\bKU_C$ would be
  a telescopic localization of $\Sigma^\infty_+\bP_C$ by the set
  $\{\res^G_C(y)\mid y\in\Yc\}$.
  Telescopically inverting a set of integer-graded equivariant homotopy
  classes induces the algebraic localization on equivariant homotopy groups,
  so this would contradict Proposition \ref{prop:modulo_torsion} (ii).
\end{proof}

\begin{rk}\label{rk:spitz-oest}
  As already mentioned in the introduction, the paper 
  \cite{spitzweck-oestvaer:equivariant}
  by Spitzweck and {\O}stv{\ae}r claims that for abelian compact Lie groups $A$,
  the spectrum $\bKU_A$ can be obtained from $\Sigma^\infty_+ \bP_A$
  by telescopically inverting a single class in the integer-graded
  homotopy group $\pi_2^A(\Sigma^\infty_+\bP_A)$.
  Their class is the analog of our naive pre-Bott class from \eqref{eq:naive_pre-Bott},
  i.e., the inflation  $b_{\nv}=p_A^*(\res^T_{\{1\}}(b_1))$ of the non-equivariant Bott class.
  However, the main result \cite[Theorem 1.1]{spitzweck-oestvaer:equivariant}
  contradicts Corollary \ref{cor:no_Z_localization}, and is incorrect.

  The argument of \cite{spitzweck-oestvaer:equivariant} breaks down in their Lemma 3.2,
  which in our notation asserts that $b_{\nv}^{-1}(1-\xi_\infty)$
  is an equivariant orientation of $(\Sigma^\infty_+\bP_A)[b_{\nv}^{-1}]$,
  for $\xi_\infty\in (\Sigma^\infty_+\bP)^0_A(\bP_A)$
  the tautological class, represented by the identity of $\bP_A$.
  In other words, the claim is that
  for every $A$-character $\alpha\colon A\to T$,
  the restriction of $b_{\nv}^{-1}(1-\xi_\infty)$
  along $P(\mC\oplus\alpha)\to \bP_A$
  is a generator of the reduced $A$-equivariant $(\Sigma^\infty_+\bP_A)[b_{\nv}^{-1}]$-cohomology
  of $P(\mC\oplus\alpha)$, based at $P(\mC\oplus 0)$,
  as a graded module over the $\mZ$-graded ring
  $(\Sigma^\infty_+\bP_A)[b_{\nv}^{-1}]^*_A$.
  This orientation property is not checked in the proof of 
  \cite[Lemma 3.2]{spitzweck-oestvaer:equivariant},
  and it fails for example when $A$ is cyclic of prime order and $\alpha$ is non-trivial.
  Consequently, in the first step of the proof of
  \cite[Theorem 1.1]{spitzweck-oestvaer:equivariant},
  the appeal to the universality of $MU_A$ is unjustified.

  We claim that the restriction of $1-\xi_\infty$ along 
  \[ S^\alpha\ \xra{\lambda\mapsto[\bar\lambda:1]}\ P(\mC\oplus\alpha)\ \to\ \bP_A \]
  equals $b_{A,\alpha}$ in $\pi_\alpha^A(\Sigma^\infty_+\bP_A)$.
  Indeed, this relation holds after restricting further along
  the fixed point inclusion $i\colon S^0\to S^\alpha$, by \eqref{eq:i^*_of_b}.
  In the special case $A=T$ and $\alpha=\Id_T$, restriction along $i$ is injective,
  so the claim holds for $(T,\Id_T)$. Naturality for
  restriction along continuous homomorphisms then proves the claim for general $(A,\alpha)$.
  Consequently, $b_{\nv}^{-1}(1-\xi_\infty)$ restricts to $b_{\nv}^{-1}\cdot b_{A,\alpha}$ in
  $\pi_{\alpha-2}^A(\Sigma^\infty_+\bP_A)[b_{\nv}^{-1}]$.
  So inverting the pre-Bott classes of all unitary representations,
  as opposed to only inverting $b_{\nv}$,
  is what turns the class $b_{\nv}^{-1}(1-\xi_\infty)$ into an equivariant orientation.
 \end{rk}

\section{Localization by \texorpdfstring{$b_1$}{b1}}
In this section we investigate the localizations of $\Sigma^\infty_+\bP$
by an individual class $b_m$ for some fixed $m\geq 1$,
with particular emphasis on the case $m=1$.
By Theorem \ref{thm:fix of pre-Bott} (i), the class $b_m$
is a factor of a restriction of $b_{m+1}$. So every morphism of commutative global ring
spectra from $\Sigma^\infty_+\bP$ that inverts $b_{m+1}$ also inverts $b_1,b_2,\dots,b_m$.
Hence each localization is potentially `more drastic' than the previous ones;
and Theorem \ref{thm:all_different} below shows that these localizations
are indeed all distinct.

The other results of this section concern localizations
of $\Sigma^\infty_+\bP$ by the first pre-Bott class $b_1$.
We show in Theorem \ref{thm:glcom b_1 loc} that
this localization in $\CAlg(\Sc p_{\gl}^\sm)$ is the left $\Ab$-approximation
of $\bKU$, where $\Ab$ is the global family of abelian compact Lie groups.
And we show in Theorem \ref{thm:ucom b_1 loc} that the
ultra-commutative localization by $b_1$ is the left $\Ab^\circ$-approximation
of $\bKU$, where $\Ab^\circ$ is the global family of compact Lie groups
whose identity component is abelian.
The family $\Ab^\circ$ is much larger than $\Ab$; for example,
$\Ab^\circ$ includes all finite groups. In particular,
the two localizations 
of $\Sigma^\infty_+\bP$ by $b_1$ in  $\CAlg(\Sc p_{\gl}^\sm)$ and in $\UR_{\gl}$
are distinct.
\medskip

The next theorem shows that inverting $b_m$ in $\UR_{\gl}$ does not invert $b_{m+1}$. 
Because a localization in the less structured category $\CAlg(\Sc p_{\gl}^\sm)$
maps to the localization in the more highly structured category $\UR_{\gl}$,
inverting $b_m$ in $\CAlg(\Sc p_{\gl}^\sm)$ does not invert $b_{m+1}$, either. 

\begin{theorem}\label{thm:all_different}
  Let $i\colon \Sigma^\infty_+\bP\to R$ be an ultra-commutative localization in $\UR_{\gl}$
  by the pre-Bott class $b_m$ for some $m\geq 1$.
  Then $i$ does not invert $b_{m+1}$.
\end{theorem}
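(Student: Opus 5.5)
The witness is the group $SU(m+1)$, as announced in the introduction. I would construct an ultra-commutative ring spectrum $E$ with a morphism $f\colon\Sigma^\infty_+\bP\to E$ that inverts $b_m$ but does not invert $b_{m+1}$. The universal property of $i$ then finishes the argument: $f$ factors as $f\simeq g\circ i$. If $i$ inverted $b_{m+1}$, then $f_*(b_{m+1})=g_*(i_*(b_{m+1}))$ would be invertible by Proposition \ref{prop:invert_inherit} (i), which is a contradiction.

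For $E$ I would take a left induced object. Let $\Fc$ be the global class of compact Lie groups that admit no epimorphism onto $SU(m+1)$; concretely, I would use the groups $K$ for which no closed subgroup of $K$ surjects onto $SU(m+1)$. This should be closed under subgroups, products, and wreath products with symmetric groups, since $SU(m+1)$ is connected and simple modulo its centre. Let $E=L_\Fc(\bKU)$ be the left approximation of $\bKU$ from this class. By Theorem \ref{thm:wreath_closed} it carries an ultra-commutative structure with a counit $E\to\bKU$ that is an $\Fc$-equivalence. By the coreflectivity statement of Theorem \ref{thm:wreath_closed}, since $\Sigma^\infty_+\bP$ is left induced from $\Ab\subseteq\Fc$, the morphism $\eta$ lifts to $f\colon\Sigma^\infty_+\bP\to E$.

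Next I check that $f$ inverts $b_m$ using Proposition \ref{prop:invert_criterion}: I need $\Phi^H(f_*(b_m))$ to be a unit for every closed $H\leq U(m)$. Since $E$ is left induced from $\Fc$, its geometric fixed points $\Phi^H(E)$ should vanish for $H\notin\Fc$ and agree with $\Phi^H(\bKU)$ for $H\in\Fc$. Every closed subgroup of $U(m)$ lies in $\Fc$, because $SU(m+1)$ has no nontrivial representation of dimension at most $m$ and so cannot be a quotient of a subgroup of $U(m)$. On these groups $\Phi^H(f_*(b_m))$ maps to the unit $\Phi^H(\eta_*(b_m))$, by Theorem \ref{thm:pre-Bott}.

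Finally, $f$ should not invert $b_{m+1}$. Restrict along the inclusion $SU(m+1)\to U(m+1)$, which is legitimate by Proposition \ref{prop:restriction_and_invert}. The group $SU(m+1)$ is not in $\Fc$, but I need a subgroup $H$ where $\Phi^H(E)\neq 0$ and $\Phi^H(b)$ is not a unit. This is the main obstacle: the naive vanishing $\Phi^{SU(m+1)}(E)=0$ makes $\Phi^{SU(m+1)}(b)$ trivially a unit, so it gives no contradiction. What I would try first is to show directly that $b_{SU(m+1),\nu}^\flat$ is not an equivalence, by testing on $\pi_0^{SU(m+1)}$. I would compute $\pi_0^{SU(m+1)}(E)$ as a colimit over $\Fc$-subgroups and show that $1$ is not in the image of $(b^\flat)_*$, in the spirit of the $\Sigma_3$ example (Example \ref{eg:Sigma_3_witnesses}). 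If that calculation proves intractable, the fallback is to choose $\Fc$ more cleverly, for instance as the class of groups that have no subquotient isomorphic to $SU(m+1)$. One would then compare the left induced $E$ with $\bKU$ through the cofibre of the counit at $SU(m+1)$, where the Weyl group of the maximal torus normalizer contributes a nonzero obstruction.
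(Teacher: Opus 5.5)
\textbf{The gap.} The gap sits at the step you flag as the main obstacle. It comes from a false principle combined with a bad choice of class.

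An $\Fc$-left induced global spectrum does \emph{not} have vanishing $H$-geometric fixed points for $H\notin\Fc$. For example, $\Sigma^\infty_+\bP$ is $\Ab$-left induced, yet $\Phi^G_*(\Sigma^\infty_+\bP)\iso\mZ[\Hom(G,T)]\tensor\pi_*(\Sigma^\infty_+\bP)$ is nonzero for every $G$ by Proposition \ref{prop:monomial_basis} (ii). For a multiplicative class the correct statement is Proposition \ref{prop:reflexive} (ii): inflation along the initial homomorphism $K\to uK$ into $\Fc$ gives $\Phi^{uK}_*(E)\iso\Phi^K_*(E)$.

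Your class contains $PSU(m+1)$, since that group contains no copy of $SU(m+1)$. Take $m+1$ prime, so that your class really is multiplicative and wreath-closed and $E$ exists. Then the initial map out of $SU(m+1)$ is the projection to $PSU(m+1)$. Hence $\Phi^{SU(m+1)}_*(E)\iso\Phi^{PSU(m+1)}_*(\bKU)=0$ by Remark \ref{rk:fix_of_KU}. The same holds for every closed $H\le U(m+1)$ containing $SU(m+1)$, with $uH=H/\mu_{m+1}$ non-cyclic. Every other closed subgroup of $U(m+1)$ lies in your class, where $E\to\bKU$ is an equivalence on geometric fixed points.

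By Proposition \ref{prop:invert_criterion}, your $f$ therefore \emph{does} invert $b_{m+1}$. Consequently no computation of $\pi_0^{SU(m+1)}(E)$ can show that $1$ is missing from the image of $(b^\flat)_*$. Your fallback class (no subquotient isomorphic to $SU(m+1)$) still contains $PSU(m+1)$ and fails the same way. For other $m$ your class need not even be multiplicative. For instance, $SU(6)$ embeds into $SU(6)/\mu_2\times SU(6)/\mu_3$, and both factors lie in your class, so Theorem \ref{thm:wreath_closed} does not apply.

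\textbf{The fix.} Choose $\Fc$ so that every homomorphism from $SU(m+1)$ into a group of $\Fc$ is trivial. The paper takes $\Fc_m$, the groups embeddable in some $\Sigma_k\wr U(m)$. This class is multiplicative, closed under wreath products, and contains $U(m)$. Every homomorphism $SU(m+1)\to\Sigma_k\wr U(m)$ lands in $U(m)^k$ by connectedness, and it is trivial because $SU(m+1)$ has no nontrivial representation of dimension at most $m$.

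So $SU(m+1)\to\{1\}$ is initial, and Proposition \ref{prop:reflexive} (ii) gives
\[ \Phi^{SU(m+1)}_0(L^{\uc}_{\Fc_m}(\bKU))\ \iso\ \pi_0(\bKU)\ \iso\ \mZ\ \ne\ 0\ . \]
On the other hand, $\Phi^{SU(m+1)}(b_{SU(m+1),\nu_{m+1}})=0$ by Theorem \ref{thm:Phi_on_irrep}: $SU(m+1)$ has no proper closed subgroups of finite index, so $\nu_{m+1}$ has no monomial structure. A class that is zero in a nonzero ring cannot be a unit. This decides non-invertibility directly at $SU(m+1)$, with no auxiliary subgroup and no $\pi_0$-calculation.

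The rest of your outline is fine as it stands: the factorization through the localization, and inverting $b_m$ via the $\Fc$-equivalence $E\to\bKU$ together with $U(m)\in\Fc$.
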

\begin{proof}
  It suffices to exhibit any morphism of ultra-commutative ring spectra from
  $ \Sigma^\infty_+\bP$ that inverts $b_m$  but not $b_{m+1}$.
  We write $\Fc_m$ for the global class of compact Lie groups
  that are isomorphic to a subgroup of $\Sigma_k\wr U(m)$ for some $k\geq 1$.
  This global class is multiplicative because $(\Sigma_k\wr U(m))\times (\Sigma_l\wr U(m))$
  embeds into  $\Sigma_{k+l}\wr U(m)$.
  And $\Fc_m$ is closed under wreath products because
  $\Sigma_k\wr(\Sigma_l\wr U(m))$ embeds into  $\Sigma_{k l}\wr U(m)$.
  By design, $\Fc_m$ is the minimal multiplicative global class that contains $U(m)$
  and is closed under wreath products.

  We let $\delta\colon L_{\Fc_m}^{\uc}(\bKU)\to\bKU$ be an $\Fc_m$-approximation
  of the global $K$-theory spectrum in the $\infty$-category $\UR_{\gl}$
  of ultra-commutative ring spectra, in the sense of Remark \ref{rk:approxFucom}.
  So this morphism is terminal in the full subcategory of $\UR_{\gl}/\bKU$
  spanned by morphisms whose source is $\Fc_m$-left induced as a global spectrum.
  Such a morphism exists by Theorem \ref{thm:wreath_closed} (iv).
  
  Since $\bP$ is a global classifying space of
  the abelian compact Lie group $U(1)$,
  the global spectrum $\Sigma^\infty_+\bP$ is left induced from the
  global family $\Ab$ of abelian compact Lie groups.
  Hence $\Sigma^\infty_+\bP$ is also left induced from the larger global class $\Fc_m$.
  So the morphism $\eta\colon\Sigma^\infty_+\bP\to\bKU$ factors uniquely
  as a morphism of ultra-commutative ring spectra $\psi\colon\Sigma^\infty_+\bP\to L_{\Fc_m}^{\uc}(\bKU)$
  followed by $\delta\colon L_{\Fc_m}^{\uc}(\bKU)\to\bKU$.
  Since $\delta$ is an $\Fc_m$-equivalence, since $U(m)\in\Fc_m$,
  and since $\eta$ inverts $b_m$, the morphism
  $\psi\colon\Sigma^\infty_+\bP\to L_{\Fc_m}^{\uc}(\bKU)$ also inverts $b_m$.
  We will show that $\psi$ does {\em not} invert $b_{m+1}$.
  Incidentally, for $m=1$ the family $\Fc_1$ is the family $\Ab^\circ$
  of compact Lie groups with abelian identity component, and we show in
  Theorem \ref{thm:ucom b_1 loc} below that the morphism
  $\psi\colon\Sigma^\infty_+\bP\to L_{\Ab^\circ}^{\uc}(\bKU)$ is an ultra-commutative
  localization by $b_1$. For $m\geq 2$, I do not know whether
  $\psi\colon\Sigma^\infty_+\bP\to L_{\Fc_m}^{\uc}(\bKU)$ is an ultra-commutative
  localization by $b_m$.
  
  To show that $\psi\colon\Sigma^\infty_+\bP\to L_{\Fc_m}^{\uc}(\bKU)$ does not invert $b_{m+1}$
  we consider the group $S U(m+1)$. 
  Since $S U(m+1)$ is connected, every continuous homomorphism
  from $S U(m+1)$ to $\Sigma_k\wr U(m)$ takes values in the identity component
  $U(m)^k$ of the target.
  Since every nontrivial irreducible $S U(m+1)$-representation
  has dimension at least $m+1$, any such homomorphism is trivial.
  Hence the unique homomorphism $p_{SU(m+1)}\colon S U(m+1)\to\{1\}$
  is initial among morphisms from $S U(m+1)$ to groups in $\Fc_m$.
  Since $L_{\Fc_m}^{\uc}(\bKU)$ is  $\Fc_m$-left induced,
  and since this global class is multiplicative,
  the inflation homomorphism
  \[ p_{S U(m+1)}^*\ : \ \pi_0(L_{\Fc_m}^{\uc}(\bKU)) =
    \Phi_0^{\{1\}}(L_{\Fc_m}^{\uc}(\bKU)) \ \to \ \Phi^{S U(m+1)}_0(L_{\Fc_m}^{\uc}(\bKU)) \]
  is an isomorphism by Proposition \ref{prop:reflexive} (ii).
  Because $L_{\Fc_m}^{\uc}(\bKU)$ is non-equivariantly equivalent to $\bKU$,
  the source ring $\pi_0(L_{\Fc_m}^{\uc}(\bKU))$ is isomorphic to $\pi_0(\bKU)=\mZ$.
  We conclude that $\Phi^{S U(m+1)}_0(L_{\Fc_m}^{\uc}(\bKU))\allowbreak\iso\mZ$.
  Since $SU(m+1)$ is connected, it has no proper closed subgroups of finite index. 
  So the tautological $S U(m+1)$-representation $\nu_{m+1}$
  is not induced from any 1-dimensional representation of a closed subgroup of $S U(m+1)$,
  and we obtain the relation
  \[ \Phi^{S U(m+1)}(\res^{U(m+1)}_{S U(m+1)}(b_{m+1}))\ = \
    \Phi^{S U(m+1)}(b_{S U(m+1),\nu_{m+1}})\ =\ 0\]
  in $\Phi^{S U(m+1)}_0(\Sigma^\infty_+ \bP)$, by Theorem \ref{thm:Phi_on_irrep}.
  If $\psi_*(b_{m+1})$ were invertible in $L_{\Fc_m}^{\uc}(\bKU)$,
  then  $\psi_*(\Phi^{S U(m+1)}\allowbreak(b_{S U(m+1),\nu_{m+1}}))=0$
  would be invertible in the ring $\Phi_0^{S U(m+1)}(L_{\Fc_m}^{\uc}(\bKU))$,
  contradicting the fact that this ring is nontrivial.
\end{proof}

Now we look more closely at the localization of $\Sigma^\infty_+\bP$
by the first pre-Bott class;
as we shall see, inverting $b_1$ already yields the $K$-theory spectrum
for a large class of compact Lie groups, including all abelian ones.
When inverting $b_1$, the level of coherence in the multiplication
makes a difference: the localizations as commutative global ring spectra
and as ultra-commutative ring spectra are {\em not} equivalent,
as a comparison of Theorems \ref{thm:glcom b_1 loc} and \ref{thm:ucom b_1 loc} shows.
In fact, the localization in $\CAlg(\Sc p^\sm_{\gl})$
does not even admit an ultra-commutative lift, see Example \ref{eg:Sigma_3_witnesses}.
This is in contrast to the localization by $\{b_m\}_{m\geq 1}$,
where the ultra-commutative localization is also a localization
in the less highly structured category, by Theorem \ref{thm:main}.\medskip

We let $\Ab$ denote the global family of abelian compact Lie groups.
This global family is multiplicative, i.e., closed under products.
This closure property implies that the full subcategory of $\Sc p_{\gl}$
spanned by the $\Ab$-left induced global spectra
is closed under the global smash product,
see Theorem \ref{thm:multiplicative_class} (i).
And it implies that the full subcategory of $\CAlg(\Sc p_{\gl}^\sm)$
spanned by those commutative global ring spectra
whose underlying global spectra are $\Ab$-left induced
is coreflective in  $\CAlg(\Sc p_{\gl}^\sm)$,
i.e., the inclusion has a right adjoint, see Theorem \ref{thm:multiplicative_class} (vi).
In particular, for every commutative global ring spectrum $R$, the full subcategory
of the slice category $\CAlg(\Sc p_{\gl}^\sm)/R$ spanned by those morphisms
$A\to R$ such that $A$ is $\Ab$-left induced has a terminal object
 $L_{\Ab}(R)\to R$, see Remark \ref{rk:comFapp}.

\Danger We alert the reader that if $R$ in $\CAlg(\Sc p_{\gl}^\sm)$
underlies an ultra-commutative ring spectrum,
then $L_{\Ab}(R)$ need not underlie any ultra-commutative ring spectrum;
the commutative global ring spectrum $L_{\Ab}(\bKU)$ is a witness to this,
see Example \ref{eg:Sigma_3_witnesses} below.\medskip

In the remainder of this section, we abuse notation
by omitting the forgetful functor $u_{\gl}$, i.e.,
we denote the underlying commutative global ring spectrum of an ultra-commutative ring spectrum
by the same name. 
Since $\bP$ is a global classifying space of an abelian compact Lie group,
the underlying global spectrum of $\Sigma^\infty_+\bP$ is left induced from the
family $\Ab$. The morphism $\eta\colon \Sigma^\infty_+\bP\to\bKU$
thus uniquely factors as a morphism of commutative global ring spectra
$\eta^\natural\colon\Sigma^\infty_+\bP\to L_{\Ab}(\bKU)$, followed by the universal
$\Ab$-approximation $\epsilon\colon L_{\Ab}(\bKU)\to\bKU$ in $\CAlg(\Sc p_{\gl}^\sm)$.

\begin{theorem}[Commutative global localization by $b_1$]\label{thm:glcom b_1 loc}
    The morphism of commutative global ring spectra
    $\eta^\natural\colon \Sigma^\infty_+\bP\to L_{\Ab}(\bKU)$
    is a global localization in $\CAlg(\Sc p_{\gl}^\sm)$
    by the pre-Bott class $b_1$.
\end{theorem}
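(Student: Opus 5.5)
We will apply the geometric fixed point criterion of Theorem \ref{thm:fix_criterion} to $\eta^\natural$ and the set $\Xc=\{b_1\}$. So we must show that for every compact Lie group $K$, the map $\Phi^K_*(\eta^\natural)\colon\Phi^K_*(\Sigma^\infty_+\bP)\to\Phi^K_*(L_{\Ab}(\bKU))$ is a localization by the classes $\Phi^K(\beta^*(b_1))=\Phi^K(b_{K,\beta})$, where $\beta\colon K\to T$ ranges over all characters of $K$. We split into the cases $K$ abelian and $K$ non-abelian.

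\emph{Abelian $K$.} The universal approximation $\epsilon\colon L_{\Ab}(\bKU)\to\bKU$ is an $\Ab$-equivalence, so $\Phi^K_*(\epsilon)$ is an isomorphism and $\Phi^K_*(\eta^\natural)$ identifies with $\Phi^K_*(\eta)$. By Theorem \ref{thm:geomfix}, $\Phi^K_*(\eta)$ is a localization by the classes $\Phi^K(b_{K,V})$ for all unitary $K$-representations $V$. Since $K$ is abelian, every such $V$ is a sum of characters, so Theorem \ref{thm:fix of pre-Bott} (ii) gives $b_{K,V}$ as a product of classes $b_{K,\beta}$. A product in a graded-commutative ring is invertible exactly when its factors are. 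Hence inverting all $\Phi^K(b_{K,V})$ is the same as inverting the $\Phi^K(b_{K,\beta})$, which is what we need.

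\emph{Non-abelian $K$.} This is the step I expect to be the main obstacle. We need a description of the geometric fixed points of $\Ab$-left induced commutative global ring spectra. The natural input is the appendix analogue of Proposition \ref{prop:reflexive} (ii) for the multiplicative global family $\Ab$. I expect it to say that for $\Ab$-left induced $E$, the map $\Phi^K_*(E)$ is the colimit of $\Phi^A_*(E)$ over epimorphisms $K\to A$ onto abelian groups. Since $\Ab$ is closed under quotients and products, this diagram has a terminal object, the abelianization $q\colon K\to K_{\ab}$. So inflation $q^*\colon\Phi^{K_{\ab}}_*(E)\to\Phi^K_*(E)$ should be an isomorphism for both $E=\Sigma^\infty_+\bP$ and $E=L_{\Ab}(\bKU)$. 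For $\Sigma^\infty_+\bP$ one can also check this directly from Proposition \ref{prop:monomial_basis} (ii), using $\Hom(K,T)=\Hom(K_{\ab},T)$.

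Given this, the commutative square \eqref{eq:infl_and_fix-ROG}, applied to $\eta^\natural$ and $q$, identifies $\Phi^K_*(\eta^\natural)$ with $\Phi^{K_{\ab}}_*(\eta^\natural)$. By the abelian case, the latter is a localization by the classes $\Phi^{K_{\ab}}(b_{K_{\ab},\gamma})$ for characters $\gamma$ of $K_{\ab}$. Under $q^*$ these correspond to $\Phi^K(b_{K,\gamma q})$, and the characters $\gamma q$ are exactly all characters $\beta$ of $K$. If the general colimit formula for $\Ab$-left induced spectra is not directly available, I would fall back on the following. Write $L_{\Ab}$ on underlying global spectra as a colimit over $B_{\gl}A$-cells, and compute geometric fixed points of $\Sigma^\infty_+B_{\gl}A\sm X$ via $\Hom(K,A)=\Hom(K_{\ab},A)$.

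Finally, $\eta^\natural$ itself inverts $b_1$, since $\epsilon$ is an $\Ab$-equivalence, $T\in\Ab$, and $\eta$ inverts $b_1$ by Theorem \ref{thm:pre-Bott}. This membership is also part of the conclusion of Theorem \ref{thm:fix_criterion}. With both cases established, Theorem \ref{thm:fix_criterion} shows that $\eta^\natural$ is a global localization by $b_1$.
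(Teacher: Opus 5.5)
Your proposal is correct and follows the paper's proof. The paper also applies the geometric fixed point criterion and reduces to $K_{\ab}$ via inflation along $q\colon K\to K_{\ab}$, which is an isomorphism on $\Phi_*$ for both $\Ab$-left induced spectra. The result you "expect" is exactly Proposition \ref{prop:reflexive} (ii), so no fallback is needed. At $K_{\ab}$ the paper identifies $\Phi^{K_{\ab}}_*(\eta^\natural)$ with $\Phi^{K_{\ab}}_*(\eta)$ through the $\Ab$-equivalence $\epsilon$, and it passes from all representations to characters by multiplicativity, just as you do. The only difference is cosmetic: the paper treats all $K$ uniformly instead of splitting off the abelian case, where $q$ is the identity.
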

\begin{proof}
  We let $G$ be a compact Lie group.
  The projection $q_G\colon G\to G_{\ab}$ to the abelianization is initial among continuous
  homomorphisms to abelian compact Lie groups;
  so the global family $\Ab$ is reflexive in the sense of \cite[Definition 4.5.7]{schwede:global}.
  The underlying global spectra of both $\Sigma^\infty_+\bP$ and $L_{\Ab}(\bKU)$
  are left induced from the global family $\Ab$.
  So \cite[Proposition 4.5.8]{schwede:global} shows that both vertical inflation morphisms
  in the commutative square of graded-commutative rings
  \[ \xymatrix@C=18mm{\Phi_*^{G_{\ab}}(\Sigma^\infty_+\bP)\ar[d]^\iso_{q_G^*}
      \ar[r]_-{\Phi^{G_{\ab}}_*(\eta^\natural)}\ar@/^2pc/[rr]^(.2){\Phi^{G_{\ab}}_*(\eta)}&
      \Phi^{G_{\ab}}_*(L_{\Ab}(\bKU))\ar[d]^{q_G^*}_\iso \ar[r]^-{\iso}_-{\Phi^{G_{\ab}}_*(\epsilon)}&
      \Phi^{G_{\ab}}_*(\bKU)\\
      \Phi^G_*(\Sigma^\infty_+\bP)\ar[r]_-{\Phi^G_*(\eta^\natural)} &
      \Phi^G_*(L_{\Ab}(\bKU))
    } \]
  are isomorphisms, see also Proposition \ref{prop:reflexive}.
  By Theorem \ref{thm:geomfix}, the morphism $\Phi_*^{G_{\ab}}(\eta)$
  is a localization by the classes $\Phi^{G_{\ab}}(b_{G_{\ab},V})$
  for all unitary $G_{\ab}$-representations $V$.
  Since  $\epsilon\colon L_{\Ab}(\bKU)\to\bKU$ is an $\Ab$-equivalence,
  it induces an isomorphism of $G_{\ab}$-geometric fixed points, and so
  also the morphism $\Phi_*^{G_{\ab}}(\eta^\natural)$
  is a localization by the same classes.
  Since $G_{\ab}$ is abelian, every $G_{\ab}$-representation is a direct sum
  of 1-dimensional representations. By the multiplicativity of the pre-Bott classes
  (Theorem \ref{thm:fix of pre-Bott} (ii)),
  the morphism $\Phi^{G_{\ab}}_*(\eta^\natural)$ is also 
  a localization by the classes $\Phi^{G_{\ab}}(b_{G_{\ab},\chi})$
  for all continuous homomorphisms $\chi\colon G_{\ab}\to T$.
  Since $q_G^*(\Phi^{G_{\ab}}(b_{G_{\ab},\chi}))= \Phi^G(b_{G,\chi\circ q_G})$,
  and since every continuous homomorphism $G\to T$ factors uniquely through
  $q_G\colon G\to G_{\ab}$, this shows that the lower horizontal morphism  $\Phi^G_*(\eta^\natural)$
  is a localization by the classes $\Phi^G(b_{G,\alpha})=\Phi^G(\alpha^*(b_1))$
  for all continuous homomorphisms $\alpha\colon G\to T$.
  The geometric fixed point criterion of Theorem \ref{thm:fix_criterion}
  applied to the single class $b_1$ thus shows that
  $\eta^\natural\colon\Sigma^\infty_+\bP\to L_{\Ab}(\bKU)$
  is a global localization by $b_1$ in $\CAlg(\Sc p_{\gl}^\sm)$.
\end{proof}

Our next aim is to prove a result analogous to Theorem \ref{thm:glcom b_1 loc}
for the ultra-commutative localization of $\Sigma^\infty_+\bP$ by $b_1$,
see Theorem \ref{thm:ucom b_1 loc} below.
We write $\Ab^\circ$ for the multiplicative global family of those compact Lie groups
$G$ whose identity component $G^\circ$ is abelian.
This family in particular contains all finite groups and all abelian compact Lie groups.
In contrast to the global family $\Ab$, the family $\Ab^\circ$ is not only multiplicative,
but also closed under wreath products. Indeed, if $G$ is any compact Lie group in $\Ab^\circ$,
then 
\[ (\Sigma_m\wr G)^\circ \ = \ (G^\circ)^m\]
is again abelian, so the wreath product $\Sigma_m\wr G$ also belongs to $\Ab^\circ$.

Because $\Ab^\circ$ is multiplicative and closed under wreath products,
the full subcategory of $\UR_{\gl}$
spanned by those ultra-commutative ring spectra
whose underlying global spectra are $\Ab^\circ$-left induced
is coreflective in  $\UR_{\gl}$, see Theorem \ref{thm:wreath_closed} (iv).
Hence for every ultra-commutative  ring spectrum $R$, the full subcategory
of the slice category $\UR_{\gl}/R$ spanned by those morphisms
$S\to R$ such that $S$ is $\Ab^\circ$-left induced has a terminal object
$\delta\colon L_{\Ab^\circ}^{\uc}(R)\to R$;
informally speaking $L_{\Ab^\circ}^{\uc}(R)$ is closest from the left to $R$
among all $\Ab^\circ$-left induced ultra-commutative ring spectra.
Another reassuring property is that the forgetful functor
$\UR_{\gl}\to\CAlg(\Sc p_{\gl}^\sm)$
commutes with left $\Ab^\circ$-approximations,
by the horizontal right adjointability of Theorem \ref{thm:wreath_closed} (iv).
More precisely, the morphism of commutative global ring spectra underlying 
$\delta\colon L_{\Ab^\circ}^{\uc}(R)\to R$ is a left $\Ab^\circ$-approximation
of the underlying commutative global ring spectrum of $R$.

Since the underlying global spectrum of $\Sigma^\infty_+\bP$
is left induced from $\Ab$, it is also left induced from the larger global
family $\Ab^\circ$. So the morphism $\eta\colon \Sigma^\infty_+\bP\to\bKU$
factors uniquely as a morphism of ultra-commutative ring spectra
$\eta^\flat\colon \Sigma^\infty_+\bP\to L_{\Ab^\circ}^{\uc}(\bKU)$, followed by the universal
$\Ab^\circ$-approximation $\delta\colon L_{\Ab^\circ}^{\uc}(\bKU)\to\bKU$.
The reader might want to contrast our next result with Theorem \ref{thm:glcom b_1 loc},
which identifies the localization in $\CAlg(\Sc p_{\gl}^\sm)$, as opposed to $\UR_{\gl}$.

\begin{theorem}[Ultra-commutative localization by $b_1$]\label{thm:ucom b_1 loc}
    The morphism
    $\eta^\flat\colon \Sigma^\infty_+\bP \to L_{\Ab^\circ}^{\uc}(\bKU)$
    is an ultra-commutative localization by the pre-Bott class $b_1$.
\end{theorem}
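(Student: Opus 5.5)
Since we work in $\UR_{\gl}$, Theorem~\ref{thm:UR2CR} (iii) does not apply directly: the underlying commutative global localization by $b_1$ is $L_{\Ab}(\bKU)$, not $L^{\uc}_{\Ab^\circ}(\bKU)$. I would therefore use the localization recognition of Theorem~\ref{thm:loc_reg} directly in $\Cc=\Sc p_{\gl}^\sm$. I would take $\Wc$ to be the $b_1$-inverting objects of $(\Sigma^\infty_+\bP)\bs\UR_{\gl}$, and replace $\CAlg(\Cc)$ by $\UR_{\gl}$. This is legitimate because the proof of Theorem~\ref{thm:loc_reg} only uses that pushouts exist, that they are preserved by the composite of $u_{\gl}$ with $\Phi^K$ (as $u_{\gl}$ is a left adjoint by Theorem~\ref{thm:UR_monadic}), and that $u_{\gl}$ is conservative. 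Closure of $\Wc$ under postcomposition follows from Proposition~\ref{prop:invert_inherit} (i). The morphism $\eta^\flat$ lies in $\Wc$ because $\delta$ is an $\Ab^\circ$-equivalence, $T\in\Ab^\circ$, and $\eta$ inverts $b_1$ by Theorem~\ref{thm:pre-Bott}.

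The first key step is to identify $\Phi^G_*(\eta^\flat)$ for every compact Lie group $G$. For $G\in\Ab^\circ$, $\delta$ induces an isomorphism on $\Phi^G_*$, so $\Phi^G_*(\eta^\flat)\cong\Phi^G_*(\eta)$. By Theorem~\ref{thm:geomfix} this is the localization by all $\Phi^G(b_{G,V})$. For $G\notin\Ab^\circ$, I would use that $\Ab^\circ$-left induced spectra have geometric fixed points computed from the $\Ab^\circ$-groups mapping to $G$, via Proposition~\ref{prop:reflexive} or a variant for non-reflexive families. Here I expect the cleanest route to be the following. Since $G^\circ$ is nonabelian, the argument of Theorem~\ref{thm:rep_with_trivial_Phi-Lie} (b) yields a non-monomial irreducible $V$ with $\Phi^G(b_{G,V})=0$, so the target of Theorem~\ref{thm:geomfix} vanishes. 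I would then check separately that $\Phi^G_*(L^{\uc}_{\Ab^\circ}\bKU)$ vanishes, or at least that it is the localization of $\Phi^G_*(\Sigma^\infty_+\bP)$ at the appropriate classes.

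The second key step, which I expect to be the main obstacle, is to show that any $b_1$-inverting ultra-commutative $f\colon\Sigma^\infty_+\bP\to P$ inverts all the classes $\Phi^G(b_{G,V})$ for $G\in\Ab^\circ$. By Proposition~\ref{prop:invertibility_power} and Proposition~\ref{prop:restriction_and_invert}, $f$ inverts every $\gamma^*P^m(\alpha^*b_1)$. The wreath products $\Sigma_m\wr T$ and restrictions of these power operations produce, after applying $\Phi^G$, the elements $1-\tr_H^G(\chi)^\sharp$ by Proposition~\ref{prop:monomial_basis} (iv). For $G\in\Ab^\circ$, every irreducible representation is induced from a character of a finite index subgroup, since $G^\circ$ is abelian, and Theorem~\ref{thm:Phi_on_irrep} expresses $\Phi^G(b_{G,V})$ as a sum of such terms. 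A sum of units need not be a unit, so I would instead argue that $\Phi^G(b_{G,V})$ equals $\Phi^G$ of $\Psi^*P^m(b_{H,\chi})$ times a factor. Alternatively, I would show that the $G$-geometric fixed points of the localized target become zero unless $G$ is cyclic, which happens when some single $1-\tr^G_H(\chi)^\sharp$ vanishes, as in Proposition~\ref{prop:nearcyclic} and Theorem~\ref{thm:G^circ_image} (ii). For cyclic $G$, $b_1$ already suffices by Theorem~\ref{thm:glcom b_1 loc}. So the plan is to verify hypothesis (c) group by group: for non-cyclic $G\in\Ab^\circ$, exhibit an inverted class with zero image, obtained by reduction to near-cyclic or cyclic-component-group quotients with a nontrivial $\chi$ satisfying $\tr_H^G(\chi)=1$; for $G\notin\Ab^\circ$, handle the factorization through the left-induced structure. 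Theorem~\ref{thm:loc_reg} then concludes.
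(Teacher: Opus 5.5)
\textbf{The gap: groups $G\notin\Ab^\circ$.} For such $G$ the map $\Phi^G_*(\eta^\flat)$ is not identified with $\Phi^G_*(\eta)$. So the vanishing of $\Phi^G(b_{G,V})$ for a non-monomial $V$ is beside the point: a $b_1$-inverting $f$ need not invert $b_{G,V}$. Moreover, the statement you propose to check, that $\Phi^G_*(L^{\uc}_{\Ab^\circ}(\bKU))=0$, is false. For $G=SU(2)$ this ring is $\pi_*(\bKU)\neq 0$, which is precisely the witness used in Theorem~\ref{thm:all_different} for $m=1$. Indeed, if these rings vanished for all $G\notin\Ab^\circ$, then $\eta^\flat$ would invert every $b_m$.

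What is missing is the reflexivity reduction. $\Ab^\circ$ is multiplicative, hence reflexive by Proposition~\ref{prop:mult=refl}, with initial homomorphism the quotient $p\colon G\to G_\diamond=G/(G^\circ)'$. So no variant for non-reflexive classes is needed, and the relevant groups are quotients of $G$, not groups mapping to $G$. Because $\Sigma^\infty_+\bP$ and $L^{\uc}_{\Ab^\circ}(\bKU)$ are both $\Ab^\circ$-left induced, Proposition~\ref{prop:reflexive}~(ii) makes both inflations $p^*$ isomorphisms. Hence $\Phi^G_*(\eta^\flat)\cong\Phi^{G_\diamond}_*(\eta^\flat)\cong\Phi^{G_\diamond}_*(\eta)$ is the localization at the $p^*$-images of the localizing classes at $G_\diamond\in\Ab^\circ$. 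For $f\in\Wc$ one has $\Phi^G_*(f)\circ p^*=p^*\circ\Phi^{G_\diamond}_*(f)$. Here $p^*$ is a ring homomorphism on the geometric fixed points of the target $P$, though not an isomorphism there, so the factorization at $G$ follows from $f$ inverting the localizing classes at $G_\diamond$. Since every homomorphism $G\to\Sigma_m\wr T$ kills $(G^\circ)'$, these classes are exactly the $\Phi^G(\alpha^*(P^m(b_1)))$.

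\textbf{Problems within $\Ab^\circ$.} First, it is false that every irreducible representation of a group with abelian identity component is monomial. $\Ab^\circ$ contains all finite groups, including the non-monomial group $SL_2(\mF_3)$, so the route through Theorem~\ref{thm:Phi_on_irrep} is unavailable. Only your ``alternatively'' route works, and it is the paper's: prove that $\Phi^G_*(\eta)$ is itself a localization by the classes $\Phi^G(\beta^*(P^m(b_1)))$ for $\beta\colon G\to\Sigma_m\wr T$. Every $f\in\Wc$ inverts these by Propositions~\ref{prop:invertibility_power}, \ref{prop:restriction_and_invert} and \ref{prop:invert_criterion}. For cyclic $G$, the classes with $m=1$ already give the localization of Theorem~\ref{thm:geomfix}, and the others become units anyway; for non-cyclic $G$, some such class is zero.

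Second, the addendum of Proposition~\ref{prop:nearcyclic} covers only classes (b) and (c), so quotients onto $C_p\times C_p$ need a separate argument. The paper uses $\prod_{\lambda\neq 1}(1-(\lambda\circ\epsilon)^\sharp)=0$, a product of inverted classes. Alternatively, note that $\tr_H^{C_p\times C_p}(\chi)(\gamma)=\chi(\gamma^p)=1$ for $H$ of index $p$.

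\textbf{Your framework versus the paper's.} Your adaptation of Theorem~\ref{thm:loc_reg} to $\UR_{\gl}$ is valid, since $u_{\gl}$ is a conservative left adjoint, but it is unnecessary. Your objection to Theorem~\ref{thm:UR2CR}~(iii) concerns only the set $\{b_1\}$. The paper shows via Theorem~\ref{thm:fix_criterion} that the underlying morphism of $\eta^\flat$ is a global localization by $\{P^m(b_1)\}_{m\geq 1}$, and applies Theorem~\ref{thm:UR2CR}~(iii) to that set. It then notes that an ultra-commutative morphism inverts $b_1$ if and only if it inverts every $P^m(b_1)$.
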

\begin{proof}
  In a first step we show the following claim:
  For every group $G$ in the global family $\Ab^\circ$,
  the morphism of geometric fixed point rings
  \[ \Phi^G_*(\eta)\ : \ \Phi^G_*(\Sigma^\infty_+\bP)\ \to \ \Phi^G_*(\bKU) \]
  is a localization by the set of classes $\Phi^G(\beta^*(P^m(b_1)))$
  for all $m\geq 1$ and all continuous homomorphisms $\beta\colon G\to\Sigma_m\wr T$.

  We start with the case where $G$ is cyclic.
  Then all irreducible unitary $G$-representations are 1-dimensional.
  Hence in this case the morphism
  $\Phi^G_*(\eta)\colon\Phi^G_*(\Sigma^\infty_+\bP)\to\Phi^G_*(\bKU)$
  is a localization by the set of classes $\Phi^G(b_{G,\chi})=\Phi^G(\chi^*(b_1))$
  for all continuous homomorphisms $\chi\colon G\to T$,
  by Theorem \ref{thm:geomfix}. Since $\eta$ is a morphism of ultra-commutative ring spectra,
  it is compatible with geometric fixed points, restrictions, and power operations,
  and each of these operations preserves invertibility
  by Propositions  \ref{prop:invert_criterion},  \ref{prop:restriction_and_invert},
  and \ref{prop:invertibility_power}, respectively.
  Since $\eta_*(b_1)$ is invertible by Theorem \ref{thm:pre-Bott}, the classes 
  \[  \eta_*(\Phi^G(\beta^*(P^m(b_1)))) \ = \ \Phi^G(\beta^*(P^m(\eta_*(b_1)))) \]
  are all invertible in $\Phi^G_*(\bKU)$ for all $m\geq 2$.
  Hence these classes are already invertible after inverting the
  classes for $m=1$, and thus $\Phi^G_*(\eta)$ is also a localization by the larger set.

  Now we assume that $G$ is in the family $\Ab^\circ$, but it is not cyclic.
  We claim that then $G$ has at least one of the following two properties:
  \begin{itemize}
  \item[(A)] The group $G$ admits a continuous epimorphism onto $C_p\times C_p$ for some prime $p$.
  \item[(B)] There is a finite index subgroup $H$ of $G$ and a nontrivial
    character $\chi\colon H\to T$ such that $\tr_H^G(\chi)=1$.
  \end{itemize}

  If the component group $\pi_0(G)$ is cyclic, then $G$ must be nonabelian,
  for else $G$ would be cyclic.
  Theorem \ref{thm:G^circ_image} (ii) then provides the data $(H,\chi)$
  witnessing that $G$ has property (B).

  If the component group $\pi_0(G)$ is not cyclic, then
  $\pi_0(G)$ admits a homomorphism onto a near-cyclic group in the sense of 
  Definition \ref{def:near-cyclic}.
  So there is a continuous epimorphism $\epsilon\colon G\to K$
  onto a finite near-cyclic group.
  This near-cyclic group $K$ has one of the properties (a), (b) or (c)
  listed in Proposition \ref{prop:nearcyclic}.
  The groups in class (a) are of the form $C_p\times C_p$ for some prime $p$;
  so if $K$ has property (a), then $G$ has property (A).

  If $K$ has property (b) or (c), then
  the addendum of Proposition \ref{prop:nearcyclic}
  provides a subgroup $L$ of $K$ and a nontrivial character $\lambda\colon L\to T$
  such that $\tr_L^K(\lambda)=1$.
  Then $H=\epsilon^{-1}(L)$ is a finite index subgroup of $G$, and
  $\chi=\lambda\circ \epsilon|_H\colon H\to T$ is a nontrivial character such that
  \[ \tr_H^G(\chi)\ = \
    \tr_H^G(\lambda\circ\epsilon|_H)\ = \ \tr_L^K(\lambda)\circ\epsilon \ = \ 1\ .\]
  So $G$ has property (B).

  If $G$ has property (A), we let $\epsilon\colon G\to C_p\times C_p$ be an epimorphism, 
  for some prime $p$. The relation
  \[ \prod_{\lambda\in\Hom(C_p\times C_p,T)\setminus\{1\}}(1-\lambda) \ = \ 0 \]
  holds in the integral group ring $\mZ[\Hom(C_p\times C_p,T)]$,
  where the product is over the $p^2-1$ nontrivial characters of $C_p\times C_p$.  
  Hence the relation
  \[ \prod_{\lambda\in\Hom(C_p\times C_p,T)\setminus\{1\}}(1-(\lambda\circ\epsilon)) \ = \ 0 \]
  holds in the group ring $\mZ[\Hom(G,T)]$, and the $G$-characters $\lambda\circ\epsilon$
  that occur are all nontrivial.
  In the ring $\Phi^G_0(\Sigma^\infty_+\bP)\iso \mZ[\Hom(G,T)]$,
  we are in particular inverting the classes
  \[ \Phi^G((\lambda\circ\epsilon)^*(b_1))\ = \ \Phi^G(b_{G,\lambda\circ\epsilon})\ = _{\eqref{eq:Phi^G_of_char}}\ 1 -(\lambda\circ\epsilon)^\sharp\ , \]
  and hence also their product.
  So the relevant localization of  $\Phi^G_0(\Sigma^\infty_+\bP)$
  is the zero ring, consistent with the fact that $\Phi^G_*(\bKU)=0$ for non-cyclic groups $G$.
  
  If $G$ has property (B), witnessed by $(H,\chi)$, then 
  we set $\beta=(\Sigma_m\wr\chi)\circ\Psi_{\bar g}\colon G\to\Sigma_m\wr T$,
  where $m=[G:H]$, and $\Psi_{\bar g}\colon G\to\Sigma_m\wr H$  
  is the monomorphism  \eqref{eq:define_Psi} arising from a choice of transversal
  of $H$ in $G$.
  Then
  \begin{align*}
    \Phi^G(\beta^*(P^m(b_1)))\
    &= \ \Phi^G(\Psi^*_{\bar g}((\Sigma_m\wr\chi)^*(P^m(b_1))))\
    = \ \Phi^G(\Psi^*_{\bar g}(P^m(\chi^*(b_1))))\\
    &= \ \Phi^G(\Psi^*_{\bar g}(P^m(b_{H,\chi})))\  = \ 1- \tr_H^G(\chi)^\sharp\ = \ 0 \ . 
  \end{align*}
  The second equality  $(\Sigma_m\wr\chi)^*\circ P^m=P^m\circ\chi^*$
  is naturality of the power operation with respect to restriction along group homomorphisms;
  the proof is straightforward from the definitions.
  The fourth equality is Proposition \ref{prop:monomial_basis} (iv).
  So the localization of $\Phi^G_*(\Sigma^\infty_+ \bP)$ by 
  $\Phi^G(\beta^*(P^m(b_1)))$ is the zero ring, consistent with the
  fact that $\Phi^G_*(\bKU)$ is trivial for non-cyclic groups $G$.

  With the claim established, we now show that
  the underlying morphism in $\CAlg(\Sc p_{\gl}^\sm)$
  of $\eta^\flat\colon \Sigma^\infty_+\bP\allowbreak\to L_{\Ab^\circ}^{\uc}(\bKU)$
  is a global localization by the set of classes $\{P^m(b_1)\}_{m\geq 1}$.
  The global family $\Ab^\circ$ is multiplicative, and hence reflexive
  by Proposition \ref{prop:mult=refl}, i.e., the inclusion of $\Ab^\circ$
  into all compact Lie groups has a left adjoint.
  For a compact Lie group $G$,
  the initial homomorphism $p\colon G\to G_\diamond$ to a group in $\Ab^\circ$ is the projection
  onto the quotient group  $G_\diamond=G/(G^\circ)'$, where $(G^\circ)'=[G^\circ,G^\circ]$
  is the commutator subgroup of the identity component $G^\circ$ of $G$. 

  Both $\Sigma^\infty_+\bP$ and $L_{\Ab^\circ}^{\uc}(\bKU)$
  are left induced from the global family $\Ab^\circ$.
  So \cite[Proposition 4.5.8]{schwede:global}
  or Proposition \ref{prop:reflexive} (ii) show that both vertical inflation morphisms
  in the commutative square of graded-commutative rings
  \begin{equation}\begin{aligned}\label{eq:G_diamond}
 \xymatrix@C=15mm{\Phi_*^{G_{\diamond}}(\Sigma^\infty_+\bP)
      \ar[d]^\iso_{p^*}\ar@/^2pc/[rr]^(.2){\Phi^{G_\diamond}_*(\eta)}
      \ar[r]_-{\Phi^{G_\diamond}_*(\eta^\flat)}&
      \Phi^{G_\diamond}_*(L_{\Ab^\circ}^{\uc}(\bKU))\ar[d]_\iso^{p^*}
      \ar[r]_-{\Phi^{G_\diamond}_*(\delta)}^\iso &      \Phi^{G_\diamond}_*(\bKU) \\
      \Phi^G_*(\Sigma^\infty_+\bP)\ar[r]_-{\Phi^G_*(\eta^\flat)}
      &  \Phi^G_*(L_{\Ab^\circ}^{\uc}(\bKU))
    }     \end{aligned}  \end{equation}
  are isomorphisms. The right upper horizontal map is an isomorphism because
  $\delta$ is an $\Ab^\circ$-equivalence.
  
  By the above claim applied to $G_\diamond$,
  the morphism $\Phi_*^{G_\diamond}(\eta)$ is a localization by the set of classes
  \[ \Phi^{G_\diamond}(\beta^*(P^m(b_1))) \]
  for all $m\geq 1$ and all continuous homomorphisms $\beta\colon G_\diamond\to\Sigma_m\wr T$.
  The commutative diagram \eqref{eq:G_diamond} thus shows that the lower horizontal homomorphism
  $\Phi^G_*(\eta^\flat)$ is a localization by the set of classes
  \[    p^*(\Phi^{G_\diamond}(\beta^*(P^m(b_1)))) \ = \
    \Phi^G(p^*(\beta^*(P^m(b_1))))\ = \   \Phi^G((\beta\circ p)^*(P^m(b_1)))
  \]
  for the same set of pairs $(m,\beta)$.
  Every continuous homomorphism $\alpha\colon G\to\Sigma_m\wr T$
  annihilates $(G^\circ)'$,
  and hence factors uniquely as $\alpha=\beta\circ p$ for
  a continuous homomorphism $\beta\colon G_\diamond\to \Sigma_m\wr T$.
  So  $\Phi^G_*(\eta^\flat)$ is also a localization by the set of classes
  $\Phi^G(\alpha^*(P^m(b_1)))$ for all $m\geq 1$ and all
  continuous homomorphisms $\alpha\colon G\to\Sigma_m\wr T$.
  The geometric fixed point criterion of Theorem \ref{thm:fix_criterion}
  thus proves the claim that $\eta^\flat$  is a global localization in
  $\CAlg(\Sc p^\sm_{\gl})$ by the set $\{P^m(b_1)\}_{m\geq 1}$.

  Since $\eta^\flat$ arises as a morphism of ultra-commutative ring spectra,
  and since the underlying morphism is a global localization in
  $\CAlg(\Sc p^\sm_{\gl})$ by $\{P^m(b_1)\}_{m\geq 1}$,
  Theorem \ref{thm:UR2CR} (iii) shows that $\eta^\flat$
  is also an ultra-commutative localization by $\{P^m(b_1)\}_{m\geq 1}$ in
  the more highly structured category $\UR_{\gl}$.
  A morphism of ultra-commutative ring spectra $j\colon\Sigma^\infty_+\bP\to S$
  is compatible with power operations, and thus inverts the class $b_1$
  if and only if it inverts $P^m(b_1)$ for all $m\geq 1$,
  compare Proposition \ref{prop:invertibility_power}.
  So every ultra-commutative localization in $\UR_{\gl}$ by $b_1$
  is also an ultra-commutative localization by $\{P^m(b_1)\}_{m\geq 1}$,
  and vice versa.
\end{proof}

\begin{eg}[$L_{\Ab}(\bKU)$ is not ultra-commutative]\label{eg:Sigma_3_witnesses}
  We will now show that the commutative global ring spectrum $L_{\Ab}(\bKU)$
  does not underlie any ultra-commutative ring spectrum.
  We argue by contradiction and suppose that there was an ultra-commutative ring spectrum $R$
  and an equivalence $s\colon L_{\Ab}(\bKU)\xra{\sim}u_{\gl}(R)$ in $\CAlg(\Sc p^\sm_{\gl})$.
  We let $\zeta\colon A_3\to T$ denote either of the two nontrivial characters of the
  alternating group $A_3$.
  If $\eta^\natural\colon \Sigma^\infty_+\bP\to L_{\Ab}(\bKU)$
  denotes the global localization by $b_1$ in $\CAlg(\Sc p_{\gl}^\sm)$
  discussed in Theorem \ref{thm:glcom b_1 loc},
  then the class 
  \[ B_{A_3,\zeta}\ =\ (s\circ \eta^\natural)_*(b_{A_3,\zeta}) \ = \
    s_*(\zeta^*(\eta^\natural_*(b_1)))\]
  in $\pi_\zeta^{A_3}(R)$ is invertible  by
  Propositions \ref{prop:invert_inherit} (i) and \ref{prop:restriction_and_invert}.
  Since $R$ is ultra-commutative, the class
  $\Phi^{\Sigma_3}(\Psi^*(P^2(B_{A_3,\zeta})))$ is then invertible by
  Propositions \ref{prop:invert_criterion},
  \ref{prop:restriction_and_invert} and \ref{prop:invertibility_power}.
  
  We set $x=(s\circ\eta^\natural)_*(\zeta^*(e_T))\in\pi_0^{A_3}(R)$;
  then $x^3=1$, because $\zeta^3=1$.
  Moreover,
  \[ i^*(B_{A_3,\zeta})\ = \ (s\circ\eta^\natural)_*(i^*(b_{A_3,\zeta}))\ =_{\eqref{eq:i^*_of_b}}  \
    (s\circ\eta^\natural)_*(1-\zeta^*(e_T))\ = \ 1-x \ , \]
  for $i\colon S^0\to S^{\zeta}$ the fixed point inclusion.
  
  We let $\Psi\colon\Sigma_3\to\Sigma_2\wr A_3$ be the monomorphism \eqref{eq:define_Psi}
  based on some transversal of the alternating group $A_3$ in $\Sigma_3$.
  We showed in the proof of Proposition \ref{prop:monomial_basis} (iv)
  that the map
  \[  N\ =\ \Phi^{\Sigma_3}\circ\Psi^*\circ P^2\ \colon\ \pi_0^{A_3}(R)\ \to \
    \Phi_0^{\Sigma_3}(R) \]
  is additive, and hence a unital ring homomorphism.
  The hypotheses provide ring isomorphisms
  \[ \Phi_0^{\Sigma_3}(R)\ \iso\ \Phi_0^{\Sigma_3}(L_{\Ab}(\bKU))\ \iso\ \Phi_0^{C_2}(L_{\Ab}(\bKU))\ \iso
    \  \Phi_0^{C_2}(\bKU) \ \iso  \   \mZ[1/2]\ .\]
  The second isomorphism is by inflation along the quotient map
  $\Sigma_3\to C_2$ to the abelianization, using Proposition \ref{prop:reflexive} (ii).
  The third uses that $C_2$ belongs to the family $\Ab$.
  The only cube root of unity in $\mZ[1/2]$, and hence in $\Phi^{\Sigma_3}_0(R)$,
  is 1, so $N(x)=1$.
  Then
  \begin{align*}
    \Phi^{\Sigma_3}(\Psi^*(P^2(B_{A_3,\zeta})))\
    &= \ \Phi^{\Sigma_3}(\bar i^*(\Psi^*(P^2(B_{A_3,\zeta}))))\
      = \ N(i^*(B_{A_3,\zeta}))\  = \ N(1-x)\  =\  0\ ,
  \end{align*}
  where now $\bar i$ is the inclusion of $\Sigma_3$-fixed points
  $S^0\to S^{\ind_{A_3}^{\Sigma_3}(\zeta)}$.
  We conclude that 0 is a unit in $\Phi_0^{\Sigma_3}(R)\iso\mZ[1/2]$, which is a contradiction.
  So $L_{\Ab}(\bKU)$ does not underlie any ultra-commutative ring spectrum.
  By Theorem \ref{thm:glcom b_1 loc},
  $L_{\Ab}(\bKU)$ is the commutative global localization of $\Sigma^\infty_+\bP$ by $b_1$;
  so in particular, the commutative global localization and
  the ultra-commutative localization of $\Sigma^\infty_+\bP$ by $b_1$ are not equivalent.
\end{eg}

\section{Real-global \texorpdfstring{$K$}{K}-theory as a Real-global localization}\label{sec:Real}

The global $K$-theory spectrum $\bKU$ underlies a Real-global refinement
$\bKR$ that also knows about all conjugate-linear phenomena related
to complex $K$-theory.
We shall explain now that, and how,
our localization results can be extended to this Real-global context.

In this section we write  $C=\Gal(\mC/\mR)$ for the Galois group of $\mC$ over $\mR$,
a group with two elements.
Since the non-identity element of $C$ always acts by some kind of complex
conjugation, and the $C$-actions encode conjugate-linear phenomena, we shall also use
the term `Real-global' as synonymous with `$C$-global'.
The complex-conjugation involution $\psi\colon\bKU\to\bKU$
was first investigated
by Halladay and Kamel \cite[Section 6]{halladay-kamel}, who show
that the resulting genuine $C$-spectrum that they denote $KU_{\mR}$
represents Atiyah's Real $K$-theory.
I show in \cite[Theorem B.58]{schwede:GSB} that $\bKR=(\bKU,\psi)$ deserves to be called
the {\em Real-global $K$-theory spectrum}:
for every augmented Lie group $\alpha\colon G\to C$,
the genuine $G$-spectrum  $\alpha^*(\bKR)$ represents
$\alpha$-equivariant Real $K$-theory $KR_\alpha$.
Complex conjugation induces a multiplicative involution on the orthogonal space $\bP$,
making it an ultra-commutative $C$-monoid,
compare \cite[Construction 4.1]{schwede:GSB}.
Moreover, the morphism  $\eta$
is $C$-equivariant, and hence a morphism of ultra-commutative $C$-ring spectra
 $\eta\colon \Sigma^\infty_+\bP\to \bKR$.
In this section we generalize the localization properties
of the morphism $\eta\colon \Sigma^\infty_+\bP\to \bKU$ from Theorem \ref{thm:main} (i)
to the Real-global context, see Theorem \ref{thm:Realglobal}.
And we sketch in Remark \ref{rk:C_global_UCom} how the
ultra-commutative localization property  from Theorem \ref{thm:main} (ii)
ought to generalize.
There is only one minor modification necessary: we refine the
first pre-Bott class $b_1$ to a specific `Real-global pre-Bott class'
in $\pi^{\tilde T}_{\nu_1}(\Sigma^\infty_+\bP)$, see
Construction \ref{con:Real pre-Bott},
where $\tilde T=T\rtimes C$ is the extended circle group,
the semidirect product of $C$ acting on $T$ by complex conjugation.
As we explain in the proof of Theorem \ref{thm:Realglobal},
the key feature of the refined pre-Bott class $\tilde b_1$ is that inverting it
kills the geometric fixed point spectra of all surjectively augmented Lie groups,
a consequence of the nilpotency of $\Phi^C(\res^{\tilde T}_C(\tilde b_1))$,
see Proposition \ref{prop:properties_tilde b} (iii).
\medskip

Before we can dive into the specifics of the $C$-global morphism
$\eta\colon \Sigma^\infty_+\bP\to\bKR$, we must give meaning to
`global localizations' in the more general context of Real-global homotopy
theory. The general theory would work perfectly well for 
augmentations to a fixed compact Lie group $K$.
Since we are only interested in the special case
where $K=C=\Gal(\mC/\mR)$ is the Galois group, we only spell out the
definitions in this case.
In the following, an {\em augmented Lie group}
is a continuous homomorphism $\alpha\colon G\to C$ from
a compact Lie group to the Galois group.

\begin{construction}[$C$-global spectra]
A morphism $f\colon X\to Y$ of orthogonal $C$-spectra is a {\em $C$-global equivalence}
if for every augmented Lie group $\alpha\colon G\to C$ and all integers $k$, the map
$\pi_k^G(\alpha^*(f))\colon\pi_k^G(\alpha^*(X))\to \pi_k^G(\alpha^*(Y))$ is an isomorphism.
The $C$-global equivalences are the weak equivalences
in a stable model structure on the category $\Sp^O_C$ of orthogonal $C$-spectra,
which is moreover symmetric monoidal with cofibrant unit for the smash product,
see \cite{grande:MA}, or Theorem 7.8 and Corollary 7.19 of \cite{brink-lenz}.
The monoidal localization thus yields a symmetric monoidal $\infty$-category
  \[ \Sc p^{\sm}_{C\dgl} \ = \ (\Sp^{O,\sm}_C)_\infty \]
  and a lax symmetric monoidal functor
  \[ \gamma_{C\dgl}^\sm \ : \ \Sp^{O,\sm}_C\  \to \ \Sc p^{\sm}_{C\dgl} \]
  whose underlying functor
  is a localization of the 1-category of orthogonal $C$-spectra at the class
  of $C$-global equivalences.
  We refer to the underlying $\infty$-category $\Sc p_{C\dgl}$
  as the {\em $\infty$-category of $C$-global spectra};
  it is stable and compactly generated,
  and hence presentable, by \cite[Corollary A.19]{schwede:stiefel}.
  The monoidal product on $\Sc p^{\sm}_{C\dgl}$ arises from a Quillen bifunctor
  at the level of the model, and so it preserves colimits in each variable;
  hence  $\Sc p^{\sm}_{C\dgl}$ is presentably symmetric monoidal.
  The $\infty$-category $\CAlg(\Sc p^\sm_{C\dgl})$ of {\em commutative $C$-global ring spectra}
  is thus presentable by \cite[Corollary 3.2.3.5]{lurie:higher_algebra}.  
\end{construction}

\begin{construction}[Underlying $G$-spectrum]
  In Construction \ref{con:res_G} we introduce the `underlying $G$-spectrum functor'
  from global spectra to $G$-spectra, for any compact Lie group $G$.
  In the $C$-global context, every augmented Lie group $\alpha\colon G\to C$
  gives rise to an analogous functor, as follows.
  The 1-categorical restriction functor
  \[ \alpha^* \ : \ \Sp^O_C\ \to \ \Sp_G^O \ , \quad X \ \longmapsto \ \alpha^*(X) \]
  from orthogonal $C$-spectra to orthogonal $G$-spectra
  takes $C$-global equivalences to $\upi_*$-isomorphisms, by definition.
  And this restriction functor is strong symmetric monoidal
  for the respective smash products, with diagonal action.
  The universal property of the monoidal localization
  $\gamma^\sm_{C\dgl}\colon \Sp_C^{O,\sm}\to\Sc p_{C\dgl}^\sm$
  yields a unique lax symmetric monoidal functor
  \[  \res_\alpha^{\sm} \ : \ \Sc p_{C\dgl}^{\sm} \ \to \ \Sc p_G^{\sm}     \]
  equipped with a monoidal equivalence
  \[ \gamma_G^\sm\circ \alpha^* \ \sim \ \res_\alpha^\sm \circ \gamma^\sm_{C\dgl}\ \colon\
   \Sp^{O,\sm}_C\ \to \  \Sc p_G^{\sm} \ . \]
 The same kind of argument as in the proof of Theorem \ref{thm:gl2G} (iv)
 shows that the a priori only lax symmetric monoidal functor $\res_\alpha^{\sm}$
 is in fact strong.
\end{construction}

We introduce some conventions to ease the notational burden.
For an augmented Lie group $\alpha\colon G\to C$, the symmetric monoidal functor $\res^\sm_\alpha$
induces a functor of $\infty$-categories of commutative algebras.
For a commutative $C$-global ring spectrum $R$, we shall abbreviate
the resulting commutative $G$-ring spectrum $\CAlg(\res_\alpha^\sm)(R)$ to $R_\alpha$.
And we shall abbreviate the $G$-equivariant homotopy and geometric fixed point groups,
and the geometric fixed point ring spectrum of $R_\alpha$ to 
\[ \pi_V^\alpha(R)\ = \ \pi_V^G(R_\alpha)\ ,\quad
  \Phi_k^\alpha(R)\ = \ \Phi_k^G(R_\alpha)\text{\quad and\quad}
  \Phi^\alpha(R)\ = \ \Phi^G(R_\alpha)  \ .\]
Finally, we shall implicitly augment any closed subgroup $H$ of $G$
by the restricted augmentation $\alpha|_H\colon H\to C$.

The following is the straightforward generalization
of global localizations of commutative global ring spectra
(Definition \ref{def:global_loc}) to the $C$-global context.

\begin{defn}[$C$-global localization]
  Let $R$ be a commutative $C$-global ring spectrum.
  A {\em set of representation-graded equivariant homotopy classes} of $R$
  is a set $\Xc=\{v\}$ of
  classes in $\pi_V^\alpha(R)$ for varying pairs $(\alpha,V)$ consisting
  of an augmented Lie group $\alpha\colon G\to C$ and a $G$-representation $V$.
  We will say that the class $v\in \pi_V^\alpha(R)$ is {\em supported}
  at the homomorphism $\alpha$, or at the pair $(\alpha,V)$.
  A morphism $i\colon R\to S$ of commutative $C$-global ring spectra
  {\em inverts $\Xc$} if for every
  element $v\in\Xc$ supported at $(\alpha,V)$, the class $\pi_V^\alpha(i)(v)\in\pi_V^\alpha(S)$
  is invertible in the sense of Definition \ref{def:RO-invertible}.
  A {\em $C$-global localization by $\Xc$}
  is an initial object of the full subcategory of the
  slice category $R\bs\CAlg(\Sc p_{C\dgl}^\sm)$ spanned by the
  $\Xc$-inverting morphisms.
\end{defn}

\begin{construction}[Real-global pre-Bott class]\label{con:Real pre-Bott}  
  We write $\tilde T=T\rtimes C$ for the {\em extended circle group},
  the semidirect product of the group $C=\Gal(\mC/\mR)$
  acting by complex conjugation on $T=U(1)$.
  The group $\tilde T$ is augmented to $C$ by the projection.
  We abuse notation and write $\nu_1$ for the tautological Real
  $\tilde T$-representation on $\mC$ by scalar multiplication and complex conjugation.
  The underlying unitary $T$-representation of $\nu_1$ is the tautological
  representation with the same name, justifying the abuse.
  We introduce a {\em Real-global pre-Bott class} $\tilde b_1$
  in the $\tilde T$-equivariant homotopy group of $\Sigma^\infty_+\bP$
  that restricts to the first pre-Bott class from Construction \ref{con:pre-Bott}  
  on the underlying `non-Real' global spectrum.

  We embed the group $C$ into $\tilde T$ as complex conjugation.
  The cofiber sequence of based $\tilde T$-spaces
\[  \tilde T/C_+ \ \to \ S^0 \ \xra{\ i \ } \ S^{\nu_1}  \]
  induces a long exact sequence in any $\tilde T$-equivariant cohomology theory.
  For $C$-global spectra, the restriction homomorphism $\res^{\tilde T}_C$
  is a split epimorphism, by inflation along the augmentation $\tilde T\to C$;
  so the long exact sequence splits off
  a short exact sequence:
  \[ \xymatrix@C=9mm{
      0\ar[r] & \pi_{\nu_1}^{\tilde T}(\Sigma^\infty_+\bP)\ar[r]^-{i^*} &
      \pi_0^{\tilde T}(\Sigma^\infty_+\bP)\ar[r]^-{\res^{\tilde T}_C}&
      \pi_0^C(\Sigma^\infty_+\bP)\ar[r] & 0     } \]
  We alert the reader that according to our present conventions,
  here and below the group $C$ is augmented by the identity,
  so it acts on $\Sigma^\infty_+\bP$ by complex conjugation.
  This is {\em different} from the trivial $C$-action,
  which occurs for example in Proposition \ref{prop:modulo_torsion}.
  We write $e_{\tilde T}\in \pi_0^{\tilde T}(\Sigma^\infty_+\bP)$ for the stable tautological class,
  see \cite[(4.2)]{schwede:GSB};
  its restriction to $T$ is the tautological class $e_T$ from \eqref{eq:define_e_T}.
  The ring $\pi_0^C(\Sigma^\infty_+\bP)$ is isomorphic to the Burnside ring of the group $C$,
  and additively generated by $1=\res^{\tilde T}_C(e_{\tilde T})$,
  the multiplicative unit, and the class $\tr_{\{1\}}^C(1)=\tr_{\{1\}}^C(\res^{\tilde T}_{\{1\}}(e_{\tilde T}))$.
  Because $\res^{\tilde T}_C(1- e_{\tilde T})=0$,
  there is a unique class, the {\em Real-global pre-Bott class}
  \[  \tilde b_1 \ \in \ \pi_{\nu_1}^{\tilde T}(\Sigma^\infty_+\bP) \text{\quad such that\quad}
    i^*(\tilde b_1)\ = \ 1- e_{\tilde T}\text{\quad in\quad} \pi_0^{\tilde T}(\Sigma^\infty_+\bP)\ . \]
\end{construction}

The following proposition shows that
the Real-global pre-Bott class is indeed a lift of the first pre-Bott class
to the $\tilde T$-equivariant homotopy group,
and that it maps to the Bott class in Real-equivariant  $K$-theory.

\begin{prop}\label{prop:properties_tilde b}\ 
  \begin{enumerate}[\em (i)]
  \item
    The relation $\res^{\tilde T}_T(\tilde b_1)=b_1$ holds in $\pi^T_{\nu_1}(\Sigma^\infty_+\bP)$.
  \item
    The morphism $\eta\colon \Sigma^\infty_+ \bP \to \bKR$
    sends the class $\tilde b_1$ to the $\tilde T$-equivariant Bott class of $\nu_1$.      
  \item
    The class $\Phi^C(\res^{\tilde T}_C(\tilde b_1))$ in $\Phi^C_1(\Sigma^\infty_+\bP)$
    is nilpotent, where $C$ sits inside $\tilde T$ as complex conjugation. 
    \end{enumerate}
  \end{prop}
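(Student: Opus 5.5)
Parts (i) and (ii) will be handled by the same injectivity trick used for $b_1$ in Construction \ref{con:pre-Bott} and Remark \ref{rk:preBott2Bott}; part (iii) is a separate geometric fixed point computation, and I expect it to be the main obstacle.

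For (i), I restrict the short exact sequence defining $\tilde b_1$ along $T\le\tilde T$. The cofiber sequence $\tilde T/C_+\to S^0\to S^{\nu_1}$ restricts on $T$ to $T_+\to S^0\to S^{\nu_1}$. The fixed point inclusion $i$ commutes with $\res^{\tilde T}_T$, so
\[ i^*(\res^{\tilde T}_T(\tilde b_1))\ =\ \res^{\tilde T}_T(1-e_{\tilde T})\ =\ 1-e_T\ =\ i^*(b_1)\ . \]
Injectivity of $i^*$ on $\pi^T_{\nu_1}$ then gives $\res^{\tilde T}_T(\tilde b_1)=b_1$.

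For (ii), the morphism $\eta$ is compatible with the split short exact sequence for $\tilde T$, exactly as in Remark \ref{rk:b_1_unique}. The same splitting argument (inflation along $\tilde T\to C$) shows that $i^*\colon\pi^{\tilde T}_{\nu_1}(\bKR)\to\pi_0^{\tilde T}(\bKR)$ is injective. The $\tilde T$-equivariant Real Bott class $\tilde\beta$ of $\nu_1$ is, as in the complex case, the reduced class of the tautological Real line bundle over $P(\mC\oplus\nu_1)\cong S^{\nu_1}$. Hence $i^*(\tilde\beta)=1-[\nu_1]$ in $KR_{\tilde T}(\ast)$. Since $\eta_*(e_{\tilde T})=[\nu_1]$ by the construction of $\eta$ on the rank one part, we get $i^*(\eta_*(\tilde b_1))=1-[\nu_1]=i^*(\tilde\beta)$, and injectivity gives $\eta_*(\tilde b_1)=\tilde\beta$.

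For (iii), the plan is to compute in $\Phi^C_*(\Sigma^\infty_+\bP)$, where $C$ acts on $\bP$ by complex conjugation. The $C$-fixed points of $\bP$ are the real lines, so $\bP^C\simeq\mR P^\infty$ and $\Phi^C_*(\Sigma^\infty_+\bP)\cong\pi_*(\Sigma^\infty_+\mR P^\infty)$. The representation $\nu_1$ restricted to $C$ is $1\oplus\sigma$, with one-dimensional fixed points, so $\Phi^C(\res^{\tilde T}_C(\tilde b_1))$ lies in degree $1$. I will try to identify this class with an explicit element of $\pi_1(\Sigma^\infty_+\mR P^\infty)\cong\mZ/2\oplus\mZ/2$, namely $\eta_{\text{Hopf}}$ on the base point plus possibly the class coming from $\mR P^1\subset\mR P^\infty$. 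For this I take the fixed points of the geometric map $S^{\nu_1}\cong P(\mC\oplus\nu_1)\to\bP$ representing $1-\xi$, as described in Remark \ref{rk:spitz-oest}; its $C$-fixed points give $S^1\cong\mR P^1\to\mR P^\infty$.

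I then need to show that some power of this degree-one class vanishes. Since the class is $2$-torsion, its powers land in the torsion of $\pi_*(\Sigma^\infty_+\mR P^\infty)$. I would try to show nilpotence using that $\Sigma^\infty_+\mR P^\infty$ has nilpotent positive-degree homotopy, for instance by the Nishida nilpotence theorem applied to its ring of positive-dimensional stable homotopy. A cleaner route would be to map to $\Phi^C(\bKR)$, which is known to be $0$, but that only detects invertibility, not nilpotence in the source. So I expect to rely on Nishida-type nilpotence of torsion classes in $\pi_{>0}$ of a suspension spectrum ring. Pinning down this step rigorously is where I expect the real difficulty.
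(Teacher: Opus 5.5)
Your part (i) is the paper's argument. Part (ii) is correct but takes a different route.

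You use injectivity of $i^*\colon\pi^{\tilde T}_{\nu_1}(\bKR)\to\pi^{\tilde T}_0(\bKR)$. This does hold for every $C$-global spectrum, by the inflation splitting of $\res^{\tilde T}_C$ in all degrees. Your route then needs two inputs in $\pi_0^{\tilde T}(\bKR)\cong RR(\tilde T)$, which you assert rather than derive: $i^*(\beta_{\tilde T,\nu_1})=1-[\nu_1]$ and $\eta_*(e_{\tilde T})=[\nu_1]$.

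The paper instead proves injectivity of $\res^{\tilde T}_T$ on $\pi^{\tilde T}_{\nu_1}(\bKR)$. This comes from injectivity of $RR(\tilde T)\to RU(T)$ together with Real Bott periodicity. The paper then reduces to part (i) and the non-Real identity $\eta_*(b_1)=\beta_{T,\nu_1}$. So it needs nothing about the Real Bott class beyond its restriction to $T$. Your version avoids Real Bott periodicity. However, the cleanest justification of $\eta_*(e_{\tilde T})=[\nu_1]$ is precisely the injectivity of $RR(\tilde T)\to RU(T)$ that the paper uses.

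Part (iii) has a genuine gap: you never establish nilpotence, and the tool you propose does not apply as stated. Nishida's theorem concerns positive-degree elements of $\pi_*(\mS)$. It says nothing directly about the ring $\pi_*(\Sigma^\infty_+\mR P^\infty)$. The degree-one part of that ring is not in the image of $\pi_*(\mS)$, and it contains classes with nonzero integral Hurewicz image, for example the class from $\mR P^1\subset\mR P^\infty$ that you expect to appear. So even a Nishida-type statement for $E_\infty$-rings would need an extra step.

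The missing observation is that you need neither to identify the class nor to use any nilpotence theorem. By the known low-dimensional computations of the stable homotopy of $\mR P^\infty$ (Liulevicius), $\pi_5(\Sigma^\infty_+\mR P^\infty)\cong\pi_5(\mS)\oplus\pi^s_5(\mR P^\infty)=0$. Hence in $\Phi^C_*(\Sigma^\infty_+\bP)\cong\pi_*(\Sigma^\infty_+\mR P^\infty)$ the fifth power of \emph{every} degree-one element vanishes, whatever the multiplication is.

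Your strategy could be repaired, for instance with the Devinatz--Hopkins--Smith nilpotence theorem: $\widetilde{MU}_*(\mR P^\infty)$ is concentrated in odd degrees and $MU_1=0$, so the $MU$-Hurewicz image of $x^2$ vanishes. This is far heavier than the degree count.

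You were right to discard the comparison with $\Phi^C(\bKR)$. It would also be circular, since the paper uses (iii) exactly to show that $\Phi^\alpha(\bKR)$ vanishes for surjective augmentations.
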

  \begin{proof}
  (i) By design we have
  \[ i^*(\res^{\tilde T}_T(\tilde b_1))\ = \
    \res^{\tilde T}_T(i^*(\tilde b_1))\ = \
    \res^{\tilde T}_T(1-e_{\tilde T})\ = \   1-e_T \ = \ i^*(b_1)  \]
  in the group $\pi_0^T(\Sigma^\infty_+\bP)$. Since the map
  $i^*\colon\pi_{\nu_1}^T(\Sigma^\infty_+\bP)\to\pi_0^T(\Sigma^\infty_+\bP)$
  is injective, this proves the desired relation.

  (ii)
  The forgetful ring homomorphism $\res^{\tilde T}_T\colon RR(\tilde T)\to RU (T)$
  from the Real representation ring of $\tilde T$
  is injective by \cite[page 13]{atiyah-segal:completion}.
  By Real-equivariant Bott periodicity \cite[Theorem 5.1]{atiyah:elliptic},
  the restriction homomorphism
  $\res^{\tilde T}_T\colon \pi_{\nu_1}^{\tilde T}(\bKR)\to\pi_{\nu_1}^T(\bKR)$
  is then injective, too. 
  It also takes the $\tilde T$-equivariant Bott class $\beta_{\tilde T,\nu_1}$
  to the $T$-equivariant Bott class $\beta_{T,\nu_1}$.
  Since $\eta$ is a morphism of $C$-global spectra, we have
  \[ \res^{\tilde T}_T(\eta_*(\tilde b_1))\ = \
    \eta_*(\res^{\tilde T}_T(\tilde b_1))\ = \
    \eta_*(b_1)\ = \ \beta_{T,\nu_1} \ = \ \res^{\tilde T}_T(\beta_{\tilde T,\nu_1})\ , \]
  where the third equation is Remark \ref{rk:preBott2Bott}.
  Since $\res^{\tilde T}_T$ is injective,
  the desired relation $\eta_*(\tilde b_1)=\beta_{\tilde T,\nu_1}$ follows.

  (iii) The underlying $C$-space of the $C$-global space $\bP$
  is infinite complex projective space with $C$-action by complex conjugation.
  So the fixed point space $\bP^C$ is homeomorphic to $\mR P^\infty$.
  The geometric fixed point groups of an equivariant suspension spectrum
  are isomorphic to the stable homotopy groups of the fixed point space,
  compare \cite[Example 3.3.3]{schwede:global}. So the ring $\Phi^C_*(\Sigma^\infty_+\bP)$
  is isomorphic to $\pi_*(\Sigma^\infty_+\mR P^\infty)$.
  The non-equivariant stable homotopy groups of $\mR P^\infty$
  are explicitly known in small degrees, see for example \cite{liulevicius}.
  In particular, $\pi_5(\Sigma^\infty_+ \mR P^\infty)=0$,
  so the fifth power of every class in $\pi_1(\Sigma^\infty_+\mR P^\infty)$ is zero.
  Hence also the fifth power of the class $\Phi^C(\res^{\tilde T}_C(\tilde b_1))$ is zero.    
\end{proof}

The following is the main result of this section;
it extends Theorem \ref{thm:main} (i) to the Real-global context.
As I explain in Remark \ref{rk:C_global_UCom}, I am convinced that
part (ii) of Theorem \ref{thm:main} also generalizes,
but to adapt the proof we need certain foundational properties
of ultra-commutative $C$-global ring spectra that are not yet in the literature.
In the following theorem we consider the pre-Bott classes $b_m$
as supported at the trivially augmented group $U(m)$.

\begin{theorem}\label{thm:Realglobal}\
  The morphism $\eta\colon \Sigma^\infty_+ \bP \to \bKR$
  is a $C$-global localization in $\CAlg(\Sc p_{C\dgl}^\sm)$
  by the classes $\tilde b_1$ and $\{b_m\}_{m\geq 2}$.
\end{theorem}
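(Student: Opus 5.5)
We apply the localization recognition of Theorem \ref{thm:loc_reg} to the presentably symmetric monoidal $\infty$-category $\Sc p^\sm_{C\dgl}$. The detecting family consists of the functors $\Phi^G\circ\res^\sm_\alpha\colon\Sc p^\sm_{C\dgl}\to\Sc p^\sm$, for $\alpha\colon G\to C$ ranging over representatives of isomorphism classes of augmented Lie groups. These are strong symmetric monoidal left adjoints. They are jointly conservative, because $C$-global equivalences are detected by the restrictions $\alpha^*$, and $G$-equivalences are detected by geometric fixed points of closed subgroups. Let $\Wc$ be the full subcategory of morphisms inverting $\tilde b_1$ and $\{b_m\}_{m\geq 2}$. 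It is closed under postcomposition by Proposition \ref{prop:invert_inherit} (i). The $C$-global analog of Proposition \ref{prop:restriction_and_invert} holds with the same proof, now for continuous homomorphisms compatible with augmentations. For such $\gamma$, one uses the $C$-global inflation compatibility \eqref{eq:infl_and_fix-ROG}, and the image of an augmented subgroup is again augmented. Next, $\eta$ lies in $\Wc$. Indeed, $\eta_*(\tilde b_1)$ is the Real Bott class by Proposition \ref{prop:properties_tilde b} (ii). Invertibility can then be tested by Proposition \ref{prop:invert_criterion} (c) on closed subgroups $H\le\tilde T$. For $H\le T$, this follows from Theorem \ref{thm:pre-Bott} together with (i). For $H$ with surjective augmentation, $\Phi^H(\bKR)$ vanishes, as shown below, so every class is a unit there. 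Finally, $\eta_*(b_m)$ is invertible by Theorem \ref{thm:pre-Bott}.

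The main verification is hypothesis (c) of Theorem \ref{thm:loc_reg}, and I would split it into two cases. In the first case the augmentation $\alpha$ is trivial. Then $\res_\alpha(\bKR)=\bKU_G$, and everything reduces to Theorem \ref{thm:geomfix}. The morphism $\Phi^G_*(\eta)$ is a localization by the classes $\Phi^G(\beta^*(x))$, with $\beta\colon G\to U(m)$ and $x=b_m$. The case $m=1$ uses $b_1=\res^{\tilde T}_T(\tilde b_1)$, so $\beta^*(b_1)$ is a restriction of $\tilde b_1$ along an augmentation-preserving map $G\to\tilde T$. Hence every $f\in\Wc$ inverts these classes, and $\Phi^G_*(f)$ factors through $\Phi^G_*(\eta)$.

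In the second case $\alpha$ is surjective. I claim that both the target $\Phi^G_*(\bKR)$ and the localization of $\Phi^G_*(\Sigma^\infty_+\bP)$ vanish. Choose $g\in G$ with $\alpha(g)$ the generator of $C$. Let $H$ be the closure of the subgroup generated by $g$; it still surjects onto $C$. It would be enough to find an augmentation-preserving $\beta\colon G\to\tilde T$ that is ``Real'' in the relevant sense, for instance the map $G\to C\hookrightarrow\tilde T$, given by $\alpha$ followed by the complex-conjugation section. This is a homomorphism over $C$ because $C$ sits in $\tilde T$ as a complement. With $\beta=s\circ\alpha$ we get
\[ \Phi^G(\beta^*(\tilde b_1))\ =\ \alpha^*\bigl(\Phi^C(\res^{\tilde T}_C(\tilde b_1))\bigr), \]
using inflation compatibility \eqref{eq:infl_and_fix-ROG} along the epimorphism $\alpha$. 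The right-hand side is nilpotent by Proposition \ref{prop:properties_tilde b} (iii), because inflation is a ring homomorphism. Therefore any $f\in\Wc$ inverts a nilpotent class in $\Phi^G_*$, which forces $\Phi^G_*(P)=0$. In particular this applies to $f=\eta$, which lies in $\Wc$ by the $T$-part argument above applied to $\eta_*(\tilde b_1)$ at the subgroups $H\le T$. Hence $\Phi^G_*(\bKR)=0$, consistent with the known vanishing of Real $K$-theory geometric fixed points. So $\Phi^G_*(\eta)$ is the localization at this single nilpotent class, and every $\Phi^G_*(f)$ factors through the zero ring.

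The step I expect to be the main obstacle is making the argument for $\eta\in\Wc$ non-circular. To check that $\eta_*(\tilde b_1)$ is invertible via Proposition \ref{prop:invert_criterion} (c), one needs the subgroups $H\le\tilde T$ with surjective augmentation. For those I would use directly that the Real Bott class $\beta_{\tilde T,\nu_1}$ is invertible by Real Bott periodicity \cite[Theorem 5.1]{atiyah:elliptic}. This parallels the five-lemma argument of Theorem \ref{thm:pre-Bott}: the short exact sequence of Construction \ref{con:Real pre-Bott} for $\bKR$, combined with invertibility after restriction to $C$. That avoids any appeal to vanishing of $\Phi^H(\bKR)$, which is then deduced rather than assumed. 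The remaining hypotheses are formal, and Theorem \ref{thm:loc_reg} yields the claim.
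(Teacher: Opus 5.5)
Your argument is correct and follows the paper's proof. You apply localization recognition with the functors $\Phi^G\circ\res^\sm_\alpha$, reduce the trivially augmented case to Theorem \ref{thm:geomfix} via $\res^{\tilde T}_T(\tilde b_1)=b_1$, and kill the surjectively augmented case with the nilpotent class $\alpha^*(\Phi^C(\res^{\tilde T}_C(\tilde b_1)))$.

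The one thing to clean up is the circular claim in your first paragraph that $\Phi^H(\bKR)=0$ ``as shown below''. As you note yourself at the end, $\eta$ inverts $\tilde b_1$ directly by Proposition \ref{prop:properties_tilde b} (ii) and Real Bott periodicity, which is exactly how the paper argues; the vanishing of $\Phi^\alpha(\bKR)$ then follows as a consequence. The unused subgroup generated by $g$ and the five-lemma aside can be dropped.
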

\begin{proof}
  We apply the localization recognition of Theorem \ref{thm:loc_reg}
  to the presentably symmetric monoidal $\infty$-category $\Sc p^\sm_{C\dgl}$
  of $C$-global spectra, to the morphism $\eta\colon\Sigma^\infty_+ \bP\to\bKR$,
  and to the family of geometric fixed point functors
  $\Phi^G\circ\res^\sm_\alpha\colon\Sc p^\sm_{C\dgl}\to \Sc p^\sm$,
  for $\alpha\colon G\to C$ ranging
  through representatives of all isomorphism classes of augmented Lie groups.
  These functors are strong symmetric monoidal left adjoints, and jointly conservative,
  by the $C$-global analog of Theorem \ref{thm:gl2G}.
  The full subcategory $\Wc$ of $(\Sigma^\infty_+\bP)\bs\CAlg(\Sc p^\sm_{C\dgl})$
  is spanned by the morphisms that invert the pre-Bott classes $\tilde b_1$ and $b_m$ for $m\geq 2$.
  It is closed under postcomposition by the $C$-global analog of
  Proposition \ref{prop:invert_inherit} (i).
  The morphism $\eta\colon\Sigma^\infty_+ \bP\to\bKR$ inverts $\tilde b_1$ by
  Proposition \ref{prop:properties_tilde b} (ii) and Real-equivariant Bott periodicity
  \cite[Theorem 5.1]{atiyah:elliptic};
  and it inverts the classes $b_m$ by Theorem \ref{thm:pre-Bott}.
  
  Now we verify hypothesis (c) for an augmented Lie group $\alpha\colon G\to C$.
  If the augmentation $\alpha$ is trivial, then $\bKR_\alpha=\bKU_G$,
  and similarly for $\Sigma^\infty_+\bP$. So in this case, the morphism of graded rings
  $\Phi^\alpha_*(\eta)\colon\Phi^\alpha_*(\Sigma^\infty_+\bP)\to \Phi^\alpha_*(\bKR)$
  specializes to $\Phi^G_*(\eta)\colon\Phi^G_*(\Sigma^\infty_+\bP)\to \Phi^G_*(\bKU)$,
  with the understanding that now $\Sigma^\infty_+\bP$ and $\bKU$
  denote the underlying global spectra.
  This homomorphism is a localization by the set of classes $\Phi^G(b_{G,V})$
  for all unitary $G$-representations $V$, by Theorem \ref{thm:geomfix}.
  Moreover, if $f\colon \Sigma^\infty_+\bP\to P$ belongs to $\Wc$,
  then it inverts $b_m$ for all $m\geq 1$; for $m=1$, this is because $f$ inverts
  $\tilde b_1$, and hence also its restriction $b_1$.
  So $f$ inverts all the classes $\Phi^G(b_{G,V})$, by 
  Propositions \ref{prop:restriction_and_invert} and \ref{prop:invert_criterion}.
  Since $\Phi^G_*(\eta)$ is a localization by these classes, the morphism $\Phi^G_*(f)$
  factors through $\Phi^G_*(\eta)$. This verifies hypothesis (c) for trivial augmentations.

  If the augmentation $\alpha\colon G\to C$ is surjective, then the inflation homomorphism
  $\alpha^*\colon\Phi^C_*(\Sigma^\infty_+\bP)\to \Phi^\alpha_*(\Sigma^\infty_+\bP)$
  is a ring homomorphism. Since $\Phi^C(\res^{\tilde T}_C(\tilde b_1))$ is nilpotent
  by Proposition \ref{prop:properties_tilde b} (iii), also the class
  \[ \Phi^\alpha_*(f)(\alpha^*(\Phi^C(\res^{\tilde T}_C(\tilde b_1)))) \ =\
    \Phi^G(\alpha^*(\res^{\tilde T}_C(f_*(\tilde b_1)))) \]
  is nilpotent. Because $f\colon\Sigma^\infty_+\bP\to P$ inverts $\tilde b_1$,
  it also inverts $\alpha^*(\Phi^C(\res^{\tilde T}_C(\tilde b_1)))$ by 
  Propositions \ref{prop:invert_criterion} and \ref{prop:restriction_and_invert}. 
  So $\Phi^G(\alpha^*(\res^{\tilde T}_C(f_*(\tilde b_1))))$ is both nilpotent
  and invertible, and the commutative ring spectrum $\Phi^\alpha(P)$ is trivial.
  In particular, the ring spectrum $\Phi^\alpha(\bKR)$ is trivial.
  So $\Phi^\alpha_*(\eta)\colon\Phi^\alpha_*(\Sigma^\infty_+\bP)\to \Phi^\alpha_*(\bKR)$
  is a localization, namely the effect of inverting 0.
  And $\Phi^\alpha_*(f)\colon \Phi^\alpha_*(\Sigma^\infty_+\bP)\to\Phi^\alpha_*(P)$
  factors through $\Phi^\alpha_*(\eta)$, proving (c).  
  So altogether, Theorem \ref{thm:loc_reg} applies
  and shows that $\eta\colon \Sigma^\infty_+\bP\to\bKR$
  is a $C$-global localization by $\{\tilde b_1, b_m\mid m\geq 2\}$
  in $\CAlg(\Sc p^\sm_{C\dgl})$.
\end{proof}

\begin{rk}[Ultra-commutative $C$-global localization]\label{rk:C_global_UCom}
  The $\infty$-category of {\em ultra-commutative $C$-global ring spectra} $\UR_{C\dgl}$
  is the localization of the 1-category of commutative orthogonal $C$-ring spectra
  at the class of $C$-global equivalences.
  I expect that the $C$-global equivalences on the category of
  commutative orthogonal $C$-ring spectra are the weak equivalences in
  a model structure that generalizes the global model structure
  for ultra-commutative ring spectra from \cite[Theorem 5.4.3]{schwede:global}.
  Granted this, we could deduce that the $\infty$-category $\UR_{C\dgl}$ is
  complete and cocomplete.

  Let $R$ be an ultra-commutative $C$-global ring spectrum, and
  let $\Xc$ be a set of representation-graded equivariant homotopy classes of $R$.
  An {\em ultra-commutative $C$-global localization by $\Xc$}
  is an initial object of the full subcategory of the
  slice category $R\bs \UR_{C\dgl}$ spanned by the $\Xc$-inverting morphisms.
  I expect that the morphism $\eta\colon \Sigma^\infty_+ \bP \to \bKR$
  is an ultra-commutative $C$-global localization
  by the classes $\tilde b_1$ and $\{b_m\}_{m\geq 2}$.
  In fact, the entire proof of the non-Real counterpart in 
  Theorem \ref{thm:main} (ii) should generalize to the Real-global context, as follows.
 
  Much like in \eqref{eq:ucom2CR_gl}, we obtain a `forgetful' functor
  \[ u_{C\dgl}\ :\ \UR_{C\dgl}\ \to\  \CAlg(\Sc p^\sm_{C\dgl})  \]
  from the $\infty$-category of ultra-commutative $C$-global ring spectra to the
  $\infty$-category of commutative $C$-global ring spectra.
  Theorem \ref{thm:UR_monadic} should generalize, hopefully with
  essentially the same proof, to show that the $\infty$-category $\UR_{C\dgl}$
  is presentable and that the functor $u_{C\dgl}$ is bimonadic.
  Granted this, in the proof of Theorem \ref{thm:UR2CR},
  we could replace `global' by `$C$-global' throughout, and replace compact Lie groups
  by augmented compact Lie groups.
  We would then in particular obtain the following $C$-global generalization of
  Theorem \ref{thm:UR2CR} (iii):
  let $i\colon R\to S$ be a morphism of ultra-commutative $C$-global ring spectra, and
  let $\Xc$ be a set of representation-graded equivariant homotopy classes of $R$.
  Suppose that the underlying morphism of commutative $C$-global ring spectra
  $u_{C\dgl}(i)$ is a $C$-global localization by $\Xc$ in $\CAlg(\Sc p^{\sm}_{C\dgl})$.
  Then $i$ is an ultra-commutative $C$-global localization by $\Xc$ in $\UR_{C\dgl}$.
  If this all works out as expected, the localization property of 
  $\eta\colon \Sigma^\infty_+ \bP \to \bKR$ in $\CAlg(\Sc p^\sm_{C\dgl})$
  from Theorem \ref{thm:Realglobal}
  implies that this morphism is also an ultra-commutative $C$-global localization in $\UR_{C\dgl}$
  by the classes $\tilde b_1$ and $\{b_m\}_{m\geq 2}$.
\end{rk}

    \begin{appendix}
  \section{Equivariant spectra}\label{app:equivariant_spectra}
  
  In this appendix we collect the background material
  on the $\infty$-categories of genuine $G$-spectra
that we need in the body of the paper.
Throughout the following discussion, we let $G$ be a compact Lie group. For our purposes,
it is convenient to introduce the $\infty$-category of $G$-spectra
via the orthogonal spectrum model of Mandell and May \cite{mandell-may}.
A detailed discussion based on this model is provided by Mathew, Naumann and Noel
in \cite[Section 5]{mathew-naumann-noel:nilpotence}, and we recall some key aspects below.
An equivalent model-independent definition of genuine $G$-spectra
is via inverting linear representation spheres on genuine $G$-spaces, see  
\cite[Appendix C]{gepner-meier:equivariantTMF}.
For finite groups, yet another intrinsically higher categorical definition
is as spectral Mackey functors \cite{barwick:spectral_mackey},
see \cite[Appendix A]{clausen-mathew-naumann-noel:descent-vanishing} for a streamlined proof
of the equivalence to the other models.

Geometric fixed points of equivariant ring spectra are our main
tool in this paper to identify localizations, see the geometric
fixed point criterion from Theorem \ref{thm:fix_criterion}.
A substantial part of this appendix is thus devoted to geometric fixed point spectra,
which we recall in Construction \ref{con:geom_fix}.
Below we shall reconcile two different meanings of `geometric fixed point groups':
the non-equivariant homotopy groups of the geometric fixed point spectrum,
and the explicit 1-categorical definition for equivariant orthogonal spectra
that can be found for example in \cite[Definition 3.3.1]{schwede:global},
and that we recall in Construction \ref{con:geomfix} below.
In Proposition \ref{prop:identify_Phis} we show that
these two interpretations are naturally and multiplicatively isomorphic.
\smallskip

We will frequently use {\em monoidal localization}
to construct lax or strong symmetric monoidal functors,
and to construct monoidal transformations between such functors.
Given symmetric monoidal $\infty$-categories $\Cc$ and $\Dc$,
we write $\Fun_{\lax}(\Cc,\Dc)$ for the $\infty$-category
with objects the lax symmetric monoidal functors from $\Cc$ to $\Dc$,
and with morphisms the monoidal transformations.
We write $\Fun_\tensor(\Cc,\Dc)$ for the full subcategory
of $\Fun_{\lax}(\Cc,\Dc)$ spanned by the strong symmetric monoidal functors.

\begin{defn}[Monoidal localization]
  Let $\M^\tensor$ be a symmetric monoidal model category
  with cofibrant unit object. A {\em monoidal localization}
  is a lax symmetric monoidal functor
  \[ \gamma^\tensor_\M\ \colon \ N(\M^\tensor) \ \to \ \M^\tensor_\infty \]
  to a symmetric monoidal $\infty$-category $\M^\tensor_\infty$
  with the following universal property: for every symmetric monoidal $\infty$-category $\Dc$,
  the restriction functor
  \[ 
    \Fun_{\lax}(\gamma^\tensor_\M,\Dc)\ \colon \
    \Fun_{\lax}(\M^\tensor_\infty,\Dc)\ \to \ \Fun_{\lax}^W(N(\M^\tensor),\Dc)
  \]
  is an equivalence of $\infty$-categories. Here the target
  $\Fun_{\lax}^W(N(\M^\tensor),\Dc)$ is the full subcategory of
  $\Fun_{\lax}(N(\M^\tensor),\Dc)$ spanned by those lax symmetric monoidal
  functors whose underlying functors invert weak equivalences.
\end{defn}

It would be more precise to speak of {\em symmetric monoidal localizations}.
However, since we have no need to consider localizations of (non-symmetric)
monoidal categories, we simplify the terminology by dropping the adjective `symmetric'.

\begin{rk}\label{rk:monloc_addendum}
  Clearly, monoidal localizations are unique (in the sense of higher category theory)
  whenever they exist. And Nikolaus and Scholze show in \cite[Theorem A.7]{nikolaus-scholze:TC}
  that every symmetric monoidal model category admits a monoidal localization.
  In the same theorem, they show moreover that
  the monoidal localization has the following additional properties:
  \begin{enumerate}[(i)]
  \item The functor of underlying $\infty$-categories
    \[ (\gamma^\tensor_\M)_{\td{1}}\ \colon \ N \M = N(\M^\tensor)_{\td{1}} \ \to \ (\M^\tensor_\infty)_{\td{1}} \]
    is a localization of the 1-category $\M$ at the class of weak equivalences.
  \item Let $\M_{\cof}^\tensor$ denote the full symmetric monoidal subcategory
    of cofibrant objects in $\M^\tensor$. Then the composite
    \[ \gamma^\tensor_{\M,\cof}\ : \
      N(\M^\tensor_{\cof}) \  \ \xra{\text{incl}} \ N(\M^\tensor) \ \xra{\gamma^\tensor_\M} \ \M^\tensor_\infty \]
    is strong (as opposed to only lax) symmetric monoidal.
 Since $\gamma^\tensor_{\M,\cof}$ is both strong symmetric monoidal
    and essentially surjective on objects, precomposition with it `detects strongness',
    in the following sense:
    let $F\colon \M^\tensor_\infty\to\Dc$ be a lax symmetric monoidal functor
    such that the composite $F\circ\gamma^\tensor_{\M,\cof}\colon\allowbreak N(\M^\tensor_{\cof})\to\Dc$ is strong (as opposed to only lax) symmetric monoidal.
    Then $F$ is strong symmetric monoidal.
  \item
    For every symmetric monoidal $\infty$-category $\Dc$,
    restriction along $\gamma^\tensor_{\M,\cof}$ is an equivalence of $\infty$-categories
 \[    \Fun_\tensor(\gamma^\tensor_{\M,\cof},\Dc)\ \colon \
      \Fun_\tensor(\M^\tensor_\infty,\Dc)\ \xra{\ \sim \ } \
      \Fun_\tensor^W(N(\M^\tensor_{\cof}),\Dc)\ .  
 \]
  \end{enumerate}
\end{rk}

\begin{construction}[The symmetric monoidal $\infty$-category of genuine $G$-spectra]
  We write $\Sp_G^O$ for the 1-category of orthogonal $G$-spectra.
  A morphism $f\colon X\to Y$ of orthogonal $G$-spectra is a {\em $\upi_*$-isomorphism}
  if the induced map of equivariant homotopy groups $\pi_k^H(f)\colon \pi_k^H(X)\to\pi_k^H(Y)$
  is an isomorphism for all $k\in\mZ$ and all closed subgroups $H$ of $G$.
  Various stable model structures on the category of orthogonal $G$-spectra
  have been established  with $\upi_*$-isomorphisms as weak equivalences,
  the first one being due to Mandell--May \cite[III Theorem 4.2]{mandell-may}.
  By \cite[III Proposition 7.5]{mandell-may} the smash product of orthogonal $G$-spectra
  is a Quillen bifunctor for the Mandell--May model structure.
  Since the monoidal unit, the $G$-sphere spectrum $\mS_G$,
  is cofibrant in this model structure,
  we are dealing with a symmetric monoidal model category.
  The monoidal localization thus yields a symmetric monoidal $\infty$-category
  \[ \Sc p^{\sm}_G \ = \ (\Sp_G^{O,\sm})_\infty \]
  and a lax symmetric monoidal functor
  \[ \gamma_G^\sm \ : \Sp_G^{O,\sm}\ \to \ \Sc p^{\sm}_G \]
  whose underlying functor
  \[ \gamma_G = (\gamma_G^\sm)_{\td{1}}\ : \
    \Sp_G^O \ \to \ (\Sc p_G^{\sm})_{\td{1}} = \Sc p_G \]
  is a localization of the 1-category of orthogonal $G$-spectra
  at the class of $\upi_*$-isomorphisms.
  We refer to the underlying $\infty$-category $\Sc p_G$
  as the {\em $\infty$-category of genuine $G$-spectra}, and to the monoidal structure as the
  {\em equivariant smash product}.
\end{construction}

As the localization of a complete and cocomplete model category,
the $\infty$-category $\Sc p_G$ is complete and cocomplete,
see \cite[Theorem 2.5.9]{barnea-harpaz-horel}.
The {\em $G$-stable homotopy category} $h(\Sc p_G)$ is
the triangulated homotopy category of the stable $\infty$-category $\Sc p_G$.
For every closed subgroup $H$ of $G$, the  unreduced suspension spectrum
$\Sigma^\infty_+ G/H$ of the coset $G$-space $G/H$ represents the functor
$\pi_0^H\colon h(\Sc p_G)\to \rAb$, compare \cite[Proposition 3.1.46]{schwede:global}.
Since the functors $\pi_*^H$ preserve coproducts
and jointly detect isomorphisms in $h(\Sc p_G)$, the set $\{\Sigma^\infty_+ G/H\}_{H\leq G}$
compactly generates  $h(\Sc p_G)$ as a triangulated category.
Hence the stable $\infty$-category $\Sc p_G$ of genuine $G$-spectra
is also compactly generated by these objects, and thus presentable.

\begin{construction}[Geometric fixed points]\label{con:geom_fix}
  We let $\Sc_{G,\ast}^\sm$ denote the presentably symmetric monoidal $\infty$-category
  of based $G$-spaces, defined as the monoidal localization of
  the symmetric monoidal model category $G\bT_*$
  of based compactly generated topological $G$-spaces \cite[Proposition B.7]{schwede:global},
  with weak equivalences those morphisms that induce weak homotopy equivalences
  of fixed points for all closed subgroups of $G$. The monoidal structure is by smash product.

  The 1-categorical suspension spectrum  functor
  \[ \Sigma^\infty\ : \ G\bT_\ast \ \to \ \Sp^O_G \]
  from based $G$-spaces to orthogonal $G$-spectra is strong symmetric monoidal
  and a left Quillen functor for the model structure presently under consideration.
  So it descends to a symmetric monoidal left adjoint
  \[ \Sigma^\infty\ : \ \Sc_{G,\ast}^\sm \ \to \ \Sc p_G^\sm \]
  of presentably symmetric monoidal $\infty$-categories.
  Gepner--Meier \cite[Corollary C.7]{gepner-meier:equivariantTMF}
  and Clausen--Mathew--Naumann--Noel
  \cite[Theorem A.2]{clausen-mathew-naumann-noel:descent-vanishing}
  show that the suspension spectrum functor
  $\Sigma^\infty\colon \Sc_{G,\ast}^\sm\to \Sc p_G^\sm$
  is the universal example of a strong symmetric monoidal left adjoint
  that inverts all representation spheres.
  More precisely,  $\Sigma^\infty\colon \Sc_{G,\ast}^\sm\to \Sc p_G^\sm$
  is initial in the full subcategory of $\Sc_{G,\ast}^\sm\bs \CAlg(\Pr^L)$ spanned
  by those symmetric monoidal left adjoints $F\colon \Sc_{G,\ast}^\sm\to \Dc$ with
  the property that $F(S^V)$ is invertible in $\Dc$ for every orthogonal $G$-representation $V$.
  
  The 1-categorical fixed point functor
  \[ (-)^G\ : \ G\bT_\ast \ \to \ \bT_* \]
  from based $G$-spaces to based spaces is fully homotopical,
  preserves pushouts and filtered colimits along closed embeddings,
  and commutes with geometric realization and smash products,
  compare \cite[Proposition B.1]{schwede:global}.
  So it descends to a symmetric monoidal left adjoint
  \[ (-)^G\ : \ \Sc_{G,\ast}^\sm \ \to \ \Sc_*^\sm \]
  on the level of presentably symmetric monoidal $\infty$-categories.
  Then the composite
  \[  \Sc_{G,\ast}^\sm \ \xra{(-)^G} \ \Sc_*^\sm \ \xra{\Sigma^\infty}\ \Sc p^\sm \]
  is another strong symmetric monoidal left adjoint. For every $G$-representation
  $V$, the space $(S^V)^G$ is a sphere, and thus becomes invertible in $\Sc p^\sm$.
  So by the above universal property of $\Sc p_G^\sm$ of inverting representation spheres,
  there is a unique strong symmetric monoidal left adjoint
  \begin{equation}\label{eq:define_Phi^G}
    \Phi^G\ : \ \Sc p_G^\sm \ \to \ \Sc p^\sm \ ,    
  \end{equation}
  the {\em geometric fixed point functor}, equipped with a homotopy
  that witnesses the commutativity of the following square in $\CAlg(\Pr^L)$:
  \begin{equation}\begin{aligned}\label{eq:Phi^G_def_square}
   \xymatrix{
      \Sc^\sm_{G,\ast} \ar[r]^-{(-)^G}\ar[d]_{\Sigma^\infty} & \Sc_*^\sm\ar[d]^{\Sigma^\infty} \\
      \Sc p_G^\sm \ar[r]_-{\Phi^G} & \Sc p^\sm}        
    \end{aligned} \end{equation}
\end{construction}

\begin{rk}
  Mathew, Naumann and Noel introduce the geometric fixed point functor $\Phi^G$
  somewhat differently in \cite[Definition 6.12]{mathew-naumann-noel:nilpotence},
  as the composite of smashing with $\tilde E \Pc$, followed by
  the `categorical fixed point' functor $(-)^G\colon \Sc p_G^\sm\to \Sc p^\sm$,
  right adjoint to the inflation functor $i_G\colon \Sc p^\sm\to \Sc p_G^\sm$,
  see \eqref{eq:inflation_functor}.
  Here $E\Pc$ denotes a universal $G$-space for the family of proper closed
  subgroups, and $\tilde E\Pc$ its unreduced suspension.
  Their geometric fixed point functor is also a strong monoidal left adjoint,
  and it also makes the diagram \eqref{eq:Phi^G_def_square} commute.
  So the two constructions are equivalent by the universal property
  of $\Sigma^\infty\colon \Sc_{G,\ast}^\sm\to \Sc p_G^\sm$.
\end{rk}

\begin{construction}[Inflation]
  The symmetric monoidal $\infty$-category of spectra is initial
  in the $\infty$-category $\CAlg(\Pr^L_{\st})$ of presentably symmetric monoidal
  stable $\infty$-categories, see \cite[Corollary 4.8.2.19]{lurie:higher_algebra}.
  So there is a unique strong symmetric monoidal left adjoint
  \begin{equation} \label{eq:inflation_functor}
    i_G \ \colon \ \Sc p^\sm \ \to \ \Sc p^\sm_G\ ,   
  \end{equation}
  called the {\em inflation functor}.
  The composite of inflation and geometric fixed points \eqref{eq:define_Phi^G} is a 
  strong symmetric monoidal left adjoint endofunctor of $\Sc p^\sm$;
  by virtue of $\Sc p^\sm$ being initial in $\CAlg(\Pr^L_{\st})$,
  the composite is uniquely monoidally equivalent to the identity:
  \begin{equation}\label{eq:Phi_p_G}
    \Id_{\Sc p^\sm}\ \sim\   \Phi^G\circ i_G\ .
  \end{equation}
\end{construction}

We call a morphism  $f\colon X\to Y$ of $G$-spectra
a {\em geometric fixed point equivalence} if the morphism
$f\sm\tilde E\Pc\colon X\sm\tilde E\Pc\to Y\sm\tilde E\Pc$
is an equivalence in $\Sc p_G$. This happens if and only if
the morphism $\Phi^G(f)\colon\Phi^G(X)\to\Phi^G(Y)$ is an equivalence in $\Sc p$.
A result of Lewis and May \cite[II Theorem 6.9]{lms},
expressed in more classical language, says that the geometric fixed point
functor $\Phi^G\colon \Sc p_G\to\Sc p$ witnesses the $\infty$-category of
non-equivariant spectra as a localization of the
$\infty$-category of $G$-spectra by the class of geometric fixed point equivalences.
This result was revisited from the higher categorical perspective
by Mathew, Naumann and Noel in \cite[Theorem 6.11]{mathew-naumann-noel:nilpotence}.
We shall now reinterpret this result of Lewis and May as a monoidal
localization property, without any particular claim to originality.

For a symmetric monoidal $\infty$-category $\Dc$,
we let $\Fun_{\lax}^{\tilde E\Pc}(\Sc p^\sm_G,\Dc)$ denote the full subcategory
of the $\infty$-category $\Fun_{\lax}(\Sc p^\sm_G,\Dc)$ spanned by those
lax symmetric monoidal functors $F\colon \Sc p^\sm_G\to\Dc$
whose underlying functor $F_{\td{1}}$ inverts the morphism
$X\sm i\colon X\to X\sm\tilde E\Pc$ for every $G$-spectrum $X$,
where $i\colon S^0\to\tilde E\Pc$ is the inclusion of the $G$-fixed cone points.
Equivalently, $F_{\td{1}}$ sends all geometric fixed point equivalences to equivalences.
And we set
\[ \Fun_\tensor^{\tilde E\Pc}(\Sc p^\sm_G,\Dc)\ = \
   \Fun_\tensor(\Sc p^\sm_G,\Dc)\cap \Fun_{\lax}^{\tilde E\Pc}(\Sc p^\sm_G,\Dc)\ ,
\]
the full subcategory
of $\Fun_{\lax}^{\tilde E\Pc}(\Sc p^\sm_G,\Dc)$ spanned by the strong symmetric monoidal
functors.

The following theorem is a higher categorical version of 
a special case of \cite[II Theorem 6.9]{lms}, and closely related to
\cite[Theorem 6.11]{mathew-naumann-noel:nilpotence}.

\begin{theorem}\label{thm:Phi_mon_loc}
  For every symmetric monoidal $\infty$-category $\Dc$, the functors
\[
 \xymatrix{
    \Fun_{\lax}(\Phi^G,\Dc)\colon       \Fun_{\lax}(\Sc p^\sm,\Dc)\ \ar@<.4ex>[r]
      &\ \Fun_{\lax}^{\tilde E\Pc}(\Sc p^\sm_G,\Dc)\colon \Fun_{\lax}(i_G,\Dc) \ar@<.4ex>[l]  }  \]
  are mutually inverse equivalences of $\infty$-categories, and
  the functors
\[
 \xymatrix{
    \Fun_{\tensor}(\Phi^G,\Dc)\colon  \Fun_{\tensor}(\Sc p^\sm,\Dc)\ \ar@<.4ex>[r]
      &\ \Fun_{\tensor}^{\tilde E\Pc}(\Sc p^\sm_G,\Dc)\colon \Fun_{\tensor}(i_G,\Dc) \ar@<.4ex>[l]  }  \]
are mutually inverse equivalences of $\infty$-categories.
\end{theorem}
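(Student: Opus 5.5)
The plan is to reduce the statement to the Lewis--May localization result in its higher categorical form, \cite[Theorem 6.11]{mathew-naumann-noel:nilpotence}: $\Phi^G\colon \Sc p_G\to\Sc p$ is a localization at the geometric fixed point equivalences and has a fully faithful right adjoint. Concretely, $\Phi^G$ is a smashing localization. Consider the idempotent commutative algebra $\tilde E\Pc$ in $\Sc p_G^\sm$, whose unit is $i\colon \mS_G\to\tilde E\Pc$. Its module category $\Mod_{\tilde E\Pc}(\Sc p_G^\sm)$ is the full subcategory $L\Sc p_G$ of $\tilde E\Pc$-local objects. Moreover, $X\mapsto X\sm\tilde E\Pc$ is a symmetric monoidal localization $L\colon\Sc p_G^\sm\to (L\Sc p_G)^\sm$ in the sense of \cite[Proposition 4.8.2.9]{lurie:higher_algebra}. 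For idempotent algebras, Lurie shows that precomposition with $L$ gives equivalences
\[ \Fun_{\lax}((L\Sc p_G)^\sm,\Dc)\ \simeq\ \Fun_{\lax}^{\tilde E\Pc}(\Sc p_G^\sm,\Dc)\quad\text{and}\quad \Fun_{\tensor}((L\Sc p_G)^\sm,\Dc)\ \simeq\ \Fun_{\tensor}^{\tilde E\Pc}(\Sc p_G^\sm,\Dc) \]
for every symmetric monoidal $\infty$-category $\Dc$. This uses \cite[Proposition 2.2.1.9]{lurie:higher_algebra} for localizations compatible with the monoidal structure, where both the lax and the strong versions hold.

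Step one is therefore to show that $\Phi^G$ factors through $L$ as a strong symmetric monoidal equivalence $\bar\Phi\colon (L\Sc p_G)^\sm\simeq\Sc p^\sm$. The factorization exists because $\Phi^G(\tilde E\Pc)\simeq\mS$: the fixed points $(\tilde E\Pc)^G$ are $S^0$ by the defining square \eqref{eq:Phi^G_def_square}, so $\Phi^G$ inverts each $X\sm i$. The composite $L\circ i_G\colon\Sc p\to L\Sc p_G$ is a strong monoidal left adjoint. Since $\Sc p^\sm$ is initial in $\CAlg(\Pr^L_{\st})$, the equivalence \eqref{eq:Phi_p_G} gives $\bar\Phi\circ L i_G\simeq\Id$. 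It remains to show that $L i_G\circ\bar\Phi\simeq\Id$, i.e.\ that $L i_G$ is essentially surjective, or equivalently that $L\Sc p_G$ is generated under colimits by $\tilde E\Pc$. This holds because $\Sigma^\infty_+G/H\sm\tilde E\Pc\simeq 0$ for proper $H$, while for $H=G$ it is $\tilde E\Pc=L i_G(\mS)$. Together with the fully faithfulness of $L i_G$ from Mathew--Naumann--Noel, this gives an equivalence of presentably symmetric monoidal stable $\infty$-categories.

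Step two deduces the statement. Precomposition with $\Phi^G\simeq\bar\Phi\circ L$ is the composite of the equivalence induced by $\bar\Phi$ and the equivalence induced by $L$, so it is an equivalence onto the $\tilde E\Pc$-inverting functors, in both the lax and the strong version. To see that $\Fun(i_G,\Dc)$ is the inverse, note that $\Fun(i_G,\Dc)\circ\Fun(\Phi^G,\Dc)=\Fun(\Phi^G\circ i_G,\Dc)\simeq\Id$ by \eqref{eq:Phi_p_G}. A one-sided inverse of an equivalence is automatically a two-sided inverse. One should also check that $\Fun(i_G,\Dc)$ is only being applied on the $\tilde E\Pc$-inverting subcategory, which is harmless.

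The main obstacle I expect is the bookkeeping in citing Lurie's theorem on localizations compatible with the monoidal structure for lax functors. The subtle point is that a lax symmetric monoidal $F$ inverting $X\to X\sm\tilde E\Pc$ really factors through $L$ as a lax functor. This needs $L$ to be a monoidal localization in the $\infty$-operadic sense: the class of maps inverted by $L$ must be closed under smashing with arbitrary objects. That closure is immediate here because $L$ is smashing.
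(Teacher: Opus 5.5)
Your proof is correct and follows essentially the same route as the paper. Both arguments use the idempotent algebra $\Sigma^\infty\tilde E\Pc$, Lurie's universal property of the resulting smashing localization $L$, the theorem of Mathew--Naumann--Noel that $L\circ i_G$ is a symmetric monoidal equivalence, and the identity $\Phi^G\circ i_G\simeq\Id$. The only difference is bookkeeping: the paper never factors $\Phi^G$ through $L$, and instead applies two-out-of-three to $\Fun_{\lax}(L,\Dc)$ and $\Fun_{\lax}(L\circ i_G,\Dc)$ to conclude directly that $\Fun_{\lax}(i_G,\Dc)$ is an equivalence. Also, since Mathew--Naumann--Noel already provide an equivalence rather than just full faithfulness, your essential-surjectivity argument is harmless but not needed.
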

\begin{proof}
  The morphism of based $G$-spaces $\tilde E\Pc\sm i\colon \tilde E\Pc\to\tilde E\Pc\sm\tilde E\Pc$
  is an equivalence in $\Sc_{G,\ast}$, so the induced morphism of suspension
  spectra
  \[ (\Sigma^\infty \tilde E\Pc)\sm i\ \colon\
    \Sigma^\infty \tilde E\Pc\ \to\
    (\Sigma^\infty \tilde E\Pc)\sm (\Sigma^\infty\tilde E\Pc)
  \]
  is an equivalence in $\Sc p_G$; we have implicitly used the strong
  symmetric monoidal structure of the suspension spectrum functor.
  This means that the morphism $\Sigma^\infty i\colon\mS_G\to\Sigma^\infty \tilde E\Pc$
  is an idempotent object in the sense of \cite[Definition 4.8.2.1]{lurie:higher_algebra}.

  As in \cite[Theorem 6.11]{mathew-naumann-noel:nilpotence},
  we write $\Sc p_G[\Pc^{-1}]$ for the full subcategory of $\Sc p_G$
  spanned by those $G$-spectra $X$ such that the morphism 
  $X\sm i\colon X\to X\sm\tilde E\Pc$ is an equivalence.
  Because $\Sigma^\infty \tilde E\Pc$ is an idempotent algebra,
  this subcategory inherits a unique symmetric monoidal structure,
  denoted $\Sc p_G^\sm[\Pc^{-1}]$, such that the  Bousfield localization functor
  \[ (\Sigma^\infty \tilde E\Pc)\sm - \ \colon  \ \Sc p^\sm_G \ \to \ \Sc p^\sm_G[\Pc^{-1}] \]
  becomes strong symmetric monoidal, see \cite[Proposition 4.8.2.7]{lurie:higher_algebra}.
  In particular, this functor is a monoidal localization
  by the class of geometric fixed point equivalences, i.e., the functor
  \[  \Fun_{\lax}( (\Sigma^\infty \tilde E\Pc)\sm - ,\Dc)\colon  \Fun_{\lax}(\Sc p^\sm_G[\Pc^{-1}],\Dc)\ \to
    \ \Fun_{\lax}^{\tilde E\Pc}(\Sc p^\sm_G,\Dc)\]
  is an equivalence.

  Mathew, Naumann and Noel show in \cite[Theorem 6.11]{mathew-naumann-noel:nilpotence}
  that the composite strong symmetric monoidal functor
  \[ \Sc p^\sm \ \xra{\ i_G \ }\ \Sc p^\sm_G \ \xra{\Sigma^\infty\tilde E\Pc\sm-} \
    \Sc p^\sm_G[\Pc^{-1}]  \]
  is an equivalence of symmetric monoidal $\infty$-categories.
  So in the diagram      
  \[
   \Fun_{\lax}(\Sc p^\sm_G[\Pc^{-1}],\Dc)\xra{\Fun_{\lax}( (\Sigma^\infty \tilde E\Pc)\sm - ,\Dc)} 
    \Fun_{\lax}^{\tilde E\Pc}(\Sc p^\sm_G,\Dc)
   \xra{\Fun_{\lax}(i_G,\Dc)} \Fun_{\lax}(\Sc p^\sm,\Dc)
   \]
the first functor and the composite are equivalences, hence so is the second.
 Since $\Phi^G$ is monoidally left
 inverse to $i_G$ by \eqref{eq:Phi_p_G}, the functor
  \[     \Fun_{\lax}(\Phi^G,\Dc)\colon \Fun_{\lax}(\Sc p^\sm,\Dc)\ \to\
    \Fun_{\lax}^{\tilde E\Pc}(\Sc p^\sm_G,\Dc)  \]
  is an equivalence of $\infty$-categories, too.
  Since both  $i_G$ and $\Phi^G$ are strong (as opposed to just lax)
  symmetric monoidal, composition with them preserves the full subcategories
  $\Fun_{\tensor}^{\tilde E\Pc}(\Sc p^\sm_G,\Dc)$ and $\Fun_{\tensor}(\Sc p^\sm,\Dc)$,
  so the restrictions to these full subcategories are equivalences, too.
\end{proof}

There are two possible meanings of `geometric fixed point groups'
for a $G$-spectrum $X$.
On the one hand, `geometric fixed point groups' could refer to the homotopy groups
of the geometric fixed point spectrum $\Phi^G(X)$
in the $\infty$-category $\Sc p$.
On the other hand, it could refer to the explicit 1-categorical definition
that can be found for example in \cite[Definition 3.3.1]{schwede:global},
and that we recall now.
The former interpretation has convenient formal properties,
while we use the latter interpretation for explicit calculations.
We need to know that the two interpretations are naturally
and multiplicatively isomorphic, and the isomorphism \eqref{eq:Phi_vs_Phi}
constructed below witnesses precisely this.

\begin{construction}[Geometric fixed point homotopy groups]\label{con:geomfix}
  We let $G$ be a compact Lie group, $X$ an orthogonal $G$-spectrum,
  and $k\in\mZ$ an integer. The {\em $k$-th $G$-geometric fixed point group} is
  \begin{equation}\label{eq:define_geom_fix}
  \Phi_k^G(X)\ = \ \colim_{m+k\geq 0, V\in s(\Uc_G)} [S^{V^G\oplus \mR^{m+k}},X(V\oplus \mR^m)^G] \ .  
  \end{equation}
  Here $s(\Uc_G)$ is the poset, under inclusion, of finite-dimensional
  $G$-subrepresentations of the complete $G$-universe $\Uc_G$.
  And `$m+k\geq 0$' means that $m$ runs over the poset of those natural numbers
  such that $m+k$ is non-negative.
  For $m\leq n$ and $V\leq W$ in $s(\Uc_G)$, the transition maps
  $[S^{V^G\oplus \mR^{m+k}},X(V\oplus\mR^m)^G]\to [S^{W^G\oplus\mR^{n+k}},X(W\oplus\mR^n)^G]$
  in the colimit system generalize those in
  \cite[Definition 3.3.1]{schwede:global}.
  The monoidal natural isomorphism \eqref{eq:Phi_vs_Phi} below
  gives a different perspective on these 
  geometric fixed point groups, namely as the homotopy groups of
  a geometric fixed point spectrum $\Phi^G(X)$.
    
  The geometric fixed point groups are multiplicative, in the following sense.
  For orthogonal $G$-spectra $X$ and $Y$, we define external multiplication maps
  \begin{equation}\label{eq:cdot_Phi^G}
    \times\ : \ \Phi_k^G(X)\times \Phi_l^G(Y)\ \to \ \Phi_{k+l}^G(X\sm Y)\ .     
  \end{equation}
  We let $f\colon S^{V^G\oplus\mR^{m+k}}\to X(V\oplus\mR^m)^G$ and
  $g\colon S^{W^G\oplus\mR^{n+l}}\to Y(W\oplus\mR^n)^G$
  be continuous based maps that represent classes $x=[f]\in \Phi_k^G(X)$ and $y=[g]\in\Phi_l^G(Y)$,
  respectively. We define $x\times y$ as the class represented by
  the following composite:
  \begin{align*}
    S^{(V\oplus W)^G\oplus \mR^{m+n+k+l}}\
    &\xra{\text{shuffle}}\
    S^{V^G\oplus \mR^{m+k}\oplus  W^G\oplus\mR^{n+l}}\\
    &\xra{f\sm g}\ X(V\oplus\mR^m)^G\sm Y(W\oplus\mR^n)^G\ = \ (X(V\oplus\mR^m)\sm Y(W\oplus\mR^n))^G\\
    &\xra{(\iota_{V\oplus\mR^m,W\oplus\mR^n})^G} \ ((X\sm Y)(V\oplus\mR^m\oplus W\oplus\mR^n))^G \\
    &\xra{\text{shuffle}}\ ((X\sm Y)( V\oplus W\oplus\mR^{m+n}))^G 
  \end{align*}
  Here $\iota_{V\oplus\mR^m,W\oplus\mR^n}$ is a component of the universal bimorphism.

  Essentially the same arguments as for the pairing on equivariant homotopy groups
  in \cite[Theorem 3.5.14]{schwede:global} show that the pairing
  \eqref{eq:cdot_Phi^G} is biadditive, unital, associative and commutative,
  the latter with respect to the Koszul sign $(-1)^{k l}$ on pairs of elements
  of degree $(k,l)$. We omit the formal proof.
\end{construction}

\begin{construction}[Descending equivariant and geometric fixed point homotopy groups]\label{con:descend_pi_and_Phi}
  The universal property of the monoidal localization functor 
  $\gamma^\sm_G\colon\Sp^{O,\sm}_G\to\Sc p^\sm_G$ allows us to descend
  the multiplicative structure of the homotopy group functors $\pi_*^G$
  and the geometric fixed point functors $\Phi^G_*$ from the orthogonal spectrum model
  to the $\infty$-category of $G$-spectra, as follows.

  We write $\GrAb^\tensor$ for the symmetric monoidal 1-category of
  $\mZ$-graded abelian groups, under graded tensor product, 
  and with the Koszul sign built into the symmetry isomorphism.
  We write $\pi_*^G(X)=\{\pi_k^G(X)\}_{k\in\mZ}$ for the $\mZ$-graded $G$-equivariant
  homotopy groups of an orthogonal $G$-spectrum $X$.
  By \cite[Theorem 3.5.14]{schwede:global}, the maps
  \[ \times \ \colon \ \pi_k^G(X)\times\pi_l^G(Y)\ \to \ \pi_{k+l}^G(X\sm Y) \]
  defined in \cite[(3.5.13)]{schwede:global}
  endow the functor $\pi_*^G$ with a lax symmetric monoidal structure.
  By definition, this functor also inverts $\upi_*$-isomorphisms,
  so it descends uniquely through
  the monoidal localization $\gamma^\sm_G\colon\Sp^{O,\sm}_G\to\Sc p^\sm_G$.
  In other words, there is a unique lax symmetric monoidal functor
  \[ \bar\pi_*^G \ \colon\  \Sc p^\sm_G \ \to \ \GrAb^\tensor \]
  equipped with a monoidal natural isomorphism
  \[  \bar\pi_*^G\circ \gamma^\sm_G \ \iso \ \pi_*^G   \]
  between lax symmetric monoidal functors $\Sp^{O,\sm}_G\to\GrAb^\tensor$.
  
  Similarly, the maps \eqref{eq:cdot_Phi^G} endow the functor $\Phi_*^G$
  with a lax symmetric monoidal structure.
  This functor inverts $\upi_*$-isomorphisms by \cite[Proposition 3.3.10]{schwede:global},
  so it, too,  descends uniquely through
  the monoidal localization $\gamma^\sm_G\colon\Sp^{O,\sm}_G\to\Sc p^\sm_G$:
  there is a unique lax symmetric monoidal functor
  \[ \bar\Phi_*^G \ \colon\ \Sc p^\sm_G \ \to \ \GrAb^\tensor \]
  equipped with a monoidal natural isomorphism
    \[
  \bar\Phi_*^G\circ \gamma^\sm_G \ \iso \ \Phi_*^G\ .    
\]
  The {\em geometric fixed point homomorphism}
  \begin{equation}\label{eq:define_Phi^G_map}
    \Phi^G\ \colon\ \pi_k^G(X)\ \to\  \Phi^G_k(X)
  \end{equation}
  does what the name suggests:
  it sends the class of a $G$-map $f\colon S^{U\oplus\mR^{m+k}}\to X(U\oplus\mR^m)$
  to the class of the map
  \[ f^G \  \colon \ S^{U^G\oplus\mR^{m+k}} =  (S^{U\oplus\mR^{m+k}})^G\ \to\  X(U\oplus\mR^m)^G
    \ , \]
  the restriction to $G$-fixed points.
  As the integer $k$ varies, the geometric fixed point maps \eqref{eq:define_Phi^G_map}
  form a monoidal natural transformation $\Phi^G\colon \pi_*^G\to\Phi^G_*$
  between lax symmetric monoidal functors that descend through $\gamma^\sm_G$.
  So there is a unique monoidal natural transformation
  \[ \bar\Phi^G \ \colon \ \bar\pi_*^G \ \to \ \bar\Phi^G_* \]
  of lax symmetric monoidal functors $\Sc p^\sm_G\to\GrAb^\tensor$
  such that $\bar\Phi^G\circ \gamma^\sm_G=\Phi^G$.
\end{construction}

\section{Global spectra}\label{app:global}

In this appendix, we recall the necessary background material on global spectra
and global ring spectra.
We work in the orthogonal spectrum model of \cite[Chapter 4]{schwede:global}.
An equivalent and intrinsically higher categorical construction of global
spectra was provided by Linskens, Nardin and Pol in \cite{linskens-nardin-pol}.
The main qualitative features of global spectra
are summarized from a higher categorical perspective in Theorem \ref{thm:Sp_gl properties}.
And the main properties of the functor $\res_G$ that extracts the underlying $G$-homotopy
type of a global spectrum are summarized in Theorem \ref{thm:gl2G}.

We also include a rectification result that exhibits `global' module categories,
in the higher categorical sense, as localizations of 1-categorical
modules in the orthogonal spectrum model, including symmetric monoidal structures.
This symmetric monoidal rectification in Theorem \ref{thm:monoidal_loc}
will also be needed in the proof of Theorem \ref{thm:UR_monadic},
where we establish the
bimonadicity of the forgetful functor $u_{\gl}\colon \UR_{\gl}\to\CAlg(\Sc p_{\gl}^\sm)$
from ultra-commutative to commutative global ring spectra.

\begin{construction}[The symmetric monoidal $\infty$-category of global spectra]
  A morphism $f\colon X\to Y$ of orthogonal spectra is a {\em global equivalence}
  if the induced map of equivariant homotopy groups $\pi_k^G(f)\colon \pi_k^G(X)\to\pi_k^G(Y)$
  is an isomorphism for all $k\in\mZ$ and all compact Lie groups $G$.
  We show in \cite[Theorem 4.3.18]{schwede:global} that the global equivalences
  are part of a stable, cofibrantly generated, proper and topological model structure
  on the category of orthogonal spectra.
  By \cite[Proposition 4.3.24]{schwede:global},
  the smash product of orthogonal spectra is a Quillen bifunctor for the global model structure.
  Since the monoidal unit, the sphere spectrum $\mS$, is cofibrant in the global model structure,
  we are dealing with a symmetric monoidal model category.
  The monoidal localization thus yields a symmetric monoidal $\infty$-category
  \[ \Sc p^{\sm}_{\gl} \ = \ (\Sp^{O,\sm})_\infty \]
  and a lax symmetric monoidal functor
  \[ \gamma_{\gl}^\sm \ : \ \Sp^{O,\sm}\  \to \ \Sc p^{\sm}_{\gl} \]
  whose underlying functor
  \[ \gamma_{\gl} = (\gamma^\sm_{\gl})_{\td{1}}\ : \ \Sp^O \ \to \ (\Sc p_{\gl}^{\sm})_{\td{1}} = \Sc p_{\gl}\]
  is a localization of the 1-category of orthogonal spectra at the class of global equivalences.
  We refer to the underlying $\infty$-category $\Sc p_{\gl}$
  as the {\em $\infty$-category of global spectra}, and to the monoidal structure as the
  {\em global smash product}.
\end{construction}

The {\em $\infty$-category of commutative global ring spectra} $\CAlg(\Sc p^\sm_{\gl})$
is the $\infty$-category of commutative algebras in the symmetric monoidal $\infty$-category
$\Sc p^{\sm}_{\gl}$ of global spectra under the global smash product.

\begin{theorem}\label{thm:Sp_gl properties}\
  \begin{enumerate}[\em (i)]
  \item The $\infty$-category $\Sc p_{\gl}$ of global spectra 
    is stable and compactly generated, and hence presentable.
  \item 
    The symmetric monoidal $\infty$-category $\Sc p^{\sm}_{\gl}$ of global spectra
    is presentably symmetric monoidal.
  \item The $\infty$-category $\CAlg(\Sc p_{\gl}^\sm)$
    of commutative global ring spectra is presentable.
  \end{enumerate}
\end{theorem}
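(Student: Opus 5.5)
The plan is to deduce all three parts from the global model structure on orthogonal spectra recalled above, using standard results transporting properties of model categories to their $\infty$-categorical localizations.

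For (i), stability holds because the global model structure of \cite[Theorem 4.3.18]{schwede:global} is stable. A localization of a stable model category is a stable $\infty$-category: finite limits and colimits are computed as homotopy (co)limits, and the suspension--loop adjunction descends to an equivalence. For compact generation, I would use that the global stable homotopy category $h(\Sc p_{\gl})$ is compactly generated as a triangulated category by the suspension spectra $\Sigma^\infty_+ B_{\gl}G$, for $G$ running through a set of representatives of isomorphism classes of compact Lie groups. Each $\Sigma^\infty_+ B_{\gl}G$ represents $\pi_0^G$ \cite[Theorem 4.4.3]{schwede:global}. The functors $\pi_*^G$ preserve coproducts and jointly detect isomorphisms, by the very definition of global equivalences. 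This is the same argument used for $\Sc p_G$ in Appendix \ref{app:equivariant_spectra}. Since $\Sc p_{\gl}$ is cocomplete \cite[Theorem 2.5.9]{barnea-harpaz-horel}, compactness in the homotopy category agrees with compactness in the stable $\infty$-category, and the triangulated generation statement lifts. Hence $\Sc p_{\gl}$ is compactly generated, and therefore presentable. (Alternatively, \cite[Corollary A.19]{schwede:stiefel} applies directly to a cofibrantly generated stable model category with a set of compact generators.)

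For (ii), we already know that $\Sc p^\sm_{\gl}$ is a symmetric monoidal $\infty$-category whose underlying $\infty$-category is presentable. It remains to show that $X\sm -$ preserves colimits in each variable. The smash product is a left Quillen bifunctor \cite[Proposition 4.3.24]{schwede:global}, so the derived smash product with a cofibrant $X$ is a left Quillen functor. Its $\infty$-categorical shadow is a left adjoint, and thus preserves all colimits. Every object is equivalent to a cofibrant one, and Remark \ref{rk:monloc_addendum} (ii) identifies the monoidal product on cofibrant objects with the 1-categorical one. Together these give colimit preservation in each variable.

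Part (iii) then follows from (ii) by \cite[Corollary 3.2.3.5]{lurie:higher_algebra}: commutative algebras in a presentably symmetric monoidal $\infty$-category form a presentable $\infty$-category.

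The main obstacle is the precise bookkeeping in (ii). One must ensure that the monoidal localization functor, which is only lax monoidal on all objects, agrees with the left-derived Quillen bifunctor. Remark \ref{rk:monloc_addendum} (ii) supplies exactly this identification on cofibrant objects, so I would route the argument through the cofibrant subcategory.
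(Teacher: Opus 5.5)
Your proposal is correct and follows the paper's own proof: stability comes from the stable global model structure, presentability comes from compact generation of the homotopy category by the $\Sigma^\infty_+B_{\gl}G$ \cite[Theorem 4.4.3]{schwede:global}, colimit preservation of the smash product comes from the Quillen bifunctor property, and (iii) follows from \cite[Corollary 3.2.3.5]{lurie:higher_algebra}. Your extra step in (ii), routing through cofibrant objects via Remark \ref{rk:monloc_addendum} (ii), spells out a point the paper leaves implicit.
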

\begin{proof}
  (i)
  The $\infty$-category $\Sc p_{\gl}$ is complete and cocomplete
  as the localization of a complete and cocomplete model category,
  see \cite[Theorem 2.5.9]{barnea-harpaz-horel}.
  Since the global model structure of  \cite[Theorem 4.3.18]{schwede:global}
  is stable, the suspension functor on its homotopy category is invertible,
  so the $\infty$-category $\Sc p_{\gl}$ is stable,
  compare \cite[Corollary 1.4.2.27]{lurie:higher_algebra}.
  Moreover, by \cite[Theorem 4.4.3]{schwede:global} the triangulated homotopy category
  of the stable $\infty$-category of global spectra
  is compactly generated by the unreduced suspension spectra of the global classifying spaces
  of all compact Lie groups;  in particular, the $\infty$-category $\Sc p_{\gl}$
  of global spectra is presentable by \cite[Corollary 1.4.4.2]{lurie:higher_algebra}.

  (ii) Because the smash product is a Quillen bifunctor
  for the global model structure, the induced smash product
  on $\Sc p_{\gl}$ preserves colimits in both variables.
  Since the underlying $\infty$-category is presentable by (i),
  the symmetric monoidal $\infty$-category $\Sc p^{\sm}_{\gl}$
  is presentably symmetric monoidal.

  Since $\Sc p^{\sm}_{\gl}$ is presentably symmetric monoidal by (ii),
  the $\infty$-category $\CAlg(\Sc p^\sm_{\gl})$ of its commutative algebras
  is presentable by \cite[Corollary 3.2.3.5]{lurie:higher_algebra}, proving (iii).  
\end{proof}

As we now recall, every global spectrum has an underlying $G$-spectrum,
for every compact Lie group $G$.
This process is implemented by a strong symmetric monoidal left adjoint
from global spectra to $G$-spectra.

\begin{construction}[Underlying $G$-spectrum]\label{con:res_G}
  We consider the functor of 1-categories
  \[ (-)_G \ : \ \Sp^O\ \to \ \Sp_G^O \ , \quad X \ \longmapsto \ X_G\]
  from orthogonal spectra to orthogonal $G$-spectra
  that endows an orthogonal spectrum with the trivial $G$-action.
  This functor takes global equivalences to $\upi_*$-isomorphisms.
  The `equivariant' smash product of orthogonal $G$-spectra is simply the smash
  product of the underlying non-equivariant orthogonal spectra
  with diagonal $G$-action. So the trivial action functor $(-)_G$ is strong symmetric monoidal
  for the respective smash products.
  The universal property of the monoidal localization $\gamma^\sm_{\gl}\colon \Sp^{O,\sm}\to\Sc p_{\gl}^\sm$
  thus yields a unique lax symmetric monoidal functor of symmetric monoidal $\infty$-categories
  \begin{equation}\label{eq:define_res^sm_G}
    \res_G^{\sm} \ : \ \Sc p_{\gl}^{\sm} \ \to \ \Sc p_G^{\sm}    
  \end{equation}
  equipped with a homotopy witnessing the commutativity of the following square on the left:
  \[ \xymatrix@C=12mm{
      \Sp^{O,\sm}\ar[r]^-{\gamma^\sm_{\gl}}\ar[d]_{(-)_G^\sm} &  \Sc p_{\gl}^\sm \ar[d]^{\res_G^\sm}   &&
      \CAlg(\Sp^{O,\sm})\ar[r]^-{\CAlg(\gamma^\sm_{\gl})}\ar[d]_{\CAlg((-)_G^\sm)} &  \CAlg(\Sc p_{\gl}^\sm) \ar[d]^{\CAlg(\res_G^\sm)}      \\
       \Sp_G^{O,\sm} \ar[r]_-{\gamma_G^\sm} & \Sc p_G^\sm &&
       \CAlg(\Sp_G^{O,\sm}) \ar[r]_-{\CAlg(\gamma_G^\sm)} & \CAlg(\Sc p_G^\sm)
     } \]
   The underlying  functor
   \[ \res_G \ : \ \Sc p_{\gl}\ \to \ \Sc p_G \]
   of $\res_G^\sm$ is then descended from the trivial action functor
   by the universal property of the localization $\gamma_{\gl}\colon \Sp^O\to\Sc p_{\gl}$.
   While a priori only lax, we show in the next theorem that
   $\res_G^{\sm}$ is in fact strong symmetric monoidal.
   Lax symmetric monoidal functors induce
   functors of commutative algebras, yielding the
   commutative square of $\infty$-categories on the right.
   \end{construction}

A commutative square of $\infty$-categories
\[\xymatrix{ \Bc \ar[r]^-E \ar[d]_-v & \Dc\ar[d]^u\\ 
    \Ac \ar[r]_F &\Cc   }    \]
is {\em horizontally right adjointable} if the functors
$E$ and $F$ admit right adjoints $\rho\colon\Dc\to\Bc$ and $\omega\colon \Cc\to\Ac$,
respectively, and the {\em Beck--Chevalley transform}, i.e., the composite natural transformation
\[ v\circ\rho \ \xra{\eta\diamond(v\circ\rho)} \
  \omega\circ F \circ v\circ \rho \ \sim \
  \omega\circ u \circ E\circ \rho\ \ \xra{(\omega\circ u)\diamond\epsilon} \
  \omega\circ u
\]
of functors $\Dc\to\Ac$ is an equivalence.
Here $\eta\colon \Id_\Ac\to \omega\circ F$ is the unit of the adjunction
$(F,\omega)$, and $\epsilon\colon E\circ\rho\to\Id_\Dc$ is the counit
of the adjunction $(E,\rho)$.
There are analogous definitions of {\em horizontally left adjointable},
and of {\em vertically left/right adjointable}.

Every lax symmetric monoidal functor $F\colon \Cc\to\Dc$
between symmetric monoidal $\infty$-categories induces a
functor between the $\infty$-categories of commutative algebras
$\CAlg(F)\colon \CAlg(\Cc)\to\CAlg(\Dc)$, by postcomposition.

\begin{lemma}\label{lem:CAlg_left_adjoint}
  Let $F\colon \Cc\to\Dc$ be a strong symmetric monoidal functor
  between symmetric monoidal $\infty$-categories
  whose underlying functor $F_{\td{1}}\colon \Cc_{\td{1}}\to \Dc_{\td{1}}$ is a left adjoint.
  Then $F\colon \Cc\to\Dc$ is a left adjoint relative to $\rFin_*$,
  the functor  $\CAlg(F)\colon \CAlg(\Cc)\to\CAlg(\Dc)$
  has a right adjoint, and the commutative square
    \[ \xymatrix@C=12mm@R=7mm{
        \CAlg(\Cc)\ar[d]_{\ev_{\td{1}}} \ar[r]^-{\CAlg(F)} &
        \CAlg(\Dc)\ar[d]^{\ev_{\td{1}}}\\
        \Cc_{\td{1}}\ar[r]_-{F_{\td{1}}}  & \Dc_{\td{1}}
      } \]
    is horizontally right adjointable.
\end{lemma}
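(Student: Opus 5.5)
We will deduce this from Lurie's results on $\infty$-operads; it is a formal statement about symmetric monoidal $\infty$-categories. The plan has three steps.

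First, we promote $F$ to a relative left adjoint. Model $\Cc$ and $\Dc$ as coCartesian fibrations $p\colon\Cc^\tensor\to\rFin_*$ and $q\colon\Dc^\tensor\to\rFin_*$. The strong symmetric monoidal functor $F$ is then a map $F^\tensor\colon\Cc^\tensor\to\Dc^\tensor$ over $\rFin_*$ that preserves coCartesian edges. Over $\td{n}$ the fibre functor is $F_{\td{1}}^{\times n}$ under the Segal equivalences, so each fibre functor has a right adjoint $G_{\td{1}}^{\times n}$. We will apply \cite[Proposition 7.3.2.6]{lurie:higher_algebra}: a functor over a base that preserves coCartesian edges and admits fibrewise right adjoints is a left adjoint relative to the base. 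This gives a right adjoint $G^\tensor\colon\Dc^\tensor\to\Cc^\tensor$ relative to $\rFin_*$, which is the first claim. By \cite[Corollary 7.3.2.7]{lurie:higher_algebra}, $G^\tensor$ is a lax symmetric monoidal functor, and its underlying functor is the right adjoint $G_{\td{1}}$ of $F_{\td{1}}$.

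Second, we produce the adjunction on commutative algebras. Composition with $F^\tensor$ and $G^\tensor$ acts on sections of $\Cc^\tensor$ and $\Dc^\tensor$ over $\rFin_*$, which gives the functors $\CAlg(F)$ and $\CAlg(G)$. The unit and counit of the relative adjunction lie over identities in $\rFin_*$, so they induce, by postcomposition, a unit and counit between these functors. The triangle identities then hold automatically, since they hold for $(F^\tensor,G^\tensor)$. One must still check that postcomposition preserves commutative algebra objects, i.e. inert-preserving sections. This holds because both functors are maps of $\infty$-operads: $G^\tensor$ preserves inert edges because it is lax monoidal. This is the content of \cite[Remark 7.3.2.13]{lurie:higher_algebra}. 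Hence $\CAlg(G)$ is right adjoint to $\CAlg(F)$.

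Third, we verify horizontal right adjointability. The square with the forgetful functors $\ev_{\td{1}}$ commutes with the right adjoints strictly, because $\ev_{\td{1}}\circ\CAlg(G)=G_{\td{1}}\circ\ev_{\td{1}}$. We then need the Beck--Chevalley transform $\ev_{\td{1}}\circ\CAlg(G)\to G_{\td{1}}\circ\ev_{\td{1}}$ to be an equivalence. By construction, the unit and counit of the $\CAlg$-adjunction evaluate at $\td{1}$ to the unit and counit of $(F_{\td{1}},G_{\td{1}})$. The Beck--Chevalley composite is therefore obtained from a triangle-identity composite for $(F_{\td{1}},G_{\td{1}})$ and is equivalent to the identity.

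The main obstacle is bookkeeping in the third step: we must make sure that the adjunction data on $\CAlg$ really restrict at $\td{1}$ to those of $(F_{\td{1}},G_{\td{1}})$, which is what the relative-adjunction formalism guarantees.
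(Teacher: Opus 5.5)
Your proposal is correct and follows the paper's proof: you get the relative right adjoint $G$ over $\rFin_*$ from Lurie's results (the paper cites Corollary 7.3.2.7 for both its existence and the fact that it is a map of $\infty$-operads), induce the adjunction $\CAlg(F)\dashv\CAlg(G)$ by postcomposing with the relative unit and counit, and then observe that horizontal right adjointability amounts to $\CAlg(G)$ having underlying functor $G_{\td{1}}$. Your third step spells out the Beck--Chevalley/triangle-identity check that the paper leaves implicit.
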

\begin{proof}
  By \cite[Corollary 7.3.2.7]{lurie:higher_algebra}, the functor $F$
  has a relative right adjoint $G\colon \Dc\to\Cc$ over $\rFin_*$,
  which is moreover a morphism of operads. So $G$ is a lax symmetric monoidal
  structure on a right adjoint $G_{\td{1}}\colon \Dc_{\td{1}}\to\Cc_{\td{1}}$ to $F_{\td{1}}$.
  The unit and counit of the relative adjunction then induce
  the unit and counit of an adjunction between
  $\CAlg(F)\colon \CAlg(\Cc)\to\CAlg(\Dc)$ and $\CAlg(G)$.
  The horizontal right adjointability of the square is simply the statement
  that the underlying functor of the right adjoint $\CAlg(G)$
  to $\CAlg(F)$ is the right adjoint $G_{\td{1}}$ to the functor $F_{\td{1}}$.
\end{proof}

\begin{theorem}\label{thm:gl2G}
  Let $G$ be a compact Lie group.
  \begin{enumerate}[\em (i)]
    \item The functor
    $\res_G\colon \Sc p_{\gl}\to\Sc p_G$ has a left adjoint and a right adjoint.
  \item The functors $\res_G\colon \Sc p_{\gl}\to \Sc p_G$ are jointly conservative
    as $G$ runs over representatives for the isomorphism classes of all compact Lie groups.
  \item The functors $\Phi^G\circ \res_G\colon \Sc p_{\gl}\to \Sc p$ are jointly conservative
    as $G$ runs over representatives for the isomorphism classes of all compact Lie groups.
  \item The  lax symmetric monoidal functor
    $\res_G^{\sm}\colon \Sc p_{\gl}^{\sm}\to\Sc p_G^{\sm}$ is strong symmetric monoidal.
    \item  The functor $\CAlg(\res^\sm_G)$ has a left adjoint and a right adjoint.
  \end{enumerate}
\end{theorem}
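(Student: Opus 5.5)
The plan is to work with the orthogonal spectrum models and to produce the adjoints of $\res_G$ from Quillen adjunctions. For (i), the right adjoint comes from the trivial action functor $(-)_G\colon \Sp^O\to\Sp^O_G$. Its 1-categorical right adjoint is the $G$-fixed point functor $Y\mapsto Y^G$ (levelwise fixed points of an orthogonal $G$-spectrum). I will check that $(-)_G$ is a left Quillen functor from the global model structure to the Mandell--May model structure. It should take generating (acyclic) cofibrations to (acyclic) cofibrations, because global cofibrations are built from cells $G_V(\bL(V,-)/H\times \partial D^n\to \dots)$ that become $G$-cofibrant after imposing the trivial action. Since $(-)_G$ preserves all weak equivalences, the derived functor $\res_G$ is then a left adjoint.

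For the left adjoint I will use the formula from \cite{schwede:global} that sends an orthogonal $G$-spectrum $Y$ to $\bL_{G,V}\sm_G Y$, with $V$ a faithful $G$-representation. I expect to cite that this is left Quillen from the $G$-projective model structure to the global one, with right adjoint naturally weakly equivalent to $(-)_G$ on fibrant objects. If that citation is awkward, I will fall back on presentability: $\Sc p_{\gl}$ and $\Sc p_G$ are presentable (Theorem \ref{thm:Sp_gl properties}), and $\res_G$ preserves limits and colimits. Preservation of colimits follows from the right adjoint already constructed. Preservation of limits follows from stability together with preservation of products, since $\pi_k^H$ of a product is the product of the $\pi_k^H$. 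The adjoint functor theorem then gives the left adjoint.

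Part (ii) is immediate from the definitions. A morphism in $\Sc p_{\gl}$ is an equivalence iff it is a global equivalence of representatives, i.e. iff it induces isomorphisms on $\pi_*^G$ for all $G$. That is precisely joint conservativity of the $\res_G$, because $\pi_*^G$ jointly with subgroups detects equivalences in $\Sc p_G$. Part (iii) follows from (ii) together with the fact that for fixed $G$ the functors $\Phi^H\circ\res^G_H$, with $H\le G$, are jointly conservative on $\Sc p_G$. I will prove that fact by isotropy separation and induction over closed subgroups, or cite \cite{mathew-naumann-noel:nilpotence}. Since $\res^G_H\circ\res_G\simeq\res_H$, the functors $\Phi^H\circ\res_H$ already range over the listed family.

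For (iv) I will apply Remark \ref{rk:monloc_addendum} (ii). It suffices that $\res_G^\sm\circ\gamma^\sm_{\gl,\cof}\simeq\gamma^\sm_G\circ(-)_G$ restricted to globally cofibrant orthogonal spectra is strong. This holds because $(-)_G$ is strong and takes global cofibrants to $G$-cofibrants (from the Quillen argument in (i)), and $\gamma^\sm_G$ is strong on cofibrants. This compatibility of cofibrancy is the step I expect to be the main obstacle, since it is where the global and Mandell--May model structures have to be compared carefully. Finally (v): the right adjoint follows from (iv), (i) and Lemma \ref{lem:CAlg_left_adjoint}. For the left adjoint, note that $\CAlg(\res^\sm_G)$ preserves limits and sifted colimits, because these are computed underlying and $\res_G$ preserves them. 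The source and target are presentable, so the adjoint functor theorem gives the left adjoint.
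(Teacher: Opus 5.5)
Your arguments for (ii), (iii) and (v) agree with the paper's, given (i) and (iv). However, the step on which both your right adjoint in (i) and your proof of (iv) rest is false. The trivial-action functor $(-)_G$ does \emph{not} take global cofibrations to Mandell--May cofibrations, so it is not left Quillen for these model structures, and $Y\mapsto Y^G$ is not right Quillen.

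Global cofibrations are the flat cofibrations. They are generated by semifree cells on $(O(m)/H\times D^k)_+$ for arbitrary closed subgroups $H\le O(m)$. With trivial $G$-action, such a cell has isotropy $G\times H$ at level $\mR^m$. Mandell--May cofibrations, by contrast, are generated by free cells $F_V(G/K_+\sm -)$, whose isotropy groups are graph subgroups of $G\times O(V)$ and so meet $O(V)$ trivially.

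The failure is already visible for $G=\{1\}$, where $(-)_G$ is the identity. The spectrum $\Sigma^\infty_+B_{\gl}H$ with $H\ne\{1\}$ is flat but not q-cofibrant, and non-equivariant $\Omega$-spectra need not be global $\Omega$-spectra. Concretely, the right adjoint $R$ of $\res_{\{1\}}$ satisfies $\pi_0^H(RY)\iso h(\Sc p)(\Sigma^\infty_+BH,Y)$. This is a Borel-type construction, not levelwise fixed points of a fibrant replacement. Your fallback for the left adjoint inherits the problem, because you derive colimit preservation from this right adjoint.

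Both points are easy to repair, and the repairs lead back to the paper's argument.

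For (i), show that $\res_G$ preserves sums exactly as you show it preserves products. The functors $\pi_k^H$ commute with sums in both $\Sc p_{\gl}$ and $\Sc p_G$, since the corepresenting objects are compact. Hence the exact functor $\res_G$ preserves all limits and colimits, and the adjoint functor theorem supplies both adjoints at once. The paper gets sums and products from the homotopy-level adjoints of \cite[Theorem 4.5.24]{schwede:global}.

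For (iv), replace Mandell--May cofibrancy by flatness. For a flat orthogonal spectrum $X$, the functor $X_G\sm-$ preserves $\upi_*$-isomorphisms by \cite[Theorem 3.5.10]{schwede:global}. Comparing with Mandell--May cofibrant replacements, this shows that the lax structure map of $\gamma^\sm_G$ is an equivalence on pairs $(X_G,Y_G)$ with $X,Y$ flat. Therefore $\gamma^\sm_G\circ(-)_G$ is strong on $\Sp^{O,\sm}_{\cof}$, and Remark \ref{rk:monloc_addendum} (ii) gives strongness of $\res^\sm_G$. This is exactly the ``ultimate reason'' cited in the paper's proof; packaging it via the addendum makes the reduction explicit.
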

\begin{proof}
  (i) The functor $\res_G$ is an exact functor between stable $\infty$-categories,
  and the induced functor of triangulated homotopy categories
  $h(\res_G)\colon h(\Sc p_{\gl})\to h(\Sc p_G)$ has adjoints on either side
  by \cite[Theorem 4.5.24]{schwede:global}.
  So $h(\res_G)$ preserves sums and products.
  Hence the functor of $\infty$-categories $\res_G$ is an exact functor
  preserving sums and products, and hence all limits and colimits.
  Since the source and target of $\res_G$  are presentable,
  this functor has adjoints on either side, by Lurie's adjoint functor theorem
  \cite[Corollary 5.5.2.9]{lurie:htt}.
  
  Property (ii) holds by design, since a morphism
  $f\colon X\to Y$ of orthogonal spectra is defined to be a global equivalence
  if and only if the morphisms of orthogonal $G$-spectra
  $f_G\colon X_G\to Y_G$ are $\upi_*$-isomorphisms of orthogonal $G$-spectra
  for all compact Lie groups $G$.
  Property (iii) then follows from property (ii) and
  the fact that the geometric fixed point functors $\Phi^H\circ\res^G_H\colon \Sc p_G\to\Sc p$
  are jointly conservative for all closed subgroups $H$ of $G$,
  see \cite[Proposition 6.13]{mathew-naumann-noel:nilpotence} for an
  $\infty$-categorical treatment, and \cite[Proposition 3.3.10]{schwede:global}
  for a proof in the orthogonal spectrum model.
  We also use implicitly that the functor $\Phi^H\circ\res^G_H\circ \res_G\colon \Sc p_{\gl}\to \Sc p$
  is naturally equivalent to $\Phi^H\circ\res_H$.

  (iv) Strong (versus lax) monoidality can be tested on homotopy categories; 
  at the level of homotopy categories, the argument is explained
  immediately before \cite[Theorem 4.5.24]{schwede:global}.
  The ultimate reason is the fact that for every flat orthogonal spectrum $X$
  (i.e., cofibrant in the global model structure) and every compact Lie group $G$,
  the functor $X_G\sm-\colon \Sp_G^O\to\Sp_G^O$ preserves $\upi_*$-isomorphisms,
  see \cite[Theorem 3.5.10]{schwede:global}.

  (v) Since $\res_G^\sm$ is a strong symmetric monoidal left adjoint,
  $\CAlg(\res_G^\sm)$ has a right adjoint by Lemma \ref{lem:CAlg_left_adjoint}.
  For every presentably symmetric monoidal $\infty$-category $\Cc$,
  the forgetful functor $\ev_{\td{1}}\colon\allowbreak\CAlg(\Cc)\to \Cc_{\td{1}}$
  preserves and creates all limits and sifted colimits by
  Corollary 3.2.2.5 and Proposition 3.2.3.1 of  \cite{lurie:higher_algebra}.
  Since $\res_G$ has adjoints on either side, 
  $\ev_{\td{1}}\circ\CAlg(\res_G^\sm)\sim \res_G\circ\ev_{\td{1}}$ preserves limits
  and sifted colimits.
  Since  $\ev_{\td{1}}$ is conservative and creates limits and sifted colimits,
  we conclude that the functor $\CAlg(\res_G^\sm)$ preserves limits and sifted colimits.
  Since the source and target of $\CAlg(\res_G^\sm)$ are presentable,
  this functor has a left adjoint
  by the adjoint functor theorem \cite[Corollary 5.5.2.9]{lurie:htt}.
\end{proof}

\begin{construction}[Inflation homomorphisms]
  We let $X$ be an orthogonal spectrum, and we let $\gamma\colon K\to G$ be a continuous
  epimorphism of compact Lie groups.
  For every $G$-representation $V$, we have $\gamma^*(V)^K=V^G$ and
  \[    X(\gamma^*(V)\oplus\mR^m)^K\ = \ \gamma^*(X(V\oplus\mR^m))^K
    \ = \ X(V\oplus\mR^m)^G\ ,  \]
  because $\gamma$ is surjective.
  We define the {\em inflation homomorphism}
  \begin{equation}\label{eq:define_inflation}
  \gamma^*\ \colon\ \Phi_k^G(X) \ \to \ \Phi^K_k(X)   
  \end{equation}
  by sending the class of a based continuous map
  $f\colon S^{V^G\oplus\mR^{m+k}}\to X(V\oplus\mR^m)^G$
  to the class of the same map, but now viewed as a map
  \[ f \ \colon \ S^{\gamma^*(V)^K\oplus \mR^{m+k}}\  \to \  X(\gamma^*(V)\oplus\mR^m)^K\ . \]
  Much like in \cite[Construction 3.1.15]{schwede:global},
  we exploit here that elements of $\Phi_k^K(X)$
  can unambiguously be represented by based maps $S^{W^K\oplus\mR^{m+k}}\to X(W\oplus\mR^m)^K$,
  for any $K$-representation $W$, not necessarily a subrepresentation of the chosen
  complete universe, see \cite[Construction 3.1.13]{schwede:global}.
  The inflation homomorphisms $\gamma^*$ are clearly contravariantly functorial
  for composition of continuous epimorphisms.
  The inflation homomorphism $\gamma^*$
  is a monoidal natural transformation between two lax symmetric monoidal
  functors from orthogonal spectra to graded abelian groups that both invert global equivalences.
  So by monoidal localization, $\gamma^*$ descends uniquely to a
  monoidal natural transformation
  $\gamma^*\colon\bar\Phi_*^G\circ\res^\sm_G\to\bar\Phi^K_*\circ\res^\sm_K$
  between the descended functors $\Sc p^\sm_{\gl}\to\GrAb^\tensor$
  from the $\infty$-category of global spectra to graded abelian groups;
  we abuse notation and denote the descended transformation by the same symbol.
\end{construction}

\begin{construction}
  We let $i_{\gl}\colon \Sc p^\sm\to\Sc p^\sm_{\gl}$ be the unique
  strong symmetric monoidal left adjoint.
  It is fully faithful by \cite[Theorem 4.5.1]{schwede:global},
  and its right adjoint is the forgetful functor
  $\res^\sm_{\{1\}}\colon \Sc p^\sm_{\gl}\to \Sc p^\sm$.
  Then the composite
  \[ \Sc p^\sm\ \xra{\ i_{\gl}\ } \ \Sc p^\sm_{\gl}\ \xra{\res^\sm_G}\ \Sc p^\sm_G \]
  is a strong symmetric monoidal left adjoint;
  by virtue of $\Sc p^\sm$ being initial in $\CAlg(\Pr^L_{\st})$,
  the composite is uniquely monoidally equivalent
  to the inflation functor \eqref{eq:inflation_functor}:
  \begin{equation}\label{eq:i_gl2i_G}
    \res^\sm_G\circ i_{\gl}\ \sim\  i_G\ . 
  \end{equation}  
  A global spectrum $Y$ is {\em left induced} if it is in the essential
  image of the fully faithful functor $i_{\gl}$.
  In this case, the inflation homomorphism \eqref{eq:define_inflation}
  \[ p_G^* \ \colon \ \bar\pi_k(Y) = \bar\Phi^{\{1\}}_k(Y)\ \to\ \bar\Phi_k^G(Y) \]
  associated to the unique homomorphism $p_G\colon G\to\{1\}$
  is an isomorphism, see \cite[Example 4.5.10]{schwede:global}.
  In particular, for every non-equivariant spectrum $X$ in $\Sc p$,
  the inflation homomorphism
  \begin{equation}\label{eq:p_G_iso}
    p_G^* \ \colon \ \bar\pi_*(X)\ \iso_{\eqref{eq:i_gl2i_G}}\
    \bar\Phi^{\{1\}}_*(i_{\gl}(X))\ \to\ \bar\Phi_*^G(i_{\gl}(X))
   \ \iso_{\eqref{eq:i_gl2i_G}}\  \bar\Phi^G_*(i_G(X))   
  \end{equation}
  is a monoidal natural isomorphism between lax symmetric monoidal
  functors $\Sc p^\sm\to\GrAb^\tensor$. 
\end{construction}

\begin{prop}\label{prop:identify_Phis}
  There is a unique monoidal natural transformation
  \begin{equation}\label{eq:Phi_vs_Phi}
    \tau \ \colon \
    \bar\pi_*\circ \Phi^G \ \to\ \bar\Phi^G_*
  \end{equation}
of lax monoidal functors $\Sc p^\sm_G\to\GrAb^\tensor$  such that the composite
  \[  \bar\pi_* \ \xra[\iso]{\bar\pi_*\circ \eqref{eq:Phi_p_G}}\
    \bar\pi_*\circ \Phi^G \circ i_G\ \xra[\iso]{\tau\circ i_G}\
    \bar\Phi^G_*\circ i_G    \]
  equals the monoidal isomorphism $p_G^*$ from \eqref{eq:p_G_iso}.
  Moreover, $\tau$ is a monoidal natural isomorphism.
\end{prop}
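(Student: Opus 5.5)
The plan is to use the universal property of the geometric fixed point functor from Theorem \ref{thm:Phi_mon_loc}. Both functors $\bar\pi_*\circ\Phi^G$ and $\bar\Phi^G_*$ are lax symmetric monoidal functors $\Sc p^\sm_G\to\GrAb^\tensor$. The first one is, by construction, precomposition of the lax symmetric monoidal functor $\bar\pi_*\colon\Sc p^\sm\to\GrAb^\tensor$ with $\Phi^G$. So Theorem \ref{thm:Phi_mon_loc} says the following. If $\bar\Phi^G_*$ lies in $\Fun_{\lax}^{\tilde E\Pc}(\Sc p^\sm_G,\GrAb^\tensor)$, then $\bar\Phi^G_*$ is monoidally equivalent to $(\bar\Phi^G_*\circ i_G)\circ\Phi^G$. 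Moreover, monoidal transformations $\bar\pi_*\circ\Phi^G\to\bar\Phi^G_*$ then correspond bijectively, up to homotopy, to monoidal transformations $\bar\pi_*\to\bar\Phi^G_*\circ i_G$, the correspondence being given by precomposing with $i_G$ and the unit \eqref{eq:Phi_p_G}. Since $\GrAb^\tensor$ is a 1-category, the mapping spaces between these functors are discrete. Hence existence and uniqueness of $\tau$ with the prescribed composite reduce to taking the preimage of the monoidal isomorphism $p_G^*$ from \eqref{eq:p_G_iso}.

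The key step, and the main point to check, is that $\bar\Phi^G_*$ inverts geometric fixed point equivalences. Equivalently, for every $G$-spectrum $X$ the map $\bar\Phi^G_*(X\sm i)\colon\bar\Phi^G_*(X)\to\bar\Phi^G_*(X\sm\tilde E\Pc)$ must be an isomorphism. I would check this on the orthogonal model, taking $X$ cofibrant and representing $\tilde E\Pc$ by a $G$-CW complex. The explicit colimit \eqref{eq:define_geom_fix} only involves $G$-fixed points of the levels $X(V\oplus\mR^m)$. Taking $G$-fixed points commutes with smash products, and $(\tilde E\Pc)^G=S^0$ via $i$. So on representatives, smashing with $i$ induces a homeomorphism of fixed points $X(V\oplus\mR^m)^G\cong (X(V\oplus\mR^m)\sm\tilde E\Pc)^G$, up to the usual identification $(X\sm A)(W)=X(W)\sm A$ for a based $G$-space $A$. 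Hence the map is an isomorphism already before passing to colimits. One should cite that $\Phi^G_*$ of a smash with a $G$-space is computed levelwise like this, as in \cite[Section 3.3]{schwede:global}. This is where I expect the only real work, namely making the point-set description match the descended functor $\bar\Phi^G_*$. This is handled by the monoidal isomorphism $\bar\Phi^G_*\circ\gamma^\sm_G\cong\Phi^G_*$.

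It remains to show that $\tau$ is an isomorphism. Transport $p_G^*$ through the equivalence of Theorem \ref{thm:Phi_mon_loc}. The natural transformation $\tau$ is then the composite of $p_G^*\circ\Phi^G$, an isomorphism since $p_G^*$ is, with the equivalence $(\bar\Phi^G_*\circ i_G)\circ\Phi^G\simeq\bar\Phi^G_*$ supplied by the theorem. Concretely, $\tau_X$ is identified with $p^*_G$ at $\Phi^G(X)$ composed with $\bar\Phi^G_*$ applied to the geometric fixed point equivalence $i_G\Phi^G(X)\simeq X\sm\tilde E\Pc\leftarrow X$. Both are isomorphisms, by \eqref{eq:p_G_iso} and the key step respectively. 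So $\tau$ is a monoidal natural isomorphism.
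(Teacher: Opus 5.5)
Your proposal is correct and follows essentially the same route as the paper: you place both $\bar\pi_*\circ\Phi^G$ and $\bar\Phi^G_*$ in $\Fun_{\lax}^{\tilde E\Pc}(\Sc p^\sm_G,\GrAb^\tensor)$, the latter via the same point-set observation that $(-)^G$ commutes with smash products and $i^G$ is a homeomorphism. You then take $\tau$ to be the unique preimage of $p_G^*$ under the equivalence $\Fun_{\lax}(i_G,\GrAb^\tensor)$ of Theorem \ref{thm:Phi_mon_loc}, which is an isomorphism because equivalences reflect isomorphisms. One small correction to your ``concrete'' description: $i_G\Phi^G(X)$ is in general not equivalent to $X\sm\tilde E\Pc$ (take $X=\mS_G$ with $G\ne\{1\}$); it is only linked to it by the geometric fixed point equivalence $i_G\Phi^G(X)\to i_G\Phi^G(X)\sm\tilde E\Pc\simeq X\sm\tilde E\Pc$, and that is all your argument actually uses.
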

\begin{proof}
  For every orthogonal $G$-spectrum $X$,
  the geometric fixed point functor $\Phi^G_k\colon\Sp^O_G\to\rAb$
  inverts the morphism $X\sm i\colon X\to X\sm\tilde E\Pc$, simply because
  the $G$-fixed point functor $(-)^G$ is strong monoidal for the smash product
  of based $G$-spaces, and the map $i^G\colon S^0=(S^0)^G\to (\tilde E\Pc)^G$ is a homeomorphism.
  So the descended functor $\bar\Phi^G_k\colon\Sc p_G\to\rAb$
  inverts geometric fixed point equivalences.
  The functor $\bar\pi_*\circ\Phi^G$ also 
  inverts geometric fixed point equivalences, so both $\bar\Phi^G_* $ and $\bar\pi_*\circ\Phi^G$
  are objects of the $\infty$-category $\Fun_{\lax}^{\tilde E\Pc}(\Sc p^\sm_G,\GrAb^\tensor)$.
  Theorem \ref{thm:Phi_mon_loc} shows that the functor
  \[
    \Fun_{\lax}(i_G,\GrAb^\tensor) \colon  \Fun_{\lax}^{\tilde E\Pc}(\Sc p^\sm_G,\GrAb^\tensor) \ \to \
    \Fun_{\lax}(\Sc p^\sm,\GrAb^\tensor) \]
  is an equivalence of $\infty$-categories, which proves the claim.
\end{proof}

An important consequence of the monoidal isomorphism \eqref{eq:Phi_vs_Phi}
is that for homotopy $G$-ring spectra $R$,
the two ways of assigning $\mZ$-graded geometric fixed point rings
are naturally isomorphic.
On the one hand, the composite
\[ \Phi^G(R)\sm\Phi^G(R)\ \xra{\ \sim \ } \ \Phi^G(R\sm R) \ \xra{\Phi^G(\mu)} \
\Phi^G(R)\]
turns a homotopy $G$-ring spectrum $R$ into a non-equivariant homotopy ring spectrum,
where the first map is the strong monoidal structure on the geometric fixed point functor.
This homotopy ring spectrum has a graded homotopy ring $\bar\pi_*(\Phi^G(R))$.
On the other hand, the lax symmetric monoidal structure
on the functor $\bar\Phi^G_*\colon \Sc p_G\to\GrAb$
descended from the point-set level pairing \eqref{eq:cdot_Phi^G}
provides the $\mZ$-graded abelian group $\bar\Phi^G_*(R)$ with a graded ring structure
via the composite
  \begin{equation}\label{eq:concrete_product}
    \bar\Phi_k^G(R)\times \bar\Phi_l^G(R)\ \xra{\ \times\ }\ \bar\Phi_{k+l}^G(R\sm R)\
    \xra{\bar\Phi_{k+l}^G(\mu)}  \ \bar\Phi_{k+l}^G(R)\ .  
  \end{equation}

\begin{cor}\label{cor:fix_ring_iso}
  Let $R$ be a homotopy $G$-ring spectrum.
  The value of the monoidal natural isomorphism \eqref{eq:Phi_vs_Phi}
  at the underlying $G$-spectrum of $R$ is an isomorphism of graded rings
  from $\bar\pi_*(\Phi^G(R))$ to $\bar\Phi^G_*(R)$.
\end{cor}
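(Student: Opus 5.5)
The corollary follows from Proposition \ref{prop:identify_Phis} by the standard fact that monoidal natural transformations between lax symmetric monoidal functors preserve the induced algebra structures. The plan is to evaluate the monoidal natural isomorphism $\tau\colon \bar\pi_*\circ\Phi^G\to\bar\Phi^G_*$ from \eqref{eq:Phi_vs_Phi} at $R$. We then check that the resulting isomorphism of graded abelian groups $\tau_R\colon\bar\pi_*(\Phi^G(R))\to\bar\Phi^G_*(R)$ intertwines the two multiplications and preserves units.

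First, both products are obtained by the same recipe from the respective lax structures. On the source, the product on $\bar\pi_*(\Phi^G(R))$ is the composite of three maps:
\begin{itemize}
\item the lax structure of $\bar\pi_*$ applied to $\Phi^G(R)\sm\Phi^G(R)$;
\item the strong structure map $\Phi^G(R)\sm\Phi^G(R)\simeq\Phi^G(R\sm R)$;
\item the map $\bar\pi_*(\Phi^G(\mu))$.
\end{itemize}
The first two together form the lax monoidal structure of the composite functor $\bar\pi_*\circ\Phi^G$ at $(R,R)$. So the source product is \[\bar\pi_*\Phi^G(R)\otimes\bar\pi_*\Phi^G(R)\to\bar\pi_*\Phi^G(R\sm R)\to\bar\pi_*\Phi^G(R),\] namely the lax structure of $\bar\pi_*\circ\Phi^G$ followed by the functor applied to $\mu$. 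On the target, \eqref{eq:concrete_product} is by definition the lax structure of $\bar\Phi^G_*$ followed by $\bar\Phi^G_*(\mu)$.

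Second, we use the two defining properties of $\tau$. Monoidality of $\tau$ says that $\tau_{R\sm R}$ composed with the lax structure of $\bar\pi_*\circ\Phi^G$ equals the lax structure of $\bar\Phi^G_*$ composed with $\tau_R\otimes\tau_R$. Naturality of $\tau$ with respect to $\mu\colon R\sm R\to R$ gives $\tau_R\circ\bar\pi_*\Phi^G(\mu)=\bar\Phi^G_*(\mu)\circ\tau_{R\sm R}$. Pasting these two squares shows $\tau_R(x\cdot y)=\tau_R(x)\cdot\tau_R(y)$.

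Third, for the unit we use the analogous compatibility of $\tau$ with the unit maps $\mZ\to F(\mS_G)$, which is part of being a monoidal transformation. We combine it with naturality along the unit morphism $\mS_G\to R$.

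The only subtle point is that $R$ lives in the homotopy category $h(\Sc p_G^\sm)$ rather than being a coherent algebra. This causes no difficulty, because the lax structures of all functors involved, and the monoidality of $\tau$, are 1-categorical data valued in $\GrAb$, which is a 1-category. They therefore factor through the homotopy category, and $\mu$ is only needed as a morphism of $h(\Sc p_G)$. Since $\tau_R$ is a bijection by Proposition \ref{prop:identify_Phis}, it is an isomorphism of graded rings.
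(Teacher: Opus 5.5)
Your proposal is correct and is exactly the argument the paper relies on: the corollary is stated without further proof as an immediate consequence of Proposition~\ref{prop:identify_Phis}. The reason is that a monoidal natural isomorphism between lax symmetric monoidal functors into the 1-category $\GrAb^\tensor$ intertwines the products and units induced on the values at a homotopy algebra, which is what you verify by pasting the monoidality square with the naturality square for $\mu$ (and likewise for the unit).
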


Our next result is a symmetric monoidal rectification result
for `global modules' over a commutative orthogonal ring spectrum $A$.
Theorem \ref{thm:monoidal_loc} says that
the symmetric monoidal model category of $A$-module spectra
presents the $\infty$-category
of modules over the commutative global ring spectrum represented by $A$.
This rectification will be used in the proof of Theorem \ref{thm:UR_monadic},
the bimonadicity of the functor $u_{\gl}\colon\UR_{\gl}\to\CAlg(\Sc p^\sm_{\gl})$.

A lax symmetric monoidal functor between symmetric monoidal 1-categories
$R\colon\Dc\to\Cc$ naturally refines to a lax symmetric monoidal functor
\[  \Mod(A|R) \ \colon \ \Mod^\tensor_A(\Dc)\ \to \ \Mod^\tensor_{R(A)}(\Cc) \ ,  \]
for every commutative algebra $A$ in $\Dc$, and assuming
that $\Dc$ and $\Cc$ have enough colimits to admit relative tensor products for modules.
The higher categorical analog of this lift will play an important role for us,
so we review the construction in some detail.

\begin{construction}\label{con:Mod(D)}
We let $\Dc$ be a symmetric monoidal $\infty$-category.
In \cite[Definition 3.3.3.8]{lurie:higher_algebra}, 
Lurie defines an $\infty$-operad $\Mod^\tensor(\Dc)$
that comes with a functor $\pi_\Dc\colon \Mod^\tensor(\Dc)\to \Dc\times \CAlg(\Dc)$;
Lurie uses the notation $\Mod^{\text{Comm}}(\Dc)^\tensor$
and works more generally over coherent $\infty$-operads;
we only need the case of the commutative operad.
Objects of $\Mod^\tensor(\Dc)$ can informally be thought of as pairs $(M,T)$
consisting of a commutative algebra $T$ in $\Dc$, and a left $T$-module $M$.
And morphisms are, informally speaking, pairs $(f,i)\colon (M,T)\to (M',T')$
consisting of a morphism $i\colon T\to T'$ of commutative algebras,
and a morphism of $T$-modules $f\colon M\to i^*(M')$.
Proposition 3.3.3.10 of \cite{lurie:higher_algebra} characterizes inert morphisms
in $\Mod^\tensor(\Dc)$: firstly, the image in $\rFin_*$ is inert; and
secondly, the classifying functor takes certain lifts to inert morphisms of $\Dc$.

For a commutative algebra $T$, Lurie then defines the $\infty$-operad $\Mod_T^\tensor(\Dc)$
as the fiber of $\pi_\Dc$ over $T$, i.e., by the pullback
\[ \xymatrix{ \Mod_T^\tensor(\Dc)\ar[r]\ar[d]_{u_T} & \Mod^\tensor(\Dc)\ar[d]^{\pi_\Dc}\\
\Dc\ar[r]_-{(-,T)} & \Dc\times \CAlg(\Dc),  } \]
see \cite[Definition 4.5.1.1]{lurie:higher_algebra};
Lurie uses the notation $\Mod_T(\Dc)^\tensor$.
The left vertical {\em forgetful functor} $u_T\colon \Mod_T^\tensor(\Dc)\to\Dc$ is a morphism
of $\infty$-operads, because $u_T$ preserves and detects inertness in the sense explained above.

Now we let $R\colon \Dc\to\Cc$ be a lax symmetric monoidal functor.
Postcomposition with $R$ is a functor
$\Mod^\tensor(R)\colon \Mod^\tensor(\Dc)\to \Mod^\tensor(\Cc)$
that participates in a commutative diagram:
\[ \xymatrix@C=25mm{
\Dc\ar[r]^-{(-,T)}\ar[d]_R &    \Dc\times \CAlg(\Dc) \ar[d]^{R\times\CAlg(R)}&
 \Mod^\tensor(\Dc)\ar[d]^{\Mod^\tensor(R)}\ar[l]_-{\pi_\Dc}\\
\Cc \ar[r]_-{(-,\CAlg(R)(T))}&  \Cc\times \CAlg(\Cc)   & \Mod^\tensor(\Cc)\ar[l]^-{\pi_\Cc}} \]
Passing to pullbacks yields a functor
\[ \Mod(T|R)\ \colon \ \Mod_T^\tensor(\Dc) \ \to \ \Mod_{\CAlg(R)(T)}^\tensor(\Cc) \]
that covers $R$ with respect to the forgetful functors.
Because $R$ preserves inert morphisms,
postcomposition with $R$ preserves the second condition that characterizes
inert morphisms in $\Mod^\tensor(\Dc)$.
So $\Mod^\tensor(R)$, and hence $\Mod(T|R)$, preserve inert morphisms,
and $\Mod(T|R)$ is a morphism of $\infty$-operads.

In \cite[Theorem 4.5.2.1]{lurie:higher_algebra}, Lurie shows that
if the underlying $\infty$-category $\Dc_{\td{1}}$
admits geometric realization of simplicial objects,
and if the tensor product functor $\tensor\colon\Dc_{\td{1}}\times\Dc_{\td{1}}\to\Dc_{\td{1}}$
preserves such geometric realizations in each variable separately,
then $\Mod_T^\tensor(\Dc)$ is in fact a symmetric monoidal $\infty$-category.
As a morphism of $\infty$-operads between symmetric monoidal $\infty$-categories,
the forgetful functor $u_T\colon \Mod_T^\tensor(\Dc)\to\Dc$ is then
lax symmetric monoidal.
Moreover $u_T$ is the right adjoint in a relative adjunction over $\rFin_*$.
The left adjoint $T\tensor-\colon\Dc\to\Mod_T^\tensor(\Dc)$,
the {\em free $T$-module functor}, is moreover strong symmetric monoidal.
To obtain this monoidal adjunction, we can appeal to \cite[Theorem 4.5.3.1]{lurie:higher_algebra}
for the unique morphism $\mone\to T$ in $\CAlg(\Dc)$
from an initial commutative algebra,
and then exploit \cite[Proposition 3.4.2.1]{lurie:higher_algebra}
to identify $\Mod_{\mone}^\tensor(\Dc)$ with $\Dc$.

If $\Cc$ and $\Dc$ are symmetric monoidal $\infty$-categories that both
satisfy the hypotheses of \cite[Theorem 4.5.2.1]{lurie:higher_algebra},
then for every lax symmetric monoidal functor $R\colon \Dc\to\Cc$, the functor
$\Mod(T|R)\colon\Mod_T^\tensor(\Dc)\allowbreak\to\Mod_{\CAlg(R)(T)}^\tensor(\Cc)$
is a morphism of $\infty$-operads between symmetric monoidal $\infty$-categories,
i.e., again a lax symmetric monoidal functor.
\end{construction}

In the next rectification
result we consider a commutative orthogonal ring spectrum $A$.
By \cite[Corollary 4.3.29]{schwede:global}, the global model
structure on the category of orthogonal spectra lifts to a 
model structure on the category $\Mod_A$ of left $A$-modules,
which is moreover a symmetric monoidal model structure
with respect to the relative smash product $\sm_A$ of $A$-modules.
We write $\Mod_{A,\cof}^\sm$ for the full symmetric monoidal subcategory of $\Mod_A^\sm$
spanned by the cofibrant $A$-modules.
When the $\Mod^\tensor$-construction is applied to the operadic nerve
of a symmetric monoidal 1-category with colimits,
then the relative tensor products are the classical coequalizers, so
Lurie's construction returns the operadic nerve of the 1-categorical module category.
In particular, $\Mod_A^\tensor(\Sp^{O,\sm})$ is equivalent
to the operadic nerve of the symmetric monoidal 1-category $(\Mod_A,\sm_A)$.
In the following we shall blur this distinction
and simply write $\Mod_A^\sm$ for the symmetric monoidal $\infty$-category
$\Mod^\tensor_A(\Sp^{O,\sm})$, and similarly for $\Mod_{A,\cof}^\sm$.

Commutative orthogonal ring spectra are precisely the
commutative algebras in the symmetric monoidal 1-category $\Sp^{O,\sm}$
of orthogonal spectra under smash product.
So for every commutative orthogonal ring spectrum $A$,
the lax symmetric monoidal localization functor
$\gamma^\sm_{\gl}\colon\Sp^{O,\sm}\to \Sc p^\sm_{\gl}$ induces
a lax symmetric monoidal functor on module categories
$\Mod(A|\gamma_{\gl}^\sm)\colon\Mod_A^\sm\to\Mod_{\gamma(A)}^\tensor(\Sc p_{\gl}^\sm)$,
where we abbreviated the commutative global ring spectrum represented by $A$ to 
\[ \gamma(A)\ = \ \CAlg(\gamma^\sm_{\gl})(A)\ .\]

\begin{theorem}\label{thm:monoidal_loc}
  For every commutative orthogonal ring spectrum $A$,
  the lax symmetric monoidal functor
  \[  \Mod(A|\gamma^\sm_{\gl})  \ \colon\     \Mod_A^\sm\ \to\
      \Mod^\tensor_{\gamma(A)}(\Sc p^\sm_{\gl})\]
    is a monoidal localization.
\end{theorem}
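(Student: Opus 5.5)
We have to show that $\Mod(A|\gamma^\sm_{\gl})$ has the universal property of a monoidal localization. The plan is to identify it with the monoidal localization $\gamma^\sm_{\Mod_A}\colon \Mod_A^\sm\to(\Mod_A^\sm)_\infty$ supplied by \cite[Theorem A.7]{nikolaus-scholze:TC} for the symmetric monoidal model category of $A$-modules. Note that the unit $A$ is cofibrant in the lifted model structure of \cite[Corollary 4.3.29]{schwede:global}. The functor $\Mod(A|\gamma^\sm_{\gl})$ sends global equivalences of $A$-modules to equivalences, because its underlying functor is $\gamma_{\gl}$ composed with the conservative forgetful functor. The universal property of $\gamma^\sm_{\Mod_A}$ therefore provides a lax symmetric monoidal comparison functor
\[ \Theta\ \colon\ (\Mod_A^\sm)_\infty\ \to\ \Mod^\tensor_{\gamma(A)}(\Sc p^\sm_{\gl}) \]
with $\Theta\circ\gamma^\sm_{\Mod_A}\simeq\Mod(A|\gamma^\sm_{\gl})$. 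It then suffices to show that $\Theta$ is a symmetric monoidal equivalence. This splits into two claims: $\Theta$ is strong monoidal, and its underlying functor is an equivalence of $\infty$-categories.

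For strongness we use Remark \ref{rk:monloc_addendum} (ii): it is enough to check that $\Mod(A|\gamma^\sm_{\gl})$ restricted to cofibrant $A$-modules is strong. For cofibrant $A$-modules $M$ and $N$, we must show that the comparison map from the $\infty$-categorical relative tensor product $\gamma(M)\sm_{\gamma(A)}\gamma(N)$ to $\gamma(M\sm_A N)$ is an equivalence. The left side is the geometric realization of the bar construction $\gamma(M\sm A^{\sm\bullet}\sm N)$, by Lurie's construction in \cite[Theorem 4.5.2.1]{lurie:higher_algebra}. Here we use that $\gamma^\sm_{\gl}$ is strong on flat orthogonal spectra and that a cofibrant $A$-module is flat as an orthogonal spectrum, possibly after a flat replacement of $A$. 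The right side is the 1-categorical coequalizer. It therefore suffices to see that the point-set bar construction $B_\bullet(M,A,N)$ is Reedy cofibrant, or at least that its realization computes the homotopy colimit, with realization isomorphic to $M\sm_A N$. This is a standard consequence of the cofibrancy of $M$. I expect this step to be the main obstacle, because of possible non-flatness of $A$ itself. If that causes trouble, I would first reduce to the case of flat $A$ via a cofibrant replacement $A^c\to A$ in commutative orthogonal ring spectra, using that extension of scalars along a global equivalence gives a Quillen equivalence.

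For the equivalence of underlying $\infty$-categories, we compare two adjunctions. On the model side there is the Quillen adjunction $A\sm-\colon\Sp^O\rightleftarrows\Mod_A$. On the $\infty$-categorical side there is the free–forgetful adjunction $\gamma(A)\sm-\dashv u_{\gamma(A)}$. The comparison $\Theta$ commutes with both the forgetful functors and the free functors. The forgetful functor $u_{\gamma(A)}$ is conservative and preserves sifted colimits, and the same holds for the derived forgetful functor on the model side, since forgetting preserves all weak equivalences and filtered colimits and geometric realizations. Thus both sides are monadic over $\Sc p_{\gl}$ by Barr--Beck--Lurie \cite[Theorem 4.7.3.5]{lurie:higher_algebra}. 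The two monads are both $X\mapsto\gamma(A)\sm X$, by the strongness established above applied to free modules $A\sm X$ with $X$ flat. Hence $\Theta_{\td{1}}$ is an equivalence, and combined with strongness $\Theta$ is a symmetric monoidal equivalence. This proves the theorem.
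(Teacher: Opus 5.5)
Your architecture is the paper's. You descend $\Mod(A|\gamma^\sm_{\gl})$ through the monoidal localization of $\Mod_A$ to a lax functor (your $\Theta$, the paper's $\sigma$), prove it strong, and prove its underlying functor an equivalence by comparing the two monadic adjunctions over $\Sc p_{\gl}$ with \cite[Corollary 4.7.3.16]{lurie:higher_algebra}.

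The real difference is the strongness step, and yours is heavier. You test strongness directly on pairs of cofibrant $A$-modules via bar constructions. As you suspected, this needs several extra inputs:
\begin{enumerate}[(a)]
\item flatness of $A$, since cofibrant $A$-modules need not be flat orthogonal spectra ($A$ itself is one);
\item a flat-cofibration unit $\mS\to A$, for Reedy cofibrancy of $B_\bullet(M,A,N)$;
\item an identification of the lax structure map of $\Mod(A|\gamma^\sm_{\gl})$ with the map induced by the bar construction;
\item a reduction along a cofibrant replacement $A^c\to A$, which needs extension of scalars to be a Quillen equivalence that intertwines the two comparison functors symmetric monoidally.
\end{enumerate}
Also, $|B_\bullet(M,A,N)|$ is not isomorphic to $M\sm_A N$. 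What you need is that the augmentation $|B_\bullet(M,A,N)|\to M\sm_A N$ is a global equivalence for cofibrant $N$: use an extra degeneracy in the free case, then cell induction.

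All of this can be done, but the paper sidesteps it. It only shows that $\sigma\circ L$ is strong, where $L\colon\Sc p^\sm_{\gl}\to\M od^\sm_A$ is the descended free functor. Via the monoidal mate, this reduces to the structure map $\gamma_{\gl}(A)\sm\gamma_{\gl}(X)\to\gamma_{\gl}(A\sm X)$ for cofibrant $X$, which is an equivalence with no hypothesis on $A$. Strongness then propagates to all of $\M od_A$ because $\sigma$ preserves colimits, both tensor products preserve colimits in each variable, and $\M od_A$ is generated under colimits by the image of $L$. That last fact is a consequence of the monadicity of $\rho_{\td{1}}$, where $\rho=u_{\gamma(A)}\circ\sigma$ is the descended forgetful functor. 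You use this monadicity anyway in your last step, so your own ingredients already give the shorter argument.

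Two points to tighten in the final step. First, it is the specific comparison map of monads $u_{\gamma(A)}(\gamma(A)\sm-)\to\rho_{\td{1}}L_{\td{1}}$ that must be an equivalence, not merely an abstract identification of both monads with $\gamma(A)\sm-$. The paper checks this after precomposing with cofibrant orthogonal spectra $X$, by identifying the derived unit with the point-set unit $X\to A\sm X$. Second, point-set preservation of filtered colimits and geometric realizations does not by itself show that $\rho_{\td{1}}$ preserves $\infty$-categorical colimits. The clean reason is that $\rho_{\td{1}}$ is a right adjoint between stable $\infty$-categories, hence exact, and it preserves coproducts because wedges are homotopical on both sides.
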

\begin{proof}
  We let
  \[ \gamma^\sm_A\ : \ \Mod^\sm_A \ \to \ \M od^\sm_A \]
  be a monoidal localization of the global model structure.
  We write $\M od_A=(\M od^\sm_A)_{\td{1}}$ for the underlying $\infty$-category.
  The following diagram of lax symmetric monoidal functors commutes:
   \[ \xymatrix@C=20mm{ \Mod^\sm_A =\Mod^\tensor_A(\Sp^{O,\sm})
       \ar[r]^-{\Mod(A|\gamma^\sm_{\gl})}\ar[d]_{u_A} &
      \Mod_{\gamma(A)}^\tensor(\Sc p^\sm_{\gl})\ar[d]^{u_{\gamma(A)}}\\
      \Sp^{O,\sm}\ar[r]_-{\gamma_{\gl}^\sm} & \Sc p^\sm_{\gl}
    } \]
  The composite $\gamma_{\gl}^\sm\circ u_A$ inverts global equivalences,
  and the functor $u_{\gamma(A)}$ is conservative.
  So the functor $\Mod(A|\gamma^\sm_{\gl})\colon\Mod_A^\sm \to
  \Mod_{\gamma(A)}^\tensor(\Sc p^\sm_{\gl})$ inverts global equivalences.
  The universal property of the monoidal localization provides a unique
  lax symmetric monoidal functor
  \[\sigma\ \colon\ \M od_A^\sm\ \to\ \Mod^\tensor_{\gamma(A)}(\Sc p_{\gl}^\sm)  \]
  equipped with a monoidal equivalence
  \begin{equation}\label{eq:descend Mod(A)}
    \sigma\circ \gamma^\sm_A \ \sim \ \Mod(A|\gamma^\sm_{\gl})\ .
  \end{equation}
  We define a lax symmetric monoidal functor by
  \[ \rho\ = \ u_{\gamma(A)}\circ \sigma\ \colon \ \M od^\sm_A\ \to \ \Sc p^\sm_{\gl}\ . \]
  The monoidal equivalence 
  \[    \rho\circ \gamma^\sm_A \ =\
      u_{\gamma(A)}\circ\sigma\circ \gamma^\sm_A \ \sim_{\eqref{eq:descend Mod(A)}} \
    u_{\gamma(A)}\circ \Mod(A|\gamma^\sm_{\gl})\ =\ \gamma_{\gl}^\sm\circ u_A \]
  witnesses the commutativity of the following square:
  \begin{equation}\begin{aligned}\label{eq:forget and F}
      \xymatrix@C=12mm{
      \Mod^\sm_A \ar[d]_{\gamma^\sm_A}\ar[r]^-{u_A}
      & \Sp^{O,\sm}\ar[d]^{\gamma^\sm_{\gl}}  \\
      \M od_A^\sm\ar[r]_-{\rho} & \Sc p_{\gl}^\sm }
    \end{aligned}
  \end{equation}
  So the lax symmetric monoidal functor $\rho$ `is' the monoidal descendant
  of the forgetful functor $u_A$.
    
  The forgetful functor $u_A\colon\Mod_A\to\Sp^O$
  is a right Quillen functor for the global model structures.
  Since the underlying functor $\rho_{\td{1}}\colon \M od_A\to \Sc p_{\gl}$ of $\rho$
  is descended from a right Quillen functor,
  it is a right adjoint, see \cite[Theorem 2.1]{mazelgee:adjunctions}.
  In particular, $\rho_{\td{1}}$ preserves limits.
  Also, because the functor $u_A$ detects and preserves global equivalences,
  its derived functor $\rho_{\td{1}}$ is conservative.
 
  Now we show that $\rho_{\td{1}}$ also preserves colimits.
  The global model structure on $\Mod_A$ is stable, hence so is
  the associated $\infty$-category $\M od_A$.
  Since the $\infty$-category $\Sc p_{\gl}$ is also stable,
  and since $\rho_{\td{1}}\colon \M od_A\to \Sc p_{\gl}$ preserves limits,
  it therefore also preserves finite colimits.
  Arbitrary coproducts (i.e., wedges) of orthogonal spectra, and hence also of $A$-modules,
  are homotopical for global equivalences by \cite[Corollary 3.1.37 (i)]{schwede:global}.
  So coproducts in $\Sp^O$ and in $\Mod_A$ represent coproducts
  in $\Sc p_{\gl}$ and $\M od_A$, respectively. Since the functor $u_A\colon\Mod_A\to\Sp^O$
  preserves arbitrary coproducts, so does the descended functor $\rho_{\td{1}}$.
  Since $\rho_{\td{1}}$ preserves finite colimits and arbitrary coproducts,
  it preserves arbitrary colimits.
  
  Lurie shows in \cite{lurie:higher_algebra} that
  the forgetful functor $u_{\gamma(A)}\colon \Mod_{\gamma(A)}(\Sc p^\sm_{\gl})\to\Sc p_{\gl}$
  is conservative and preserves limits and colimits,
  see his Corollary 4.2.3.2 for conservativity, 
  Corollary 4.2.3.3 for limits, and Corollary 4.2.3.5 for colimits,
  using the identification of $\Mod_{\gamma(A)}(\Sc p^\sm_{\gl})$
  with left modules $\LMod_{\gamma(A)}(\Sc p^\sm_{\gl})$ from Corollary 4.5.1.6.
  Since $\rho_{\td{1}}=u_{\gamma(A)}\circ\sigma_{\td{1}}$
  is conservative and preserves limits and colimits,
  we conclude that also the functor $\sigma_{\td{1}}\colon\M od_A\to\Mod_{\gamma(A)}(\Sc p^\sm_{\gl})$
  that underlies $\sigma$ is conservative and preserves limits and colimits.

  By \cite[Theorem 4.3.27 (ii)]{schwede:global},
  the functor $A\sm-\colon\Sp^{O,\sm}\to \Mod^\sm_A$ preserves global equivalences
  between cofibrant orthogonal spectra, and it sends cofibrant orthogonal spectra
  to cofibrant $A$-modules. So the restricted functor
  \[ (A\sm-)|_{\cof}\ \colon \ \Sp^{O,\sm}_{\cof}\ \to\ \Mod^\sm_{A,\cof} \]
  is strong symmetric monoidal and preserves global equivalences.
  The strong symmetric monoidal composite
  \[    \Sp^{O,\sm}_{\cof}\ \xra{(A\sm-)|_{\cof}}\ \Mod^\sm_{A,\cof} \
    \xra{\gamma^\sm_{A,\cof}}\ \M od^\sm_A\]
  thus inverts global equivalences. The modified universal property
  of Remark \ref{rk:monloc_addendum} (iii) provides a unique
  strong symmetric monoidal functor
  \[ L\ \colon\ \Sc p^\sm_{\gl}\ \to\ \M od^\sm_A \]
  equipped with a monoidal equivalence
  \begin{equation}\label{eq:descend L}
    L\circ \gamma^\sm_{\gl,\cof} \ \sim \ \gamma_A^\sm\circ (A\sm -)|_{\cof}\ .
  \end{equation}

  By \cite[Theorem 2.1]{mazelgee:adjunctions} applied to the Quillen pair $(A\sm-,u_A)$,
  their derived functors are adjoint.
  The derived functor of $A\sm-$ is $L_{\td1}$ by \eqref{eq:descend L},
  and that of $u_A$ is $\rho_{\td1}$ by \eqref{eq:forget and F},
  so $(L_{\td{1}},\rho_{\td{1}})$ is an adjoint functor pair.
  We claim that the $\infty$-category $\M od_A$ is generated under colimits by
  the image of the descended left adjoint $L_{\td{1}}\colon \Sc p_{\gl}\to \M od_A$.  
  As the localization of a model category, the $\infty$-category
  $\M od_A$ is cocomplete, see \cite[Theorem 2.5.9]{barnea-harpaz-horel}.
  We showed above that the right adjoint $\rho_{\td{1}}$ is conservative and
  preserves colimits. So $\rho_{\td{1}}$ is monadic by Lurie's
  Barr--Beck theorem \cite[Theorem 4.7.3.5]{lurie:higher_algebra}.
  Hence every object of $\M od_A$ can be presented as a colimit of a diagram
  all of whose values are in the essential image of $L_{\td{1}}$, see 
  \cite[Proposition 4.7.3.14]{lurie:higher_algebra}.
  In particular,  the $\infty$-category $\M od_A$ is generated under colimits by
  the objects in the image of $L_{\td{1}}\colon \Sc p_{\gl}\to \M od_A$.

  Now we will show that $\sigma$ is strong, and not only lax, symmetric monoidal.
  The {\em monoidal mate} of the commutative diagram of lax symmetric monoidal functors
  \[ \xymatrix{
      \Mod^\sm_A\ar[r]^-{u_A}\ar[d]_{\Mod(A|\gamma^\sm_{\gl})}&    \Sp^{O,\sm}\ar[d]^{\gamma^\sm_{\gl}}  \\
      \Mod^\tensor_{\gamma(A)}(\Sc p^\sm_{\gl})\ar[r]_-{u_{\gamma(A)}} &\Sc p^\sm_{\gl}   } \]
  is the composite monoidal transformation
  \begin{align}\label{eq:some_mon_tran}
    (\gamma(A)\sm -)\circ \gamma^\sm_{\gl}\ &\xra{  (\gamma(A)\sm -)\circ \gamma^\sm_{\gl}\circ\text{unit}}
    \
    (\gamma(A)\sm -)\circ \gamma^\sm_{\gl}\circ u_A \circ(A\sm -) \\
      &=\  (\gamma(A)\sm -)\circ u_{\gamma(A)} \circ \Mod(A|\gamma^\sm_{\gl})\circ(A\sm -) \nonumber\\
&\xra{\text{counit}\circ\Mod(A|\gamma^\sm_{\gl})\circ (A\sm -)} \  \Mod(A|\gamma^\sm_{\gl})\circ (A\sm -)  \ . \nonumber
  \end{align}
  The underlying morphism in $\Sc p^\sm_{\gl}$ of the value of  \eqref{eq:some_mon_tran} at
  an orthogonal spectrum $X$ is the monoidal structure morphism
  \[  \gamma_{\gl}(A)\sm \gamma_{\gl}(X)\ \to \ \gamma_{\gl}(A\sm X)\]
  of $\gamma^\sm_{\gl}\colon\Sp^{O,\sm}\to\Sc p^\sm_{\gl}$.
  This monoidal structure morphism is an equivalence on pairs of cofibrant objects,
  by Remark \ref{rk:monloc_addendum} (ii).
  Moreover, for cofibrant orthogonal spectra $X$, the functor $-\sm X$ is fully homotopical
  for global equivalences by \cite[Theorem 4.3.27 (i)]{schwede:global}.
  So by cofibrant replacement, the monoidal structure morphism is an equivalence
  already if only $X$ is cofibrant.
  So when restricted to cofibrant orthogonal spectra,
  the monoidal transformation \eqref{eq:some_mon_tran}
  is a monoidal equivalence, witnessing that the restricted mate diagram 
  \[ \xymatrix@C=15mm{
      \Mod^\sm_{A,\cof}\ar[d]_{\Mod(A|\gamma^\sm_{\gl})_{\cof}}
      &    \Sp^{O,\sm}_{\cof}\ar[d]^{\gamma^\sm_{\gl,\cof}}\ar[l]_-{(A\sm-)|_{\cof}}  \\
      \Mod^\tensor_{\gamma(A)}(\Sc p^\sm_{\gl}) &\Sc p^\sm_{\gl} \ar[l]^-{\gamma(A)\sm-}  } \]
  commutes.
  Here $\Mod(A|\gamma^\sm_{\gl})_{\cof}$ denotes the restriction
  of $\Mod(A|\gamma^\sm_{\gl})$ to cofibrant $A$-modules.
  Since the functors $\gamma^\sm_{\gl,\cof}$
  and $\gamma(A)\sm-$ are strong symmetric monoidal,
  so is their composite,
  and thus also the functor $\Mod(A|\gamma_{\gl}^\sm)_{\cof}\circ(A\sm-)|_{\cof}$.
  The monoidal equivalences
  \[ \sigma\circ L\circ \gamma_{\gl,\cof}^\sm\ \sim \
    \sigma\circ \gamma_{A,\cof}^\sm\circ(A\sm-)|_{\cof}\ \sim \
    \Mod(A|\gamma_{\gl}^\sm)_{\cof}\circ(A\sm-)|_{\cof}\]
  then show that the lax symmetric monoidal functor
  $ \sigma\circ L\circ \gamma_{\gl,\cof}^\sm\colon\Sp^{O,\sm}_{\cof}\to \Mod^\tensor_{\gamma(A)}(\Sc p^\sm_{\gl})$ is strong.
  The first monoidal equivalence is \eqref{eq:descend L} composed with $\sigma$;
  the second is \eqref{eq:descend Mod(A)}, restricted to cofibrant $A$-modules,
  and composed with $(A\sm-)|_{\cof}$.

  The restricted monoidal localization functor
  $\gamma^\sm_{\gl,\cof}\colon\Sp_{\cof}^{O,\sm}\to\Sc p^\sm_{\gl}$
  is strong and essentially surjective on objects, so it detects strongness
  as explained in Remark \ref{rk:monloc_addendum} (ii).
  Hence the lax symmetric monoidal functor
  $\sigma\circ L\colon\Sc p^\sm_{\gl}\to \Mod^\tensor_{\gamma(A)}(\Sc p^\sm_{\gl})$ is strong.
  The monoidal products of both $\M od^\sm_A$ and $\Mod^\tensor_{\gamma(A)}(\Sc p^\sm_{\gl})$
  preserve colimits in each variable.
  Indeed, in $\M od^\sm_A$ we may consider an object of the form $\gamma_A(M)$
  for a cofibrant $A$-module $M$. Then $\gamma_A(M)\tensor-$
  is a total left derived functor of the left Quillen functor $M\sm_A-$,
  and hence a left adjoint. And for $\Mod^\tensor_{\gamma(A)}(\Sc p^\sm_{\gl})$,
  this is \cite[Theorem 3.4.4.2]{lurie:higher_algebra} for $\Oc=\Cc omm$,
  saying that $\Mod^\tensor_{\gamma(A)}(\Sc p^\sm_{\gl})$ is presentably symmetric monoidal.
  Since $\sigma_{\td{1}}$ preserves colimits, and since the $\infty$-category $\M od_A$
  is generated under colimits by the image of the strong 
  symmetric monoidal left adjoint $L\colon \Sc p^\sm_{\gl}\to\M od^\sm_A$,
  we conclude that $\sigma\colon \M od^\sm_A\to \Mod^\tensor_{\gamma(A)}(\Sc p^\sm_{\gl})$
  is strong, as claimed.
  
  Now we show that $\sigma$ is an equivalence of underlying $\infty$-categories.
  To this end we apply \cite[Corollary 4.7.3.16]{lurie:higher_algebra},
  including its addendum, 
  to the functors $\sigma_{\td{1}}$, $u_{\gamma(A)}$
  and $\rho_{\td{1}}=u_{\gamma(A)}\circ\sigma_{\td{1}}$.
  We need to verify the hypotheses of Lurie's result.
  We argued above that $(L_{\td{1}},\rho_{\td{1}})$ is an adjoint functor pair.
  Also, $(\gamma(A)\sm-,u_{\gamma(A)})$ is an adjoint functor pair,
  so hypothesis (1) holds.
  The $\infty$-categories $\M od_A$ and $\Mod_{\gamma(A)}(\Sc p^\sm_{\gl})$
  admit all colimits, and both $\rho_{\td{1}}$ and $u_{\gamma(A)}$
  are conservative and preserve all colimits, verifying hypotheses (2), (3), (4)
  and the hypothesis of the addendum. The hypothesis that is most involved to
  verify is (5); in its equivalent form of 
  \cite[Remark 4.7.3.17]{lurie:higher_algebra}, it asks that the following
  composite natural transformation of endofunctors of $\Sc p_{\gl}$ is an equivalence:
  \begin{align*}
    u_{\gamma(A)}\circ (\gamma(A)\sm -)
    \xra{u_{\gamma(A)}\circ(\gamma(A)\sm-)\circ\text{unit}}
    \ &u_{\gamma(A)}\circ (\gamma(A)\sm- )\circ \rho_{\td{1}}\circ L_{\td{1}}\\
    =
    \quad &u_{\gamma(A)}\circ (\gamma(A)\sm- )\circ u_{\gamma(A)}\circ\sigma_{\td{1}}\circ L_{\td{1}}\\
    \xra{u_{\gamma(A)}\circ \text{counit}\circ\sigma_{\td{1}}\circ L_{\td{1}}}\
      &u_{\gamma(A)}\circ \sigma_{\td{1}}\circ L_{\td{1}} =
        \rho_{\td{1}}\circ L_{\td{1}}\ . 
  \end{align*}
  It suffices to show that this transformation
  becomes an equivalence after precomposition with
  the essentially surjective functor $\gamma_{\gl,\cof}\colon\Sp^O_{\cof}\to\Sc p_{\gl}$.
  Under the identifications \eqref{eq:forget and F} and \eqref{eq:descend L},
  the unit of the derived adjunction $(L_{\td1},\rho_{\td1})$
  corresponds to the image under $\gamma_{\gl}$
  of the 1-categorical adjunction unit $X\to u_A(A\sm X)$,
  see the proof of \cite[Theorem 2.1]{mazelgee:adjunctions}.
  So the composite with $\gamma_{\gl,\cof}$
  is homotopic to the following composite of equivalences:
  \begin{align*}
    u_{\gamma(A)}\circ (\gamma(A)\sm -)\circ\gamma_{\gl,\cof}  \
    _{\eqref{eq:some_mon_tran}}&\sim \
                                 u_{\gamma(A)}\circ \Mod(A|\gamma^\sm_{\gl})_{\td{1}}\circ(A\sm -)|_{\cof}  \\
                               &= \ \gamma_{\gl}\circ u_A\circ(A\sm -)|_{\cof}  \\
    _{\eqref{eq:forget and F}} &\sim \ \rho_{\td{1}}\circ\gamma_A\circ(A\sm -)|_{\cof}  \\
    _{\eqref{eq:descend L}}&\sim \ \rho_{\td{1}}\circ L_{\td{1}}\circ\gamma_{\gl,\cof} \
  \end{align*}
  This completes the verification of the hypothesis of 
  \cite[Corollary 4.7.3.16]{lurie:higher_algebra},
  which then shows that the functor $\sigma_{\td{1}}$ is an equivalence of $\infty$-categories.

We have now shown that $\sigma\colon\M od_A^\sm\to\Mod^\tensor_{\gamma(A)}(\Sc p_{\gl}^\sm)$
is an equivalence of symmetric monoidal $\infty$-categories.
So the functor $\Mod(A|\gamma^\sm_{\gl})$ coincides with the monoidal localization
$\gamma^\sm_A$ up to composition with an equivalence, by \eqref{eq:descend Mod(A)}.
So $\Mod(A|\gamma^\sm_{\gl})$ is itself a monoidal localization.
\end{proof}

In the remainder of this appendix we study the more
refined and highly structured version of commutative global ring spectra
that we call `ultra-commutative'.

\begin{defn}[Ultra-commutative ring spectra]\label{def:com_global_ring}
  The {\em $\infty$-category of ultra-commutative ring spectra}  $\UR_{\gl}$
  is the localization of the 1-category of commutative orthogonal ring spectra
  at the class of morphisms that are global equivalences of underlying orthogonal spectra.
  We denote the localization functor by
  \[ \gamma^{\uc}_{\gl}\ : \  \CAlg(\Sp^{O,\sm})\  \to \ \UR_{\gl} \ .\]
\end{defn}

We shall consistently abuse notation and denote a commutative orthogonal
ring spectrum and the ultra-commutative ring spectrum that it represents,
i.e., the image under the localization functor $\gamma^{\uc}_{\gl}$,
by the same symbol.

\begin{construction}
  We define another `forgetful' functor
  \begin{equation}\label{eq:ucom2CR_gl}
    u_{\gl}\ :\ \UR_{\gl}\ \to\  \CAlg(\Sc p_{\gl}^\sm) 
  \end{equation}
  from ultra-commutative ring spectra to commutative global ring spectra.
  The monoidal localization $\gamma^\sm_{\gl}:\Sp^{O,\sm}\to \Sc p_{\gl}^\sm$ induces a functor
  \[  \CAlg(\gamma_{\gl}^\sm) \ :\  \CAlg(\Sp^{O,\sm})\ \ \to \ \CAlg(\Sc p^\sm_{\gl}) \]
  from the 1-category of commutative orthogonal ring spectra to the
  $\infty$-category of commutative global ring spectra.
  Since the functor $\gamma_{\gl}=(\gamma^\sm_{\gl})_{\td{1}}$ inverts global equivalences
  and the functor $\ev_{\td{1}}\colon\CAlg(\Sc p_{\gl}^\sm)\to\Sc p_{\gl}$ is conservative,
  the functor  $\CAlg(\gamma_{\gl}^\sm)$ sends global equivalences
  of commutative orthogonal ring spectra to equivalences in $\CAlg(\Sc p^\sm_{\gl})$.
  So the universal property of localizations
  provides an essentially unique functor \eqref{eq:ucom2CR_gl}
  equipped with a natural equivalence
  \[  u_{\gl} \circ\gamma^{\uc}_{\gl}\ \sim\ \CAlg(\gamma^\sm_{\gl})\ .     \]
\end{construction}

The forgetful functor \eqref{eq:ucom2CR_gl} is {\em not} an equivalence;
rather, it exhibits the $\infty$-category of ultra-commutative ring spectra
as monadic and comonadic over the $\infty$-category of commutative global ring spectra,
see Theorem \ref{thm:UR_monadic} (iii) below.
This suggests thinking of an ultra-commutative ring spectrum
as extra structure on a commutative global ring spectrum.
The difference between ultra-commutative and
commutative global ring spectra can already be seen at the level of natural operations:
the 0th equivariant homotopy groups of a commutative global ring spectrum
form a global Green functor \cite[Definition 5.1.3]{schwede:global}.
For ultra-commutative ring spectra, this Green functor
comes with additional multiplicative operations that can equivalently be encoded as power operations
or norm maps; depending on the perspective, these operations enhance the global Green functor
to a global power functor \cite[Theorem 5.1.11]{schwede:global}, or a global Tambara functor,
see \cite[Remark 5.1.7]{schwede:global}.
At the level of homotopy operations,
the comonadicity is made explicit in \cite[Theorem 5.2.13]{schwede:global},
and the monadicity is sketched in \cite[Remark 5.2.20]{schwede:global}.

\begin{theorem}\label{thm:UR_monadic}\ 
  \begin{enumerate}[\em (i)]
  \item 
    The functor $\ev_{\td{1}}\circ u_{\gl}\colon \UR_{\gl}\to\Sc p_{\gl}$ is monadic
    and preserves sifted colimits. 
  \item  The $\infty$-category $\UR_{\gl}$ of ultra-commutative ring spectra
    is presentable.
  \item
    The functor $u_{\gl}\colon \UR_{\gl}\to\CAlg(\Sc p_{\gl}^\sm)$ is both monadic and comonadic.
  \end{enumerate}
\end{theorem}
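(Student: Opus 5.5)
The plan is to reduce everything to the global model structure on commutative orthogonal ring spectra from \cite[Theorem 5.4.3]{schwede:global}, whose weak equivalences are the underlying global equivalences. This model structure is cofibrantly generated and combinatorial, so its $\infty$-categorical localization $\UR_{\gl}$ is presentable. That settles (ii), provided we check that the localization of a combinatorial model category is presentable, for instance via \cite[Theorem 2.5.9]{barnea-harpaz-horel} together with the standard comparison with combinatorial simplicial model categories.

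For (i), I would descend the free--forgetful Quillen adjunction $\mathbb P\colon \Sp^O\rightleftarrows \CAlg(\Sp^O)\colon U$. By \cite[Theorem 2.1]{mazelgee:adjunctions}, the derived functors form an adjunction. The derived right adjoint is $\ev_{\td1}\circ u_{\gl}$, and it is conservative because global equivalences of commutative ring spectra are detected underlying. To check that it preserves sifted colimits, I would use that filtered colimits and geometric realizations of commutative orthogonal ring spectra are computed underlying, and that these are homotopical for global equivalences, possibly after a cofibrant replacement. The relevant input is the flatness of cofibrant commutative ring spectra in \cite[Chapter 5]{schwede:global}. Barr--Beck--Lurie \cite[Theorem 4.7.3.5]{lurie:higher_algebra} then gives monadicity.

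For (iii), monadicity of $u_{\gl}$ follows from (i) together with the forgetful functor $\ev_{\td1}\colon\CAlg(\Sc p^\sm_{\gl})\to\Sc p_{\gl}$, which is conservative and preserves sifted colimits \cite[Corollary 3.2.2.5, Proposition 3.2.3.1]{lurie:higher_algebra}. Since $\ev_{\td1}\circ u_{\gl}$ is conservative, so is $u_{\gl}$. Since both composites preserve sifted colimits and limits, and $\ev_{\td1}$ creates them, $u_{\gl}$ preserves sifted colimits and limits. The adjoint functor theorem, using (ii) and Theorem \ref{thm:Sp_gl properties} (iii), then supplies a left adjoint $\lambda_{\gl}$, and Barr--Beck gives monadicity.

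For comonadicity, I would show that $u_{\gl}$ preserves all colimits and has a right adjoint. Sifted colimits are already handled, so it remains to treat finite coproducts, which are smash products, and the initial object. Coproducts in both $\UR_{\gl}$ and $\CAlg(\Sc p^\sm_{\gl})$ are modelled by the (derived) smash product: for cofibrant commutative orthogonal ring spectra the point-set smash product is homotopical and computes the global smash product by Remark \ref{rk:monloc_addendum} (ii). Both categories have initial object $\mS$. A right adjoint then exists by presentability, and conservativity together with preservation of limits of $u_{\gl}$-split cosimplicial objects, which holds since $u_{\gl}$ preserves all limits, gives comonadicity by the dual Barr--Beck theorem.

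The main obstacle is the homotopical control of point-set constructions in commutative orthogonal ring spectra. Specifically, we need that geometric realizations and smash products of (suitably cofibrant) commutative ring spectra are invariant under global equivalences and agree with the $\infty$-categorical operations. I would try to obtain this from the flatness of underlying spectra of cofibrant commutative ring spectra and the homotopicality of $X\sm-$ for flat $X$ \cite[Theorem 4.3.27]{schwede:global}. If that does not suffice, the fallback is the rectification Theorem \ref{thm:monoidal_loc}, applied to identify pushouts $A\sm_R B$ with relative tensor products in $\Mod_{\gamma(R)}$.
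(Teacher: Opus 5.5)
Your architecture is sound: prove (i), then obtain (iii) from (i) and presentability, getting all colimits from sifted colimits plus finite coproducts. Two of its ingredients are fine. In (i), conservativity together with the derived adjunction from \cite[Theorem 2.1]{mazelgee:adjunctions} works. Your finite-coproduct argument also works: cofibrant commutative ring spectra are flat by \cite[Theorem 5.4.3 (ii)]{schwede:global}, so Remark \ref{rk:monloc_addendum} (ii) applies.

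Your proof of (ii), however, rests on a false premise. The model structure of \cite[Theorem 5.4.3]{schwede:global} is cofibrantly generated but not combinatorial, because orthogonal spectra of compactly generated spaces do not form a locally presentable category. Moreover, \cite[Theorem 2.5.9]{barnea-harpaz-horel} only gives completeness and cocompleteness. This is repairable within your outline, because your (i) does not use (ii). Once $\ev_{\td{1}}\circ u_{\gl}$ is monadic and preserves sifted colimits, $\UR_{\gl}$ is the $\infty$-category of algebras for an accessible monad on the presentable $\Sc p_{\gl}$. It is therefore presentable by \cite[Corollary 6.8]{henry-meadows:monads}, which is how the paper argues.

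The genuine gap is the preservation of sifted colimits in (i). This is the only non-formal content of the theorem, and you assert it rather than prove it, as you acknowledge. To use point-set realizations or filtered colimits, you must do two things:
\begin{enumerate}
\item represent an $\infty$-categorical colimit in $\UR_{\gl}$ by the point-set colimit of a Reedy (or projectively) cofibrant strict diagram of commutative orthogonal ring spectra;
\item show that the underlying diagram of orthogonal spectra is cofibrant enough for its point-set colimit to compute the colimit in $\Sc p_{\gl}$.
\end{enumerate}
Neither step is automatic. Latching objects are formed in $\CAlg(\Sp^O)$: for a simplicial object $X$, for example, $L_2X=X_1\sm_{X_0}X_1$ rather than $X_1\cup_{X_0}X_1$. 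So cofibrancy of the underlying diagram needs a separate argument. Point-set filtered colimits and realizations of orthogonal spectra are not homotopical without such hypotheses, and flatness of the individual levels does not settle this. Your route might be completable, for instance by showing that degeneracies are flat cofibrations and comparing with fat realizations. As written, though, the crux is missing.

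The paper avoids point-set sifted colimits altogether. It shows that $u_{\gl}$ preserves all colimits by checking only two cases: pushouts along cofibrations between cofibrant objects, and arbitrary coproducts of cofibrant objects. It then applies (the dual of) \cite[Corollary 7.7.5]{cisinski:book} on right exact, homotopy cocontinuous functors.
\begin{itemize}
\item Pushouts are handled by exactly your fallback idea. By Theorem \ref{thm:monoidal_loc} and the equivalence $\CAlg(\Mod_{\gamma(A)}^\tensor(\Sc p^\sm_{\gl}))\simeq\gamma(A)\bs\CAlg(\Sc p^\sm_{\gl})$, a pushout under a cofibrant $A$ becomes a binary coproduct of commutative algebras in cofibrant $A$-modules. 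This coproduct is preserved because the localization is strong monoidal on cofibrant $A$-modules.
\item Infinite coproducts are written as filtered colimits of finite ones over the poset of finite subsets. A pushout-product induction shows that the underlying diagram is Reedy cofibrant.
\end{itemize}
Sifted-colimit preservation in (i) then follows formally from that of $\ev_{\td{1}}\colon\CAlg(\Sc p^\sm_{\gl})\to\Sc p_{\gl}$.
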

\begin{proof}
  As the localization of a model structure established in \cite[Theorem 5.4.3]{schwede:global},
  the $\infty$-category $\UR_{\gl}$ is complete and cocomplete
  by \cite[Theorem 2.5.9]{barnea-harpaz-horel}.
  We shall show that the functor
  \[ \CAlg(\gamma^\sm_{\gl})\colon \CAlg(\Sp^{O,\sm})\to\CAlg(\Sc p^\sm_{\gl}) \]
  preserves two particular kinds of colimits:
  \begin{enumerate}[(1)]
  \item The functor $\CAlg(\gamma^\sm_{\gl})$ takes pushouts along cofibrations
    between cofibrant commutative orthogonal ring spectra to pushouts in $\CAlg(\Sc p^\sm_{\gl})$.
  \item The functor $\CAlg(\gamma^\sm_{\gl})$
    takes coproducts of cofibrant commutative orthogonal ring spectra to coproducts.
  \end{enumerate}
  Granted this for the moment, we now deduce from (1) and (2) the fact that
  \begin{enumerate}[(1)]\setcounter{enumi}{2}
  \item the functor $u_{\gl}\colon \UR_{\gl}\to\CAlg(\Sc p^\sm_{\gl})$ preserves all colimits.
  \end{enumerate}  
  To prove (3), we exploit that the cofibrations and weak equivalences
  of the global model category structure from \cite[Theorem 5.4.3]{schwede:global}
  make the category $\CAlg(\Sp^{O,\sm})$ of commutative orthogonal ring spectra
  into a {\em category with weak equivalences and cofibrations}
  in the sense of Cisinski \cite[Definition 7.4.12]{cisinski:book}
  that is moreover {\em homotopy cocomplete}
  in the sense of \cite[Definition 7.7.2]{cisinski:book}.
  In this language, property (1)
  and the fact that $\CAlg(\gamma^\sm_{\gl})$ preserves initial objects
  (a special case of (2))
  mean that the functor $\CAlg(\gamma^\sm_{\gl})$ is {\em right exact}
  in the sense of \cite[Definition 7.5.2]{cisinski:book}, with respect
  to the degenerate cofibration structure on the $\infty$-category
  $\CAlg(\Sc p^\sm_{\gl})$, i.e., where all morphisms are cofibrations,
  and the weak equivalences are the equivalences.
  And property (2) means that the functor $\CAlg(\gamma^\sm_{\gl})$ is {\em homotopy cocontinuous}
  in the sense of \cite[Definition 7.7.2]{cisinski:book}.
  Since the weak equivalences in the degenerate cofibration structure on $\CAlg(\Sc p^\sm_{\gl})$
  are already invertible, $\CAlg(\Sc p^\sm_{\gl})$ is its own localization,
  and the left derived functor of $\CAlg(\gamma^\sm_{\gl})$ is nothing but
  the factorization  $u_{\gl}\colon \UR_{\gl}\to\CAlg(\Sc p^\sm_{\gl})$
  through the localization of the source. 
  The dual (opposite) of \cite[Corollary 7.7.5]{cisinski:book} thus shows that $u_{\gl}$ preserves
  all colimits.

  Now we show properties (1) and (2). 
  For (1) we consider a commutative algebra $T$ in a symmetric monoidal $\infty$-category $\Dc$.
  As discussed in Construction \ref{con:Mod(D)},
  the symmetric monoidal $\infty$-category $\Mod_T^\tensor(\Dc)$
  comes with a lax symmetric monoidal forgetful functor
  \[ u_T\ \colon \ \Mod^\tensor_T(\Dc)\ \to \ \Dc\ .  \]
  The induced functor of commutative algebras
  \[ \CAlg(u_T)\ \colon \ \CAlg(\Mod_T^\tensor(\Dc))\ \to \ \CAlg(\Dc)  \]
  takes the initial object of the source to $T$; so it lifts to a functor
  \begin{equation}\label{eq:compare_slices}
    \CAlg(\Mod_T^\tensor(\Dc))\ \xra{\ \sim \ } \ T\backslash\CAlg(\Dc)\ ,  
  \end{equation}
  and this lift is an equivalence by \cite[Corollary 3.4.1.7]{lurie:higher_algebra}
  for $\mathcal O=\text{Comm}$.

  Next we consider a commutative orthogonal ring spectrum $A$.
  As before, we denote the commutative global ring spectrum it represents
  by $\gamma(A)=\CAlg(\gamma^\sm_{\gl})(A)$.
  By Theorem \ref{thm:monoidal_loc}, the lax symmetric monoidal functor
  $\Mod(A|\gamma^\sm_{\gl})$ is a monoidal localization.
  So the composite
  \[ \Mod_{\cof}(\gamma^\sm_{\gl})\ \colon\
    \Mod_{A,\cof}^\sm \xra{\text{incl}} \Mod_A^\sm \xra{\Mod(A|\gamma^\sm_{\gl})}
    \Mod_{\gamma(A)}^\tensor(\Sc p^\sm_{\gl})\]
  is strong symmetric monoidal,  see Remark \ref{rk:monloc_addendum} (ii).
  Since finite coproducts in $\infty$-categories of commutative
  algebras are given by the underlying monoidal product, the induced functor
  on $\infty$-categories of commutative algebras preserves finite coproducts.
  This functor features in the composite:
  \begin{equation} \label{eq:THEcomposite}
        \CAlg(\Mod_{A,\cof}^\sm) \ \xra{\CAlg(\Mod_{\cof}(\gamma^\sm_{\gl}))} \
    \CAlg(\Mod_{\gamma(A)}^\tensor(\Sc p^\sm_{\gl}))\ \xra[\sim]{\eqref{eq:compare_slices}}
    \gamma(A)\bs\CAlg(\Sc p^\sm_{\gl})\ .
      \end{equation}
  The first functor in this composite preserves finite coproducts,
  and the second is an equivalence as a special case of \eqref{eq:compare_slices}.
  So also the composite \eqref{eq:THEcomposite} preserves finite coproducts.
  
  Now we can wrap up and prove (1).
  We consider a pushout square in $\CAlg(\Sp^{O,\sm})$
  \begin{equation}
    \begin{aligned}\label{eq:1-po}
      \xymatrix{A\ar[r]^-j\ar[d]_-i & C\ar[d]^k\\ B\ar[r]_-h & D}   
    \end{aligned}
  \end{equation}
  in which all four commutative orthogonal ring spectra are cofibrant,
  and the morphism $i$ is a cofibration,
  in the global model structure of $\CAlg(\Sp^{O,\sm})$
  established in \cite[Theorem 5.4.3]{schwede:global}.
  By factoring $j$ as a composite of a cofibration followed by
  a weak equivalence in the global model structure of $\CAlg(\Sp^{O,\sm})$
  and appealing to the cube lemma we can assume without loss of generality
  that also the morphism $j$ is a cofibration.
  
  We write $l=h i=k j\colon A\to D$ for the composite.
  By \cite[Theorem 5.4.3 (ii a)]{schwede:global}, the underlying
  morphisms of $A$-modules of $i$, $j$ and $l$ are cofibrations
  in the global model structure on $A$-modules from 
  \cite[Corollary 4.3.29 (i)]{schwede:global}.
  Since $A$ is cofibrant as a module over itself,
  we conclude that all of $A$, $B$, $C$ and $D$ have cofibrant underlying $A$-modules.
  So we can---and will---view the pushout square \eqref{eq:1-po}
  as a pushout square in $\CAlg(\Mod_{A,\cof}^\sm)$.
  Since $A$ is initial in $\CAlg(\Mod_{A,\cof}^\sm)$,
  this means that the morphisms $h\colon B\to D$
  and $k\colon C\to D$ witness $l\colon A\to D$
  as the coproduct of $i\colon A\to B$ and
  $j\colon A\to C$ in the $\infty$-category $\CAlg(\Mod_{A,\cof}^\sm)$.
  Since the composite \eqref{eq:THEcomposite} preserves binary coproducts,
  we conclude that $\gamma(h)$ and $\gamma(k)$ express $\gamma(l)\colon \gamma(A)\to\gamma(D)$
  as the coproduct of $\gamma(i)\colon \gamma(A)\to\gamma(B)$ and
  $\gamma(j)\colon \gamma(A)\to\gamma(C)$
  in the slice $\infty$-category $\gamma(A)\backslash\CAlg(\Sc p^\sm_{\gl})$.
  This precisely means that $\gamma$ takes the pushout square \eqref{eq:1-po} to a pushout
  in the $\infty$-category $\CAlg(\Sc p^\sm_{\gl})$, and we have shown claim (1). 
  
  Now we prove claim (2) by exploiting that infinite coproducts
  are specific filtered colimits of finite coproducts.
  The initial object in both $\CAlg(\Sp^{O,\sm})$ and $\CAlg(\Sc p^\sm_{\gl})$ is given by
  the monoidal unit, the sphere spectrum, with its unique commutative algebra structure.
  So the functor $\CAlg(\gamma^\sm_{\gl})$ preserves initial objects.
  Since $\CAlg(\gamma^\sm_{\gl})$ preserves initial objects and
  pushouts along cofibrations between cofibrant objects,
  it also preserves all finite coproducts of cofibrant objects.
  
  Now we let $\{R_i\}_{i\in I}$ be an $I$-indexed family
  of cofibrant objects in the global model structure of $\CAlg(\Sp^{O,\sm})$.
  We write $P$ for the filtered poset, under inclusion, of finite subsets of $I$.
  We define a functor $\Phi\colon P\to \CAlg(\Sp^{O,\sm})$ by sending a finite subset $S$ of $I$
  to the coproduct $\bigwedge_{i\in S} R_i$, and an inclusion $S\subset T$
  to the canonical morphism $\bigwedge_S R_i \to \bigwedge_T R_i$.
  
  We claim that the underlying $\Sp^O$-valued functor of $\Phi$, i.e., the composite
  \[ P \ \xra{\ \Phi\ } \ \CAlg(\Sp^{O,\sm}) \ \xra{\ev_{\td{1}}} \ \Sp^O  \]
  is Reedy cofibrant, i.e., for every finite subset $S$ of $I$, the latching morphism
  \[ l_S\ : \ \colim_{C\subsetneq S}  \Phi(C)_{\td{1}}\ \to \ \Phi(S)_{\td{1}} \]
  is a flat cofibration of orthogonal spectra; here the colimit in the source of $l_S$ is
  formed over the poset of proper subsets of $S$.
  We prove this by induction over the cardinality of $S$. For $S=\emptyset$
  there are no proper subsets, so the latching morphism is the unique morphism from
  the initial orthogonal spectrum to $\Phi(\emptyset)_{\td{1}}=\mS$, and thus a flat cofibration.
  If $S$ is not empty, we choose an $i\in S$ and we set $S'=S\setminus\{i\}$.
  Because the monoidal product preserves colimits in each variable, 
  the latching object can be presented as the following pushout:
  \[ \xymatrix{\mS\sm (\colim_{C\subsetneq S'} \Phi(C)_{\td{1}})\ar[r]^-{\mS\sm l_{S'}}\ar[d]_{\text{unit}\sm \Id} &
      \mS\sm \Phi(S')_{\td{1}}\ar[d]\\
      R_i\sm (\colim_{C\subsetneq S'} \Phi(C)_{\td{1}})\ar[r]  & 
      \colim_{C\subsetneq S} \Phi(C)_{\td{1}}   } \]
  Under this interpretation, the latching morphism
  $l_S\colon\colim_{C\subsetneq S} \Phi(C)_{\td{1}}\to \Phi(S)_{\td{1}}=R_i\sm \Phi(S')_{\td{1}}$ becomes
  the pushout product of the unit morphism $\mS\to R_i$
  and the latching morphism $l_{S'}\colon\allowbreak\colim_{C\subsetneq S'} \Phi(C)_{\td{1}}\to\Phi(S')_{\td{1}}$.
  The former is a flat cofibration by \cite[Theorem 5.4.3 (ii c)]{schwede:global},
  and $l_{S'}$ is a flat cofibration by induction.
  So the pushout product property lets us conclude that $l_S$ is a flat cofibration.
  This concludes the proof that the underlying $\Sp^O$-valued functor of $\Phi$ is Reedy cofibrant.

  Now we let $\bar\Phi\colon P^\triangleright\to \CAlg(\Sp^{O,\sm})$
  be a colimit of $\Phi$. Then the value of $\bar\Phi$ at the cone point
  is in particular a coproduct in the 1-category $\CAlg(\Sp^{O,\sm})$
  of the family  $\{R_i\}_{i\in I}$.
  We contemplate the following commutative diagram of $\infty$-categories:
  \[ \xymatrix@C=10mm{ P^\triangleright \ar[r]^-{\bar\Phi}\ar@/_1pc/[dr]_-{\CAlg(\gamma^\sm_{\gl})\circ\bar\Phi} &
      \CAlg(\Sp^{O,\sm})\ar[r]^-{\ev_{\td{1}}} \ar[d]^{\CAlg(\gamma^\sm_{\gl})} &
      \Sp^O\ar[d]^{\gamma_{\gl}}\\
      &  \CAlg(\Sc p^{\sm}_{\gl})\ar[r]_-{\ev_{\td{1}}} &\Sc p_{\gl}
    } \]
  The 1-categorical forgetful functor $\ev_{\td{1}}\colon \CAlg(\Sp^{O,\sm})\to \Sp^O$
  preserves filtered colimits, so the composite $\ev_{\td{1}}\circ\bar\Phi$
  is a colimit in the 1-category of orthogonal spectra.
  As a colimit of a Reedy cofibrant diagram, it represents
  the colimit in the associated $\infty$-category, i.e., the composite 
  $\gamma_{\gl}\circ \ev_{\td{1}}\circ\bar\Phi$ is a colimit in the $\infty$-category $\Sc p_{\gl}$
  of global spectra.
  The functor $\ev_{\td{1}}\colon  \CAlg(\Sc p^\sm_{\gl})\to\Sc p_{\gl}$
  creates filtered colimits by \cite[Proposition 3.2.3.1]{lurie:higher_algebra}.
  So $\CAlg(\gamma^\sm_{\gl})\circ \bar\Phi$ is a colimit in $\CAlg(\Sc p^\sm_{\gl})$.
  We have already shown that $\CAlg(\gamma^\sm_{\gl})$ preserves finite coproducts,
  so for every finite subset $S$ of $I$, the object $\CAlg(\gamma^\sm_{\gl})(\Phi(S))$
  is a coproduct  in $\CAlg(\Sc p^\sm_{\gl})$ of $\{\CAlg(\gamma^\sm_{\gl})(R_i)\}_{i\in S}$.
  Hence the value of $\CAlg(\gamma^\sm_{\gl})\circ \bar\Phi$ at the cone point
  is the $P$-filtered colimit of the finite coproducts,
  and hence the  coproduct of the family $\{\CAlg(\gamma^\sm_{\gl})(R_i)\}_{i\in I}$.

  Now we show part (i).
  The weak equivalences in any model category are saturated,
  i.e., a morphism is a weak equivalence if and only if it becomes an equivalence in
  the localization.
  So the functor $\ev_{\td{1}}\circ u_{\gl}\colon \UR_{\gl}\to\Sc p_{\gl}$ is conservative.
  Since this composite is descended from a fully homotopical right Quillen functor
  of model categories, it has a left adjoint by \cite[Theorem 2.1]{mazelgee:adjunctions}.
  We showed above that $u_{\gl}\colon \UR_{\gl}\to\CAlg(\Sc p^\sm_{\gl})$
  preserves all colimits.
  The forgetful functor  $\ev_{\td{1}}\colon \CAlg(\Sc p^\sm_{\gl})\to\Sc p_{\gl}$
  preserves sifted colimits by \cite[Proposition 3.2.3.1]{lurie:higher_algebra}.
  So the composite $\ev_{\td{1}}\circ u_{\gl}\colon \UR_{\gl}\to\Sc p_{\gl}$ preserves sifted colimits.
  In particular, $\ev_{\td{1}}\circ u_{\gl}$ preserves geometric realization,
  i.e., colimits of simplicial objects.
  Since $\ev_{\td{1}}\circ u_{\gl}$ is conservative, has a left adjoint
  and preserves geometric realization, it is monadic by the Lurie--Barr--Beck theorem
  \cite[Theorem 4.7.3.5]{lurie:higher_algebra}.

  (ii) We exploit that the $\infty$-category $\Sc p_{\gl}$
  of global spectra is compactly generated,
  hence $\kappa$-presentable, for some regular cardinal $\kappa$.
  The functor $\ev_{\td{1}}\circ u_{\gl}\colon \UR_{\gl}\to\Sc p_{\gl}$ is monadic by (i).
  So there is a monad $\M$ on the presentable $\infty$-category
  $\Sc p_{\gl}$ such that $\UR_{\gl}$ is equivalent to $\Alg_\M(\Sc p_{\gl})$.
  Since the functor $\ev_{\td{1}}\circ u_{\gl}\colon \UR_{\gl}\to\Sc p_{\gl}$ preserves
  sifted colimits, the underlying endofunctor of $\Sc p_{\gl}$
  of the monad $\M$ preserves sifted colimits.
  So the $\infty$-category of algebras over $\M$,
  and hence also $\UR_{\gl}$,  is $\kappa$-presentable
  by  \cite[Corollary 6.8]{henry-meadows:monads}.

  (iii) The functor $\ev_{\td{1}}\circ u_{\gl}\colon \UR_{\gl}\to\Sc p_{\gl}$ is conservative,
  hence so is the functor $u_{\gl}\colon \UR_{\gl}\allowbreak\to \CAlg(\Sc p^\sm_{\gl})$.
  We argued above that the functor $u_{\gl}$ preserves all colimits;
  so it is in particular accessible.
  The functor $\ev_{\td{1}}\circ u_{\gl}\colon \UR_{\gl}\to\Sc p_{\gl}$ has a left adjoint,
  so it preserves all limits. Since 
  $\ev_{\td{1}}\colon \CAlg(\Sc p^\sm_{\gl})\to\Sc p_{\gl}$
  creates all limits by \cite[Corollary 3.2.2.5]{lurie:higher_algebra},
  we deduce that the functor $u_{\gl}$ preserves all limits.
  The $\infty$-category $\CAlg(\Sc p^\sm_{\gl})$ is
  presentable by \cite[Corollary 3.2.3.5]{lurie:higher_algebra}.
  Since $\UR_{\gl}$ is also presentable by (ii), and $u_{\gl}$ preserves all limits and colimits,
  it has both a left adjoint and a right adjoint
  by Lurie's adjoint functor theorem \cite[Corollary 5.5.2.9]{lurie:htt}.

  Since $u_{\gl}$ is conservative, has a left adjoint,
  and preserves all colimits, $u_{\gl}$ is monadic
  by the Lurie--Barr--Beck theorem \cite[Theorem 4.7.3.5]{lurie:higher_algebra}.
  Since the functor $u_{\gl}^{\op}\colon (\UR_{\gl})^{\op}\to(\CAlg(\Sc p^\sm_{\gl}))^{\op}$
  is conservative, has a left adjoint and preserves all colimits,
  $u_{\gl}^{\op}$ is monadic by the same theorem; thus $u_{\gl}$ is comonadic.
\end{proof}
    
\section{Global classes}\label{app:global_classes}

A {\em global class} is a non-empty class of compact Lie groups that is closed under isomorphism
and passage to closed subgroups.
Every global family in the sense of \cite[Definition 1.4.1]{schwede:global}
is a global class, but global families are additionally assumed to be closed
under quotient groups. The closure property under quotient groups
was used in \cite[Section 4.3]{schwede:global} to establish the $\Fc$-global
stable model structures.
In this appendix, we make no use of these model structures, and we develop
the theory from a higher categorical perspective for the more general global classes,
with particular emphasis on multiplicative features.

\begin{eg}
  An example of a global class that is not a global family is
  the class of compact Lie groups
  isomorphic to a closed subgroup of $U(2)^k$ for some $k\geq 1$;
  this class is even multiplicative, i.e., closed under products.
  It contains the binary tetrahedral group $\tilde A_4$ of order 24,
  because $\tilde A_4$ has a faithful 2-dimensional unitary representation.
  But this class does not contain the alternating group on four letters  $A_4$,
  which is a quotient of $\tilde A_4$.
  Indeed, $A_4$ has no 2-dimensional
  irreducible representations, so every homomorphism $A_4\to U(2)^k$ kills
  the commutator subgroup of $A_4$, and can thus not be injective.
\end{eg}

Let $\Fc$ be a global class of compact Lie groups.
A morphism of orthogonal spectra $f\colon X\to Y$ is an {\em $\Fc$-equivalence}
if the map $\pi_k^G(f)\colon \pi_k^G(X)\to\pi_k^G(Y)$ is an isomorphism for all $k\in\mZ$
and all compact Lie groups $G$ in the class $\Fc$.
Since every global equivalence is in particular an $\Fc$-equivalence,
the $\Fc$-equivalences descend to a well-defined class of morphisms
in the $\infty$-category $\Sc p_{\gl}$ of global spectra.
Then by design, a morphism $f$ in $\Sc p_{\gl}$ is an $\Fc$-equivalence
if and only if for every compact Lie group $G$ in $\Fc$,
the morphism of genuine $G$-spectra $\res_G f$ is an equivalence in $\Sc p_G$.

In the following we write
\[ \Sc p_\Fc \ = \ \td{\Sigma^\infty_+B_{\gl}G\colon G\in\Fc} \]
for the localizing subcategory
of the stable $\infty$-category $\Sc p_{\gl}$
generated by the global spectra $\Sigma^\infty_+B_{\gl}G$ for all $G\in\Fc$.
We show in part (v) of the following theorem that $\Sc p_\Fc$ coincides
with the essential image of the left adjoint
to the Bousfield localization of $\Sc p_{\gl}$ by the class of $\Fc$-equivalences.
So a global spectrum belongs to $\Sc p_\Fc$ if and only if
it is left induced from $\Fc$ in the sense of \cite[Definition 4.5.6]{schwede:global}.
We will therefore refer to the global spectra in $\Sc p_\Fc$ as {\em $\Fc$-left induced}.

We call a functor $f\colon \Cc\to\Dc$ of $\infty$-categories
a {\em Bousfield localization} if it has a fully faithful right adjoint.
Dually, $f$ is a {\em Bousfield colocalization} if it has a fully faithful left adjoint.
An  exact functor of stable $\infty$-categories is a {\em recollement}
if it is simultaneously a Bousfield localization and a Bousfield colocalization,
i.e., it has fully faithful adjoints on both sides.

The most important features of the $\infty$-category of
$\Fc$-left induced spectra are summarized in the following theorem.
For {\em multiplicative} global classes, the subcategory $\Sc p_\Fc$
of $\Fc$-left induced global spectra interacts well with the global smash products,
and Theorem \ref{thm:multiplicative_class} records the additional multiplicative
features in this situation.
When the multiplicative global class is additionally closed under wreath products,
then $\Sc p_\Fc$ also behaves well with respect to ultra-commutative multiplications,
as we explain in Theorem \ref{thm:wreath_closed}.

\begin{theorem}\label{thm:Sp_F properties}
  Let $\Fc$ be a global class of compact Lie groups.
  \begin{enumerate}[\em (i)]
  \item The $\infty$-category $\Sc p_\Fc$ 
    is stable and compactly generated, and hence presentable.
  \item Every $\Fc$-equivalence between $\Fc$-left induced global spectra
    is a global equivalence.
  \item The inclusion $\Sc p_\Fc\to\Sc p_{\gl}$ has a right adjoint
    $\rho_\Fc\colon\Sc p_{\gl}\to \Sc p_\Fc$, and for every global spectrum $X$,
    the adjunction counit $\epsilon_X\colon \rho_\Fc(X)\to X$ is an $\Fc$-equivalence.
  \item Any right adjoint $\rho_\Fc\colon\Sc p_{\gl}\to \Sc p_\Fc$
    to the inclusion itself has a fully faithful right adjoint.
    So $\rho_\Fc$ is a recollement of stable $\infty$-categories.
  \item
    The right adjoint $\rho_\Fc\colon \Sc p_{\gl}\to \Sc p_\Fc$ to the inclusion
    is a localization of $\infty$-categories at the class of $\Fc$-equivalences.
  \item For every $\Fc$-equivalence $f\colon X\to Y$
    and every global spectrum $Z$, the morphism $f\sm Z\colon X\sm Z \to Y\sm Z$
    is an $\Fc$-equivalence.
  \end{enumerate}
\end{theorem}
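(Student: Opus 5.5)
The plan is to work entirely inside the stable presentable $\infty$-category $\Sc p_{\gl}$, using that the objects $\Sigma^\infty_+B_{\gl}G$ are compact (they compactly generate $\Sc p_{\gl}$, see the proof of Theorem \ref{thm:Sp_gl properties}) and that $\Sigma^\infty_+B_{\gl}G$ corepresents $\pi_0^G$ on $h(\Sc p_{\gl})$ \cite[Theorem 4.4.3]{schwede:global}. The parts will be proved in the order (i), (iii), (ii), (v), (iv), (vi), since each uses the earlier ones.

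For (i), $\Sc p_\Fc$ is by definition a localizing subcategory generated by a set of compact objects, namely representatives of $\Sigma^\infty_+B_{\gl}G$ for isomorphism classes in $\Fc$. It is therefore stable, compactly generated by these objects, and hence presentable (Lurie, HA 1.4.4.2). For (iii), the inclusion preserves colimits between presentable categories, so it has a right adjoint $\rho_\Fc$ by the adjoint functor theorem. For $G\in\Fc$ and every $k$, the map $\epsilon_X$ induces a bijection on $[\Sigma^k\Sigma^\infty_+B_{\gl}G,-]$, because $\Sigma^k\Sigma^\infty_+B_{\gl}G$ lies in $\Sc p_\Fc$. By corepresentability this bijection is exactly $\pi_k^G(\epsilon_X)$, so $\epsilon_X$ is an $\Fc$-equivalence. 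For (ii), let $f\colon X\to Y$ be an $\Fc$-equivalence between $\Fc$-left induced spectra and let $C$ be its cofiber. Then $C$ lies in $\Sc p_\Fc$ and satisfies $\pi_*^G(C)=0$ for all $G\in\Fc$. The full subcategory of objects $A$ with $[A,C]_*=0$ is localizing and contains the generators, so it contains $C$ itself. Hence $C\simeq 0$ and $f$ is an equivalence.

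For (v), the counit shows that $\rho_\Fc$ sends $\Fc$-equivalences to $\Fc$-equivalences between left induced objects, and these are equivalences by (ii). Conversely, a morphism $f$ such that $\rho_\Fc(f)$ is an equivalence is an $\Fc$-equivalence by (iii). The inclusion is fully faithful, so $\rho_\Fc$ is a colocalization. A functor with a fully faithful left adjoint is the localization at the morphisms it inverts, which are exactly the $\Fc$-equivalences. For (iv), $\rho_\Fc$ is exact and preserves colimits, because the generators are compact and $\rho_\Fc$ commutes with $[\Sigma^\infty_+B_{\gl}G,-]$ for $G\in\Fc$. The adjoint functor theorem therefore gives a right adjoint $r$. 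Full faithfulness of $r$ is equivalent to the counit $\rho_\Fc r\to\Id$ being an equivalence. The plan is to deduce this formally from the fact that $\rho_\Fc\circ\mathrm{incl}\simeq\Id$: by the triangle identities, the left adjoint $\mathrm{incl}$ of $\rho_\Fc$ being fully faithful makes the right adjoint $r$ fully faithful as well (adjoint triples $L\dashv\rho\dashv r$ with $L$ fully faithful). This gives the recollement.

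For (vi), fix $Z$ and consider the exact, colimit-preserving functor $-\sm Z$. By (v) it suffices to show that $f\sm Z$ is an $\Fc$-equivalence whenever the cofiber $C$ of $f$ has $\rho_\Fc(C)=0$, that is, whenever $C$ is $\Fc$-acyclic. One checks that $\res_G(C\sm Z)\simeq\res_G C\sm\res_G Z$ by strong monoidality (Theorem \ref{thm:gl2G} (iv)), and this vanishes for $G\in\Fc$ because $\res_G C\simeq 0$. Thus $\pi_*^G(C\sm Z)=0$ for $G\in\Fc$. The main subtlety I expect lies here and in (iv): I need to confirm that $\Fc$-equivalences are detected by $\res_G$ for $G\in\Fc$ without assuming closure under quotients. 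This holds by definition, since $\pi_k^G$ depends only on $\res_G$. So (vi) reduces to the strong monoidality of $\res_G$.
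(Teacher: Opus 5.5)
Your proposal is correct and follows essentially the same route as the paper. It uses compact generation by the $\Sigma^\infty_+B_{\gl}G$ for (i) and (ii), the adjoint functor theorem plus corepresentability of $\pi_k^G$ for (iii), preservation of sums together with the adjoint-triple argument for (iv), the localization criterion for a functor with fully faithful left adjoint for (v), and strong monoidality of $\res_G$ for (vi). The differences are cosmetic: you argue (ii) and (vi) via cofibers, and in (vi) you do not need (v), since the long exact sequence gives the cofiber criterion directly; the vanishing of $\res_G C$ uses that $\Fc$ is closed under closed subgroups, not merely the definition.
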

\begin{proof}
  We prove (i) and (ii) together.
  By design, $\Sc p_\Fc$ is the full subcategory of $\Sc p_{\gl}$
  generated under colimits by the objects $(\Sigma^\infty_+B_{\gl}G)[k]$
  for all $G\in\Fc$ and all $k\in\mZ$.
  All these generating global spectra are compact objects of $\Sc p_{\gl}$
  by \cite[Theorem 4.4.3 (ii)]{schwede:global},
  so $\Sc p_\Fc$ is a compactly generated stable $\infty$-category,
  and hence presentable.
  Since the objects $(\Sigma^\infty_+B_{\gl}G)[k]$
  are compact generators of $\Sc p_\Fc$ and represent the functor $\pi_k^G$,
  by \cite[Theorem 4.4.3 (i)]{schwede:global}, every $\Fc$-equivalence between
  objects in $\Sc p_\Fc$ is an equivalence in $\Sc p_{\gl}$, i.e., a global equivalence.
  
  (iii)
  By design, $\Sc p_\Fc$ is closed under colimits inside $\Sc p_{\gl}$.
  So the inclusion $\Sc p_{\Fc}\to\Sc p_{\gl}$ preserves colimits.
  Since source and target are presentable, the adjoint functor theorem
  \cite[Corollary 5.5.2.9]{lurie:htt} provides a right adjoint
  $\rho_\Fc\colon\Sc p_{\gl}\to\Sc p_\Fc$ to the inclusion.
  We write $\epsilon_X\colon \rho_\Fc(X)\to X$ for the adjunction counit.
  For all $G\in\Fc$ and all integers $k$,
  the upper horizontal map in the following commutative diagram
  is the adjunction bijection:
  \[ \xymatrix@C=12mm{
      h(\Sc p_{\gl})((\Sigma^\infty_+B_{\gl}G)[k],\rho_\Fc(X))\ar[r]^-{(\epsilon_X)_*}_-\iso\ar[d]_\iso
      & h(\Sc p_{\gl})((\Sigma^\infty_+B_{\gl}G)[k],X)  \ar[d]^\iso\\
      \pi_k^G(\rho_\Fc(X))\ar[r]_-{(\epsilon_X)_*}&   \pi_k^G(X)
    }
  \]
  The two vertical maps are isomorphisms because $(\Sigma^\infty_+B_{\gl}G)[k]$
  represents the functor $\pi_k^G$, compare \cite[Theorem 4.4.3 (i)]{schwede:global}.
  So the lower horizontal map is an isomorphism, and thus
  $\epsilon_X$ is an $\Fc$-equivalence, as claimed.
 
  (iv)
  As a right adjoint, $\rho_\Fc$ preserves limits.
  Since source and target are stable, the functor $\rho_\Fc$ preserves finite colimits.
  We claim that $\rho_\Fc$ also preserves infinite sums.
  Indeed, for any family $(X_i)_{i\in I}$
  of global spectra and all $G$ in $\Fc$, the following diagram commutes:
  \[ \xymatrix{
      \bigoplus_I \pi_k^G(\rho_\Fc(X_i))\ar[r]^-\iso\ar[d]_\iso &
      \pi_k^G(\bigoplus_I \rho_\Fc(X_i))\ar[r] &
      \pi_k^G(\rho_\Fc(\bigoplus_I X_i))\ar[d]^\iso\\
      \bigoplus_I \pi_k^G(X_i)\ar[rr]_-\iso&&
      \pi_k^G(\bigoplus_I  X_i)    }
  \]
  The two vertical maps are isomorphisms by (iii).
  The two horizontal isomorphisms exploit that the functor $\pi_k^G$
  takes coproducts of global spectra to direct sums of abelian groups.
  Since this holds for all $G\in\Fc$, the colimit exchange morphism
  $\bigoplus_I \rho_\Fc(X_i)\to\rho_\Fc(\bigoplus_I X_i)$
  is an $\Fc$-equivalence between objects of $\Sc p_\Fc$,
  and hence a global equivalence by (ii).
  This concludes the proof that $\rho_\Fc$ preserves arbitrary sums.
  Since $\rho_\Fc$ preserves finite colimits and sums, it preserves all colimits.
  Since $\Sc p_\Fc$ and $\Sc p_{\gl}$ are presentable, the adjoint functor theorem
  \cite[Corollary 5.5.2.9]{lurie:htt} provides a right adjoint to $\rho_\Fc$.
  The functor $\rho_\Fc$ thus has adjoints on both sides. In such cases,
  one adjoint is fully faithful if and only if the other one is.
  Since the left adjoint to $\rho_\Fc$ is fully faithful, so is its right adjoint.
  
  (v) 
  The commutative square
  \[ \xymatrix{ \rho_\Fc(X)\ar[r]^-{\rho_\Fc(f)}\ar[d]_{\epsilon_X} &\rho_\Fc(Y)\ar[d]^{\epsilon_Y}\\
      X\ar[r]_-f& Y    } \]
  and the fact, proved in (iii), that the adjunction counits are $\Fc$-equivalences,
  show that a morphism $f$ of global spectra is an $\Fc$-equivalence if and only if 
  $\rho_\Fc(f)$ is an $\Fc$-equivalence.
  Since $\rho_\Fc(X)$ and $\rho_\Fc(Y)$ belong to $\Sc p_\Fc$, 
  $\rho_\Fc(f)$ is moreover an $\Fc$-equivalence if and only if it is a global equivalence,
  by (ii).
  We can then apply the dual of \cite[Proposition 5.2.7.12]{lurie:htt}
  to the functor $L=\text{incl}\circ \rho_\Fc\colon\Sc p_{\gl}\to\Sc p_{\gl}$, which yields
  the desired universal property of a localization.

  (vi)
  For any group $G$ in $\Fc$,
  the functor $\res_G^\sm\colon\Sc p^\sm_{\gl}\to \Sc p^\sm_G$ is strong symmetric monoidal,
  so $\res_G(f\sm Z)$ is equivalent in $\Sc p_G$ to $\res_G(f)\sm \res_G(Z)$.
  The morphism $\res_G(f)$ is an equivalence in $\Sc p_G$ because $G\in \Fc$;
  hence also $\res_G(f\sm Z)$ is an equivalence. Since this holds for all $G\in\Fc$,
  the morphism $f\sm Z$ is an $\Fc$-equivalence.
\end{proof}

\begin{eg}\label{eg:Thom_left_induced}
  Let $G$ be a compact Lie group in a global class $\Fc$.
  Then the functor $\res_G\colon\Sc p_{\gl}\to \Sc p_G$
  has a left adjoint
  \[ G\dbs    - \ : \ \Sc p_G\ \to \ \Sc p_{\gl}\ . \]
  This left adjoint takes the suspension $G$-spectrum $\Sigma^\infty_+ G/H$
  to $\Sigma^\infty_+ B_{\gl}H$, for example because both represent the functor
  $\pi_0^H\colon h(\Sc p_{\gl})\to\rAb$.
  Since $G$ belongs to the class $\Fc$, so does its subgroup $H$,
  and thus $G\dbs(\Sigma^\infty_+ G/H)\sim\Sigma^\infty_+ B_{\gl}H$
  lies in the subcategory $\Sc p_\Fc$.
  Since $\Sc p_G$ is generated, as a localizing subcategory,
  by the objects $\Sigma^\infty_+ G/H$, the entire image of the left adjoint $G\dbs-$
  is contained in the subcategory $\Sc p_\Fc$.
  For example, if $V$ and $W$ are representations of a group $G\in\Fc$, then the
  global Thom spectrum
  \[ (B_{\gl}G)^{V-W}\ = \ G\dbs(\mS^{V-W}) \]
  of a virtual $G$-representation is $\Fc$-left induced.
\end{eg}

We alert the reader that in general, the $\infty$-category $\Sc p_\Fc$ of $\Fc$-left induced global
spectra is {\em not} closed inside $\Sc p_{\gl}$ under the global smash product.
This improves if $\Fc$ is a {\em multiplicative} class,
i.e., closed under products, see Theorem \ref{thm:multiplicative_class} (i).

\begin{prop}\label{prop:mult=refl}
  For any global class $\Fc$, the following two conditions are equivalent.
  \begin{enumerate}[\em (a)]
  \item The global class $\Fc$ is {\em multiplicative}, i.e., 
   for all compact Lie groups $G$ and $K$ in $\Fc$, the product $G\times K$
   also belongs to $\Fc$.
  \item The global class $\Fc$ is {\em reflexive},
  i.e., the inclusion of the full subcategory spanned by groups in $\Fc$
  into the category of compact Lie groups and continuous homomorphisms has a left adjoint.
  \end{enumerate}
\end{prop}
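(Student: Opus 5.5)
We prove the two implications separately. Both directions are purely group-theoretic; the only facts needed are that a global class is closed under isomorphism and closed subgroups, and that the image of a continuous homomorphism between compact Lie groups is a closed subgroup.

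For (a)$\Rightarrow$(b), we construct the reflection directly. Let $G$ be a compact Lie group and consider the set $\Nc$ of closed normal subgroups $N$ of $G$ with $G/N\in\Fc$.
\begin{itemize}
\item[(1)] $\Nc$ is non-empty. The class $\Fc$ is non-empty and closed under subgroups, so it contains the trivial group, and $G\in\Nc$.
\item[(2)] $\Nc$ is closed under finite intersections. For $N,N'\in\Nc$, the diagonal map $G/(N\cap N')\to G/N\times G/N'$ is a continuous monomorphism. The target lies in $\Fc$ by multiplicativity, so $G/(N\cap N')$ is isomorphic to a closed subgroup of a group in $\Fc$ and hence lies in $\Fc$.
\item[(3)] $\Nc$ has a minimal element. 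Compact Lie groups satisfy the descending chain condition on closed subgroups: a strictly descending chain must drop either in dimension or in the number of components at each step. Together with (2), this gives a unique minimal element $N_\Fc$, namely the intersection of all members of $\Nc$, attained by a finite subintersection.
\end{itemize}
Set $G_\Fc=G/N_\Fc$. We claim that the projection $G\to G_\Fc$ is initial among continuous homomorphisms $\varphi\colon G\to K$ with $K\in\Fc$. The image $\varphi(G)$ is a closed subgroup of $K$, so it lies in $\Fc$ and is isomorphic to $G/\ker\varphi$. Hence $\ker\varphi\in\Nc$, so $N_\Fc\subseteq\ker\varphi$, and $\varphi$ factors uniquely through $G_\Fc$. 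Uniqueness holds because the projection is surjective. Choosing such a reflection for each $G$ defines the left adjoint in the standard way.

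For (b)$\Rightarrow$(a), suppose $G,K\in\Fc$ and let $\eta\colon G\times K\to L$ be the unit of the reflection, with $L\in\Fc$. Both projections $G\times K\to G$ and $G\times K\to K$ have targets in $\Fc$, so they factor through $\eta$. It follows that $\eta$ is injective: if $\eta(g,k)=1$, then both projections kill $(g,k)$, so $(g,k)=1$. Therefore $\eta$ is a continuous monomorphism of compact Lie groups, hence an isomorphism onto a closed subgroup of $L$. Since $\Fc$ is closed under closed subgroups and isomorphism, $G\times K\in\Fc$.

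The main point to get right is step (3), the existence of a minimal $N\in\Nc$ when $\Nc$ is infinite. The descending chain condition is the standard tool here: in a strictly descending chain of closed subgroups, either the dimension drops or the dimension stays fixed and the finite component group strictly shrinks. Everything else in the argument is formal.
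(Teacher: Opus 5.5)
Your proof is correct and follows essentially the same route as the paper. For (a)$\Rightarrow$(b), the paper likewise intersects the kernels of all homomorphisms into groups in $\Fc$, uses the descending chain condition to write this as a finite intersection, and embeds the quotient into a finite product. For (b)$\Rightarrow$(a), it likewise factors both projections through the unit, using the retraction $(i,j)$ to see that the unit is an isomorphism onto a closed subgroup.
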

\begin{proof}
  (a)$\Longrightarrow$(b)
  We let $K$ be a compact Lie group.
  We let $N$ be the intersection of the kernels of all
  continuous homomorphisms from $K$ to some group in $\Fc$.
  Then $N$ is a closed normal subgroup of $K$, and we claim
  that the quotient homomorphism $K\to K/N$
  is initial among continuous homomorphisms from $K$ to groups in $\Fc$.

  We must first show that the quotient group $K/N$ indeed belongs to $\Fc$.
  Since compact Lie groups satisfy the descending chain condition for closed
  subgroups, there are finitely many $N_1,\dots,N_k$ among the relevant closed normal subgroups
  such that $N=N_1\cap \ldots\cap N_k$.
  We choose continuous homomorphisms $\alpha_i\colon K\to G_i$
  with kernel $N_i$ and $G_i\in\Fc$. Then the homomorphism
  \[ (\alpha_1,\dots,\alpha_k)\ : \ K \ \to \ G_1\times \dots\times G_k \]
  has kernel $N_1\cap\dots\cap N_k=N$, so $K/N$ embeds into 
  $G_1\times \dots\times G_k$. Since the class is multiplicative,
  the product $G_1\times \dots\times G_k$ belongs to $\Fc$, hence so does $K/N$.
  
  Now we can complete the argument. If $\beta\colon K\to G$ is any
  continuous homomorphism to a group in $\Fc$, then $N$ is contained
  in the kernel of $\beta$, so $\beta$ factors uniquely through
  the quotient map $K\to K/N$.
  
  (b)$\Longrightarrow$(a)  
  The argument for global families in \cite[page 437f]{schwede:global}
  can be adapted to global classes, as follows.
  For $G, K\in\Fc$, we let $q\colon G\times K\to L$ be an initial continuous
  homomorphism to a group $L$ that belongs to $\Fc$.
  Then the projections $p_G\colon G\times K\to G$ and $p_K\colon G\times K\to  K$
  factor as $p_G=i \circ q$ and $p_K=j\circ q$ for unique continuous homomorphisms
  $i\colon L\to G$ and $j\colon L\to K$.
  The composite
  \[ G\times K \ \xra{\ q\ }\ L \ \xra{(i,j)}\ G\times K \]
  is the identity, so $q$ is an isomorphism from $G\times K$ onto a closed
  subgroup of $L$. Since $L$ belongs to $\Fc$, so does $G\times K$.
\end{proof}

The following proposition generalizes \cite[Proposition 4.5.8]{schwede:global}
from global families to global classes.

\begin{prop}\label{prop:reflexive}
  Let $\Fc$ be a multiplicative, and hence reflexive, global class.
  \begin{enumerate}[\em (i)]
  \item For every compact Lie group $K$,
    every initial continuous homomorphism $p\colon K\to u K$
    to a group in $\Fc$ is surjective.
  \item A global spectrum $X$ is $\Fc$-left induced if and only
    if for every compact Lie group $K$
    and every initial continuous homomorphism $p\colon K\to u K$
    to a group in $\Fc$, the inflation map
    \[ p^*\ \colon \  \Phi^{u K}_*(X)\ \to \ \Phi^K_*(X) \]
    of geometric fixed point homotopy groups is an isomorphism.
  \end{enumerate}
\end{prop}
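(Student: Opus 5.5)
Part (i) is group theory, and I would extract it from the construction in the proof of Proposition \ref{prop:mult=refl}. Given an initial $p\colon K\to uK$, the image $p(K)$ is a closed subgroup of $uK$, so it lies in $\Fc$ because global classes are closed under subgroups. Hence $p$ factors as $K\to p(K)\xra{\text{incl}} uK$. The universal property of $p$ then gives a unique $r\colon uK\to p(K)$ with $r\circ p$ equal to the corestriction of $p$. Consequently $\text{incl}\circ r\circ p=p$, and uniqueness forces $\text{incl}\circ r=\Id_{uK}$. So the inclusion $p(K)\to uK$ is surjective, and $p$ is surjective. This ensures the inflation map in (ii) is defined.

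For (ii), I would first prove the `only if' direction, showing that the full subcategory of global spectra $X$ for which all inflations $p^*$ are isomorphisms is localizing.
\begin{itemize}
\item Both functors $\Phi^{uK}_*$ and $\Phi^K_*$ are homological and preserve sums, and $p^*$ is natural.
\item So it suffices to check the generators $\Sigma^\infty_+ B_{\gl}G$ with $G\in\Fc$.
\item For a suspension spectrum, $\Phi^K_*(\Sigma^\infty_+ B_{\gl}G)\iso \pi_*(\Sigma^\infty_+ (B_{\gl}G)^K)$, compatibly with inflation.
\item The space $(B_{\gl}G)(\Uc_K)^K$ decomposes as a disjoint union, over conjugacy classes of homomorphisms $\alpha\colon K\to G$, of classifying spaces $BC(\alpha)$ of centralizers.
\end{itemize}
Since $G\in\Fc$, every such $\alpha$ factors uniquely through $p$. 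Moreover the centralizer of $\alpha$ equals the centralizer of the corresponding homomorphism from $uK$, because $p$ is surjective. Hence inflation along $p$ identifies the fixed point spaces up to equivalence, and $p^*$ is an isomorphism on these generators. This is the argument of \cite[Proposition 4.5.8]{schwede:global}, and it uses only reflexivity, not closure under quotients.

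For the `if' direction, take $X$ satisfying the condition and consider the counit $\epsilon_X\colon\rho_\Fc(X)\to X$ from Theorem \ref{thm:Sp_F properties} (iii). By Theorem \ref{thm:gl2G} (iii), it suffices to show $\Phi^K_*(\epsilon_X)$ is an isomorphism for every $K$. For $K\in\Fc$ this holds because $\epsilon_X$ is an $\Fc$-equivalence, and these are detected by geometric fixed points of groups in $\Fc$. For general $K$, naturality of $p^*$ gives a square relating $\Phi^{uK}_*(\epsilon_X)$ and $\Phi^K_*(\epsilon_X)$. Its vertical maps are isomorphisms: for $\rho_\Fc(X)$ by the `only if' direction, and for $X$ by hypothesis. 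Since $uK\in\Fc$, the top map is an isomorphism, so $\Phi^K_*(\epsilon_X)$ is one too. Therefore $X\simeq\rho_\Fc(X)$ is $\Fc$-left induced.

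The main obstacle is the generator computation in the `only if' direction. One must make precise the splitting of $(B_{\gl}G)^K$ and check that inflation corresponds to the evident map on components. I would reduce to the space level: use that $p^*$ on suspension spectra is induced by the identification $\gamma^*$ of fixed points, and that $\Uc_{uK}$ pulled back along $p$ embeds into a complete $K$-universe.
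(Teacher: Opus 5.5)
Your proposal is correct and takes essentially the paper's route. Part (i) is exactly the paper's argument, and for part (ii) the paper simply defers to \cite[Proposition 4.5.8]{schwede:global}. You have faithfully reconstructed that argument: the generator computation via the centralizer decomposition of $(B_{\gl}G)(\Uc_K)^K$ gives the `only if' direction, and the counit $\rho_\Fc(X)\to X$ together with joint conservativity of geometric fixed points gives the `if' direction, using only reflexivity and Theorem \ref{thm:Sp_F properties}.
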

\begin{proof}
  (i) The image $p(K)$ of $p\colon K\to u K$ is a closed subgroup of $u K$,
  hence also in the global class $\Fc$. Because $p$ is initial among homomorphisms
  to groups in $\Fc$, there is a unique continuous homomorphism $j\colon u K\to p(K)$
  such that the composite $j\circ p\colon K\to p(K)$
  agrees with $p$ (with restricted codomain).
  Since the composite
  \[ K \ \xra{\ p \ }\ u K \ \xra{\ j \ }\ p(K) \ \xra{\text{incl}}\ u K\]
  equals $p$, initiality shows that $\text{incl}\circ j$ is the identity of $u K$.
  So $p(K)=u K$, and thus $p$ is surjective.
  Part (ii) is proved in the same way as for global families in
  \cite[Proposition 4.5.8]{schwede:global}.
\end{proof}

As we shall now show, for multiplicative global classes,
the $\Fc$-left induced global spectra are monoidally well-behaved.

\begin{theorem}\label{thm:multiplicative_class}
  Let $\Fc$ be a multiplicative global class.
   \begin{enumerate}[\em (i)]
  \item The full subcategory $\Sc p_{\Fc}$ of $\Fc$-left induced global spectra
    is closed in $\Sc p_{\gl}$ under the global smash product.
  \item
    There is a unique symmetric monoidal structure $\Sc p_{\Fc}^\sm$
    on the $\infty$-category $\Sc p_{\Fc}$ equipped with a lift of the
    inclusion to a strong symmetric monoidal functor
    $\text{\em incl}^\sm\colon \Sc p_{\Fc}^\sm\to\Sc p_{\gl}^\sm$.
  \item The functor
    \[  \CAlg(\text{\em incl}^\sm) \ : \ \CAlg(\Sc p_{\Fc}^\sm) \ \to
      \ \CAlg(\Sc p_{\gl}^\sm)  \]
    is fully faithful, and its essential image
    is the subcategory of $\CAlg(\Sc p_{\gl}^\sm)$
    spanned by those commutative global ring spectra whose
    underlying global spectra are $\Fc$-left induced.
  \item The lax symmetric monoidal structure $\rho_\Fc^\sm\colon\Sc p_{\gl}^\sm\to\Sc p_\Fc^\sm$
    induced by $\text{\em incl}^\sm$ on the right adjoint $\rho_\Fc$ of the 
    inclusion is strong symmetric monoidal.
  \item The monoidal structure $\Sc p^\sm_\Fc$ is
    presentably symmetric monoidal,
    and the $\infty$-category $\CAlg(\Sc p_{\Fc}^\sm)$ is presentable.
  \item The pullback square of $\infty$-categories
    \[ 
      \xymatrix@C=15mm{
        \CAlg(\Sc p_{\Fc}^\sm)\ar[d]_{\ev_{\td{1}}}\ar[r]^-{\CAlg(\text{\em incl}^\sm)}&
        \CAlg(\Sc p_{\gl}^\sm)\ar[d]^{\ev_{\td{1}}}\\
        \Sc p_{\Fc}\ar[r]_-{\text{\em incl}} & \Sc p_{\gl}}
    \]
    is vertically left adjointable and horizontally right adjointable.
\end{enumerate}
\end{theorem}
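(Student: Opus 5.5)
The plan is to prove (i) first; everything else then follows from formal results in Lurie's Higher Algebra together with Theorem \ref{thm:Sp_F properties}.

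For (i), the global smash product preserves colimits in each variable, and $\Sc p_\Fc$ is the localizing subcategory generated by the $\Sigma^\infty_+B_{\gl}G$ with $G\in\Fc$. So it suffices to check that smash products of generators lie in $\Sc p_\Fc$. The model $\bL_{G,V}$ of the global classifying space satisfies $B_{\gl}G\times B_{\gl}K\simeq B_{\gl}(G\times K)$. Since $\Sigma^\infty_+$ is strong monoidal, this gives
\[ \Sigma^\infty_+B_{\gl}G\sm\Sigma^\infty_+B_{\gl}K\simeq\Sigma^\infty_+B_{\gl}(G\times K), \]
and $G\times K\in\Fc$ by multiplicativity. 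The unit $\mS=\Sigma^\infty_+B_{\gl}\{1\}$ also lies in $\Sc p_\Fc$, since $\Fc$ is non-empty and closed under subgroups. A two-step colimit-closure argument, first in one variable and then in the other, then shows that $\Sc p_\Fc$ is closed under $\sm$.

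Parts (ii) and (iii) follow from (i). A full subcategory that contains the unit and is closed under the tensor product inherits a unique symmetric monoidal structure making the inclusion strong monoidal, by \cite[Proposition 2.2.1.1]{lurie:higher_algebra}. Commutative algebras in this suboperad are exactly the algebras whose underlying objects lie in the subcategory, and $\CAlg$ of a fully faithful inclusion of operads is fully faithful.

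For (iv), I would compare the lax structure maps $\rho_\Fc X\sm\rho_\Fc Y\to\rho_\Fc(X\sm Y)$. Both sides lie in $\Sc p_\Fc$, so by Theorem \ref{thm:Sp_F properties} (ii) it suffices to check that this map is an $\Fc$-equivalence. After composing with the counit $\epsilon_{X\sm Y}$, which is an $\Fc$-equivalence by Theorem \ref{thm:Sp_F properties} (iii), we obtain $\epsilon_X\sm\epsilon_Y$. This is an $\Fc$-equivalence by two applications of Theorem \ref{thm:Sp_F properties} (vi). The unit map $\mS\to\rho_\Fc\mS$ is handled the same way.

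For (v), the smash product on $\Sc p_\Fc$ is the restriction of the one on $\Sc p_{\gl}$, and colimits in $\Sc p_\Fc$ are computed in $\Sc p_{\gl}$. Hence it preserves colimits in each variable, and $\Sc p_\Fc$ is presentable by Theorem \ref{thm:Sp_F properties} (i). Presentability of $\CAlg(\Sc p_\Fc^\sm)$ then follows from \cite[Corollary 3.2.3.5]{lurie:higher_algebra}.

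For (vi), horizontal right adjointability follows from Lemma \ref{lem:CAlg_left_adjoint} applied to $\text{incl}^\sm$, a strong monoidal left adjoint. The induced right adjoint on $\CAlg$ is $\CAlg(\rho_\Fc^\sm)$, and its underlying functor is $\rho_\Fc$. For vertical left adjointability, the left adjoints of $\ev_{\td1}$ are the free commutative algebra functors $\bigvee_n(-)^{\sm n}_{h\Sigma_n}$. The inclusion preserves smash powers, homotopy orbits and sums, so the Beck--Chevalley map from the free algebra in $\Sc p_\Fc$ to the free algebra in $\Sc p_{\gl}$ is an equivalence. The pullback property follows from (iii).

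The main obstacle is (i), specifically identifying $\Sigma^\infty_+B_{\gl}G\sm\Sigma^\infty_+B_{\gl}K$ with $\Sigma^\infty_+B_{\gl}(G\times K)$ at the level of the $\infty$-category. I would handle it through the representability statement: both objects corepresent $\pi_0^{G\times K}$ via the external product of tautological classes.
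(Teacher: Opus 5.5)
Your proposal is correct. For (i)--(v) and the horizontal half of (vi) it is the paper's proof. You use the same reduction of (i) to generators, \cite[Proposition 2.2.1.1]{lurie:higher_algebra} for (ii)--(iii), the same counit square together with Theorem \ref{thm:Sp_F properties} (ii), (iii), (vi) for (iv), and Lemma \ref{lem:CAlg_left_adjoint} for horizontal right adjointability.

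The obstacle you flag in (i) needs no representability argument, which would be awkward to carry out directly anyway. The paper quotes the point-set global equivalence \cite[(1.3.5) and Proposition 4.1.18]{schwede:global}. This computes the $\infty$-categorical smash product because the suspension spectra involved are flat. Strictly speaking, $\Sigma^\infty_+$ is strong monoidal for the box product of orthogonal spaces, not the cartesian product. The two are globally equivalent, so this is harmless.

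The only genuine difference is vertical left adjointability in (vi). Write $\mE_{\gl}$ and $\mE_\Fc$ for the free commutative algebra functors. The paper deduces vertical left adjointability formally from horizontal right adjointability. The two Beck--Chevalley transformations are mates under the composite adjunctions $\mE_{\gl}\circ\text{incl}\dashv\rho_\Fc\circ\ev_{\td1}$ and $\CAlg(\text{incl}^\sm)\circ\mE_\Fc\dashv\ev_{\td1}\circ\CAlg(\rho_\Fc^\sm)$. Hence one is an equivalence if and only if the other is \cite[Lemma A.10]{cnossen-lenz-linskens:partial}, and nothing about free algebras is needed beyond their existence.

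Your argument via $\bigvee_n (X^{\sm n})_{h\Sigma_n}$ is also valid, using (v) to know that the formula computes free algebras in $\Sc p_\Fc^\sm$. You can even skip identifying the Beck--Chevalley map with the evident comparison map. By the full faithfulness in (iii), it suffices that $\mE_{\gl}(X)$ is $\Fc$-left induced for every $X\in\Sc p_\Fc$. That follows from the formula in $\Sc p_{\gl}$, part (i), and closure of $\Sc p_\Fc$ under colimits.

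Your route makes explicit that free commutative global ring spectra on $\Fc$-left induced spectra stay $\Fc$-left induced. This is exactly the form in which vertical left adjointability is used later, in the proof of Theorem \ref{thm:wreath_closed} (ii). The paper's route is shorter and purely formal.
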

\begin{proof}
  (i)
  We let $G$ and $K$ be two compact Lie groups from the class $\Fc$.
  Then $(\Sigma^\infty_+ B_{\gl}G)\sm (\Sigma^\infty_+ B_{\gl}K)$ is globally
  equivalent to $\Sigma^\infty_+ B_{\gl}(G\times K)$,
  by (1.3.5) and Proposition 4.1.18 of \cite{schwede:global},
  and hence in $\Sc p_{\Fc}$ by the multiplicativity hypothesis on $\Fc$.
  Since $\Sc p_{\Fc}$ is the localizing subcategory of $\Sc p_{\gl}$
  generated by the suspension spectra $\Sigma^\infty_+ B_{\gl}G$ for all
  $G\in \Fc$, and since the global smash product preserves colimits
  in each variable separately, this proves that $\Sc p_{\Fc}\sm \Sc p_{\Fc}$
  is contained in $\Sc p_{\Fc}$.

  (ii) and (iii): 
  Because the subcategory $\Sc p_{\Fc}$ is closed under the global smash product
  by (i), and it contains the monoidal unit $\mS\sim\Sigma^\infty_+ B_{\gl}\{1\}$,
  Proposition 2.2.1.1 and Remark 2.2.1.2 of \cite{lurie:higher_algebra}
  provide the `restricted' symmetric monoidal structure on $\Sc p_{\Fc}$
  along with an extension of the inclusion to a strong symmetric monoidal functor.
  The induced functor
  \[ \CAlg(\text{incl}^\sm)\ : \  \CAlg(\Sc p_{\Fc}^\sm) \ \to\ \CAlg(\Sc p_{\gl}^\sm) \]
  is then fully faithful with the essential image as described in (iii).

  (iv)
  The global sphere spectrum $\mS$ is $\Fc$-left induced for any global class $\Fc$.
  So the adjunction counit $\epsilon_\mS\colon\rho_\Fc(\mS)\to\mS$ is a global equivalence.
  This adjunction counit is the inverse of the unit morphism of the monoidal structure
  on $\rho_\Fc$, so the lax monoidal structure is unital.
  
  For global spectra $X$ and $Y$, the monoidal structure morphism
  $\rho_\Fc(X)\sm\rho_\Fc(Y)\to\rho_\Fc(X\sm Y)$ participates in the following commutative
  diagram in $\Sc p_{\gl}$:
  \[ \xymatrix{\rho_\Fc(X)\sm\rho_\Fc(Y)\ar[r]\ar[d]_{\rho_\Fc(X)\sm \epsilon_Y}&\rho_\Fc(X\sm Y)\ar[d]^{\epsilon_{X\sm Y}}\\
      \rho_\Fc(X)\sm Y\ar[r]_-{\epsilon_X\sm Y} & X\sm Y
    } \]
  Here $\epsilon\colon\text{incl}\circ\rho_\Fc\to \Id_{\Sc p_{\gl}}$ is the adjunction counit.
  The two vertical and the lower horizontal morphisms are $\Fc$-equivalences
  by parts (iii) and (vi) of Theorem \ref{thm:Sp_F properties}. So the upper horizontal morphism
  is an $\Fc$-equivalence between global spectra in $\Sc p_\Fc$, and hence a global equivalence,
  by Theorem \ref{thm:Sp_F properties} (ii).

  (v) The inclusion $\Sc p_\Fc\to\Sc p_{\gl}$ is a left adjoint by
  Theorem \ref{thm:Sp_F properties} (iii).
  So the inclusion preserves and reflects colimits. Since the inclusion
  is strong monoidal and the global smash product preserves
  colimits in each variable separately, also the restricted
  global smash product
  $\sm\colon \Sc p_{\Fc}\times \Sc p_{\Fc}\to \Sc p_{\Fc}$ preserves
  colimits in each variable separately.
  Since $\Sc p_\Fc$ is presentable by Theorem \ref{thm:Sp_F properties} (i),
  we conclude that $\Sc p_\Fc^\sm$ is presentably symmetric monoidal,
  and its $\infty$-category $\CAlg(\Sc p_\Fc^\sm)$ of commutative algebras
  is presentable by \cite[Corollary 3.2.3.5]{lurie:higher_algebra}.    
  
  (vi) The square is horizontally right adjointable by Lemma \ref{lem:CAlg_left_adjoint}.
  Since the vertical functors in the square also have left adjoints,
  horizontal right adjointability is equivalent to
  vertical left adjointability, compare \cite[Lemma A.10]{cnossen-lenz-linskens:partial}.
  So the square is also vertically left adjointable.
\end{proof}

In the following we will identify, via the fully faithful functor
\[ \CAlg(\text{incl}^\sm)\ : \ \CAlg(\Sc p_\Fc^\sm)\ \to \
   \CAlg(\Sc p_{\gl}^\sm) \ ,\]
the $\infty$-category $\CAlg(\Sc p_\Fc^\sm)$ with the full subcategory of $\CAlg(\Sc p_{\gl}^\sm)$
spanned by those commutative global ring spectra
whose underlying global spectrum is $\Fc$-left induced.
This is legitimate by Theorem \ref{thm:multiplicative_class} (iii).

\begin{rk}[Commutative global $\Fc$-approximations]\label{rk:comFapp}
  Theorem \ref{thm:multiplicative_class}
  makes precise that for multiplicative global classes $\Fc$,
  commutative global ring spectra have well-behaved multiplicative
  approximations by $\Fc$-left induced objects.
  Indeed, the fact that the fully faithful functor $\CAlg(\text{incl}^\sm)$
  has a right adjoint (namely the functor $\CAlg(\rho_\Fc^\sm)$)
  means that the full subcategory of $\Fc$-left induced commutative global ring spectra
  is coreflective inside $\CAlg(\Sc p_{\gl}^\sm)$.
  So for every commutative global ring spectrum $R$, there exists a morphism 
  $\epsilon\colon L_\Fc(R)=\CAlg(\rho_\Fc^\sm)(R)\to R$ in $\CAlg(\Sc p_{\gl}^\sm)$
  that enjoys the following equivalent universal properties:
\begin{enumerate}[(a)]
\item
  The pair $(L_\Fc(R),\epsilon)$ represents the restriction of the functor
  $\CAlg(\Sc p_{\gl}^\sm)(-,R)\colon\allowbreak\CAlg(\Sc p_{\gl}^\sm)^{\op}\to\Sc$ to the subcategory
  $\CAlg(\Sc p_{\Fc}^\sm)$.
  In other words, the underlying global spectrum of $L_\Fc(R)$ is $\Fc$-left induced,
  and for every commutative global ring spectrum $S$
  whose underlying global spectrum is $\Fc$-left induced, the
  map of mapping anima
  \[  \CAlg(\Sc p_{\gl}^\sm)(S,\epsilon)\ : \  \CAlg(\Sc p_{\gl}^\sm)(S,L_\Fc(R))\ \to \  \CAlg(\Sc p_{\gl}^\sm)(S,R)\ \]
  is an equivalence.
\item The morphism $\epsilon\colon L_{\Fc}(R)\to R$ is a terminal object
  in the slice $\infty$-category $\CAlg(\Sc p_{\Fc}^\sm)_{/R}$.
\end{enumerate}
Said yet another way: whenever the underlying global spectrum of $S$ is $\Fc$-left induced,
then every morphism $f\colon S\to R$
of commutative global ring spectra factors uniquely through $\epsilon\colon L_\Fc(R)\to R$.

Moreover, the horizontal right adjointability of the square in
Theorem \ref{thm:multiplicative_class} (vi) means that the underlying
morphism $\epsilon_{\td{1}}\colon L_{\Fc}(R)_{\td{1}}\to R_{\td{1}}$
is a universal $\Fc$-approximation of the underlying global spectra,
i.e., a terminal object in the slice $\infty$-category $(\Sc p_{\Fc})_{/R_{\td{1}}}$.
Informally speaking, not only is there a `best approximation' of $R$
by an $\Fc$-left induced object, there is also no ambiguity about whether
the term `best approximation' is to be interpreted
in the $\infty$-category $\CAlg(\Sc p^\sm_{\gl})$
or in the underlying $\infty$-category $\Sc p_{\gl}$.
\end{rk}

In the rest of this appendix we consider multiplicative global classes
that are also closed under wreath products.
As we show in Theorem \ref{thm:wreath_closed}, the closure under wreath products
guarantees that $\Fc$-left induced spectra `interact well' with
ultra-commutative multiplications.
Besides the maximal global family of all compact Lie groups,
two relevant examples for us are the following.

\begin{eg}[Finite groups]
  The global family of finite groups is multiplicative and closed under
  wreath products.
  Any global class that is also closed under wreath products contains
  all symmetric groups, and hence all finite groups.
  So finite groups are minimal among global classes that are closed under wreath products.
\end{eg}

\begin{eg}[Abelian identity component]
  We write $\Ab^\circ$ for the global family
  of compact Lie groups with abelian identity component;
  this family is multiplicative and closed under wreath products,
  and it features in the discussion of the ultra-commutative localization
  of $\Sigma^\infty_+\bP$ by $b_1$, see Theorem \ref{thm:ucom b_1 loc}.
  We claim that if a global class $\Fc$  is closed under wreath products
  and contains a compact Lie group of positive dimension,
  then it contains the global family $\Ab^\circ$.
  So among global classes that are closed under wreath products, $\Ab^\circ$
  is next-to-minimal.
  
  And indeed: if the global class $\Fc$ contains a positive dimensional
  compact Lie group, then it contains the circle group $T=U(1)$.
  If $\Fc$ is closed under wreath products, it also contains $\Sigma_m\wr T$
  for all $m\geq 1$. We let $G$ be any compact Lie group whose identity
  component $G^\circ$ is abelian. Then every irreducible $G$-representation
  is a direct summand of an induced representation $\ind_{G^\circ}^G(\chi)$
  for some continuous homomorphism $\chi\colon G^\circ\to T$.
  So $G$ admits a faithful representation $V$ that is a direct sum
  of representations of the form $\ind_{G^\circ}^G(\chi_i)$
  for suitable homomorphisms $\chi_i\colon G^\circ\to T$.
  If the component group
  $\pi_0(G)=G/G^\circ$ has order $m$, then the representation $\ind_{G^\circ}^G(\chi_i)$
  is classified by a continuous homomorphism to $\Sigma_m\wr T$.
  So altogether, the faithful representation $V$ provides an embedding
  of $G$ into a finite product of copies of  $\Sigma_m\wr T$.
  Since $(\Sigma_m\wr T)^k$ embeds into $\Sigma_{k m}\wr T$,
  the group $G$ belongs to the global class $\Fc$.
\end{eg}

By Theorem \ref{thm:UR_monadic},
the functors $u_{\gl}\colon \UR_{\gl}\to\CAlg(\Sc p_{\gl}^\sm)$
and $\ev_{\td{1}}\circ u_{\gl}\colon \UR_{\gl}\allowbreak\to \Sc p_{\gl}$ are monadic.
In particular, they have left adjoints
$\lambda_{\gl}\colon\CAlg(\Sc p_{\gl}^\sm)\to \UR_{\gl}$
and $\mP_{\gl}\colon \Sc p_{\gl}\to \UR_{\gl}$, respectively.
We refer to $\mP_{\gl}$ as the {\em free ultra-commutative ring spectrum functor}.
We let $\UR_{\Fc}$ denote the full subcategory of $\UR_{\gl}$
spanned by the ultra-commutative ring spectra whose underlying global spectra are
$\Fc$-left induced.

\begin{theorem}\label{thm:wreath_closed}
  Let $\Fc$ be a multiplicative global class that is closed under wreath products.
  \begin{enumerate}[\em (i)]
  \item The free functor $\mP_{\gl}\colon \Sc p_{\gl}\to \UR_{\gl}$
    takes the subcategory $\Sc p_{\Fc}$ of $\Fc$-left induced global
    spectra to the subcategory $\UR_{\Fc}$.
  \item The left adjoint $\lambda_{\gl}\colon\CAlg(\Sc p_{\gl}^\sm)\to \UR_{\gl}$
    to $u_{\gl}$
    takes the subcategory $\CAlg(\Sc p_{\Fc}^\sm)$ to the subcategory $\UR_{\Fc}$.
  \item
    The $\infty$-category $\UR_{\Fc}$ is presentable and
    the restriction $u_\Fc\colon \UR_{\Fc}\to \CAlg(\Sc p_{\Fc}^\sm)$
    of the functor $u_{\gl}$ is bimonadic.
  \item
    The $\infty$-category $\UR_{\Fc}$ is coreflective in $\UR_{\gl}$,
    and the pullback square of $\infty$-categories
    \[ 
      \xymatrix@C=15mm{
  \UR_{\Fc}\ar[d]_{u_{\Fc}} \ar[r]^-{\text{\em incl}}& \UR_{\gl}\ar[d]^{u_{\gl}}\\
        \CAlg(\Sc p_{\Fc}^\sm)\ar[r]_-{\CAlg(\text{\em incl}^\sm)} & \CAlg(\Sc p_{\gl}^\sm)}
    \]
    is vertically left adjointable and horizontally right adjointable.
\end{enumerate}
\end{theorem}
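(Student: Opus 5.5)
The plan is to prove (i) first, then deduce (ii)–(iv) by formal arguments modelled on Theorems \ref{thm:UR_monadic} and \ref{thm:multiplicative_class}.

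For (i), note that $\ev_{\td{1}}\circ u_{\gl}\circ\mP_{\gl}$ should be computed by the global symmetric powers, $X\mapsto\bigvee_{m\geq0}(X^{\sm m})_{h\Sigma_m}$ in the global sense, i.e.\ $\Sigma_m\dbs(\res_{\Sigma_m}X^{\sm m})$ suitably interpreted. I would verify this on the point-set level: for flat (cofibrant) orthogonal spectra $X$, the free commutative orthogonal ring spectrum $\bigvee_m X^{\sm m}/\Sigma_m$ represents $\mP_{\gl}(X)$, using the global model structure of \cite[Theorem 5.4.3]{schwede:global}. Since $\mP_{\gl}$ preserves colimits and $\Sc p_\Fc$ is generated under colimits and shifts by the $\Sigma^\infty_+B_{\gl}G$ with $G\in\Fc$, and the symmetric power functors preserve sifted colimits, it suffices to show that $(\Sigma^{\infty}_+B_{\gl}G)^{\sm m}/\Sigma_m$ (and shifted variants, i.e.\ global Thom spectra over $B_{\gl}(\Sigma_m\wr G)$) lie in $\Sc p_\Fc$. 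Here I would use that the $m$-th symmetric power of $\Sigma^\infty_+B_{\gl}G$ is $\Sigma^\infty_+B_{\gl}(\Sigma_m\wr G)$, and for desuspensions a Thom spectrum $(B_{\gl}(\Sigma_m\wr G))^{V}$, which is $\Fc$-left induced by Example \ref{eg:Thom_left_induced} since $\Sigma_m\wr G\in\Fc$. Handling non-generators requires care: I would present a general $\Fc$-left induced $X$ as a sifted colimit of finite wedges of shifted generators, and reduce to wedges via the binomial formula $\mP(X\vee Y)\simeq\mP(X)\sm\mP(Y)$ together with Theorem \ref{thm:multiplicative_class} (i). This is the main obstacle: extracting the symmetric power as a left adjoint on $\Sc p_\Fc$ and making the filtration argument coherent.

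For (ii), every commutative algebra $R$ in $\CAlg(\Sc p_\Fc^\sm)$ is, by monadicity of $\ev_{\td{1}}$, a geometric realization of free commutative algebras on $\Fc$-left induced spectra, and hence of $\lambda_{\gl}$ applied to them. Since $\lambda_{\gl}$ composed with the free commutative algebra functor equals $\mP_{\gl}$, part (i) gives that the terms lie in $\UR_\Fc$. The functor $\ev_{\td{1}}\circ u_{\gl}$ preserves sifted colimits (Theorem \ref{thm:UR_monadic} (i)), and $\Sc p_\Fc$ is closed under colimits, so the realization lies in $\UR_\Fc$.

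For (iii), $\UR_\Fc$ is closed under colimits in $\UR_{\gl}$, because $u_{\gl}$ preserves colimits and $\Sc p_\Fc$ is colimit-closed; it is also closed under limits computed in $\CAlg(\Sc p_\Fc^\sm)$. By (ii), the adjunction $(\lambda_{\gl},u_{\gl})$ restricts to one between $\UR_\Fc$ and $\CAlg(\Sc p_\Fc^\sm)$. The functor $u_\Fc$ is conservative and preserves colimits, so it is monadic by Barr--Beck. Presentability then follows as in Theorem \ref{thm:UR_monadic} (ii) via \cite[Corollary 6.8]{henry-meadows:monads}. The right adjoint of $u_\Fc$ is obtained from the adjoint functor theorem once limits are handled (limits in $\UR_\Fc$ are $\CAlg(\rho^\sm_\Fc)$ applied to limits in $\UR_{\gl}$), which gives comonadicity.

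For (iv), the inclusion $\UR_\Fc\to\UR_{\gl}$ preserves colimits between presentable categories, so it has a right adjoint and $\UR_\Fc$ is coreflective. For vertical left adjointability, the Beck--Chevalley map $\lambda_{\Fc}\to\lambda_{\gl}\circ\CAlg(\text{incl}^\sm)$ is an equivalence by (ii). Horizontal right adjointability is then equivalent to it by \cite[Lemma A.10]{cnossen-lenz-linskens:partial}, as in Theorem \ref{thm:multiplicative_class} (vi).
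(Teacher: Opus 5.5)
Your computation on the generators $(\Sigma^\infty_+B_{\gl}G)[-n]$, using Thom spectra over $B_{\gl}(\Sigma_m\wr G)$ and Example \ref{eg:Thom_left_induced}, is exactly the paper's. Your arguments for (ii)--(iv) are also essentially the paper's. The gap is the step you flag yourself in (i): passing from the generators to an arbitrary $\Fc$-left induced $X$. You propose to write every $X\in\Sc p_\Fc$ as a sifted colimit of finite wedges of shifted generators, which is a density statement you do not prove, and then to run a binomial/filtration argument, which you leave open. None of this is needed.

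The missing move is to prove first that $\UR_\Fc$ is closed under all colimits in $\UR_{\gl}$, the very fact you invoke later in (iii). Since $\mP_{\gl}$ is a left adjoint, the full subcategory of those $X\in\Sc p_\Fc$ with $\mP_{\gl}(X)\in\UR_\Fc$ is then closed under colimits. It contains the $(\Sigma^\infty_+B_{\gl}G)[-n]$ for $G\in\Fc$ and $n\geq 0$. These generate $\Sc p_\Fc$ under colimits, since positive shifts are suspensions and hence colimits. So that subcategory is all of $\Sc p_\Fc$.

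Your justification of the closure in (iii) is incomplete, though. Colimit-closure of $\Sc p_\Fc$ in $\Sc p_{\gl}$ is not enough, because $u_{\gl}$ lands in $\CAlg(\Sc p_{\gl}^\sm)$, where only sifted colimits are computed on underlying global spectra. What you need is that $\CAlg(\Sc p_\Fc^\sm)$ is closed under colimits in $\CAlg(\Sc p_{\gl}^\sm)$. This holds because $\CAlg(\text{incl}^\sm)$ is a fully faithful left adjoint with exactly that essential image, by Theorem \ref{thm:multiplicative_class} (iii) and (vi). Alternatively, it follows by hand: finite coproducts are smash products (Theorem \ref{thm:multiplicative_class} (i)), and general colimits are built from coproducts and sifted colimits. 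Together with the colimit preservation of $u_{\gl}$ from Theorem \ref{thm:UR_monadic} (iii), this gives the closure.

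With this lemma placed first, (i) becomes formal, and your bar-resolution argument for (ii) goes through. Part (iii) also simplifies:
\begin{itemize}
\item The right adjoint of $u_\Fc$ comes directly from the adjoint functor theorem, since $u_\Fc$ preserves colimits between presentable $\infty$-categories.
\item Comonadicity follows because $u_\Fc$, being a monadic right adjoint, is conservative and preserves limits.
\item Your parenthetical that limits in $\UR_\Fc$ are ``$\CAlg(\rho^\sm_\Fc)$ applied to limits in $\UR_{\gl}$'' is not well-typed, since $\CAlg(\rho^\sm_\Fc)$ acts on commutative global ring spectra, not on $\UR_{\gl}$. It is not needed.
\end{itemize}
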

\begin{proof}
  For use in several of the items we show as a first step that
  $\UR_{\Fc}$ is closed under colimits in the ambient $\infty$-category $\UR_{\gl}$.
  We let $F\colon I\to \UR_{\gl}$ be a functor with values in $\UR_{\Fc}$,
  and we let $R$ be a colimit of $F$ in $\UR_{\gl}$.
  Since the functor $u_{\gl}\colon \UR_{\gl}\to\CAlg(\Sc p_{\gl}^\sm)$
  preserves colimits by Theorem \ref{thm:UR_monadic} (iii),
  $u_{\gl}(R)$ is a colimit of $u_{\gl}\circ F$ in $\CAlg(\Sc p_{\gl}^\sm)$.
  The functor $\CAlg(\text{incl}^\sm)\colon\CAlg(\Sc p^\sm_\Fc)\to \CAlg(\Sc p_{\gl}^\sm)$
  is a fully faithful left adjoint with essential image the
  $\Fc$-left induced objects, by parts (iii) and (vi) of Theorem \ref{thm:multiplicative_class}.
  So its essential image is closed under colimits in $\CAlg(\Sc p_{\gl}^\sm)$.
  So the underlying global spectrum of $u_{\gl}(R)$, which is the same
  as the underlying global spectrum of $R$, is $\Fc$-left induced.

  (i) We show that for every group $G$ in $\Fc$ and every $n\geq 0$,
  the global spectrum underlying $\mP_{\gl}((\Sigma^\infty_+ B_{\gl}G)[-n])$ is $\Fc$-left induced.
  The underlying global spectrum of a free ultra-commutative ring spectrum $\mP_{\gl}(X)$
  is the sum of the homogeneous summands $\mP_{\gl}^{[k]}(X)$ for $k\geq 0$,
  where $\mP_{\gl}^{[k]}\colon \Sc p_{\gl}\to\Sc p_{\gl}$ is the total left
  derived functor of the $k$-th symmetric smash power functor
  $Y\mapsto (Y^{\sm k})/\Sigma_k$ on the 1-category $\Sp^O$.
  Moreover,
  \[  \mP_{\gl}^{[k]}((\Sigma^\infty_+ B_{\gl}G)[-n])\ \sim \ (B_{\gl}(\Sigma_k\wr G))^{- \tau_k^n} \]
  is the global Thom spectrum associated to the negative of the $(\Sigma_k\wr G)$-representation
  $\tau_k^n$, the direct sum of $n$ copies of the natural $\Sigma_k$-representation
  on $\mR^k$. Since $G$ belongs to $\Fc$, and $\Fc$ is closed under wreath products,
  the group $\Sigma_k\wr G$ belongs to $\Fc$. So the global Thom spectrum
  $(B_{\gl}(\Sigma_k\wr G))^{- \tau_k^n}$ is $\Fc$-left induced by Example \ref{eg:Thom_left_induced}.
  Hence also the global spectrum underlying $\mP_{\gl}((\Sigma^\infty_+B_{\gl}G)[-n])$
  is $\Fc$-left induced,
  i.e., $\mP_{\gl}((\Sigma^\infty_+ B_{\gl}G)[-n])$ belongs to the subcategory
   $\UR_{\Fc}$.
  
  By definition, $\Sc p_{\Fc}$ is generated as a localizing subcategory of $\Sc p_{\gl}$
  by the suspension spectra $\Sigma^\infty_+ B_{\gl}G$ for all compact Lie groups $G$ in $\Fc$.
  So $\Sc p_{\Fc}$ is generated under colimits in $\Sc p_{\gl}$
  by the objects $(\Sigma^\infty_+ B_{\gl}G)[-n]$ for all $G\in\Fc$ and all $n\geq 0$.
  As a left adjoint, $\mP_{\gl}$ preserves colimits.
  Since $\UR_{\Fc}$ is closed under colimits in $\UR_{\gl}$ by the previous paragraph,
  the functor $\mP_{\gl}$ takes all $\Fc$-left induced global spectra into $\UR_{\Fc}$.

  (ii)  By transitivity of adjoints we have $\mP_{\gl}\sim\lambda_{\gl}\circ\mE_{\gl}$,
  where $\mE_{\gl}\colon \Sc p_{\gl}\to \CAlg(\Sc p_{\gl}^\sm)$
  is left adjoint to the forgetful functor $\ev_{\td{1}}$.
  By the vertical left adjointability in Theorem \ref{thm:multiplicative_class} (vi),
  the functor $\mE_{\gl}$ sends $\Sc p_{\Fc}$ into $\CAlg(\Sc p_\Fc^\sm)$.
  The functor $\mP_{\gl}$ sends $\Sc p_{\Fc}$ into $\UR_{\Fc}$ by part (i); so
  $\lambda_{\gl}$ sends the image of the restricted functor
  $\mE_{\gl}\colon\Sc p_{\Fc}\to\CAlg(\Sc p_\Fc^\sm)$ into $\UR_{\Fc}$.
  Since $\ev_{\td{1}}\colon\CAlg(\Sc p_{\Fc}^\sm)\to\Sc p_{\Fc}$
  is a monadic right adjoint, the $\infty$-category $\CAlg(\Sc p_{\Fc}^\sm)$
  is generated under colimits by $\mE_{\gl}(\Sc p_{\Fc})$.
  Since $\lambda_{\gl}$ preserves colimits and $\UR_{\Fc}$ is closed under colimits in
  $\UR_{\gl}$, the functor $\lambda_{\gl}$ sends all of
  $\CAlg(\Sc p_{\Fc}^\sm)$ into $\UR_{\Fc}$, proving (ii).

  (iii)
  By definition, the functor $u_{\gl}\colon\UR_{\gl}\allowbreak\to\CAlg(\Sc p^\sm_{\gl})$
  takes the subcategory $\UR_{\Fc}$ to the subcategory $\CAlg(\Sc p^\sm_{\Fc})$.
  By (ii), its left adjoint $\lambda_{\gl}$ takes $\CAlg(\Sc p^\sm_{\Fc})$ to $\UR_{\Fc}$.
  The restricted functors
  \[\xymatrix{
      \lambda_\Fc=\lambda_{\gl}|_{ \CAlg(\Sc p^\sm_{\Fc})} :   \CAlg(\Sc p^\sm_{\Fc})\ \ar@<.4ex>[r] &\ \UR_{\Fc} : u_{\gl}|_{\UR_\Fc}= u_{\Fc} \ar@<.4ex>[l]  }\]
  then form another adjoint pair, with respect to the restricted adjunction unit and counit.
  Since $u_{\gl}$ is a monadic right adjoint by Theorem \ref{thm:UR_monadic} (iii),
  and since the restricted functor $u_{\Fc}\colon\UR_{\Fc}\to\CAlg(\Sc p^\sm_{\Fc})$
  is a right adjoint,
  it is monadic by \cite[Proposition 3.26]{henry-meadows:monads}.
    
  Since $\UR_\Fc$ is closed under colimits inside $\UR_{\gl}$,
  and $\CAlg(\Sc p^\sm_\Fc)$ is closed under colimits inside  $\CAlg(\Sc p^\sm_{\gl})$,
  and since $u_{\gl}$ also preserves colimits,
  also the restriction $u_{\Fc}\colon\UR_{\Fc}\to\CAlg(\Sc p^\sm_\Fc)$ preserves colimits.
  Hence the underlying functor
  $u_\Fc\circ\lambda_\Fc\colon\CAlg(\Sc p^\sm_\Fc)\to\CAlg(\Sc p^\sm_\Fc)$
  of the monad for $\UR_\Fc$ preserves all colimits.
  So $u_\Fc\circ\lambda_\Fc$ is in particular $\kappa$-accessible
  for every sufficiently large regular cardinal $\kappa$.
  Since $\CAlg(\Sc p^\sm_\Fc)$ is presentable and the monad
  is $\kappa$-accessible, the $\infty$-category of algebras is again presentable
  by \cite[Corollary 6.8]{henry-meadows:monads}.
  Since $\UR_\Fc$ is equivalent to the $\infty$-category of algebras, it is presentable, too.

  Since source and target of the colimit preserving
  functor $u_{\Fc}\colon\UR_{\Fc}\to\CAlg(\Sc p^\sm_\Fc)$
  are presentable, this functor admits a right adjoint by Lurie's adjoint functor theorem
  \cite[Corollary 5.5.2.9]{lurie:htt}.
  Since we already know that $u_\Fc$ is monadic, the existence of a
  right adjoint makes it comonadic, too, proving (iii).

  (iv) The $\infty$-categories $\UR_\Fc$ and $\UR_{\gl}$ are presentable by part (iii)
  and Theorem \ref{thm:UR_monadic} (ii),
  and $\UR_\Fc$ is closed under colimits inside $\UR_{\gl}$ by the above.
  So the inclusion $\UR_\Fc\to\UR_{\gl}$ is a colimit preserving
  functor between presentable $\infty$-categories, and thus
  a left adjoint by Lurie's adjoint functor theorem \cite[Corollary 5.5.2.9]{lurie:htt}.
  Hence $\UR_\Fc$ is coreflective in $\UR_{\gl}$.

  The design property that $\lambda_\Fc$ is the restriction of $\lambda_{\gl}$
  precisely means that the pullback square in (iv) is vertically left adjointable.
  In this square, both horizontal functors also have right adjoints.
  In such situations, vertical left adjointability is equivalent to
  horizontal right adjointability,
  compare \cite[Lemma A.10]{cnossen-lenz-linskens:partial}.
  So the square is also horizontally right adjointable, proving (iv).
\end{proof}

\Danger If the multiplicative class $\Fc$ is not closed under wreath products,
Theorem \ref{thm:wreath_closed} fails.
The minimal global family $\Fc=\{1\}$ of trivial groups already illustrates this:
the free functor $\mP_{\gl}\colon \Sc p_{\gl}\to \UR_{\gl}$
takes the sphere spectrum (which is $\{1\}$-left induced)
to $\mP_{\gl}(\mS)$, whose underlying global spectrum
$\bigvee_{k\geq 0} \Sigma^\infty_+ B_{\gl}\Sigma_k$ is {\em not} $\{1\}$-left induced.
So the functor $\mP_{\gl}$ does {\em not} take $\Sc p_{\{1\}}$ to $\UR_{\{1\}}$; hence the functor
$\lambda_{\gl}\colon\CAlg(\Sc p_{\gl}^\sm)\to \UR_{\gl}$
does {\em not} take the subcategory $\CAlg(\Sc p_{\{1\}}^\sm)$ to the subcategory $\UR_{\{1\}}$,
either.

\begin{rk}[Ultra-commutative $\Fc$-approximations]\label{rk:approxFucom}
The previous theorem says, loosely speaking, that for multiplicative
global classes $\Fc$ that are also closed under wreath products,
also ultra-commutative ring spectra have well-behaved
approximations by $\Fc$-left induced objects.
This should be compared to Remark \ref{rk:comFapp}, where such approximations
were discussed in the $\infty$-category of commutative global ring spectra,
and the global class was assumed multiplicative, but not necessarily
closed under wreath products.

Indeed, the coreflectiveness of $\UR_{\Fc}$ in Theorem \ref{thm:wreath_closed} (iv)
means that for every ultra-commutative ring spectrum $R$,
there exists a morphism of ultra-commutative ring spectra
$\delta\colon L_\Fc^{\uc}(R)\to R$ that enjoys the following equivalent
universal properties:
\begin{enumerate}[(a)]
\item
  The pair $(L_\Fc^{\uc}(R),\delta)$ represents the restriction of the functor
  $\UR_{\gl}(-,R)\colon\allowbreak(\UR_{\gl})^{\op}\to\Sc$ to the subcategory  $\UR_{\Fc}$.
  In other words, the underlying global spectrum of $ L_\Fc^{\uc}(R)$ is $\Fc$-left induced,
  and for every ultra-commutative ring spectrum $S$
  whose underlying global spectrum is $\Fc$-left induced, the
  map of mapping anima
  \[  \UR_{\gl}(S,\delta)\ : \  \UR_{\gl}(S,L_\Fc^{\uc}(R))\ \to \  \UR_{\gl}(S,R)\ \]
  is an equivalence.
\item The morphism $\delta\colon  L_{\Fc}^{\uc}(R)\to R$ is a terminal object
  in the slice $\infty$-category $(\UR_{\Fc})_{/R}$.
\end{enumerate}
Both properties make precise that whenever
the underlying global spectrum of $S$ is $\Fc$-left induced,
then every morphism $f\colon S\to R$
of ultra-commutative ring spectra factors uniquely through $\delta\colon  L_\Fc^{\uc}(R)\to R$.

Moreover, the horizontal right adjointability of the square in
Theorem \ref{thm:wreath_closed} (iv) means that the underlying
morphism $u_{\gl}(\delta)\colon u_{\gl}( L_{\Fc}^{\uc}(R))\to u_{\gl}(R)$
is a universal $\Fc$-approximation of the underlying commutative
global ring spectrum of $R$ as discussed in Remark \ref{rk:comFapp} above.
So informally speaking,
\[ u_{\gl}( L_{\Fc}^{\uc}(R))\ = \ L_{\Fc}(u_{\gl}(R))\ . \]
And when furthermore combined with
the horizontal right adjointability of the square in
Theorem \ref{thm:multiplicative_class} (vi), we obtain that the morphism
of global spectra underlying $\delta\colon L_{\Fc}^{\uc}(R)\to R$
is a universal $\Fc$-approximation of the underlying global spectrum of $R$.
The essence of Theorem \ref{thm:wreath_closed}
is thus that not only does there exist a
`best approximation of $R$ by an $\Fc$-left induced object',
there is furthermore no ambiguity about whether
the term `best approximation' is interpreted in the $\infty$-category
$\UR_{\gl}$, in the less highly structured $\infty$-category $\CAlg(\Sc p^\sm_{\gl})$,
or in the underlying $\infty$-category $\Sc p_{\gl}$.
\end{rk}

\end{appendix}

\end{document}